\newcommand{\citep}[1]{\cite{#1}}
\newcommand{\citet}[1]{\cite{#1}}
\newtheorem{theorem}{Theorem}[section]
\newtheorem{thm}{Theorem}[section]
\newtheorem{lem}{Lemma}[section]
\newtheorem{cor}{Corollary}[section]
\newtheorem{prop}{Proposition}[section]
\newtheorem{asmp}{Assumption}[section]
\newtheorem{defn}{Definition}[section]
\newtheorem{fact}{Fact}[section]
\newtheoremstyle{remarkstyle}
  {}                  
  {}                   
  {\normalfont}         
  {}                   
  {\itshape}           
  {.}                   
  { }                   
  {}                   
\theoremstyle{remarkstyle}
\newtheorem{rem}{Remark}[section]
\definecolor{wjs}{RGB}{200,0,50}
\def\1{\bm{1}}
\def\eps{{\varepsilon}}
\def\bDelta{{\bar{\Delta}}}
\def\rd{{\textnormal{d}}}
\def\re{{\textnormal{e}}}
\def\ri{{\textnormal{i}}}
\def\ri{{\textnormal{i}}}
\def\vx{{\bm{x}}}
\def\vy{{\bm{y}}}
\DeclareMathAlphabet{\mathsfit}{\encodingdefault}{\sfdefault}{m}{sl}
\SetMathAlphabet{\mathsfit}{bold}{\encodingdefault}{\sfdefault}{bx}{n}
\def\sS{{\mathbb{S}}}
\renewcommand{\xi}{\zeta}
\def\sS{\boldsymbol{S}}
\def\0{{\bf 0}}
\def\1{{\bf 1}}
\def\AM{{\mathcal A}}
\def\DM{{\mathcal D}}
\def\EM{{\mathcal E}}
\def\IM{{\mathcal I}}
\def\HM{{\mathcal H}}
\def\KM{{\mathcal K}}
\def\NM{{\mathcal N}}
\def\PM{{\mathcal P}}
\def\QM{{\mathcal Q}}
\def\FPM{{\mathcal P}_{\Delta}}
\def\SM{{\mathcal S}}
\def\UM{{U}}
\def\VM{{\mathcal V}}
\def\WM{{\mathcal W}}
\def\RB{{\mathbb R}}
\DeclareMathOperator{\EB}{\mathbb{E}}
\def\PB{{\mathbb P}}
\def\bx{{\boldsymbol{x}}}
\def\by{{\boldsymbol{y}}}
\def\bh{{\boldsymbol{h}}}
\def\FPM{\PM_{\Delta}}
\def\type{\mathrm{type}}
\def\SPM{{\overline{\PM}_{\Delta}}}
\newcommand{\Var}{\mathrm{Var}}
\def\rd{{\mathrm{d}}}
\def\re{{\mathrm{e}}}
\def\Key{{\mathtt{Key}}}
\newcommand{\Pow}{\mathscr{P}}
\def\token{{w}}
\def\Voca{{\WM}}
\newcommand\bP{\bm{P}}
\newcommand\bQ{\bm{Q}}
\newcommand\bS{{\bm{S}}}
\def\SMmax{\SM^{\mathrm{gum}}}
\def\SMinv{\SM^{\mathrm{inv}}}
\def\Ydif{Y^{\mathrm{inv}}}
\def\Yars{Y^{\mathrm{gum}}}
\def\hars{h_{\mathrm{ars}}}
\def\hlog{h_{\mathrm{log}}}
\def\hneg{h_{\mathrm{neg}}}
\def\hlog{h_{\mathrm{log}}}
\def\hoptars{{h_{\mathrm{gum}, \Delta}}}
\def\hoptdif{{h_{\mathrm{dif}, \Delta}}}
\def\Perm{\mathrm{Perm}}
\DeclareMathOperator{\sign}{sign}
\title{Optimal Detection for Language Watermarks with Pseudorandom Collision}
\newcommand\blfootnote[1]{ 
  \begingroup
  \renewcommand\thefootnote{} 
  \footnote{\hspace{-1.2em}#1} 
  \addtocounter{footnote}{-1}
  \endgroup
}
\author{
  {T.\ Tony Cai\textsuperscript{1}}\qquad
  {Xiang Li\textsuperscript{1}}  \qquad
  {Qi Long\textsuperscript{1}} \qquad
 {Weijie J. Su\textsuperscript{1}}  \qquad
  {Garrett G.\ Wen\textsuperscript{2}}\\[1ex]
  \textsuperscript{1}University of Pennsylvania\\
  \textsuperscript{2}Yale University\\[2ex]
}
\date{October 22, 2025}
\begin{document}

\maketitle

\begin{abstract}

Text watermarking plays a crucial role in ensuring the traceability and accountability of large language model (LLM) outputs and mitigating misuse. While promising, most existing methods assume perfect pseudorandomness. In practice, repetition in generated text induces collisions that create structured dependence, compromising Type~I error control and invalidating standard analyses.
 
 We introduce a statistical framework that captures this structure through a hierarchical two-layer partition. At its core is the concept of minimal units—the smallest groups treatable as independent across units while permitting dependence within. Using minimal units, we define a non-asymptotic efficiency measure and cast watermark detection as a minimax hypothesis testing problem.

Applied to Gumbel-max and inverse-transform watermarks, our framework produces closed-form optimal rules. It explains why discarding repeated statistics often improves performance and shows that within-unit dependence must be addressed unless degenerate. Both theory and experiments confirm improved detection power with rigorous Type I error control. These results provide the first principled foundation for watermark detection under imperfect pseudorandomness, offering both theoretical insight and practical guidance for reliable tracing of model outputs.

\blfootnote{Emails:\;\texttt{tcai@wharton.upenn.edu},\;\texttt{\{lx10077,qlong\}@upenn.edu},\;\texttt{suw@wharton.upenn.edu},\;\texttt{gang.wen@yale.edu}.}
\blfootnote{Author names are listed in alphabetical order.}

\end{abstract}

\section{Introduction}\label{sec:intro} 

Recent advances in generative artificial intelligence have profoundly transformed the creation and consumption of digital content. Systems capable of generating human-like text, images, and audio are now widely accessible, with large language models (LLMs) being particularly influential \citep{openai2023,liu2024deepseek}. The ability of LLMs to produce fluent text at scale enables powerful applications, from creative writing to automated code generation. However, this proliferation also precipitates pressing concerns over provenance and authenticity. In high-stakes domains such as education, journalism, and scientific research, the misattribution of AI-generated content can have severe consequences, including undermining academic integrity, eroding public trust, and compromising research reproducibility \citep{stokel2022ai, milano2023large, zellers2019defending, starbird2019disinformation, radford2023robust, das2024under}. This landscape highlights an urgent need for reliable methods to distinguish between human-written and machine-generated text.

While many detection methods rely on identifying linguistic artifacts, a more principled and statistical approach is LLM watermarking, which has seen internal implementation by OpenAI and Google DeepMind \citep{scott2023watermarking,dathathri2024scalable}. 
This technique embeds a verifiable statistical signal into the text generation process using pseudorandom variables derived from a secret cryptographic key \citep{kuditipudi2023robust, kirchenbauer2023reliability}. In effect, the key initializes a pseudorandom generator that governs how texts are generated, thereby creating a hidden statistical dependence between the generated text and the key. This dependence enables rigorous hypothesis testing for provable detection \citep{li2024statistical, li2024robust}. In a typical implementation, a provider deploys a watermarked LLM. A user, such as a student, interacts with the model to produce a text. A verifier, such as a teacher, who has been granted access to the cryptographic key, can then analyze the text to determine if it was generated by the watermarked model.

To formalize the watermarking mechanism, it is instructive to first recognize that LLMs sequentially generate a token in a probabilistical manner.\footnote{Here, a token represents a word, subword, or punctuation. For example, the sentence ``Hello, world!'' can be tokenized into four tokens: [``Hello'', ``,'', `` world'', ``!'']]. See \url{https://platform.openai.com/tokenizer} for examples.} To produce the $t$-th token, denoted by $\token_t$, the model first computes a next-token prediction (NTP) distribution $\bP_t$ over its vocabulary based on the preceding tokens $\token_{1:(t-1)} := \token_1 \cdots \token_{t-1}$. For a watermarked LLM, the sampling of $\token_t$ from the NTP distribution $\bP_t$ is governed by a pseudorandom variable $\xi_t$, which is typically generated by a cryptographic hash function $\AM$ that takes a private $\Key$ and the recent context window $\token_{(t-m):(t-1)}$ as input. While the resulting token $\token_t$ still marginally follows the original distribution $\bP_t$, its realization is now tied to $\xi_t$. Consequently, while the marginal distributions of the tokens may be indistinguishable from unwatermarked text, their joint distribution with the pseudorandom variables is not. Without a watermark, the tokens and pseudorandom variables are statistically independent, while with a watermark, they become dependent. This induced dependence is the statistical underpinning for detection, whereby a verifier reconstructs the sequence of pseudorandom variables $\xi_1, \dots, \xi_n$ and constructs a test statistic to capture their association with the observed text.

Two of the most commonly used watermarking schemes are the Gumbel-max watermark \citep{scott2023watermarking} and the inverse-transform watermark \citep{kuditipudi2023robust}.
Both, along with most existing watermarking schemes, are theoretically grounded in a fundamental assumption that the pseudorandom variables $\xi_t = \AM(\token_{(t-m):(t-1)}, \Key)$ are independent and identically distributed (i.i.d.) for $t = m+1, \dots, n$.
This assumption is justified when the context window $\token_{(t-m):(t-1)}$ is unique for every position $t$, since the cryptographic design of the hash function ensures that its outputs behave as independent uniform draws.\footnote{The hash function is sensitive to its inputs, that is, $\AM(\token_{(t-m):(t-1)}, \Key)$ is independent of $\AM(\token_{(t'-m):(t'-1)}, \Key)$ whenever the text windows differ, $\token_{(t-m):(t-1)} \neq \token_{(t'-m):(t'-1)}$, for $t \neq t'$, as $\Key$ is randomly selected.} In practice, however, language is inherently repetitive, particularly in specialized domains like programming and mathematical writing \citep{he2024empirical}. When a segment of text repeats such that $\token_{(t-m):(t-1)} = \token_{(t'-m):(t'-1)}$ for some $t \neq t'$, the deterministic nature of the hash function forces $\xi_t = \xi_{t'}$. This phenomenon, which is known as \textit{pseudorandom collision} \citep{wu2024distortion}, is surprisingly common. It typically becomes more frequent when the LLM generation is relatively deterministic (e.g., during code generation or list completion, where the entropy of the NTP distributions is low) or when the context window size $m$ is small (see the left panel in Figure \ref{fig:repetition}). Importantly, collisions are not merely implementation artifacts but an intrinsic feature of language. They cannot be eliminated entirely, as even human-written documents naturally contain repeated phrases (see Table \ref{tab:repetition} for examples from classic works of literature).

Unfortunately, pseudorandom collisions fundamentally violate the independence assumption that underpins recent statistical frameworks for watermark detection \citep{li2024statistical,li2024robust} and estimation \citep{li2025optimal}. Since these methods all rely on token-level independence of pivotal statistics, collisions make this assumption fail and render the guarantees unreliable.
Without this independence, not only are power analyses invalidated, but more critically, \textit{even} Type I error control is no longer guaranteed (see the right panel in Figure \ref{fig:repetition}). Among many challenges, one lies in the fact that collisions can occur anywhere within the text, leading to complex and unpredictable dependence structures.

While heuristic fixes have been proposed \citep{fernandez2023three, wu2024distortion, dathathri2024scalable}, a systematic statistical analysis is still largely absent. This presents a pressing statistical challenge to the reliable detection of LLM watermarks and calls into question both the framework and the optimality of detection rules derived under the i.i.d.\ assumption. Consequently, comparisons between different watermarking schemes that neglect pseudorandom collisions cannot be considered trustworthy. It thus leads to a central question: can we establish a new framework and design provably optimal detection rules in the presence of imperfect pseudorandomness?

\begin{figure}[!t]
\vspace{-0.2in}
\centering
\includegraphics[width=\textwidth]{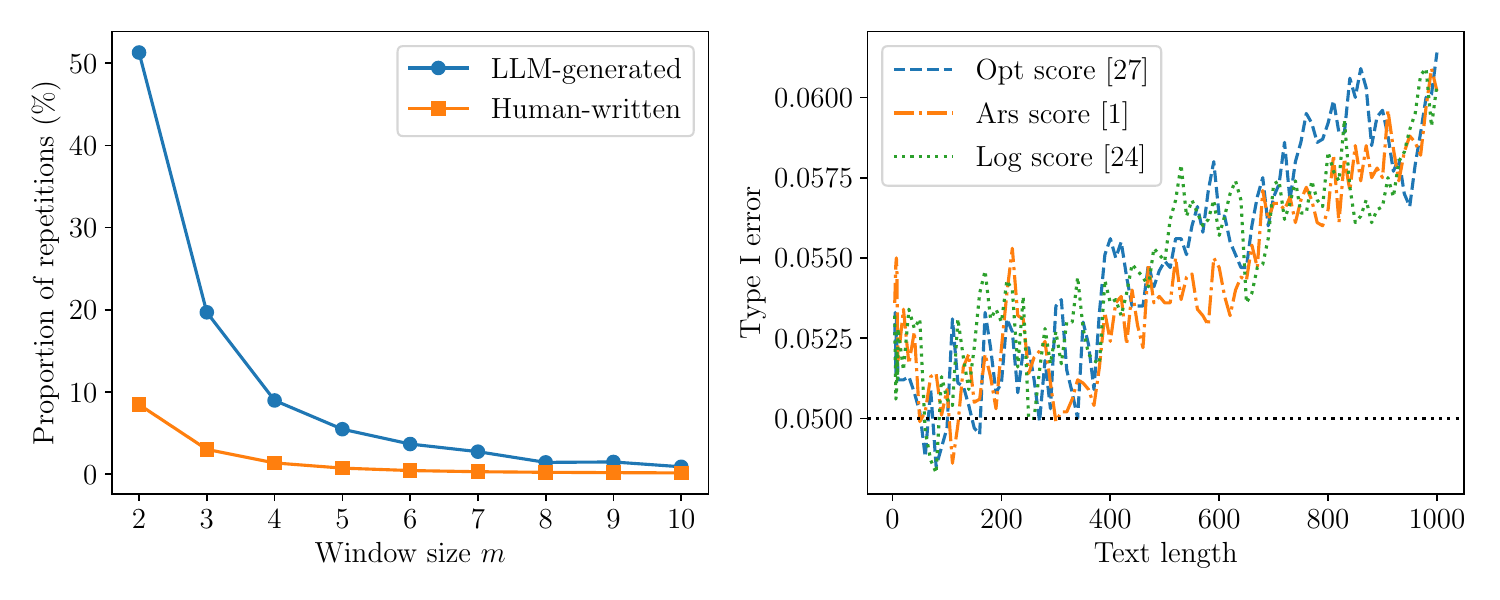}
\vspace{-0.3in}
\caption{\textbf{Left}: Fraction of repeated segments across different text window sizes $m$ for the OPT-1.3B model \citep{zhang2022opt} on the C4 news-like dataset \citep{raffel2020exploring}.
\textbf{Right}: Inflation of type I error when the null human-written data contains repetition, evaluated at significance level $\alpha = 0.05$.}
\label{fig:repetition}
\end{figure}

\sethlcolor{lightgray}	
\begin{table}[t!]
	\centering
	\resizebox{\textwidth}{!}{
		\begin{tabular}{p{0.2\linewidth} | p{0.75\linewidth}}
			\toprule
			\multicolumn{1}{c|}{\textbf{Context}}  & \multicolumn{1}{c}{\textbf{Repetitive Phrases}} \\
			\midrule
			\multicolumn{1}{c|}{
				\makecell[t]{Emphasis \\ from \textit{The Great Gatsby} \\ by F. Scott Fitzgerald}}
			& Gatsby turned sharply. ``Can't {repeat the past}?... \hl{Why of course you can!} \hl{Why of course you can!}'' He looked around him wildly, as if the past were about to rise before his eyes. \hfill (6 tokens) \\
			\hline
			\multicolumn{1}{c|}{
				\makecell[t]{Reassurance \\ from \textit{Moby-Dick} \\ by Herman Melville}}
			& And even in the whaleboat, in the stormiest gales, in the maddest tossing of the waves, the shouts of ``\hl{All's well!} \hl{All's well!}'' came to me across the water. \hfill (4 tokens) \\
			\hline
			\multicolumn{1}{c|}{
				\makecell[t]{Persuasion \\ from \textit{Julius Caesar} \\ by William Shakespeare}}
			& Antony, addressing the crowd after Caesar's death:  
			``He was my friend, faithful and just to me:  
			But \hl{Brutus says he was ambitious;}  
			\hl{And Brutus is an honourable man.}  
			\ldots  
			Yet \hl{Brutus says he was ambitious;}  
			\hl{And Brutus is an honourable man.}  
			\ldots'' \hfill (7 and 9 tokens) \\
			\hline
			\multicolumn{1}{c|}{
				\makecell[t]{Urging \\ from \textit{White Fang} \\ by Jack London}}
			& \hl{Fight!} \hl{Fight!} \hl{Fight!} That was it---the inexorable and eternal decree\ldots{} the urge of life, the tidal wave of life, surging upward, beating\hl{ in him}, pounding\hl{ in him}, driving him resistlessly on. \hfill (2 tokens) \\
			\bottomrule
	\end{tabular}}
	\vspace{-0.05in}
	\caption{Examples of natural repetition in literary works. Token counts are computed using the GPT-4o tokenizer (\url{https://platform.openai.com/tokenizer}).}
	\label{tab:repetition}
	\vspace{-0.1in}
\end{table}

\subsection{Our Contributions}

To address this challenge, we develop a new framework for watermark detection that explicitly accounts for pseudorandomness collisions. This framework still builds on the pivotal-statistic approach of \citet{li2024statistical}, but differs by carefully capturing how text repetition affects the joint distribution of the pivotal statistics $Y_{1:n}$.

When no text windows repeat, the pseudorandom variables $\zeta_{1:n}$ can be safely treated as i.i.d., and by the pivotal property, $Y_{1:n}$ is also i.i.d. With repetitions, however, some pseudorandom variables and pivotal statistics become identical: if two positions $t \neq t'$ share the same context window, then $\zeta_t = \zeta_{t'}$, in which case we also have $Y_t = Y_{t'}$ whenever $\token_t = \token_{t'}$. Such collisions at the $\zeta$-level and coincidences at the token level induce structured dependence in $Y_{1:n}$. To systematically capture this dependence, we introduce a hierarchical framework built on a two-level partition of pseudorandom variables and pivotal statistics. At the first level, $\zeta_{1:n}$ are grouped into blocks reflecting pseudorandom collisions, while, at the second, $Y_{1:n}$ are further divided into sub-blocks accounting for both pseudorandom collisions and token coincidences.

This two-level structure provides a refined basis for analysis. Within this framework, the detection problem reduces to testing distributional differences in $Y_{1:n}$ conditioned on the observed two-level partitions, with a formal formulation presented in \eqref{eq:structured_test}. This formulation serves as the foundation for developing provably optimal detection rules and sets the stage for our contributions below.

\begin{enumerate}
\item[]
\textbf{A hierarchical framework of LLM watermarks.} 
We propose a statistical framework for watermark detection that explicitly accounts for text repetition through the hierarchical two-layer partition. This partition captures the dependence among pivotal statistics and allows their joint distribution to be characterized without any information loss. Within this structure, we find that the pivotal statistics can be partitioned into disjoint subsets, which we call \textit{minimal units}, that are mutually independent across units though not independent within each unit. Taking minimal units as the basic analytic objects, we introduce a new non-asymptotic efficiency notion that quantifies least-favorable detection power when NTP distributions lie in a belief class, casting the search for optimal rules as a minimax problem. Finally, we develop a general non-i.i.d. large-deviation bound under verifiable conditions, which provides a tight characterization of this efficiency notion (see Remark \ref{rem:tight-lower-bound}). This framework is formally introduced in Section \ref{sec:framework}.

\item[]
\textbf{Application to the Gumbel-max watermark.}
We apply our framework to the Gumbel-max watermark in Section~\ref{sec:gumbel} and analyze the associated minimax problem of maximizing the efficiency notion. We find that a saddle-point pair---consisting of an optimal detection rule and the corresponding least-favorable distribution---does not always exist. When it does, we derive closed-form expressions; when it does not, we characterize the transition boundaries. Notably, the optimally derived rule reduces to discarding all repeated pivotal statistics in $Y_{1:n}$, a form that resonates with empirical heuristics proposed in \citet{fernandez2023three,wu2024distortion,dathathri2024scalable}. Our optimal rule rigorously controls Type I error and achieves detection power comparable to, and in some cases exceeding, existing methods, as shown in numerical experiments.

However, deriving these optimal rules is more challenging than in prior work \citep{li2024statistical}. While both frameworks maximize least-favorable detection power over a class of NTP distributions, theirs operates at the token level, whereas ours must operate on minimal units within the hierarchical partition. This shift renders the minimax problem highly non-convex, as it requires accounting for all NTP distributions within a unit rather than a single token-level distribution. To tackle this difficulty, we develop new analytical tools based on Schur-convexity and geometric arguments, which resolve the optimality issues in this non-convex setting and may be of independent interest.

\item[]
\textbf{Application to the inverse transform watermark.}
Finally, we apply our framework to the inverse transform watermark in Section~\ref{sec:inverse}. This case poses unique analytical challenges, as the joint distribution involves exponentially many terms and is intractable in finite form. We show that as the vocabulary size grows, the distribution converges to a simpler asymptotic limit, which makes the minimax problem tractable and yields a closed-form optimal detection rule. Our analysis further reveals that, while discarding repeated pivotal statistics remains harmless, optimal rules must still account for the dependence among statistics within each minimal unit, since they share the same pseudorandom variables. Numerical experiments corroborate these results, showing comparable detection power while maintaining rigorous Type I error control.
\end{enumerate}

\subsection{Related Work}

Since the introduction of text watermarking for LLMs \citep{kirchenbauer2023watermark,scott2023watermarking}, text repetition has been widely observed. Such repetition---often caused by relatively deterministic generation or small context windows \citep{fernandez2023three,kuditipudi2023robust}---induces pseudorandom collisions. Prior analysis frameworks \citep{li2024statistical,li2024robust,zhao2024permute} and downstream estimation tasks \citep{li2025optimal} overlook this issue by assuming perfect pseudorandomness, where all pivotal statistics are assumed to be i.i.d. In practice, collisions introduce strong dependencies, since repeated contexts force correlation or even identity among pivotal statistics. As a result, empirical Type~I error can be severely inflated, far beyond the nominal level \citep{fernandez2023three,wu2024distortion}, undermining the reliability of the watermark. While some studies note that mild repetition can occasionally improve power or robustness in goodness-of-fit tests \citep{he2024empirical}, this benefit comes at the cost of uncontrolled Type~I error, making repetition generally undesirable.
To address this issue, we develop a new formulation and analysis techniques that explicitly account for the dependence induced by pseudorandom collisions. As a consequence, our framework not only resolves this fundamental issue but also explains why a common empirical fix---discarding repeated pivotal statistics and applying detection rules only to the unique ones \citep{fernandez2023three,wu2024distortion,dathathri2024scalable}---is information-theoretically justified, as it matches the structure of the optimal detection rule.

From a statistical standpoint, the collision-induced dependence structure presents a novel challenge. Classical goodness-of-fit tests \citep{donoho2004higher,caiOptimalDetectionHeterogeneous2011,caiOptimalDetectionSparse2014} typically assume i.i.d.\ samples under both the null and alternative hypotheses, whereas our problem involves a non-i.i.d.\ setting where the dependence structure is captured by the hierarchical
two-layer partition. Unlike traditional cases (such as serial correlation in time series \citep{brockwell2002introduction,shumway2006time} or within-subject dependence in longitudinal data \citep{diggle2002analysis,fitzmaurice2012applied}) where dependence takes the form of partial correlation and each observation still contributes new information, our setting exhibits a more extreme structure: some pivotal statistics are exact duplicates due to collisions, while others are intricately linked through shared pseudorandom variables. These overlaps fall outside existing frameworks, and our work offers the first formal treatment of hypothesis testing under this collision-driven dependence. In pursuing optimal detection rules, our strategy connects to the classical literature on robust hypothesis testing \citep{huber1973minimax,veeravalli2002minimax,fauss2021minimax}, which also seeks detectors optimized against least-favorable distributions from a belief class. The key difference is that our setting is considerably more complex: saddle-point solutions may fail to exist, whereas in classical formulations they typically do, due to the simplicity of their model and problem setup.

\section{Preliminaries}\label{sec:prelim}
 
\paragraph{Watermarking embedding and detection.}
At a high level, watermarking modifies text generation by coupling each token with a recoverable pseudorandom variable, often referred to as a random seed in \citet{dathathri2024scalable}. Concretely, rather than drawing the $t$-th token directly from the model’s next-token-prediction (NTP) distribution $\bP_t = (P_{t,w})_{w \in \Voca}$, the process first generates a pseudorandom variable $\zeta_t = \AM(\token_{(t-m):(t-1)}, \Key)$, where $\AM$ is a cryptographic hash function applied to the preceding context window $\token_{(t-m):(t-1)}$ together with a secret $\Key$. The token is then produced by a decoding function $\token_t = \SM(\bP_t, \zeta_t)$, which links $\bP_t$ and $\zeta_t$ in a deterministic way. The sequence $\zeta_{1:n} := \zeta_1 \ldots \zeta_n$ is typically modeled as i.i.d., a valid assumption only when every length-$m$ context prefix is unique \citep{barak2021book, schneier1996applied}. In this work, we focus on \emph{unbiased} decoders, which preserve the marginal distribution in the sense that $\PB_{\zeta}(\SM(\bP, \zeta) = w) = P_w$. In this way, watermarking does not degrade text quality.

To detect the watermark, a verifier reconstructs the sequence $\zeta_{1:n}$ and tests for the statistical dependence between each $\token_t$ and $\zeta_t$. This is formalized using a \emph{pivotal statistic} $Y_t = Y(w_t, \zeta_t)$ \citep{li2024statistical}. Under the null hypothesis $H_0$ (human-written text), $w_t$ and $\zeta_t$ are independent, by the pivotal property, $Y_t$ follows a fixed null distribution denoted by $\mu_0$, regardless of the distribution of $w_t$. Under the alternative $H_1$ (watermarked text), the induced dependence shifts its distribution to an alternative $\mu_{1,\bP_t}$, which depends on $\bP_t$ since in this case $Y_t$ takes the form $Y_t = Y(\SM(\bP_t, \zeta_t), \zeta_t)$. In this way, \citet{li2024statistical,li2024robust} formulate detection as the hypothesis testing problem:
\begin{equation}\label{eq:previous_test}
H_0: Y_t \sim \mu_0 \text{ i.i.d.}, \quad t = 1,\dots,n 
\qquad \text{vs.} \qquad 
H_1: Y_t \sim \mu_{1,\bP_t}, \quad t = 1,\dots,n.
\end{equation}
The standard detection approach, which aggregates scores $h(Y_t)$, relies on the i.i.d. property of the sequence $\{\zeta_t\}_{t=1}^n$. In practice, however, text repetition leads to hash collisions (that is, $\zeta_t = \zeta_{t'}$ for some $t \neq t'$), violating this core assumption. This breakdown of independence for the pivotal statistics $\{Y_t\}_{t=1}^n$ motivates the framework developed in this paper.

\paragraph{Gumbel-max watermark.}
The Gumbel-max watermark \citep{scott2023watermarking} is the most influential unbiased watermarking scheme and has seen widespread adoption in research \citep{openai2024understanding}. It builds on the classical Gumbel-max technique \citep{gumbel1948statistical, papandreou2011perturb}, which samples from a distribution $\bP = (P_w)_{w \in \Voca}$ by drawing $U_w \sim \mathrm{Unif}(0,1)$ independently for each $w \in \Voca$ and selecting
\[
\SMmax(\bP, \xi) := \arg\max_{w \in \Voca} \frac{\log U_w}{P_w}, \quad \text{where }\quad   \xi = (U_w)_{w \in \Voca}.
\]
This decoder is unbiased by construction \citep{li2024statistical}. The associated pivotal statistic is $Y_t = U_{t,\token_t}$, which is uniformly distributed on $(0,1)$ when the text is human-written (that is, $H_0$), but becomes stochastically larger under watermarking (that is, $H_1$) due to the watermark-induced alignment. Detection procedures exploit this shift by aggregating scores $\sum_{t=1}^n h(Y_t)$ and declaring watermarking when the sum exceeds a threshold. In practice, effective score functions are those whose expectations are larger under $H_1$ than under $H_0$. Common choices include $\hars(y) = -\log(1 - y)$ \citep{scott2023watermarking}, $\hlog(y) = \log y$ \citep{kuditipudi2023robust}, and the optimal $\hoptars$ from \citet{li2024statistical}, which depends on a user-specified parameter $\Delta \in (0,1)$.

\paragraph{Inverse transform watermark.}
 
An alternative unbiased scheme is the inverse transform watermark of \citet{kuditipudi2023robust}, which uses inverse transform sampling for unbiased token generation. To produce a token $\token$, the scheme first generates a random permutation of the vocabulary, denoted by $\pi$, together with a uniform draw $U \sim \mathrm{Unif}(0,1)$, and combines them as $\xi = (U, \pi)$. The token is then chosen via
\[
\SMinv(\bP, \xi) = \pi^{-1}(F^{-1}(U; \pi)), 
\quad \text{where } \quad 
F(x; \pi) = \sum_{\token' \in \Voca} P_{\token'} \cdot \mathbf{1}\{\pi(\token') \le x\},
\]
and $F^{-1}(u; \pi) = \min\{x : F(x; \pi) \ge u\}$ is the generalized inverse of $F(x; \pi)$.  

The corresponding pivotal statistic is $\Ydif_t = |\eta(\pi_t(w_t)) - U_t|$, with $\eta(w) = (w - 1)/(|\Voca| - 1)$ mapping token indices to $[0,1]$. Under human-written text ($H_0$), $\Ydif_t$ is approximately distributed as $|U - U'|$ for two independent $U, U' \sim \mathrm{Unif}(0,1)$, giving rise to a triangular distribution. Under watermarking ($H_1$), it concentrates near zero due to alignment. As in the Gumbel-max case, detection exploits this shift through score functions. Typical examples include $\hneg(y) = -y$ and the optimal $\hoptdif$ from \citet{li2024statistical}, also parameterized by a user-specified parameter $\Delta \in (0,1)$.

\section{A Statistical Framework under Pseudorandomness Collision}
\label{sec:framework}

This section introduces our statistical framework for watermark detection under pseudorandomness collisions. We begin in Section~\ref{sec:independence} with the two-layer partition structure that models the induced dependence, then in Section~\ref{sec:problem-formulation} formalize the detection problem, and finally in Section~\ref{sec:optimal} define an efficiency notion that enables a minimax characterization of optimal detection rules.

\subsection{Structural Dependence and Distribution Factorization}
\label{sec:independence}

Text repetition induces repeated pseudorandom variables and, in turn, repeated pivotal statistics. Specifically, under the hash rule $\zeta_t = \AM(w_{(t-m):(t-1)}, \Key)$, if two context windows satisfy $w_{(t-m):(t-1)} = w_{(t'-m):(t'-1)}$ for $t \ne t'$, then $\zeta_t = \zeta_{t'}$.
Moreover, if $w_t = w_{t'}$ as well, then by the definition $Y_t = Y(w_t, \zeta_t)$, it follows that $Y_t = Y_{t'}$.
We formalize this dependence structure via a two-level partition of the index set $\mathcal{I} = \{1, 2, \dots, n\}$.

\paragraph{Two-level partitions.}

The first partition focuses on pseudorandom variables.

\begin{defn}[$\zeta$-level partition]\label{def:zeta_partition}
The $\zeta$-level partition is defined as $\Pi_{\zeta} := \{\IM_k^{\zeta}\}_{k=1}^K = \{\IM_1^{\zeta}, \ldots, \IM_{K}^{\zeta}\}$, where each block $\IM_k^{\zeta} \subset \mathcal{I}$ satisfies:
\begin{itemize}
\item[(i)] All indices in $\IM_k^{\zeta}$ share the same pseudorandom variable: $\zeta_i = \zeta_j$ for all $i, j \in \IM_k^{\zeta}$, while distinct blocks correspond to distinct values: $\zeta_i \ne \zeta_j$ for $i \in \IM_k^{\zeta}, j \in \IM_{k'}^{\zeta}$ with $k \ne k'$. 
\item[(ii)] The blocks form a disjoint partition of $\mathcal{I}$: $\bigcup_{k=1}^{K} \IM_k^{\zeta} = \mathcal{I}$ and $\IM_k^{\zeta} \cap \IM_{k'}^{\zeta} = \varnothing$ for $k \ne k'$.
\end{itemize}
\end{defn}

Each $\zeta$-block is further refined based on whether the pivotal statistics coincide.

\begin{defn}[$Y$-level partition]\label{def:y_partition}
For each block $\IM_k^{\zeta}$, the corresponding $Y$-level partition is defined as $\Pi_{Y}^{(k)} =\{\IM_{k,l}^{Y}\}_{l=1}^{m_k} =\{\IM_{k,1}^{Y}, \ldots, \IM_{k,m_k}^{Y}\}$, where each sub-block $\IM_{k,l}^{Y} \subset \IM_k^{\zeta}$ satisfies:
\begin{itemize}
\item[(i)] All indices in $\IM_{k,l}^{Y}$ share the same pivotal statistic: $Y_i = Y_j$ for all $i, j \in \IM_{k,l}^{Y}$, while distinct sub-blocks correspond to distinct values: $Y_i \ne Y_j$ for $i \in \IM_{k,l}^{Y}, j \in \IM_{k,l'}^{Y}$ with $l \ne l'$.
\item[(ii)] The sub-blocks form a disjoint partition of $\IM_k^{\zeta}$: $\bigcup_{l=1}^{m_k} \IM_{k,l}^{Y} = \IM_k^{\zeta}$ and $\IM_{k,l}^{Y} \cap \IM_{k,l'}^{Y} = \varnothing$ for $l \ne l'$.
\end{itemize}
\end{defn}

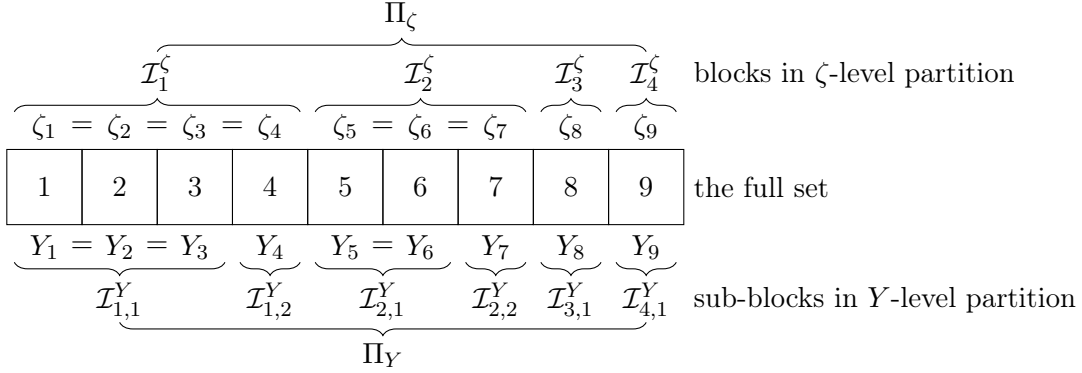
\begin{figure}[!t]
\centering
\begin{tikzpicture}[every node/.style={font=\normalsize}]
 
    \foreach \i in {1,...,9} {
        \draw (\i,0) rectangle ++(1,1);
        \node at (\i+0.5,0.5) {\i};
    }

    \foreach \i in {1,...,9} {
      \node at (\i + 0.5, 1.3) {$\zeta_{\i}$};
    }
    \node at (2,1.3) {$=$};
    \node at (3,1.3) {$=$};
    \node at (4,1.3) {$=$};
    \node at (6,1.3) {$=$};
    \node at (7,1.3) {$=$};

    \draw[decorate,decoration={brace,amplitude=5pt}] (1.1,1.5) -- (4.9,1.5) node[midway,above=4pt] {$\IM_1^{\zeta}$};
    \draw[decorate,decoration={brace,amplitude=5pt}] (5.1,1.5) -- (7.9,1.5) node[midway,above=4pt] {$\IM_2^{\zeta}$};
    \draw[decorate,decoration={brace,amplitude=5pt}] (8.1,1.5) -- (8.9,1.5) node[midway,above=4pt] {$\IM_3^{\zeta}$};
    \draw[decorate,decoration={brace,amplitude=5pt}] (9.1,1.5) -- (9.9,1.5) node[midway,above=4pt] {$\IM_4^{\zeta}$};
    \draw[decorate,decoration={brace,amplitude=5pt}] (3,2.3) -- (9.5,2.3) node[midway,above=4pt] {$\Pi_{\zeta}$};
 
    \foreach \i in {1,...,9} {
      \node at (\i + 0.5, -0.3) {$Y_{\i}$};
    }
    \node at (2,-.3) {$=$};
    \node at (3,-.3) {$=$};
    \node at (6,-.3) {$=$};

    \draw[decorate,decoration={brace,amplitude=5pt,mirror}] (1.1,-0.5) -- (3.9,-0.5) node[midway,below=4pt] {$\IM_{1,1}^Y$};
    \draw[decorate,decoration={brace,amplitude=5pt,mirror}] (4.1,-0.5) -- (4.9,-0.5) node[midway,below=4pt] {$\IM_{1,2}^Y$};
    \draw[decorate,decoration={brace,amplitude=5pt,mirror}] (5.1,-0.5) -- (6.9,-0.5) node[midway,below=4pt] {$\IM_{2,1}^Y$};
    \draw[decorate,decoration={brace,amplitude=5pt,mirror}] (7.1,-0.5) -- (7.9,-0.5) node[midway,below=4pt] {$\IM_{2,2}^Y$};
    \draw[decorate,decoration={brace,amplitude=5pt,mirror}] (8.1,-0.5) -- (8.9,-0.5) node[midway,below=4pt] {$\IM_{3,1}^Y$};
    \draw[decorate,decoration={brace,amplitude=5pt,mirror}] (9.1,-0.5) -- (9.9,-0.5) node[midway,below=4pt] {$\IM_{4,1}^Y$};

    \node[align=left, anchor=west] at (10,2) {blocks in $\zeta$-level partition};
    \node[align=left, anchor=west] at (10,0.5) {the full set};

    \draw[decorate,decoration={brace,amplitude=5pt,mirror}] (2.5,-1.3) -- (9.5,-1.3) node[midway,below=4pt] {$\Pi_Y$};
    \node[align=left, anchor=west] at (10,-1) {sub-blocks in $Y$-level partition};
\end{tikzpicture}
\vspace{-5pt}
\caption{Illustration of the two-level partition structure in a $9$-length sequence: the $\zeta$-level partition $\Pi_\zeta$ groups indices with the same pseudorandom variable, while the $Y$-level partition $\Pi_Y$ further groups them by shared pivotal statistic.}
\label{fig:two-layer-partition}
\end{figure}

An example of the two-layer partition is shown in Figure~\ref{fig:two-layer-partition}. While this structure captures the dependencies caused by repeated context windows, it also implies where conditional independence can still hold. In particular, pseudorandom variables associated with different blocks can be safely treated as independent (see Assumption~\ref{asmp:blocks-independence} for the formal statement). This independence follows from the input sensitivity of cryptographic hash functions: when the input contexts differ, the resulting pseudorandom outputs---$\AM(\token_{(t-m):(t-1)}, \Key)$ and $\AM(\token_{(t'-m):(t'-1)}, \Key)$---are statistically independent~\citep{wu2024distortion}.

\begin{asmp}[Independence across blocks]
\label{asmp:blocks-independence}
For $k \neq k'$ and any $i \in \IM_k^{\zeta}$ and $j \in \IM_{k'}^{\zeta}$,  $\zeta_i$ is statistically independent of $\zeta_j$, denoted as $\zeta_i \perp \zeta_j$.
\end{asmp}

\begin{cor}
Under Assumption \ref{asmp:blocks-independence}, for $k \neq k'$ and any indices $i \in \IM_k^{\zeta}$ and $j \in \IM_{k'}^{\zeta}$,  $Y_i$ is statistically independent of $Y_j$, denoted as $Y_i \perp Y_j$.
\end{cor}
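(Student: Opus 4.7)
The plan is to reduce the claimed independence of $Y_i$ and $Y_j$ directly to the $\zeta$-level independence given by Assumption~\ref{asmp:blocks-independence}, exploiting the fact that each pivotal statistic is a deterministic function of the corresponding token and pseudorandom variable, namely $Y_t = Y(w_t, \zeta_t)$.

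First I would record the measurability fact: $Y_i$ is $\sigma(w_i, \zeta_i)$-measurable and $Y_j$ is $\sigma(w_j, \zeta_j)$-measurable. It therefore suffices to show that the random vectors $(w_i, \zeta_i)$ and $(w_j, \zeta_j)$ are independent whenever $i \in \IM_k^{\zeta}$ and $j \in \IM_{k'}^{\zeta}$ with $k \neq k'$. Under the null hypothesis this is essentially immediate: the text $w_{1:n}$ is produced without access to $\Key$, so $w_{1:n} \perp \zeta_{1:n}$, and Assumption~\ref{asmp:blocks-independence} gives $\zeta_i \perp \zeta_j$; combining the two independences yields the claim.

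For the alternative hypothesis the argument is only slightly more delicate, since $w_t = \SM(\bP_t, \zeta_t)$ is a function of previously realized $\zeta$'s through the autoregressive NTP mechanism. The key observation is that the $\zeta$-block partition $\Pi_{\zeta}$ is itself measurable with respect to the observed text, so I would condition on $\Pi_{\zeta}$ and reason blockwise along the causal order induced by $t$. Within this conditional model, the tokens at positions in $\IM_k^{\zeta}$ are measurable with respect to $\zeta$'s in $\IM_k^{\zeta}$ and those in strictly earlier blocks; Assumption~\ref{asmp:blocks-independence} then allows a standard factorization of the joint law across blocks, which in particular yields $(w_i, \zeta_i) \perp (w_j, \zeta_j)$ and hence $Y_i \perp Y_j$.

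The main obstacle I expect is the bookkeeping in the alternative case, where one must carefully separate the randomness assigned to each block and verify that conditioning on $\Pi_{\zeta}$ does not induce hidden couplings. This should be handled by writing the generation process as a measurable function of independent block-level pseudorandom inputs and then applying a routine disintegration argument; everything else in the corollary is a direct consequence of Assumption~\ref{asmp:blocks-independence}.
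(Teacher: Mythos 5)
Your reduction to $(w_i,\zeta_i)\perp(w_j,\zeta_j)$ is the right instinct, but the way you assemble the independences has a real gap, and under $H_1$ you over-complicate an argument that the paper's framework makes essentially trivial.

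Under $H_0$ you argue that $w_{1:n}\perp\zeta_{1:n}$ together with $\zeta_i\perp\zeta_j$ ``combines'' to give $(w_i,\zeta_i)\perp(w_j,\zeta_j)$. That combination is not valid in general: if $(w_i,w_j)\perp(\zeta_i,\zeta_j)$ and $\zeta_i\perp\zeta_j$, the joint law factors as $P(w_i,w_j)P(\zeta_i)P(\zeta_j)$, and the product-form conclusion $(w_i,\zeta_i)\perp(w_j,\zeta_j)$ would additionally require $w_i\perp w_j$, which need not hold for human text. What rescues the statement is that the whole framework is conditional on the observed text (equivalently, on $(\Pi_\zeta,\Pi_Y)$ as in \eqref{eq:structured_test}), so under $H_0$ the tokens $w_i,w_j$ are \emph{constants}. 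Once this is said, $Y_i=Y(w_i,\zeta_i)$ is $\sigma(\zeta_i)$-measurable—not merely $\sigma(w_i,\zeta_i)$-measurable—and the conclusion drops out of $\zeta_i\perp\zeta_j$ in one line.

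Under $H_1$, your blockwise conditioning on $\Pi_\zeta$ would not actually close the argument as stated. If the tokens in block $k'$ are measurable with respect to $\zeta$'s in block $k'$ \emph{and strictly earlier blocks}, then $w_j$ can depend on $\zeta_i$ (and on other $\zeta$'s in $\IM_k^\zeta$) through the autoregressive NTP chain, so $(w_i,\zeta_i)$ and $(w_j,\zeta_j)$ share common inputs and the factorization you invoke does not go through. The paper sidesteps this entirely by treating $\bP_{1:n}$ as fixed (see the footnote to the remark after \eqref{eq:structured_test}); with $\bP_t$ fixed, $w_t=\SM(\bP_t,\zeta_t)$ and hence $Y_t=Y(\SM(\bP_t,\zeta_t),\zeta_t)$ are measurable functions of $\zeta_t$ alone, and the corollary again follows directly from Assumption~\ref{asmp:blocks-independence}. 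You should either explicitly invoke the fixed-$\bP_{1:n}$ convention or replace the blockwise causal argument with the observation that, in the paper's conditional model, each $Y_t$ is a deterministic function of $\zeta_t$.
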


In some cases, a finer level of independence holds between sub-blocks (see Assumption~\ref{asmp:subblocks-independence}). Recall that each $Y_t = Y(\zeta_t, w_t)$ is a deterministic function of both $\zeta_t$ and $w_t$. Since $\zeta_t$ is constant within each sub-block, this finer independence requires that the function $w \mapsto Y(\zeta_t, w)$ induces variability across tokens. Whether this holds depends on the specific structure of the decoder $\SM$ and the statistic $Y$, and does not hold universally. A notable case where it does is the Gumbel-max watermark, where $\zeta_t$ is a random vector with i.i.d.\ $\UM(0,1)$ entries and $Y_t$ selects the entry indexed by $w_t$, preserving independence across tokens even when $\zeta_t$ is shared.

\begin{asmp}[Independence across sub-blocks]
\label{asmp:subblocks-independence}
For any $i \in \IM_{k, l}^{Y}$ and $j \in \IM_{k', l'}^{Y}$, $Y_i \perp Y_j$, whenever either $k \neq k'$ or $k = k'$ but $l \neq l'$.
\end{asmp}

\paragraph{Factorization from structural independence.}
The pivotal statistics $Y_{1:n}$ are the basis for detection. A direct consequence of the above independence conditions is that the joint distribution of $Y_{1:n}$ factorizes across blocks---and in some cases, across sub-blocks---which simplifies both analysis and inference.

\begin{prop}[Distribution factorization]
\label{thm:data_gen}
Let $\Pi_{\zeta} = \{\IM_k^\zeta\}_{k=1}^K$ denote a $\zeta$-level partition.
Under Assumption \ref{asmp:blocks-independence}, the joint distribution of $(Y_t)_{t=1}^n$ factorizes as
\begin{align}
\PB( (Y_t)_{t=1}^n \mid \Pi_{\zeta} )
= \prod_{\VM \in \Pi_{\zeta}} \PB((Y_t)_{t \in \VM}| \Pi_{\zeta} )
\label{eq:thm_joint_1}
\end{align}
If Assumption~\ref{asmp:subblocks-independence} holds, let $\Pi_{Y}^{(k)} = \{\IM_{k,l}^{Y}\}_{l=1}^{m_k}$ be the $Y$-level refinement of $\IM_k^{\zeta}$, and define the full $Y$-level partition as $\Pi_Y := \{\IM_{k,l}^{Y}\}_{k, l}$. Then the joint distribution further factorizes as
\begin{align}
\PB( (Y_t)_{t=1}^n \mid \Pi_Y)
= \prod_{\VM \in \Pi_Y} \PB((Y_t)_{t \in \VM}|\Pi_{Y}).
\label{eq:thm_joint_2}
\end{align}
\end{prop}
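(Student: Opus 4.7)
The plan is to derive both factorizations directly from the stated independence assumptions, leveraging the fact that every pivotal statistic $Y_t = Y(\token_t, \zeta_t)$ is a deterministic function of its token and its pseudorandom variable. I would work conditionally on the partition $\Pi_{\zeta}$ (and later $\Pi_Y$), so the only remaining randomness to manage is the joint law of the $\zeta$-values and tokens compatible with the observed block structure.

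For \eqref{eq:thm_joint_1}, I would first observe that within each block $\IM_k^{\zeta}$ all $\zeta_t$ collapse to a single common value $\zeta_{(k)}$ by Definition~\ref{def:zeta_partition}, so the sub-vector $(Y_t)_{t \in \IM_k^\zeta}$ is a measurable function of $\zeta_{(k)}$ and of the tokens $(\token_t)_{t \in \IM_k^\zeta}$. Assumption~\ref{asmp:blocks-independence} gives mutual independence of $\{\zeta_{(k)}\}_{k=1}^K$ conditional on $\Pi_\zeta$. I would then argue that, conditional on $\Pi_\zeta$, the tokens can be generated block-by-block from these independent $\zeta_{(k)}$'s, either trivially under the null (where $\token_t \perp \zeta_t$) or through the decoding rule $\token_t = \SM(\bP_t, \zeta_t)$ under the alternative. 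Consequently, the per-block random vectors $((Y_t)_{t \in \IM_k^\zeta})_{k=1}^{K}$ are measurable functions of independent quantities, hence mutually independent, yielding the product form \eqref{eq:thm_joint_1}.

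For \eqref{eq:thm_joint_2}, I would refine the argument by zooming into each block. Within a sub-block $\IM_{k,l}^Y$, Definition~\ref{def:y_partition} forces $Y_t = Y_{t'}$ for all $t, t' \in \IM_{k,l}^Y$, so the entire sub-vector is redundant and is determined by a single scalar $Y_{(k,l)}$. Assumption~\ref{asmp:subblocks-independence} then upgrades the block-level independence already established to full sub-block-level independence of these representatives, including across sub-blocks that live inside the same $\zeta$-block. Combining this with \eqref{eq:thm_joint_1} and applying the chain rule across sub-blocks yields \eqref{eq:thm_joint_2}.

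The main obstacle is handling the tokens under the watermarked alternative, where $\token_t = \SM(\bP_t, \zeta_t)$ depends on the full history through $\bP_t$, potentially producing dependencies that cross block boundaries. The resolution is that every such dependence is mediated through previous $\zeta$'s and tokens, so once $\Pi_\zeta$ is fixed and the $\zeta_{(k)}$'s are independent (Assumption~\ref{asmp:blocks-independence}), the induced joint distribution of $(\token_t)_{t=1}^n$ can be constructed recursively in a way that the source of randomness injected at each step lies entirely within the block its index belongs to. This measurable-function bookkeeping is the only nontrivial step; once it is in place, the factorizations fall out immediately from the standard fact that measurable functions of independent random variables remain independent.
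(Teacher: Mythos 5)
Your proof outline for the factorization itself is on the right track: under Assumption~\ref{asmp:blocks-independence} (resp.\ \ref{asmp:subblocks-independence}), once each $Y_t$ is a measurable function of a quantity that lives entirely inside its own block (resp.\ sub-block), mutual independence of those quantities across units gives the product form immediately. The paper treats the proposition as essentially immediate for exactly this reason.

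However, the key step in your argument --- how you dispose of the ``main obstacle'' you correctly identify --- does not hold up. You observe that under $H_1$ the token $\token_t = \SM(\bP_t, \zeta_t)$ depends on the history through $\bP_t$, so in principle $Y_t$ depends on $\zeta_{t'}$ for earlier $t'$ in \emph{other} blocks, which would destroy cross-block independence. You then claim the recursion can be arranged so that ``the source of randomness injected at each step lies entirely within the block its index belongs to.'' This is false in general: if $\bP_t$ is a (random) function of $\token_{1:(t-1)}$, then $\token_t$ is a function of $(\zeta_1,\dots,\zeta_t)$, not of $\zeta_t$ alone, and the map $\zeta_{(k)} \mapsto (Y_t)_{t \in \IM_k^\zeta}$ is simply not well-defined block-by-block. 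The recursive bookkeeping cannot manufacture independence that the generative model does not have. What actually closes this gap --- and what the paper relies on, though it records it only in a footnote immediately after the proposition --- is the modeling convention that $\bP_{1:n}$ is \emph{fixed and deterministic} (unknown but not random, following the discussion in \citet{li2025optimal}). Once $\bP_t$ is a constant rather than a function of the random history, $\token_t = \SM(\bP_t, \zeta_t)$ and hence $Y_t = Y(\token_t, \zeta_t)$ depend on $\zeta_t$ alone, and your first paragraph's argument goes through verbatim for \eqref{eq:thm_joint_1}; the sub-block refinement for \eqref{eq:thm_joint_2} is then just Assumption~\ref{asmp:subblocks-independence} applied to the representatives. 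You should replace your recursive-construction resolution with an explicit invocation of the fixed-$\bP_{1:n}$ convention; as written, that step of your proof is wrong.
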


\paragraph{Minimal units.}
Proposition~\ref{thm:data_gen} establishes that, conditioned on the observed repetition pattern (represented by the tuple $(\Pi_{\zeta}, \Pi_Y)$), the joint distribution of $(Y_t)_{t=1}^n$ factorizes into independent components.
We denote such a component by $\VM$, which corresponds either to a block like $\IM_1^\zeta, \ldots, \IM_K^\zeta$, where pseudorandom variables are shared (as in~\eqref{eq:thm_joint_1}), or to a sub-block like $\IM_{1,1}^Y, \ldots, \IM_{K,m_K}^Y$, where pivotal statistics coincide (as in~\eqref{eq:thm_joint_2}).
We refer to this element $\VM$ as a \emph{minimal unit}---the finest partition level at which this independence factorization holds.
We denote the set of all minimal units as $\Pi$, which can be either $\Pi_{\zeta}$ or $\Pi_Y$ depending on the structure. In the case of the Gumbel-max watermark, for instance, the minimal units are the sub-blocks. A key implication is that pivotal statistics from different minimal units are mutually independent, while those within the same unit might exhibit strong dependence due to pseudorandomness collisions. 

\subsection{Problem Formulation}
\label{sec:problem-formulation}

With the two-layer partition structure in place, we now formalize the hypothesis testing problem. Given data $Y_{1:n}$, where each $Y_t = Y(\token_t, \zeta_t)$ depends on the token $\token_t$ and its associated pseudorandom variable $\zeta_t$, we begin by identifying the repetition pattern and representing it through the two-layer partitions $\Pi_\zeta$ and $\Pi_Y$. The goal is to test:
\begin{equation}\label{eq:structured_test} 
H_0: Y_t~|~(\Pi_\zeta, \Pi_Y) \sim \mu_0, \quad t = 1,\dots,n  
\quad \text{vs.} \quad 
H_1: Y_t~|~(\Pi_\zeta, \Pi_Y) \sim \mu_{1,\bP_t}, \quad t = 1,\dots,n .\end{equation} 
The notation $Y_t~|~(\Pi_\zeta, \Pi_Y)$ indicates that the joint distribution of $(Y_t)_{t=1}^n$ follows the observed repetition structure: indices within the same block of $\Pi_\zeta$ share the same pseudorandom variable, and those within the same sub-block of $\Pi_Y$ take on the same pivotal statistic.

\begin{rem}[Comparison with previous work]
The main difference from prior work~\citep{li2024statistical} is that $(Y_t)_{t=1}^n$ are no longer independent under either $H_0$ or $H_1$.\footnote{For theoretical analysis, we assume that $\bP_{1:n}$ is fixed but unknown. This simplification preserves the difficulty of the problem, as $\bP_{1:n}$ are still not observed. Under this assumption, \citet{li2025optimal} shows that $(Y_t)_{t=1}^n$ are independent under both $H_0$ and $H_1$. See Section 3.1 of~\citet{li2025optimal} for a related discussion.} The dependence arises from the two-level partition $(\Pi_\zeta, \Pi_Y)$, which forces certain pseudorandom variables and pivotal statistics to be identical within groups. As a result, although each $Y_t$ still marginally follows $\mu_0$ under $H_0$ or $\mu_{1, \bP_t}$ under $H_1$ when conditioning on $(\Pi_\zeta, \Pi_Y)$, their joint distribution no longer factorizes across $t$ and instead follows the one described in Proposition~\ref{thm:data_gen}. In short, pseudorandom collisions induce dependence among pivotal statistics, motivating our new formulation in \eqref{eq:structured_test} and the minimal-unit technique to properly address it.
\end{rem}

At a high level, watermark detection under pseudorandomness collisions reduces to identifying distributional differences in $(Y_t)_{t=1}^n$, given the dependence structure specified by the two-layer partitions $(\Pi_{\zeta}, \Pi_Y)$. By the factorization established in Proposition~\ref{thm:data_gen}, it is both natural and sufficient to consider detection rules that assign score functions to each minimal unit and aggregate the resulting scores into a global test statistic.\footnote{The log-likelihood ratio test also falls into this class, though it is typically impractical as it depends on the inaccessible NTP distributions.}
Specifically, we propose and assign a score function $h_{\VM}$ to every minimal unit $\VM \in \Pi$, and write $Y_{\VM} := (Y_t)_{t \in \VM}$ for the vector of pivotal statistics in $\VM$.  
The detection rule then takes the form:
\begin{equation}
\label{eq:test_decision_rule}
T_n = 
\begin{cases}
1, & \text{if }S_n \ge \gamma_{n,\alpha}, \\
0, & \text{otherwise},
\end{cases}
\end{equation}
where the test statistic is defined as
\begin{equation}
\label{eq:S}
S_n 
= \sum_{\VM \in \Pi} h_{{\VM}}(Y_{\VM}),
\end{equation}
and $\gamma_{n,\alpha}$ is the $(1-\alpha)$ quantile of $S_n$ under $H_0$, ensuring Type I error control: $\PB_{0}(S_n \ge \gamma_{n,\alpha}) = \alpha$.
In practice, $\gamma_{n,\alpha}$ can be estimated via simulation, since the dependence structure of $Y_{1:n}$ is fully characterized by the partitions $(\Pi_\zeta, \Pi_Y)$, and each $Y_t$ marginally follows $\mu_0$ under the null.

\subsection{Detection Efficiency and Optimal Scores}\label{sec:typeII}
\label{sec:optimal}

The central goal of this paper is to solve the hypothesis testing problem~\eqref{eq:structured_test} optimally using detection rules of the form~\eqref{eq:test_decision_rule}. To this end, we require a criterion or efficiency notion to quantify the performance of a given score function.

We follow the spirit of the asymptotic efficiency notion introduced by~\citet{li2024robust}, which quantifies detection efficiency via the decay rate of the least favorable Type II error under a fixed Type I error level. Here, ``least favorable'' refers to the worst-case Type II error over a belief class $\PM$---a collection of plausible NTP distributions that the verifier assumes the true $\bP_t$ belongs to. This formulation reflects a practical constraint: the verifier does not have access to the true $\bP_t$ and must rely on prior knowledge or assumptions to evaluate efficiency.
However, this notion cannot be directly applied in our setting, as it relies on perfect pseudorandomness and thus assumes full independence among these $Y_t$'s. To address this, we introduce a new non-asymptotic notion of efficiency that explicitly incorporates the dependencies induced by the partition $\Pi$.

\begin{defn}[Non-asymptotic $\Pow$-efficiency]
\label{def:non_asymptotic_efficiency}
Let $S_n$ be a test statistic computed from $Y_{1:n}$ using a partition $\Pi$ with $N_n = |\Pi|$ minimal units. Let $\gamma_{n,\alpha}$ denote the critical value corresponding to a Type I error level $\alpha$. For a given family of belief classes $\Pow:=\{\PM_{\VM}\}_{\VM \in \Pi}$, the non-asymptotic $\Pow$-efficiency of the test based on the score functions $\bh = \{h_{\VM}\}_{\VM \in \Pi}$ is defined as
\begin{align}
\label{eq:non_asymptotic_efficiency}
R_{n,\Pow}(\bh) := -\frac{1}{N_n} \sup_{\bP_{\VM} \subseteq \PM_{\VM}, \forall \VM} \log \PB_{1, \bP_{\VM}}(S_n \le \gamma_{n,\alpha}),
\end{align}
 
where $\bP_{\VM} := (\bP_t)_{t \in \VM}$ collect the NTP distributions in the minimal unit $\VM$, $\PM_{\VM}$ is the belief class associated with $\VM$, and the supremum is taken over all collections where each $\bP_{\VM} \subseteq \PM_{\VM}$ for all $\VM \in \Pi$.
\end{defn}
\begin{rem}[Necessity of non-asymptotic efficiency]
Given the hash rule $\zeta_t = \AM(\token_{(t-m):(t-1)}, \Key)$, the number of distinct text windows $\token_{(t-m):(t-1)}$ is bounded by $|\Voca|^m$. Consequently, the total number of possible pseudorandom variables $\zeta_t$ is also bounded by $|\Voca|^m$. Since different minimal units must correspond to different pseudorandom variables, the number of minimal units satisfies $|\Pi| \leq |\Voca|^m$, which does not grow with the text length $n$. This boundedness necessitates a non-asymptotic efficiency notion, as $|\Pi|$ cannot diverge with $n$ when $|\Voca|$ and $m$ are fixed.
\end{rem}

There are two key differences between our efficiency notion and that of~\cite[Theorem 2.1]{li2024statistical}.  
First, $R_{n,\Pow}(\bh)$ is defined for finite $n$ and uses minimal units as the basic building blocks.  
In contrast, the earlier notion is defined at the token level and only in the asymptotic regime as $n \to \infty$.  
That special case corresponds to our framework when the partition is $\Pi = \{\{1\}, \{2\}, \dots, \{n\}\}$, that is, one token per unit.  
Second, our formulation allows different belief classes to be assigned to different minimal units, and each minimal unit can have its own score function.
This flexibility enables us to evaluate a broader range of detection rules and better reflect practical scenarios.  
By contrast, the efficiency notion of~\cite{li2024statistical} requires a single belief class and a single score function across all tokens, which is less expressive.

\begin{asmp}
\label{asmp:main}
 
We assume that
\begin{enumerate}[label=(\roman*)]
\item \textbf{(Independence structure)} 
\label{subasmp:indepedence} Either Assumption \ref{asmp:blocks-independence} or \ref{asmp:subblocks-independence} holds.

\item \textbf{(Bounded variance)} 
\label{subasmp:bounded-variance} Let $\bh=\{h_{\VM}\}_{\VM \in \Pi}$ be the score functions, with each assigned to a minimal unit. We assume that the variances of $\{h_{\VM}(Y_{\VM})\}_{\VM \in \Pi}$ are uniformly bounded under $H_0$.

\item \textbf{(Well posedness)} \label{subasmp:well-posedness} 
Let $B_{n,\Pow}(\bh)$ denote the non-asymptotic quantity defined in \eqref{eq:non_asymptotic_bound_B}.
There exists a minimizer of the infimum over $\theta$ that is bounded by a positive constant independent of both the partition $\Pi$ and $n$.

\end{enumerate}
\end{asmp}

We pose a mild Assumption~\ref{asmp:main} to simplify the efficiency notion $R_{n,\Pow}$. The first condition of independence structure reflects the repetition-induced partition and has been discussed in Section \ref{sec:independence}: although dependence may persist within a block, some independence still holds across different blocks or sub-blocks. The second condition of bounded variance rules out pathological score functions with unbounded variability, and is satisfied in practice since the score functions we study even admit finite MGFs. The last condition of well-posedness ensures that the minimization problem in $B_{n, \Pow}(\bh)$ has stable solutions: the minimizer over $\theta$ is uniformly bounded. Together, these assumptions require only mild regularity and do not limit the practical applicability of our framework.

\begin{thm}[Explicit lower bound for detection efficiency]
\label{thm:power}
Let $\Pow = \{\PM_{\VM}\}_{\VM \in \Pi}$ denote the family of belief classes, with one belief class assigned to each minimal unit.
Let $\phi_{\bP_{\VM}, h_{\VM}}(\theta)$ denote the moment generating function (MGF) under the alternative $H_1$ in~\eqref{eq:structured_test}, defined for any $\theta \ge 0$ as
\begin{align}
\label{eq:moment-generating-function}
\phi_{\bP_{\VM}, h_{\VM}}(\theta) := \EB_{1, \bP_{\VM}}[\exp(-\theta\, h_{\VM}(Y_{\VM}))].
\end{align}
Under Assumption~\ref{asmp:main}, the non-asymptotic $\Pow$-efficiency of the score functions $\bh=\{h_{\VM}\}_{\VM \in \Pi}$ is lower bounded by
\[
R_{n, \Pow}(\bh) \ge B_{n, \Pow}(\bh) - \omega_{N_n},
\]
where 
\begin{align}
\label{eq:non_asymptotic_bound_B}
B_{n, \Pow}(\bh) := - \inf_{\theta \ge 0} \frac{1}{N_n} \sum_{\VM \in \Pi} \left( 
\theta\, \EB_0[h_{{\VM}}(Y_{\VM})] + \sup_{\bP_{\VM} \subseteq \PM_\VM} \log \phi_{\bP_{\VM}, h_{{\VM}}}(\theta)
\right),
\end{align}
and $\omega_{N_n}$ is a deterministic function of $N_n$ satisfying $\omega_{N_n} \to 0$ as $N_n \to \infty$.
\end{thm}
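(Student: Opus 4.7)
The plan is to combine a Chernoff-type bound with the factorization supplied by Proposition~\ref{thm:data_gen} and then to close the gap between the data-dependent quantile $\gamma_{n,\alpha}$ and the null mean $\EB_0[S_n]$ via Chebyshev concentration. First, for any $\theta \ge 0$ and any admissible NTP collection $\bP = (\bP_{\VM})_{\VM \in \Pi}$ with $\bP_{\VM} \subseteq \PM_{\VM}$, Markov's inequality applied to $e^{-\theta S_n}$ yields $\PB_{1,\bP}(S_n \le \gamma_{n,\alpha}) \le e^{\theta \gamma_{n,\alpha}} \EB_{1,\bP}[e^{-\theta S_n}]$. Under Assumption~\ref{asmp:main}\ref{subasmp:indepedence}, Proposition~\ref{thm:data_gen} forces the joint distribution of $(Y_{\VM})_{\VM \in \Pi}$ to factorize across minimal units, so $\EB_{1,\bP}[e^{-\theta S_n}] = \prod_{\VM \in \Pi} \phi_{\bP_{\VM}, h_{\VM}}(\theta)$. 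Taking logarithms, then taking the supremum over $\bP$, and observing that the supremum distributes across the independent factors, I obtain, for every $\theta \ge 0$,
\[
R_{n,\Pow}(\bh)
\;\ge\; -\frac{\theta\, \gamma_{n,\alpha}}{N_n}
\;-\; \frac{1}{N_n} \sum_{\VM \in \Pi} \sup_{\bP_{\VM} \subseteq \PM_{\VM}} \log \phi_{\bP_{\VM}, h_{\VM}}(\theta).
\]

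The second step replaces $\gamma_{n,\alpha}$ by the null mean $\EB_0[S_n] = \sum_{\VM} \EB_0[h_{\VM}(Y_{\VM})]$. Under $H_0$, the independence-across-minimal-units property (again from Assumption~\ref{asmp:main}\ref{subasmp:indepedence}) implies $\Var_0(S_n) = \sum_{\VM} \Var_0(h_{\VM}(Y_{\VM}))$, and by Assumption~\ref{asmp:main}\ref{subasmp:bounded-variance} each summand is uniformly bounded by some constant $C$, giving $\Var_0(S_n) \le C N_n$. Chebyshev's inequality applied to the centered sum then shows that the $(1-\alpha)$-quantile satisfies
\[
\gamma_{n,\alpha} \;\le\; \EB_0[S_n] + \sqrt{C N_n / \alpha},
\]
so $\gamma_{n,\alpha}/N_n - \EB_0[S_n]/N_n \le \eps_{N_n}$ with $\eps_{N_n} := \sqrt{C/(\alpha N_n)} \to 0$. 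Substituting this into the Chernoff bound and taking the supremum over $\theta \ge 0$ yields
\[
R_{n,\Pow}(\bh)
\;\ge\; -\inf_{\theta \ge 0} \Bigl\{ \tfrac{\theta}{N_n}\sum_{\VM} \EB_0[h_{\VM}(Y_{\VM})] + \theta \eps_{N_n} + \tfrac{1}{N_n}\sum_{\VM} \sup_{\bP_{\VM} \subseteq \PM_{\VM}} \log \phi_{\bP_{\VM}, h_{\VM}}(\theta) \Bigr\}.
\]

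Finally, I invoke well-posedness (Assumption~\ref{asmp:main}\ref{subasmp:well-posedness}): plug in a minimizer $\theta^\star$ of the defining infimum of $B_{n,\Pow}(\bh)$ that is bounded by some constant $\Theta$ independent of $\Pi$ and $n$. The quantity in braces is then at most $-B_{n,\Pow}(\bh) + \Theta \eps_{N_n}$, so setting $\omega_{N_n} := \Theta \sqrt{C/(\alpha N_n)}$ gives the claimed bound $R_{n,\Pow}(\bh) \ge B_{n,\Pow}(\bh) - \omega_{N_n}$ with $\omega_{N_n} \to 0$. The one place requiring care is the quantile-to-mean reduction in the second step: without the uniform variance bound and the boundedness of $\theta^\star$, the error term $\theta \eps_{N_n}$ at the optimizer could blow up, and it is precisely Assumptions~\ref{asmp:main}\ref{subasmp:bounded-variance} and~\ref{subasmp:well-posedness} that guarantee $\omega_{N_n}$ stays at the $O(1/\sqrt{N_n})$ scale uniformly in the partition $\Pi$.
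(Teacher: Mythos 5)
Your proof is correct and follows essentially the same route as the paper: Chernoff bound, factorization over minimal units, Chebyshev concentration of $\gamma_{n,\alpha}/N_n$ around the null mean, and boundedness of the optimizing $\theta$. One small streamlining you make (and it is genuinely cleaner): the paper first introduces the intermediate quantity
\[
D_{n,\Pow}(\bh) := -\inf_{\theta\ge 0}\Bigl\{\theta\,\tfrac{\gamma_{n,\alpha}}{N_n}+\tfrac{1}{N_n}\sum_{\VM\in\Pi}\sup_{\bP_{\VM}\subseteq\PM_{\VM}}\log\phi_{\bP_{\VM},h_{\VM}}(\theta)\Bigr\},
\]
proves $R_{n,\Pow}\ge D_{n,\Pow}$, and then separately proves (Lemma~\ref{lem:bounded-optimal-theta}) that the optimizer of \emph{both} $B$ and $D$ is uniformly bounded, so that $|B-D|=O(1/\sqrt{N_n})$. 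You instead never mention $D$: you plug the minimizer $\theta^\star$ of $B$ (already bounded by Assumption~\ref{asmp:main}\ref{subasmp:well-posedness}) directly into the Chernoff/Chebyshev lower bound, picking up only the error $\theta^\star\eps_{N_n}$. This avoids having to control the optimizer of $D$, and only needs the one-sided Chebyshev bound $\gamma_{n,\alpha}/N_n\le\mu_n+\eps_{N_n}$ rather than the two-sided one the paper proves (the two-sided bound is used later for tightness in Remark~\ref{rem:tight-lower-bound}, not here). Both routes land at $\omega_{N_n}=O(1/\sqrt{N_n})$; yours is slightly more economical for the lower-bound direction alone.
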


\begin{rem}[Asymptotic tightness]
\label{rem:tight-lower-bound}
Under further regularity conditions, the lower bound \(B_{n, \Pow}(\bh)\) is asymptotically tight in the sense that $\left| R_{n, \Pow}(\bh) - B_{n, \Pow}(\bh) \right| \leq \omega_{N_n}$ for the same sequence $\omega_{N_n}$ introduced in Theorem \ref{thm:power}.
To prove this tightness, we develop a novel non-i.i.d. large-deviation bound.
See Theorem \ref{sec:proof-asymptotic-efficiency-tightness} in the Supplementary Material for more details.
\end{rem}

In Theorem \ref{thm:power}, we lower bound $R_{n, \Pow}(\bh)$ by a more explicit quantity $B_{n, \Pow}(\bh)$, using the classical Chernoff bound. Setting $\theta = 0$ further shows that $B_{n, \Pow}(\bh)$ is always non-negative.

\paragraph{Optimality via minimax optimization.}
The lower bound $B_{n, \Pow}(\bh)$ provides a tractable approximation to the efficiency notion $R_{n, \Pow}(\bh)$ and admits an explicit form suitable for analysis. In particular, identifying the optimal score functions reduces to solving the minimax optimization problem that $\max_{\bm{h}} B_{n, \Pow}(\bh)$.
Since the expression of $B_{n, \Pow}(\bh)$ decomposes over minimal units, the overall optimization problem naturally separates into independent subproblems. Viewing each scaled score function $\theta h_{\VM}$ as a reparameterization, finding the optimal collection $\bh = \{h_{{\VM}}\}_{\VM \in \Pi}$ reduces to solving the following minimax problem for each minimal unit $\VM$:
\begin{equation}
\label{eq:sub-optimization}
h_{{\VM}} = \arg\min_{h} \max_{\bP_{\VM} \subseteq \PM_{\VM}} L(h, \bP_{\VM}),
\quad \text{where} \quad
L(h, \bP_{\VM}) = \EB_0[h(Y_{\VM})] + \log \EB_{1, \bP_{\VM}}[\exp(-h(Y_{\VM}))].
\end{equation}
The key difference from the previous formulation in~\cite[Equation 14]{li2024statistical} is that we now optimize over all NTP distributions $\bP_{\VM}$ within each minimal unit $\VM$, which is necessary to capture the dependence induced by repetition in the two-level partition. In contrast, the previous setting corresponds to the non-repetition case where $|\VM| = 1$, which results in a significantly simpler minimax problem.
Following prior work, we adopt the $\Delta$-regular class as our belief set for simplicity:
\begin{align}
\label{eq:regular_classdef}
\FPM = \left\{ \bP : \max_{w} P_w \le 1 - \Delta \right\}.
\end{align}

\section{Application to the Gumbel-max Watermark}
\label{sec:gumbel}

In this section, we apply our framework to the Gumbel-max watermarking scheme \cite{scott2023watermarking}. Recall that the Gumbel-max decoder can be equivalently written as
\begin{equation}
\label{eq:it-new}
\token_t = \SMmax(\bP_t, \xi_t) := \arg\max_{\token \in \Voca} \frac{\log U_{t, \token}}{P_{t, \token}},
\end{equation}
where $\{\xi_t\}_{t=1}^n = \{ (U_{t, \token})_{\token \in \Voca} \}_{t=1}^n$ denotes $n \times |\Voca|$ i.i.d.~replicates of standard uniform random variables $U(0,1)$. As shown in \eqref{eq:it-new}, the Gumbel-max trick ensures that the decoder samples exactly from the intended NTP distribution $\bP_t$.

The pivotal statistic in this setting is given by \( Y_t = \Yars(w_t, \zeta_t) = U_{t, w_t} \), namely the coordinate of $\zeta_t = (U_{t,\token})_{\token \in \Voca}$ corresponding to the chosen token $\token_t$.
This choice satisfies the refined Assumption~\ref{asmp:subblocks-independence}, implying that each minimal unit coincides with a sub-block.
Consequently, for a minimal unit $\VM = \{t_1, \dots, t_k\}$, all pivotal statistics $Y_{t_1}, \dots, Y_{t_k}$ collapse to the same value, so it is sufficient to consider only the unique representative, say $Y_{t_1}$.
Under the null $H_0$, this statistic still follows $\mathrm{Unif}(0,1)$, since repetition does not alter its marginal distribution.
We next derive its alternative distribution in the following lemma.
 
\begin{lem}
\label{lem:gumbel-alternative}
For the minimal unit $ \VM = \{t_1, \dots, t_k \} $, all pivotal statistics within the unit share the same value, that is, $Y_{t_1} = \cdots = Y_{t_k}$.
Let $ \bP_{\VM} = (\bP_{t})_{t \in \VM} $ denote their corresponding NTP distributions. Then, the alternative distribution of the shared pivotal statistic is given by
\begin{equation}
\label{eq:gumbel-alternative-CDF}
\PB_{1, \bP_{\VM}}(Y_{t_1} \le y \,|\, Y_{t_1} = \cdots = Y_{t_k})
=\frac{ \sum_{w\in \Voca} S_w y^{1/S_w}}{\sum_{w\in \Voca} S_w}
~\text{where}~
S_w =  \left(\sum_{w'\neq w} \max_{t \in \VM}\frac{P_{t,w'}}{P_{t,w}}+1\right)^{-1}.
\end{equation}
\end{lem}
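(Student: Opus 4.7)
The key observation is that within a single minimal unit (sub-block) $\VM = \{t_1,\dots,t_k\}$, all indices share the same pseudorandom variable $\zeta_{t_1} = \cdots = \zeta_{t_k} = (U_w)_{w \in \Voca}$, and the event that they lie in the same sub-block forces the selected tokens to coincide, say $\token_{t_1} = \cdots = \token_{t_k} = \token^*$ for some $\token^* \in \Voca$. Hence $Y_{t_1} = U_{\token^*}$ and it suffices to compute the joint law of $(U_{\token^*},\token^*)$ under the event $\{\token_{t_1}=\cdots=\token_{t_k}\}$, then marginalize over $\token^*$ via Bayes' rule.

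The plan is to reparameterize by exponentials $E_\token = -\log U_\token$, which are i.i.d.\ $\mathrm{Exp}(1)$. The Gumbel-max rule \eqref{eq:it-new} rewrites as $\token_{t_j} = \arg\min_{\token} E_\token / P_{t_j,\token}$. Thus $\{\token_{t_j} = \token^* \text{ for all } j\}$ is equivalent to requiring, for every $\token' \neq \token^*$ and every $t \in \VM$, the inequality $E_{\token'}/P_{t,\token'} \ge E_{\token^*}/P_{t,\token^*}$. Taking the binding constraint across $t \in \VM$ collapses this to the simpler condition
\[
E_{\token'} \;\ge\; r_{\token',\token^*}\, E_{\token^*} \quad \text{for all } \token' \neq \token^*, \qquad r_{\token',\token^*} := \max_{t \in \VM} \frac{P_{t,\token'}}{P_{t,\token^*}}.
\]

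Next I would compute the joint probability $\PB_{1,\bP_{\VM}}(Y_{t_1} \le y,\; \token_{t_1}=\cdots=\token_{t_k}=\token^*)$ by conditioning on $E_{\token^*} = s$: the independence of the $E_\token$'s gives the product $\prod_{\token'\neq \token^*} e^{-r_{\token',\token^*} s}$, and the condition $U_{\token^*} \le y$ becomes $s \ge -\log y$. Integrating against the $\mathrm{Exp}(1)$ density of $E_{\token^*}$ yields
\[
\PB_{1,\bP_{\VM}}\!\left(Y_{t_1}\le y,\; \token_{t_1}=\cdots=\token_{t_k}=\token^*\right)
= \int_{-\log y}^{\infty} e^{-s(1+\sum_{\token'\neq \token^*} r_{\token',\token^*})} ds = S_{\token^*}\, y^{1/S_{\token^*}},
\]
since $1/S_{\token^*} = 1 + \sum_{\token'\neq \token^*} r_{\token',\token^*}$. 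Setting $y=1$ recovers $\PB(\token_{t_1}=\cdots=\token_{t_k}=\token^*) = S_{\token^*}$, and summing over $\token^* \in \Voca$ followed by Bayes' rule delivers \eqref{eq:gumbel-alternative-CDF}.

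The only delicate step is the rewriting of the simultaneous-maximizer event as the single-index constraint involving $\max_{t\in\VM} P_{t,\token'}/P_{t,\token^*}$; once that reduction is clean, the remaining computation is a standard exponential integral. I would also briefly note that the formula is invariant under rescaling each $\bP_t$ (as expected, since the arg-max depends only on ratios), which serves as a quick sanity check.
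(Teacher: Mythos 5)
Your proposal is correct and follows essentially the same route as the paper: identify that the event $Y_{t_1}=\cdots=Y_{t_k}$ forces $\token_{t_1}=\cdots=\token_{t_k}$, reduce the simultaneous arg-max condition to $U_{w'} \le U_{w^*}^{\,r_{w',w^*}}$ (equivalently $E_{w'}\ge r_{w',w^*}E_{w^*}$), compute $\PB(Y_{t_1}\le y,\, \token=w^*) = S_{w^*}y^{1/S_{w^*}}$, and apply Bayes' rule. The exponential reparameterization $E_w=-\log U_w$ is just a convenient way to carry out the ``direct calculation'' the paper invokes, and is otherwise identical.
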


The alternative distribution of $Y_{t_1}$ is considerably more complex, as it depends on the NTP distributions of all tokens $w_{t_1}, \ldots, w_{t_k}$ within the minimal unit $\VM$. This added dependence makes the analysis far more difficult than in \citet{li2024statistical}. In their case, with $|\VM|=1$, the alternative CDF $\bP \mapsto \PB_{1,\bP}(Y_{t_1} \le y)$ is convex for every $y \in [0,1]$, a property central to their analysis. By contrast, in our setting the mapping $(\bP_{t_1}, \ldots, \bP_{t_k}) \mapsto \PB_{1,\bP_{\VM}}(Y_{t_1} \le y \mid Y_{t_1} = \cdots = Y_{t_k})$ is highly non-convex, introducing a unique challenge that requires new analytical tools. We will show how we address this difficulty in Section~\ref{sec:proof-gumbel}.

To apply our framework, we evaluate the detection performance of score functions $\bh = \{h_{\VM}\}_{\VM \in \Pi}$ using the non-asymptotic $R_{n,\Pow}$-efficiency defined in Definition~\ref{def:non_asymptotic_efficiency}. Here, $\Pow$ assigns to each minimal unit a (potentially different) $\Delta$-regular class $\FPM$ for the prior belief, as introduced in~\eqref{eq:regular_classdef}. To identify the optimal score functions, we focus on saddle point solutions of the minimax problem \eqref{eq:sub-optimization}. For a minimal unit $\VM$, a pair $(h^\star, \bP_{\VM}^\star)$ is called a saddle point solution of the minimax problem $\min_h \max_{\bP_{\VM} \subseteq \FPM} L(h, \bP_{\VM})$ if and only if $L(h^\star, \bP_{\VM}) \le L(h^\star, \bP_{\VM}^\star) \le L(h, \bP_{\VM}^\star)$ holds for any score $h$ and $\bP_{\VM} \subseteq \FPM$, where $\bP_{\VM}^\star$ is the set of least-favorable NTP distributions and $h^\star$ is the corresponding optimal score function. We adopt this notion of optimality in line with the robust hypothesis testing literature \citep{huber1973minimax, veeravalli2002minimax, fauss2021minimax}, where saddle point solutions often provide both interpretability and explicit analytical forms. The following theorem specifies when such saddle point solutions exist and gives their explicit forms when they do.

\begin{thm}[Trichotomy of the saddle point solution]
\label{thm:gumbel}
Fix a sub-block $\VM$. There exist constants $0 < \Delta_1^\star \le \Delta_2^\star < \tfrac{1}{2}$, depending only on $\VM$, such that the minimax problem in \eqref{eq:sub-optimization} with belief class $\FPM$ admits a saddle point solution that falls into one of the following three regimes.
\begin{enumerate}[(i)]
\item \textbf{Low-regularity regime $(\Delta\in[0,\Delta_1^\star])$}  
A unique saddle point solution exists. The optimal score function is the weighted-log rule:
\begin{equation}\label{eq:weighted-log}
h_{{\VM}}^{\mathrm{gum}}(y)=
\frac{(|\VM|\wedge|\Voca|)\,\Delta}{(|\VM|\wedge|\Voca|-1)(1-\Delta)}\,
\log y.
\end{equation}

\item \textbf{Intermediate regime $(\Delta \in (\Delta_1^\star, \Delta_2^\star))$}
In this range, the minimax problem in \eqref{eq:sub-optimization} does not admit a saddle point solution.

\item \textbf{High-regularity regime $(\Delta\in[\Delta_2^\star,\tfrac12))$}
A unique saddle point solution exists. The optimal score function takes the least-favorable form:
\begin{equation}\label{eq:least-favorable}
h_{{\VM}}^{\mathrm{gum}}(y)=
\log \left(y^{\frac{\Delta}{1-\Delta}}+y^{\frac{1-\Delta}{\Delta}}\right).
\end{equation}
\end{enumerate}
\end{thm}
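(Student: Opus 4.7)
The plan is to construct and verify the saddle-point pair explicitly in each regime. For any fixed $\bP_{\VM}^\star$, standard variational analysis (KL-duality) applied to $L(h,\bP_{\VM}^\star)$ yields the inner minimizer $h^\star(y)=\log f_{1,\bP_{\VM}^\star}(y)+\mathrm{const}$, where $f_{1,\bP_{\VM}^\star}$ is the alternative density obtained by differentiating the CDF in Lemma~\ref{lem:gumbel-alternative}. The two score forms in the theorem therefore correspond to two specific density shapes: the weighted-log score $h^\star(y)\propto c\log y$ arises from a power-law density $f_1^\star(y)\propto y^{c}$, while the least-favorable form arises from the two-power-sum density $f_1^\star(y)=y^{\Delta/(1-\Delta)}+y^{(1-\Delta)/\Delta}$.

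Next, I would identify the two candidate least-favorable distributions producing these densities. For the low-regularity regime, the candidate $\bP_{\VM}^{\mathrm{spr}}$ has all $\bP_t$ share a common peak token with weight $1-\Delta$, while the remaining $\Delta$-mass is distributed uniformly over $k-1$ out of $k=|\VM|\wedge|\Voca|$ auxiliary tokens in a balanced round-robin pattern, so that each auxiliary token appears in exactly $k-1$ of the $\bP_t$'s. A direct calculation using Lemma~\ref{lem:gumbel-alternative} shows that every auxiliary $S_w$ vanishes (because at least one $\bP_t$ assigns zero mass to each auxiliary), while the peak coefficient evaluates to $(k-1)(1-\Delta)/(k-1+\Delta)$. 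This yields the power-law CDF $F_1(y)=y^{(k-1+\Delta)/((k-1)(1-\Delta))}$, which matches exactly the weighted-log coefficient $c=k\Delta/((k-1)(1-\Delta))$. For the high-regularity regime, the candidate $\bP_{\VM}^{\mathrm{conc}}$ sets every $\bP_t=(1-\Delta,\Delta,0,\ldots)$; since all $\bP_t$ coincide, $S_w=P_w$, and the CDF specializes to $(1-\Delta)y^{1/(1-\Delta)}+\Delta\,y^{1/\Delta}$, whose derivative is precisely the two-power-sum density giving the least-favorable score.

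The decisive step is verifying the outer saddle inequality $L(h^\star,\bP_{\VM})\le L(h^\star,\bP_{\VM}^\star)$ for every $\bP_{\VM}\subseteq\FPM$. Because the map $\bP_{\VM}\mapsto f_{1,\bP_{\VM}}$ is highly non-convex, convex-analytic tools fail, and this is the main obstacle. My approach is to express $\EB_{1,\bP_{\VM}}[\exp(-h^\star(Y))]$ purely as a function of the coefficient vector $(S_w)_w$, reinterpret the constraint $\bP_{\VM}\subseteq\FPM$ as an admissible region for $(S_w)_w$ built from the pointwise maxima $\max_t P_{t,w'}/P_{t,w}$, and apply a Schur-convexity argument combined with geometric rearrangement on the simplex to reduce the optimization to the two candidates above. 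Once this is in place, the trichotomy follows by comparing the saddle values $L^\star_{\mathrm{spr}}(\Delta)$ and $L^\star_{\mathrm{conc}}(\Delta)$: the thresholds $\Delta_1^\star$ and $\Delta_2^\star$ are determined explicitly by the points at which each candidate ceases to dominate, and in the intermediate range a direct perturbation argument shows that neither candidate satisfies both saddle inequalities simultaneously---moving from $\bP^\star_{\mathrm{conc}}$ toward a spread configuration strictly improves $L(h^\star_{\mathrm{conc}},\cdot)$ and vice versa---so no saddle point exists.
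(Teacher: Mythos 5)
Your proposal follows essentially the same route as the paper: Donsker--Varadhan pins the inner minimizer to a log-likelihood ratio, the $\bS$-reparameterization converts the non-convex outer problem into one over an explicit admissible set, Schur-convexity plus a convex-hull/extreme-point argument reduces the inner supremum to the two candidates $\bS_\Delta^\star$ (your spread configuration) and $\bP_\Delta^\star$ (your concentrated one), and the trichotomy then falls out of the self-consistency check for each candidate. Two small cautions worth keeping in mind when you flesh this out: the thresholds come from cross-checks of the form $L(h_{\bS_\Delta^\star},\bP_\Delta^\star)\le L(h_{\bS_\Delta^\star},\bS_\Delta^\star)$ (not from directly comparing the two saddle values $L^\star_{\mathrm{spr}}$ vs.\ $L^\star_{\mathrm{conc}}$), and the ordering $\Delta_1^\star\le\Delta_2^\star$ that guarantees the three regimes are nested as stated needs its own argument (the paper gets it from a Chebyshev/Cauchy--Schwarz bound showing the two conditions cannot hold simultaneously); your round-robin construction also tacitly assumes $|\Voca|\ge|\VM|+1$, whereas the $|\VM|\ge|\Voca|$ case needs the cyclic variant.
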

 
\begin{rem}[Beyond saddle point solutions]
\label{rem:beyong-saddle-point}
Saddle point solutions are a strong form of optimality, offering both interpretability and explicit analytical forms. If we relax this requirement and do not insist that the optimal score function be part of a saddle point pair, then a solution always exists in the intermediate regime. However, this solution is not associated with any least-favorable NTP distribution and does not admit a closed-form expression. A detailed discussion is provided in Supplementary~\ref{sec:intermediate-regime-analysis}.
\end{rem}

\paragraph{Discussion of the trichotomy.}
Theorem~\ref{thm:gumbel} reveals a trichotomy that reflects a transition in the existence and form of saddle point solutions as the regularity level $\Delta$ varies. In the low- and high-regularity regimes ($\Delta \notin (\Delta_1^\star, \Delta_2^\star)$), a saddle point solution exists and yields closed-form optimal score functions. Specifically, the expression in \eqref{eq:least-favorable} coincides with the least-favorable solution identified in \cite[Theorem 3.2]{li2024statistical}, which is designed to perform optimally against the least-favorable NTP distribution in $\FPM$. Meanwhile, the form in \eqref{eq:weighted-log} resembles a weighted-log score and arises in the low-regularity regime, where the alternative distribution remains close to the null. In contrast, the intermediate regime $\Delta \in (\Delta_1^\star, \Delta_2^\star)$ admits no saddle point solution, as the minimax problem is not convex–concave and the stability conditions required for a solution break down. A more detailed discussion is provided later.

\begin{figure}[!t]
    \vspace{-0.3in}
    \centering
    \includegraphics[width=\textwidth]{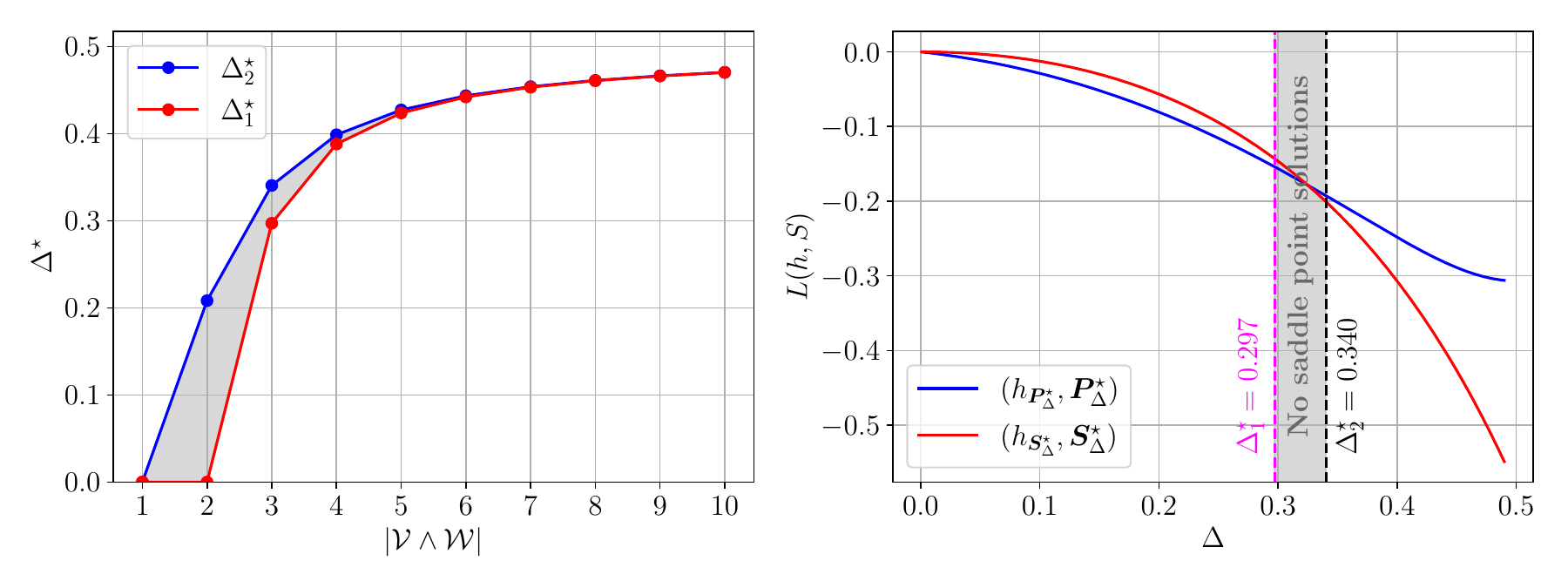}
    \vspace{-0.3in}
    \caption{\textbf{Left}: Transaction thresholds $\Delta_1^\star$ (in red) and $\Delta_2^\star$ (in blue) as functions of $|\VM| \wedge |\Voca|$. The gray region marks the intermediate regime, where no saddle point solution exists. \textbf{Right}: Illustration of why no saddle point solution exists when $|\VM| = 3 \leq |\Voca|$. 
    For $\Delta$ in the low- and high-regularity regimes, each optimal score ($h_{\bS_{\Delta}^\star}$ or $h_{\bP_{\Delta}^\star}$) corresponds to a specific distribution vector ($\bS_{\Delta}^\star$ or $\bP_{\Delta}^\star$). 
    In the intermediate regime, no distribution aligns with either score, so no saddle point solution arises.
    }
    \label{fig:deltavalue}
    \vspace{-0.1in}
    \end{figure}

\vspace{-0.5em}
\paragraph{Effects of $|\VM|$.}  
When $|\VM| = 1$, no repetition occurs, and our optimal score function reduces to the rule in \citet{li2025optimal}, which is exactly the least-favorable rule in \eqref{eq:least-favorable}. When $|\VM| \ge 2$, the role of $|\VM|$ becomes more nuanced. While the least-favorable rule in \eqref{eq:least-favorable} remains unaffected by $|\VM|$, the weighted-log rule in \eqref{eq:weighted-log} incorporates $|\VM|$ through the factor $|\VM| \wedge |\Voca|$, which weakly determines the effective regularity level assigned to each sub-block. In addition, $|\VM|$ influences the transition thresholds $\Delta_1^\star$ and $\Delta_2^\star$ that govern the trichotomy. As illustrated in the left panel of Figure~\ref{fig:deltavalue}, increasing $|\VM| \wedge |\Voca|$ increases both $\Delta_1^\star$ and $\Delta_2^\star$, thereby shrinking the intermediate regime that lacks a saddle point solution. This ``gray region'' eventually vanishes as the informativeness of each block grows.

\paragraph{Justification for discarding repeated pivotal statistics.}
Theorem~\ref{thm:gumbel} shows that for the Gumbel-max watermark, the optimal score function for each minimal unit depends only on its unique pivotal statistic. This makes sense since all pivotal statistics within a minimal unit $\VM$ take the same value, with only the size $|\VM|$ contributing limited additional information. A key implication is that practical heuristics \citep{fernandez2023three,wu2024distortion,dathathri2024scalable} that discard repeated pivotal statistics incur little information loss, as the optimal rule itself follows this principle. Furthermore, once repetitions are removed, the remaining pivotal statistics can be safely treated as i.i.d., which helps improve the alignment between empirical and theoretical Type I errors \citep{fernandez2023three}. Our analysis thus offers a theoretical justification for this widely used practice.

\paragraph{Practical suggestion.}
Since no saddle point solution exists when $\Delta \in (\Delta_1^\star, \Delta_2^\star)$, one may choose any preferred score function in practice. When $|\VM|=1$, the thresholds collapse to $\Delta_1^\star=\Delta_2^\star=0$, so the least-favorable rule in \eqref{eq:least-favorable} applies directly. Empirical evidence \cite[Figure 1]{li2024statistical} suggests that many practical scenarios fall into small-$\Delta$ regimes. Consequently, when $|\VM|\ge 2$, Theorem~\ref{thm:gumbel} often recommends the weighted-log rule.
A practical benefit of our framework over previous one \citep{li2024statistical} is its separation across minimal units, which allows different regularity levels to be assigned to different units. In our LLM experiments, we find that choosing $\Delta$ carefully—for example, setting $\Delta = 1- \max_{\token} P_\token$, where $P_\token$ is the underlying NTP distribution—often improves performance. In practice, however, $1- \max_{\token} P_\token$ is typically unknown and must be estimated from related models or tasks. Such estimation can introduce inaccuracies and, in turn, reduce detection efficiency.

\paragraph{Why the saddle point solution does not exist.}
We now briefly explain why no saddle point solution exists in the intermediate regime. To formalize this, we reparameterize the minimax problem in \eqref{eq:sub-optimization} as $\min_{h} \sup_{\bS \in \DM_{\Delta}} L(h, \bS)$, where $\bS$ denotes the reparameterized distribution vector and $\DM_{\Delta}$ its domain. If a saddle point solution existed, there would be a pair $(h^\star, \bS^\star)$ such that $L(h^\star, \bS) \le L(h^\star, \bS^\star) \le L(h, \bS^\star)$ holds for all $h$ and $\bS \in \DM_{\Delta}$, where $\bS^\star$ is the least-favorable distribution vector and $h^\star$ the corresponding optimal score function. Our analysis in Section~\ref{sec:proof-gumbel} establishes two key facts. First, $h^\star$ must be the log-likelihood ratio score associated with $\bS^\star$. Second, $\bS^\star$ must be either $\bS_{\Delta}^\star$ or $\bP_{\Delta}^\star$ (see Lemma~\ref{lem:extreme-points} for their closed forms). However, when $\Delta \in (\Delta_1^\star, \Delta_2^\star)$, $\bS_\Delta^\star$ fails to maximize the loss for its own log-likelihood ratio score, while $\bP_\Delta^\star$ fails for the same reason, so neither candidate consistently dominates the other. As a result, no saddle point solution exists in this regime. See the right panel of Figure~\ref{fig:deltavalue} for an illustration and Section \ref{sec:proof-gumbel} for a proof sketch.

\section{Application to the Inverse Transform Watermark}
\label{sec:inverse}

In this section, we apply the framework to the 
inverse transform watermark \citep{kuditipudi2023robust}.  
Recall that its decoder is defined as 
\begin{equation*}
\token_t = \SMinv(\bP_t, \xi_t) :=\pi_t^{-1}(F^{-1}(U_t; \pi_t)),
\end{equation*}
where the pseudorandom number $\zeta_t = (\pi_t, U_t)$ with $U_t \sim U(0, 1)$ and $\pi_t$ being sampled uniformly at random from all permutations on $\Voca$. Its pivotal statistic is defined as
\begin{equation*}
\Ydif_t = |U_t - \eta(\pi_t(\token_t))|, 
\quad \text{where}\quad  \eta(w) := \frac{w-1}{|\Voca|-1},
\end{equation*}
maps a discrete token index to the interval $[0,1]$ to enable direct comparison with $U_t \sim U(0,1)$.

The problem is inherently intricate, shown in prior work \citep{li2024statistical}, because the combinatorial structure introduced by the permutation $\pi_t$ significantly complicates the analysis. In our setting, this challenge is further intensified by the fact that we only have block-level independence rather than the stronger sub-block independence.
Indeed, under the pivotal function rule $\Ydif(w, \zeta) = |U - \eta(\pi(\token))|$, if $\zeta = (U, \pi)$ is shared within a block, then the pivotal statistics computed across different tokens $w$ in the block remain dependent. This violates the sub-block independence in Assumption~\ref{asmp:subblocks-independence}.
As a result, the minimal units are entire blocks for the inverse transform watermark, not sub-blocks as in the Gumbel-max watermark. These introduce two layers of complexity: the same combinatorial challenges from $\pi_t$, and the potentially arbitrary dependence within each block. Together, these make the analysis substantially more challenging than in the Gumbel-max case.

To address these challenges, we slightly modify the efficiency measure by adopting an asymptotic perspective in which the vocabulary size tends to infinity. This adjustment leads to a significantly simpler characterization of both the null and alternative distributions, as shown in Theorem~\ref{thm:inverse-asymptotic-distribution}. It also enables us to manage within-block dependence more effectively: in the asymptotic regime, the joint distribution of pivotal statistics within a block is governed by a set of independent latent variables. As a result, the within-block dependence structure becomes much more tractable, allowing for a straightforward derivation of the optimal score function. See Theorem~\ref{thm:inverse-optimal-rule} for details.

\subsection{Asymptotic Distributions}
\label{sec:inverse-asymptotic}

In the following, we focus our analysis on a minimal unit (or a block) $\IM_k^\zeta$ for some index $k$, which consists of $m_k$ sub-blocks denoted by $\{\IM_{k,\ell}^Y\}_{\ell = 1}^{m_k}$. By definition, we have $\IM_k^\zeta = \bigcup_{\ell=1}^{m_k} \IM_{k,\ell}^Y.$

Our results are asymptotic in nature and follow the convention in prior work \citep{li2024statistical}, which studies an asymptotic efficiency by letting the vocabulary size $|\Voca|$ tend to infinity. To enable this analysis, we introduce a comparable set of regularity conditions on the NTP distributions.

\begin{asmp}[Asymptotic NTP conditions]
\label{asmp:heavy-tokensa}
Let $P_{t,(i)}$ denote the $i$-th largest probability in the NTP distribution $\bP_t$. We assume that
\begin{itemize}
    \item[(i)] \textbf{Regular NTP distributions} There exists a universal constant $\delta > 0$ and a sequence $\{\Delta_t\}_{t \ge 1} \subseteq (0,1)$ such that for all $t \ge 1$,
    \begin{equation}
    \label{eq:PM-inv}
    \bP_t \in \overline{\PM}_{\Delta_t}, 
    \quad \text{where} \quad 
    \SPM := \left\{ \bP : \delta \le P_{(1)} \le 1 - \Delta, \quad P_{(2)} \le \eps_{|\Voca|} \right\},
    \end{equation}
    and $\eps_{|\Voca|}$ satisfies $\log |\Voca| \cdot \eps_{|\Voca|} \to 0$ as $|\Voca| \to \infty$.

    \item[(ii)] \textbf{Heavy repeated tokens} All tokens in non-singleton minimal units are \emph{heavy}, meaning that each token has the largest probability in its corresponding NTP distribution. That is, for any $t \in \IM_\ell^Y$ (for some $\ell$) and $m_k > 1$, we have $P_{t, w_\ell} = P_{t,(1)}$.
\end{itemize}
\end{asmp}

We briefly elaborate on Assumption~\ref{asmp:heavy-tokensa}.  
Condition (i) extends the $\Delta$-regular class defined in \eqref{eq:regular_classdef}, and a similar condition is adopted by \cite[Equation (24)]{li2024statistical}. As $|\Voca| \to \infty$, the second-largest probabilities $P_{t,(2)}$ vanish uniformly, implying that each $\bP_t$ becomes asymptotically concentrated on a single token. This assumption simplifies the theoretical analysis while remaining realistic; \cite[Figure 1]{li2024statistical} finds that practical NTP distributions are typically dominated by a single token.

Condition (ii) follows naturally from (i). Since $\bP_t$ asymptotically assigns non-negligible probability to a single token, that token is almost surely the one generated by the LLM, and thus must be the so-called heavy token. Importantly, this condition also aids the dependence analysis within a block: because the verifier lacks access to the NTP distributions during detection, and the same tokens in the same sub-block may come from distinct NTP distributions, assuming a heavy token allows us to use a single index $\Delta_t$ to represent $\bP_t$ in the asymptotic regime. This strategy---also employed by \citet{li2024statistical}---substantially simplifies the analysis while preserving essential asymptotic behavior. 

Since tokens are identical within each sub-block and distinct across different sub-blocks, we let $w_1, \ldots, w_{m_k}$ denote the unique tokens corresponding to each sub-block.
With Assumption~\ref{asmp:heavy-tokensa} in place, the following lemma establishes the asymptotic joint distribution of $(U, \eta(\pi(w_1)), \ldots, \eta(\pi(w_{m_k})))$ under both $H_0$ and $H_1$.

\begin{lem}[Asymptotic joint distribution of pseudorandomness and tokens]
\label{lem:asymptotic-joint-distributions-inverse}
Suppose Assumptions~\ref{asmp:blocks-independence} and \ref{asmp:heavy-tokensa} hold. Fix a minimal unit $\IM_k^\zeta$ from the partition $\Pi_\zeta$ (Definition~\ref{def:zeta_partition}) with $m_k$ sub-blocks $\{\IM^Y_{k,\ell}\}_{\ell=1}^{m_k}$. Define the block-wise regularity vector as
\begin{equation}
\label{eq:vector-Delta-bar}
\bm{\bDelta}_k := (\bDelta_{k,1}, \ldots, \bDelta_{k,m_k}), \quad 
\text{where} \quad \bDelta_{k,\ell} := \max_{t \in \IM^Y_{k,\ell}} \Delta_t.
\end{equation}
As $|\Voca| \to \infty$, the joint distribution of $(U, \eta(\pi(w_1)), \ldots, \eta(\pi(w_{m_k})))$ converges weakly as follows.
\begin{itemize}
    \item Under $H_0$, $(U, \eta(\pi(w_1)), \ldots, \eta(\pi(w_{m_k}))) \xrightarrow{d} (U, X_1, \ldots, X_{m_k})$, where $U, X_1, \ldots, X_{m_k}$ are i.i.d. $\mathrm{Unif}(0,1)$.
    
    \item Under $H_1$, if $P_{t,(1)} = 1 - \Delta_t$ for all $t \in \IM_{k}^\zeta$, $(U, \eta(\pi(w_1)), \ldots, \eta(\pi(w_{m_k}))) \xrightarrow{d} (U, X_1, \ldots, X_{m_k})$, where $X_1, \ldots, X_{m_k}$ are i.i.d. $\mathrm{Unif}(0,1)$, and $U$ is independent and uniformly distributed on
    \begin{equation}
    \label{eq:U-interval}
    \left[\max_{\ell \in [m_k]} \bDelta_{k,\ell} X_\ell,\quad \min_{\ell \in [m_k]} (1 - \bDelta_{k,\ell} + \bDelta_{k,\ell} X_\ell)\right], 
    \end{equation}
    conditioned on this interval being non-empty.
\end{itemize}
\end{lem}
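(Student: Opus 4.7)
The plan is to derive the joint distribution of $(U, \eta(\pi(w_1)), \ldots, \eta(\pi(w_{m_k})))$ under each hypothesis and pass to the limit $|\Voca| \to \infty$. Under $H_0$, the tokens are independent of $\zeta = (U, \pi)$ and $\pi$ is a uniform permutation of $\Voca$, so $(\pi(w_1), \ldots, \pi(w_{m_k}))$ (the $w_\ell$ being distinct by Definition~\ref{def:y_partition}) is a uniform sample without replacement from $\{1, \ldots, |\Voca|\}$. Applying $\eta$ and invoking the standard bound that the total variation distance between sampling $m_k$ items with and without replacement from a population of size $|\Voca|$ vanishes for fixed $m_k$ as $|\Voca| \to \infty$, I would conclude weak convergence to i.i.d.\ $\mathrm{Unif}(0,1)$ coordinates, with $U$ marginally uniform and independent throughout.

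For $H_1$, I would first rewrite the decoder identity $w_\ell = \SMinv(\bP_t, \zeta)$ as an interval constraint on $U$. Setting $j_\ell := \pi(w_\ell)$ and decomposing $F(\cdot; \pi)$ under $\bP_t$ into the jump of size $P_{t,(1)} = 1 - \Delta_t$ at $j_\ell$ (which is valid because $w_\ell$ is the top token of $\bP_t$ by Assumption~\ref{asmp:heavy-tokensa}(ii)) together with the contributions from the other $|\Voca| - 1$ tokens, the identity is equivalent to
\[
U \in (G_t(j_\ell - 1; \pi),\; G_t(j_\ell - 1; \pi) + 1 - \Delta_t],
\]
where $G_t(x; \pi) := \sum_{w' \ne w_\ell,\; \pi(w') \le x} P_{t, w'}$ collects the pre-jump mass. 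Imposing this for every $\ell \in [m_k]$ and $t \in \IM^Y_{k,\ell}$ and intersecting the intervals characterizes the full set of $U$-constraints that define the conditional distribution under $H_1$.

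The central technical step is a concentration claim: $G_t(j_\ell - 1; \pi) \to \Delta_t X_\ell$ in probability as $|\Voca| \to \infty$. Conditional on $j_\ell = J$, the remaining $|\Voca| - 1$ tokens fill the other positions uniformly, so $G_t(J - 1; \pi)$ is a random sum over a uniform $(J-1)$-subset of the $|\Voca| - 1$ non-$w_\ell$ probabilities, which total $\Delta_t$ and are each bounded by $P_{t,(2)} \le \eps_{|\Voca|}$. Its conditional mean equals $(J - 1)/(|\Voca| - 1) \cdot \Delta_t = \Delta_t X_\ell$, and a standard variance bound for sampling without replacement gives variance at most $\Delta_t \cdot \eps_{|\Voca|}$, which vanishes under Assumption~\ref{asmp:heavy-tokensa}(i). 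A union bound over the bounded number of $(\ell, t)$ pairs upgrades this to joint convergence.

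Combining these pieces, the asymptotic interval constraint on $U$ reduces to $U \in (\Delta_t X_\ell,\; 1 - \Delta_t + \Delta_t X_\ell]$ for each $(\ell, t)$. Maximizing the lower endpoints and minimizing the upper endpoints first over $t \in \IM^Y_{k,\ell}$---which produces $\bDelta_{k,\ell}$ of~\eqref{eq:vector-Delta-bar}---and then over $\ell$ yields exactly~\eqref{eq:U-interval}. Since $U$ is a priori $\mathrm{Unif}(0,1)$ independent of $\pi$ and the only surviving constraint is an interval in $U$ depending on $(X_1, \ldots, X_{m_k})$, the limiting conditional distribution of $U$ given the $X_\ell$'s (and non-emptiness of the interval) is uniform on that interval, producing the claimed limit. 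The main obstacle I anticipate is carrying out the concentration for $G_t(j_\ell - 1; \pi)$ with enough uniformity to couple it with the weak convergence of $(X_1, \ldots, X_{m_k})$ from the permutation structure, while correctly handling the conditioning on the watermarking event; the rate $\log |\Voca| \cdot \eps_{|\Voca|} \to 0$ in Assumption~\ref{asmp:heavy-tokensa}(i) is precisely what is needed to absorb the union-bound cost for this joint convergence.
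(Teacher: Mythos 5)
Your plan takes essentially the same route as the paper's proof: rewrite the decoder event $\SMinv(\bP_t,\zeta)=w_\ell$ as an interval constraint on $U$ determined by the partial mass of $F(\cdot;\pi)$ just below $\pi(w_\ell)$, show that partial mass concentrates around $\Delta_t\,\eta(\pi(w_\ell))$, intersect over $(\ell,t)$ to produce~\eqref{eq:U-interval}, and pass to the limit. The concentration claim you make---that the pre-jump mass has conditional mean $\tfrac{J-1}{|\Voca|-1}\Delta_t=\Delta_t X_\ell$ and variance $\lesssim \Delta_t\,\eps_{|\Voca|}$ under sampling without replacement---is exactly what the paper establishes (Lemma~\ref{lem:concentration-max-cite}), only it uses a Bernstein-type permutation concentration inequality \citep{albertConcentrationInequalitiesRandomly2019} in place of Chebyshev, which gives the $\sqrt{\eps_{|\Voca|}\log|\Voca|}$ rate that plays well with the assumption $\eps_{|\Voca|}\log|\Voca|\to 0$. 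The place where the paper does more work, and which you correctly flag as your main obstacle, is the posterior reweighting of $\pi$ induced by conditioning on the watermarking event. The paper handles this by writing the conditional law explicitly as a ratio (Lemma~\ref{lem:inverse-perfect-general-technicaljoint}) and then proving uniform convergence of $\EB\bigl[J(U,\eta(\pi(w_1)),\ldots)\bigr]$ over all $1$-Lipschitz test functions $J$ (Theorem~\ref{thm:inverse-perfect-general-technical-main}); this automatically produces the normalizing constant $I_{m_k}(\bm{\bDelta}_k)$ and the correct marginal on $(X_1,\ldots,X_{m_k})$, which is \emph{not} uniform under the posterior but is reweighted by the interval length---the stated limit encodes this via the ``conditioned on the interval being non-empty'' clause. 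To close your version of the argument you would need two additional sentences: first, that $\PB_1(E\mid\pi)$ is the interval length so the posterior tilts $\pi$ by that factor, hence the limiting marginal of $(X_1,\ldots,X_{m_k})$ is $\propto (b(\bm{x})-a(\bm{x}))_+$ rather than plain $\mathrm{Unif}(0,1)^{m_k}$, matching the conditioned law; and second, that your in-probability concentration under the prior transfers to the posterior because $\PB_1(E)=I_{m_k}(\bm{\bDelta}_k)+o(1)$ is bounded away from zero (each $\Delta_t\le 1-\delta$), so the Radon--Nikodym derivative of posterior with respect to prior is $O(1)$.
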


Surprisingly, the asymptotic distributions of $(U, \eta(\pi(w_1)), \ldots, \eta(\pi(w_{m_k})))$ take simple forms under both $H_0$ and $H_1$. Under $H_0$, the pseudorandom variable $U$ is independent of the normalized token vector $(\eta(\pi(w_1)), \ldots, \eta(\pi(w_{m_k})))$, whose entries are all i.i.d. $\mathrm{Unif}(0,1)$. In contrast, under $H_1$, the pseudorandom value $U$ becomes dependent on the token vector due to the block structure specified by $\IM_k^\zeta = \{\IM_{k,\ell}^Y\}_{\ell=1}^{m_k}$. Specifically, $U$ is independently drawn from the interval in \eqref{eq:U-interval}, which itself depends on the token vector, provided the interval is non-empty. This conditional dependence reflects the watermark signal embedded in the generation process.

Recall that for each sub-block $\IM_{k,\ell}^Y$, the corresponding pivotal statistic is defined as $Y_{k,\ell} := |U - \eta(\pi(w_\ell))|$ for $\ell = 1, \ldots, m_k$. By applying a careful change-of-variable argument, we can then characterize the asymptotic joint distribution of the vector $\bm{Y}_k = (Y_{k,1}, \ldots, Y_{k,m_k})$ under both hypotheses, as stated in the following theorem.

\begin{theorem}[Asymptotic joint distribution of pivotal statistics]
\label{thm:inverse-asymptotic-distribution}
Under the same notions and assumptions of Lemma \ref{lem:asymptotic-joint-distributions-inverse}, let $\bm{Y}_k = (Y_{k,1}, \ldots, Y_{k,m_k})$ denote the vector of unique pivotal statistics within the block $\IM_k^\zeta$, where $Y_{k,\ell}$ represents the pivotal statistic within the sub-block $\IM^Y_{k,\ell}$.
Then, as $|\Voca| \to \infty$, the joint PDF of $\bm{Y}_k$ converges as follows.
\begin{itemize}
    \item Under $H_0$, the limiting null PDF is
\begin{align}\label{eq:joint-density-representationh0}
  f_{0}(\bm{y}) = \int_{0}^{1} 2^{|I_1(u)|} \1_{I_2(u) = \emptyset} \, \rd u,
\end{align}
where for a fixed vector $\bm{y} = (y_1, \ldots, y_{m_k})$ and $u \in [0,1]$,
\begin{align*}
  I_1(u) := \left\{ \ell \in [m_k] : 0 < y_\ell < \min(u, 1 - u) \right\}, \quad I_2(u) := \left\{ \ell \in [m_k] : y_\ell \ge \max(u, 1 - u) \right\}.
\end{align*}
\item Under $H_1$, the limiting alternative PDF is
\begin{align}\label{eq:joint-density-representationh1good}
  f_{\bm{\bDelta}_k}
  (\bm{y}) 
  = \frac{1}{I_{m_k}(\bm{\bDelta}_k)} \sum_{\bm{\sigma} \in \{-1,1\}^{m_k}} \left( B_{\bm{\sigma}}^{\bm{\bDelta}_k}(\bm{y}) - A_{\bm{\sigma}}^{\bm{\bDelta}_k}(\bm{y}) \right)_+,
\end{align}
where for each sign vector $\bm{\sigma} = (\sigma_1, \ldots, \sigma_{m_k}) \in \{-1,1\}^{m_k}$ and input $\bm{y} = (y_1, \ldots, y_{m_k})$,
\begin{align*}
  L_{\bm{\sigma}}(\bm{y}) &:= \max_{\ell \in [m_k]} (-\sigma_\ell y_\ell), & 
  U_{\bm{\sigma}}(\bm{y}) &:= \min_{\ell \in [m_k]} (1 - \sigma_\ell y_\ell), \\
  Y^+_{\bm{\sigma}}(\bm{y}) &:= \left( \max_{\ell: \sigma_{\ell}=1}\frac{\bDelta_{k,\ell} }{1 - \bDelta_{k,\ell}} \cdot y_\ell \right)_+, & 
  Y^-_{\bm{\sigma}}(\bm{y}) &:= \left( \max_{\ell: \sigma_{\ell}=-1}\frac{\bDelta_{k,\ell} }{1 - \bDelta_{k,\ell}} \cdot y_\ell \right)_+, \\
  A_{\bm{\sigma}}^{\bm{\bDelta}_k}(\bm{y}) &:= \max\left\{ L_{\bm{\sigma}}(\bm{y}),\ Y^+_{\bm{\sigma}}(\bm{y}) \right\}, & 
  B_{\bm{\sigma}}^{\bm{\bDelta}_k}(\bm{y}) &:= \min\left\{ U_{\bm{\sigma}}(\bm{y}),\ 1 - Y^-_{\bm{\sigma}}(\bm{y}) \right\},
\end{align*}
with $(x)_+ := \max(x, 0)$, and the normalization constant $I_{m_k}(\bm{\bDelta}_k)$ is given by
\[
I_{m_k}(\bm{\bDelta}_k) := \int_{[0,1]^{m_k}} \left( \min_{\ell \in [m_k]} \{1 - \bDelta_{k,\ell} + \bDelta_{k,\ell} x_\ell\} - \max_{\ell \in [m_k]} \{\bDelta_{k,\ell} x_\ell\} \right)_+ \rd x_1 \cdots \rd x_{m_k}.
\]
\end{itemize}
\end{theorem}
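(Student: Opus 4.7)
The plan is to deduce the joint density of \(\bm{Y}_k\) from the limiting joint law of \((U, X_1, \ldots, X_{m_k})\) given by Lemma~\ref{lem:asymptotic-joint-distributions-inverse}, where \(X_\ell := \eta(\pi(w_\ell))\), via a sign-indexed change of variables. Since \(Y_{k,\ell} = |U - X_\ell|\) has the two preimages \(X_\ell = U \pm Y_{k,\ell}\), I enumerate branches by a sign vector \(\bm{\sigma} \in \{-1, +1\}^{m_k}\) and pull the density back through the unit-Jacobian map \(T_{\bm{\sigma}}: (u, y_1, \ldots, y_{m_k}) \mapsto (u, u + \sigma_1 y_1, \ldots, u + \sigma_{m_k} y_{m_k})\). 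Summing the pullbacks over the \(2^{m_k}\) branches and then integrating out \(u\) will give the desired joint density of \(\bm{Y}_k\).

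For the null case, Lemma~\ref{lem:asymptotic-joint-distributions-inverse} gives the uniform law on \([0,1]^{m_k+1}\), so each branch contributes the product \(\prod_{\ell} \1(u + \sigma_\ell y_\ell \in [0,1])\). Classifying each coordinate \(\ell\) by how \(y_\ell\) compares to \(u\) and \(1 - u\) shows that both signs are admissible exactly when \(y_\ell < \min(u, 1 - u)\) (defining \(I_1(u)\)), a single sign is admissible when \(\min(u, 1 - u) \le y_\ell < \max(u, 1 - u)\), and neither sign is admissible when \(y_\ell \ge \max(u, 1 - u)\) (defining \(I_2(u)\)). Multiplying the per-coordinate counts yields \(2^{|I_1(u)|}\) when \(I_2(u) = \emptyset\) and \(0\) otherwise, reproducing \eqref{eq:joint-density-representationh0} after integrating over \(u \in [0,1]\).

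For the alternative case, Lemma~\ref{lem:asymptotic-joint-distributions-inverse} implies that the limiting joint law of \((X, U)\) is uniform on the valid region \(\{(x, u) : x \in [0,1]^{m_k},\; u \in I(x)\}\), with \(I(x) := [\max_{\ell} \bDelta_{k,\ell} x_\ell,\; \min_{\ell} (1 - \bDelta_{k,\ell} + \bDelta_{k,\ell} x_\ell)]\) and total volume \(I_{m_k}(\bm{\bDelta}_k)\). Substituting \(x_\ell = u + \sigma_\ell y_\ell\) turns the constraint \(u \in I(x)\) into linear inequalities in \(u\) alone: the lower bound \(u \ge \bDelta_{k,\ell}(u + \sigma_\ell y_\ell)\) rearranges (using \(\bDelta_{k,\ell} < 1\)) to \(u \ge \sigma_\ell \bDelta_{k,\ell} y_\ell / (1 - \bDelta_{k,\ell})\); taking the maximum over \(\ell\) and discarding the negative contributions coming from \(\sigma_\ell = -1\) via \((\cdot)_+\) yields \(Y^+_{\bm{\sigma}}(y)\). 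Combining this with the hypercube bound \(u \ge L_{\bm{\sigma}}(y)\) produces \(A^{\bm{\bDelta}_k}_{\bm{\sigma}}(y)\); an analogous manipulation of the upper bound gives \(B^{\bm{\bDelta}_k}_{\bm{\sigma}}(y)\). Integrating the resulting indicator over \(u\) contributes \((B^{\bm{\bDelta}_k}_{\bm{\sigma}} - A^{\bm{\bDelta}_k}_{\bm{\sigma}})_+\), and summing over \(\bm{\sigma}\) and dividing by \(I_{m_k}(\bm{\bDelta}_k)\) yields \eqref{eq:joint-density-representationh1good}.

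The change-of-variables step itself is routine; the main obstacle is recasting the coupled constraint set \(I(x)\)---originally defined in \(x\)-coordinates through cross-coordinate maxima and minima---into \emph{separated} piecewise maxima indexed by \(\bm{\sigma}\), so that each sign vector contributes a single clean interval in \(u\). Two delicate checks are required: first, correctly handling the empty-sign regime (captured by the \((\cdot)_+\) inside \(Y^\pm_{\bm{\sigma}}\)), which prevents spurious negative bounds from tightening the interval; and second, verifying that the normalizing constant is indeed \(I_{m_k}(\bm{\bDelta}_k)\), which amounts to recognizing that the conditioning in Lemma~\ref{lem:asymptotic-joint-distributions-inverse} selects the \emph{joint uniform} law on the valid region rather than one with an \(x\)-uniform marginal---a reconciliation best done by revisiting the weak-convergence derivation underlying that lemma. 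Measure-zero boundaries such as \(\{y_\ell = 0\}\) can be ignored for the absolutely continuous densities.
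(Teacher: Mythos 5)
Your proposal is correct and takes essentially the same approach as the paper. For $H_1$ both arguments partition into the $2^{m_k}$ sign branches of $x_\ell = u + \sigma_\ell y_\ell$, pull back the joint uniform density $1/I_{m_k}(\bm{\bDelta}_k)$ on the valid region, and integrate the resulting $u$-interval indicators to obtain $(B_{\bm{\sigma}}^{\bm{\bDelta}_k} - A_{\bm{\sigma}}^{\bm{\bDelta}_k})_+$; for $H_0$ you re-use the sign decomposition while the paper derives the conditional density $f_{Y_\ell\mid U}$ directly and multiplies by conditional independence, but the two computations produce the same per-coordinate admissibility count $2^{|I_1(u)|}\,\1_{I_2(u)=\emptyset}$, and your remark that the normalizing constant must come from the joint uniform law on the valid region (rather than from an $\bm{x}$-uniform marginal) correctly isolates the one subtlety in reading Lemma~\ref{lem:asymptotic-joint-distributions-inverse} off against Theorem~\ref{thm:inverse-perfect-general-technical-main}.
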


As a special case, when $m_k = 1$, the block $\IM_k^\zeta$ contains no repeated tokens. In this setting, the PDFs in Theorem~\ref{thm:inverse-asymptotic-distribution} simplify significantly and recover the previous non-repetitive results in \cite[Theorem 4.1]{li2024statistical}, as shown in the following corollary.

\begin{cor}[Case $m_k=1$]\label{cor:inverse-m=1}
Consider a minimal unit $\IM_k^\zeta$ consisting of a single sub-block. In this case, the parameter vector $\bm{\bDelta}_k$ reduces to a scalar $\Delta_{k,1}$.
The normalization constant from Theorem~\ref{thm:inverse-asymptotic-distribution} simplifies to $I_1(\bm{\bDelta}_k) = 1 - \Delta_{k,1}$. The asymptotic PDF for the single pivotal statistic $Y_{k,1}$ reduce to:
\begin{itemize}
    \item Under $H_0$, the PDF of $Y_{k,1}$ is a triangular distribution on $[0, 1]$:
\begin{align*}
    f_{0}(y_1) =2(1-y_1)\1_{0 \le y_1 \le 1}
\end{align*}
    \item Under $H_1$, the PDF of $Y_{k, 1}$ is a triangular distribution supported on $[0, 1 - \Delta_{k,1}]$:
\begin{align}\label{eq:joint-density-representationh1goodaand}
    f_{\Delta_{k,1}}(y_1) &= 
    \begin{cases}
        \dfrac{2}{1 - \Delta_{k,1}} - \dfrac{2 y_1}{(1 - \Delta_{k,1})^2}, & \text{if } 0 < y_1 < 1 - \Delta_{k,1}, \\[0.5em]
        0, & \text{otherwise}.
    \end{cases}
\end{align}
\end{itemize}
\end{cor}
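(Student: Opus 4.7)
The plan is to specialize each component of Theorem \ref{thm:inverse-asymptotic-distribution} to $m_k = 1$ and verify that the reduced expressions match the triangular distributions in Corollary \ref{cor:inverse-m=1}. The corollary is essentially a direct computation, which I would organize into three steps.

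First, I would compute the normalization constant $I_1(\bm{\bDelta}_k)$. With $m_k = 1$ and $\bm{\bDelta}_k = \Delta_{k,1}$, the integrand $(\min\{1 - \Delta_{k,1} + \Delta_{k,1} x_1\} - \max\{\Delta_{k,1} x_1\})_+$ collapses to $1 - \Delta_{k,1}$ pointwise, so $I_1(\Delta_{k,1}) = 1 - \Delta_{k,1}$. Next, I would derive the null PDF from \eqref{eq:joint-density-representationh0}. With $m_k = 1$, the sets $I_1(u) = \{1 : 0 < y_1 < \min(u,1-u)\}$ and $I_2(u) = \{1 : y_1 \ge \max(u,1-u)\}$ each have cardinality $0$ or $1$, so the integrand is piecewise constant in $u$. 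I would split into two cases. When $y_1 \le 1/2$, we always have $\max(u,1-u) \ge 1/2 \ge y_1$, so $I_2(u)=\emptyset$ for all $u$; meanwhile $|I_1(u)|=1$ precisely on $(y_1, 1-y_1)$, yielding $\int_0^1 2^{|I_1(u)|}\,\rd u = 2(1-2y_1) + 2y_1 = 2(1-y_1)$. When $y_1 \ge 1/2$, $I_1(u)=\emptyset$ for all $u$, and $I_2(u)=\emptyset$ precisely on $[0,1-y_1)\cup(y_1,1]$, again giving $2(1-y_1)$. Together these recover $f_0(y_1) = 2(1-y_1)\1_{0\le y_1 \le 1}$.

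Third, I would compute the alternative PDF by expanding the sum over $\bm{\sigma} \in \{-1,1\}$ in \eqref{eq:joint-density-representationh1good}. For $\sigma = +1$, the empty-index max convention gives $Y^-_{\bm{\sigma}}(y_1) = 0$, so $A^{\Delta_{k,1}}_{+1}(y_1) = \max\{-y_1, \tfrac{\Delta_{k,1}}{1-\Delta_{k,1}}y_1\} = \tfrac{\Delta_{k,1}}{1-\Delta_{k,1}}y_1$ (for $y_1 \ge 0$) and $B^{\Delta_{k,1}}_{+1}(y_1) = \min\{1-y_1,\,1\} = 1-y_1$, yielding $(B-A)_+ = \bigl(1 - \tfrac{y_1}{1-\Delta_{k,1}}\bigr)_+$. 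For $\sigma=-1$, a symmetric calculation gives the same value. Summing and dividing by $I_1(\Delta_{k,1}) = 1-\Delta_{k,1}$ yields $f_{\Delta_{k,1}}(y_1) = \tfrac{2}{1-\Delta_{k,1}} - \tfrac{2y_1}{(1-\Delta_{k,1})^2}$ on $(0,1-\Delta_{k,1})$ and zero elsewhere, matching \eqref{eq:joint-density-representationh1goodaand}.

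The main obstacle I anticipate is bookkeeping the empty-index convention in $Y^+_{\bm{\sigma}}$ and $Y^-_{\bm{\sigma}}$: when no component of $\bm{\sigma}$ has the required sign, the maximum must be read as $0$ (consistent with the $(\cdot)_+$ wrapper), and this is the only subtle ingredient that makes the $\sigma=\pm 1$ contributions coincide. Once this convention is pinned down, the null case reduces to a two-case indicator integration and the alternative case to a pair of symmetric algebraic simplifications, both of which are routine. No further machinery beyond Theorem \ref{thm:inverse-asymptotic-distribution} is needed.
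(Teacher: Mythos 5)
Your proposal is correct and follows the same route as the paper's own proof: specialize the formulas of Theorem~\ref{thm:inverse-asymptotic-distribution} to $m_k = 1$, compute the null PDF by the two cases $y_1 \le 1/2$ and $y_1 > 1/2$, and expand the alternative PDF over $\sigma \in \{-1,+1\}$, observing that both signs yield $(1 - y_1/(1-\Delta_{k,1}))_+$. Your explicit verification of $I_1(\Delta_{k,1}) = 1-\Delta_{k,1}$ and your note on the empty-index convention are both accurate and correctly handled by the $(\cdot)_+$ wrapper.
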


\subsection{Optimal Score Function}
\label{sec:inverse-optimal}

As shown in Theorem~\ref{thm:inverse-asymptotic-distribution}, when the vocabulary size $|\Voca|$ tends to infinity, the joint distributions of the unique pivotal statistics within each minimal unit simplify significantly under both $H_0$ and $H_1$. To incorporate this effect in our framework, we replace the original class-dependent efficiency from Definition~\ref{def:non_asymptotic_efficiency} with its asymptotic counterpart, defined over the new class $\SPM$ introduced in~\eqref{eq:PM-inv}. Specifically, we consider the following asymptotic efficiency, denoted by $\bar{R}_{n, \Pow}$ and defined by
\begin{equation}
\label{eq:new-efficiency}
\bar{R}_{n, \Pow}(\bh):=
\liminf_{|\Voca|\to \infty} R_{n, \Pow}(\bh)
\ge \liminf_{|\Voca|\to \infty} B_{n, \Pow}(\bh) - \omega_{N_n},
\end{equation}
where $\Pow$ assigns the new class $\SPM$ (with potentially different values of $\Delta$) to the minimal units, and $\omega_{N_n}$ is a vanishing term tending to zero as the number of minimal units $N_n \to \infty$ (by Theorem~\ref{thm:power}).

To identify the optimal score functions, we adopt the same strategy as in the analysis of the Gumbel-max watermark. The quantity $\liminf\limits_{|\Voca|\to \infty} B_{n, \Pow}(\bh)$, introduced in~\eqref{eq:new-efficiency} and defined in~\eqref{eq:non_asymptotic_bound_B}, retains its additive structure across minimal units. The main difference is that the null and alternative distributions are now replaced by their asymptotic limits, as established in Theorem~\ref{thm:inverse-asymptotic-distribution}. Thus, the problem reduces to optimizing the score function for each minimal unit individually. If we assign $\overline{\PM}_{\Delta_{\VM}}$ to a minimal unit $\VM$, we obtain the following minimax optimization problem, which parallels the structure of~\eqref{eq:sub-optimization},
\begin{equation}
\label{eq:new-sub-optimization}
h_{{\VM}} = \arg\min_{h}  \sup_{\Delta_{\VM} \le \bm{\bDelta} \le 1-\delta} L'(h, \bm{\bDelta}),
\quad \text{where} \quad
L'(h, \bm{\bDelta}) = \EB_{f_0}[h(\bm{Y}_k)] + \log \EB_{f_{\bm{\bDelta}}}[\exp(-h(\bm{Y}_k))],
\end{equation}
where $f_0$ and $f_{\bm{\bDelta}}$ denote the asymptotic PDFs of the vector of pivotal statistics $\bm{Y}_k = (Y_{k,1}, \ldots, Y_{k,m_k})$ under the null and alternative, respectively, as given in Theorem~\ref{thm:inverse-asymptotic-distribution}. Here, $\bm{\bDelta} \ge \Delta_{\VM}$ means that every entry of $\bm{\bDelta}$ is at least $\Delta_{\VM}$, and the notation $\bm{\bDelta} \le 1-\delta$ is defined analogously.

We then characterize the optimal score functions that maximize $\bar{R}_{n, \Pow}$-efficiency (up to the infinitesimal error $\omega_{N_n}$), as stated in the following theorem.

\begin{theorem}\label{thm:inverse-optimal-rule}
Suppose Assumptions~\ref{asmp:blocks-independence}, \ref{asmp:main}~\ref{subasmp:bounded-variance}, and \ref{asmp:heavy-tokensa} hold.
Fix a block $\VM = \IM_k^\zeta$ consisting of $m_k$ minimal units, and assume that $\Pow$ assigns the class $\overline{\PM}_{\Delta_{\VM}}$ with $\Delta_{\VM} \in (0, 1)$ to $\VM$.
Define
\begin{align}\label{eq:optimal-detection-rule-def}
  h_{{\VM}}^{\mathrm{inv}}(\bm{y}) 
  := \log \frac{f_{\bm{\bDelta}_{\VM}}(\bm{y})}{f_{0}(\bm{y})}
  \quad \text{with} \quad \bm{y} = (y_1,\ldots,y_{m_k}), \quad
  \bm{\bDelta}_{\VM} = (\Delta_{\VM},\ldots, \Delta_{\VM}) \in \RB^{m_k},
\end{align}
where $f_{0}$ and $f_{\bm{\bDelta}_{\VM}}$ denote the asymptotic null and alternative PDFs, respectively, as given in Theorem~\ref{thm:inverse-asymptotic-distribution}.
The score functions $\{h_{{\VM}}^{\mathrm{inv}}\}_{\VM \in \Pi}$ maximizes the $\bar{R}_{n, \Pow}$-efficiency defined in~\eqref{eq:new-efficiency}, in the sense that
\[
\lim_{M \to \infty} \bar{R}_{n, \Pow}\left(\{[h_{{\VM}}^{\mathrm{inv}}]_{[-M, M]}\}_{\VM \in \Pi}\right) = \infty,
\]
where $[\cdot]_{[-M, M]}$ denotes the clipping operator onto the interval $[-M, M]$.
\end{theorem}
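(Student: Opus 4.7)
The plan is to combine the Chernoff-type lower bound on $R_{n,\Pow}$ from Theorem~\ref{thm:power} with the singular support structure of the asymptotic densities in Theorem~\ref{thm:inverse-asymptotic-distribution}. The main observation is that once $\Delta_{\VM}>0$, the alternative density $f_{\bm{\bDelta}_{\VM}}$ has strictly smaller support than $f_{0}$, so the clipped log-likelihood-ratio score behaves essentially like an indicator of a null-exclusive region, driving the per-unit detection exponent to $+\infty$ as the clipping level $M\to\infty$.

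First I would reduce to a per-unit statement. Instantiating the infimum in \eqref{eq:non_asymptotic_bound_B} at the feasible point $\theta=1$ and invoking the asymptotic distributions of Theorem~\ref{thm:inverse-asymptotic-distribution}, I obtain the additive lower bound
\[
\bar{R}_{n,\Pow}(\bh)\;\ge\; -\frac{1}{N_n}\sum_{\VM\in\Pi}\sup_{\bm{\bDelta}\in\DM_{\VM}} L'(h_{\VM},\bm{\bDelta})\;-\;\omega_{N_n},
\]
where $\DM_{\VM}=\{\bm{\bDelta}\in\RB^{m_{\VM}}:\Delta_{\VM}\le \bDelta_{j}\le 1-\delta\}$. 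Since the clipped score $[h^{\mathrm{inv}}_{\VM}]_{[-M,M]}$ trivially satisfies Assumption~\ref{asmp:main}\ref{subasmp:bounded-variance}, and since $\omega_{N_n}$ is fixed in $M$, it suffices to show that for each $\VM$ one has $\sup_{\bm{\bDelta}\in\DM_{\VM}} L'([h^{\mathrm{inv}}_{\VM}]_{[-M,M]},\bm{\bDelta})\to-\infty$ as $M\to\infty$. I would then establish a support-monotonicity lemma: inspecting \eqref{eq:joint-density-representationh1good}, the quantities $Y^{\pm}_{\bm{\sigma}}$ depend on $\bm{\bDelta}$ only through the increasing map $\bDelta_{k,\ell}\mapsto \bDelta_{k,\ell}/(1-\bDelta_{k,\ell})$, so $A^{\bm{\bDelta}}_{\bm{\sigma}}$ is coordinatewise non-decreasing and $B^{\bm{\bDelta}}_{\bm{\sigma}}$ non-increasing in $\bm{\bDelta}$. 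This gives $\mathrm{supp}(f_{\bm{\bDelta}})\subseteq \mathrm{supp}(f_{\bm{\bDelta}_{\VM}})$ for every $\bm{\bDelta}\in\DM_{\VM}$, and setting $A_{\VM}:=\mathrm{supp}(f_{0})\setminus \mathrm{supp}(f_{\bm{\bDelta}_{\VM}})$ one checks $\PB_{0}(A_{\VM})>0$ as soon as $\Delta_{\VM}>0$, while $f_{\bm{\bDelta}}\equiv 0$ on $A_{\VM}$ for every feasible $\bm{\bDelta}$.

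Next I would control the two pieces of $L'$ separately. On $A_{\VM}$ the clipped score is pinned at $-M$, so
\[
\EB_{f_{0}}\bigl[[h^{\mathrm{inv}}_{\VM}]_{[-M,M]}\bigr]\;\le\; -M\,\PB_{0}(A_{\VM})\;+\;C,
\]
where $C$ bounds the residual cross-entropy on $A_{\VM}^{c}$; finiteness of $C$ follows from the piecewise-polynomial structure of \eqref{eq:joint-density-representationh1good}, which produces at worst integrable logarithmic boundary singularities of $f_{0}\log(f_{0}/f_{\bm{\bDelta}_{\VM}})$. For the second term, since $f_{\bm{\bDelta}}\equiv 0$ on $A_{\VM}$ and $\exp(-[h^{\mathrm{inv}}_{\VM}]_{[-M,M]})=\mathrm{clip}(f_{0}/f_{\bm{\bDelta}_{\VM}},e^{-M},e^{M})$, I split $A_{\VM}^{c}$ by the three clipping regimes and use uniform boundedness of the ratio $f_{\bm{\bDelta}}/f_{\bm{\bDelta}_{\VM}}$ over $\bm{\bDelta}\in\DM_{\VM}$ to conclude $\sup_{\bm{\bDelta}}\EB_{f_{\bm{\bDelta}}}[\exp(-[h^{\mathrm{inv}}_{\VM}]_{[-M,M]})]=O(1)$ uniformly in $M$. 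Combining, $\sup_{\bm{\bDelta}} L' \le -M\,\PB_{0}(A_{\VM})+O(1)\to-\infty$, which yields $\bar{R}_{n,\Pow}\to\infty$ as claimed.

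The main obstacle I anticipate is the uniform control in the previous paragraph on the thin boundary layer $\{[h^{\mathrm{inv}}_{\VM}]<-M\}\cap A_{\VM}^{c}$, where the integrand carries a factor of $e^{M}$. The saving relies on the pointwise inequality $f_{\bm{\bDelta}_{\VM}}<e^{-M}f_{0}$ defining this layer, paired with a uniform bound $f_{\bm{\bDelta}}\le K\,f_{\bm{\bDelta}_{\VM}}$ with $K$ depending only on $\Delta_{\VM}$ and $\delta$; these combine to give $e^{M}\int_{\text{layer}} f_{\bm{\bDelta}}\le K\int f_{0}\le K$, taming the exponential blow-up. Producing such a constant $K$ that is genuinely independent of $\bm{\bDelta}\in\DM_{\VM}$ requires a careful geometric inspection of the piecewise formula \eqref{eq:joint-density-representationh1good}, and I expect this to be the most technical step of the argument; the remainder of the proof is essentially a book-keeping exercise combining the support-monotonicity lemma with the Chernoff-type reduction.
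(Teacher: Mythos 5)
Your proposal is correct and follows essentially the same route as the paper: a per-unit Chernoff reduction at $\theta = 1$, a linear-in-$M$ lower bound on $\EB_{f_0}[h]$ driven by the null-exclusive set $A_\VM$, and a uniform density-ratio bound (the paper's Lemma~\ref{lem:bounded-ratio}) to tame the cross-moment term. Two minor remarks: for Term (II) the paper simply observes that removing the lower clip at $-M$ can only increase $\exp(-h)$, which lands directly on $\int (f_{\bm{\bDelta}'}/f_{\bm{\bDelta}_\VM}) f_0 \le R$ without the three-way case split you propose; and the ratio bound $f_{\bm{\bDelta}'} \le R\, f_{\bm{\bDelta}_\VM}$ turns out to follow from a short Lipschitz argument (Lemma~\ref{lem:joint-density-function-propertiesa}) — $f_{\bm{\bDelta}'}$ vanishes linearly as $\max_\ell y_\ell \to 1-\Delta_\VM$ while a single-term lower bound shows $f_{\bm{\bDelta}_\VM}$ vanishes at the matching linear rate — rather than requiring the full geometric inspection of the piecewise formula you anticipated as the hardest step.
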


As shown in Theorem~\ref{thm:inverse-optimal-rule}, the optimal score function takes the form of a log-likelihood ratio between the asymptotic null and alternative PDFs. This result generalizes the previous result of~\cite[Theorem 4.2]{li2024statistical}, which corresponds to the special case $m_k = 1$, where each block consists of only a single sub-block.
Notably, the efficiency at the rule $h_{{\VM}}^{\mathrm{inv}}$ diverges to infinity. This arises because the null and alternative PDFs $f_0$ and $f_{\bm{\bDelta}}$ differ on their supports, causing the KL divergence (which is essentially the optimal efficiency) to diverge.
In practice, although the asymptotic regime $|\Voca| \to \infty$ only holds approximately, the score function $h_{{\VM}}^{\mathrm{inv}}$ still performs well---particularly when the regularity level $\Delta_{\VM}$ is adaptively selected.

\section{Experiments}
\label{sec:exp}
This section highlights the effectiveness of our framework through synthetic and real-data experiments and shows the practical utility of our proposed methods under pseudorandom collision. All the experiment codes are at \url{https://github.com/lx10077/WatermarkCollision}.

\subsection{Synthetic Studies}
\label{sec:simulation}

\paragraph{Experimental setup.}
We deliberately introduce repetition to evaluate Type~I and Type~II errors under pseudorandom collisions.
We set the vocabulary size to $|\Voca|=10^3$. At each step $t$, with probability $0.9$, a new token is generated according to the considered watermarking scheme. Specifically, we first sample $\Delta_t \sim \mathrm{Unif}(10^{-3}, \Delta_{\max})$ for a prespecified $\Delta_{\max} \in (0,1)$, and then independently construct an NTP distribution $\bP_t$ satisfying $\max_{\token \in \Voca} \bP_{t,\token} = 1 - \Delta_t$. The NTP distribution interpolates between a Zipf law \citep{zipfHumanBehaviorPrinciple2016} and the uniform distribution,\footnote{See Algorithm~1 in the appendix of \citet{li2025optimal} for details.} with $\Delta_{\max}$ controlling its degree of randomness or entropy.
 
With the remaining probability $0.1$, we introduce repetition through two independent mechanisms. With probability $0.05$, we insert a segment sampled from a growing pool of previously used segments, which is updated whenever a new segment is generated or observed. With another $0.05$, we copy a contiguous block from the generated prefix: draw a length $L \in \{1, \ldots, L_{\max}\}$ with $L_{\max} = 5$, select a valid start uniformly, and replicate the block as the next output. Since the repeat decision is independent of the mechanism, this yields a decoupled corruption setup, enabling direct comparisons of Type~I and Type~II errors across score functions. The simulation results for $\Delta_{\max} = 0.7$ are shown in Figure~\ref{fig:simulation}, while further implementation details and additional results for other values of $\Delta_{\max}$ are provided in Supplementary~\ref{append:simulation}.

\begin{figure}[!t]
\centering
\vspace{-0.1in}
\includegraphics[width=\linewidth]{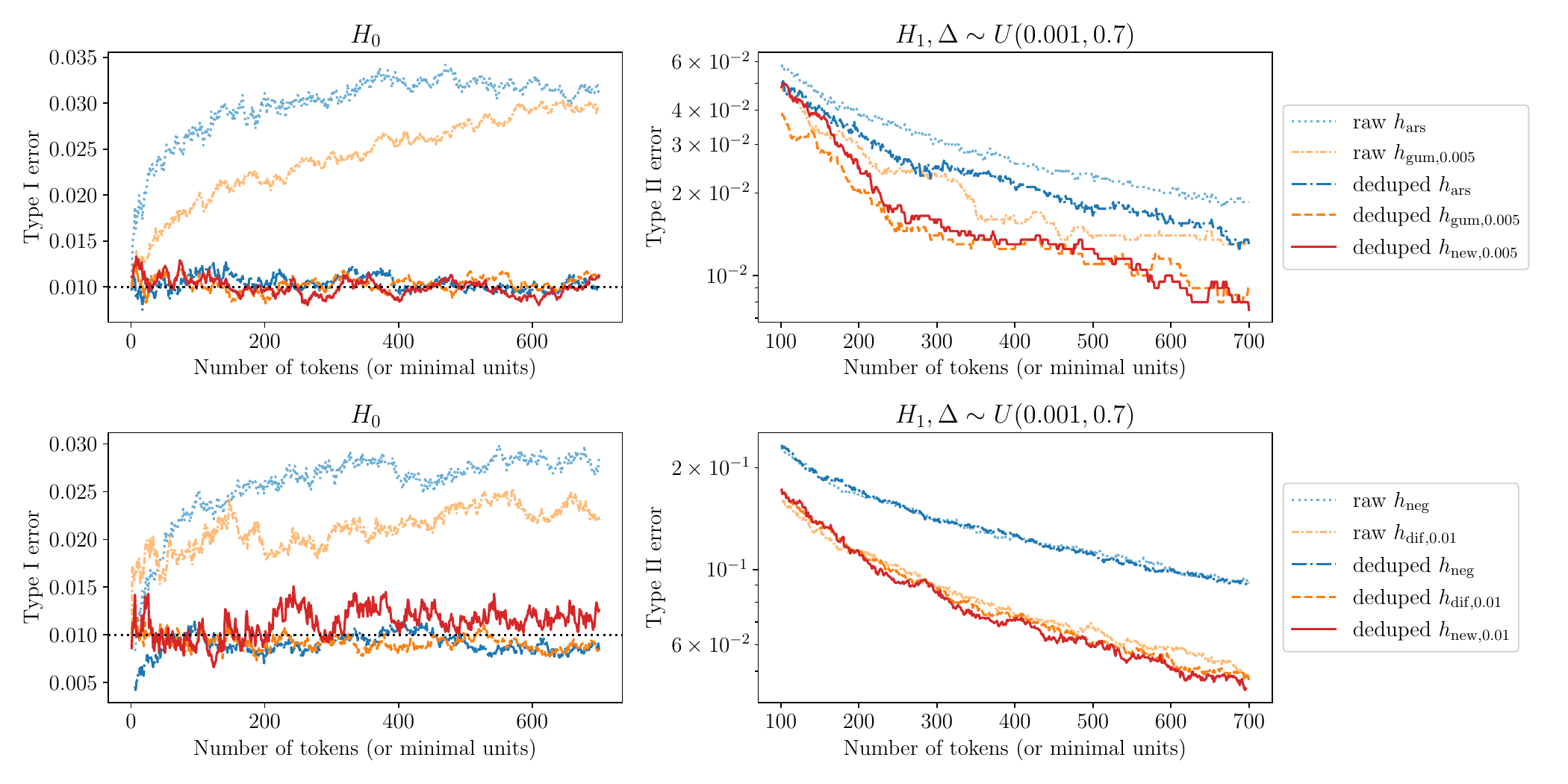}
\caption{Type~I errors (left) and Type~II errors (right, log scale) on synthetic datasets for the Gumbel-max watermark (top) and the inverse-transform watermark (bottom). 
Our new detection rules are denoted by $h_{\mathrm{new},\Delta}$. Here, ``raw'' or ``deduped'' indicates that the detection rule is applied to raw or unique pivotal statistics.}
\vspace{-0.1in}
\label{fig:simulation}
\end{figure}
\vspace{-0.1in}

\paragraph{Type I error.}
From the first column of Figure~\ref{fig:simulation}, existing rules, when directly applied to raw data, fail to control Type~I error: at $\alpha=0.01$, their empirical errors (light curves) hover around $0.03$, well above the nominal level. This inflation arises because repeated pivotal statistics are double-counted as independent evidence, inflating the effective sample size and understating variance. In our setup, about 15\%–20\% of the data is repeated on average. We then evaluate the same detection rules, together with our proposed rule in Theorem~\ref{thm:gumbel} and the new rule in Theorem~\ref{thm:inverse-optimal-rule}, after removing repetitions while regenerating until the sequence length is maintained. In contrast, these methods (darker curves) control Type~I error well, with only natural random fluctuation. These results show that treating minimal units as the basic unit is effective for controlling Type~I errors.

\paragraph{Type II error.}
From the second column of Figure~\ref{fig:simulation}, we find that detection rules, when applied to deduplicated data, achieve Type~II errors that are comparable to, and sometimes smaller than, those on raw data. For example, for the Gumbel-max watermark, $\hars$ performs slightly better once repetitions are removed. Both of our proposed detection rules also perform on par with existing state-of-the-art methods: for Gumbel-max, $h_{\mathrm{new},0.005}$ behaves similarly to $\hars$, while for the inverse-transform watermark, $h_{\mathrm{new},0.01}$ matches the performance of $h_{\mathrm{dif},0.01}$ from \citet{li2024statistical}. These findings indicate that modest levels of repetition do not substantially degrade Type II errors or the detection power, as the watermark signal embedded in unique data is already strong. Another reason is that we set a uniform $\Delta$ across all minimal units, which may limit potential gains from explicitly modeling repetition.

\begin{figure}[!tb]
\centering
\vspace{-0.1in}
\includegraphics[width=\linewidth]{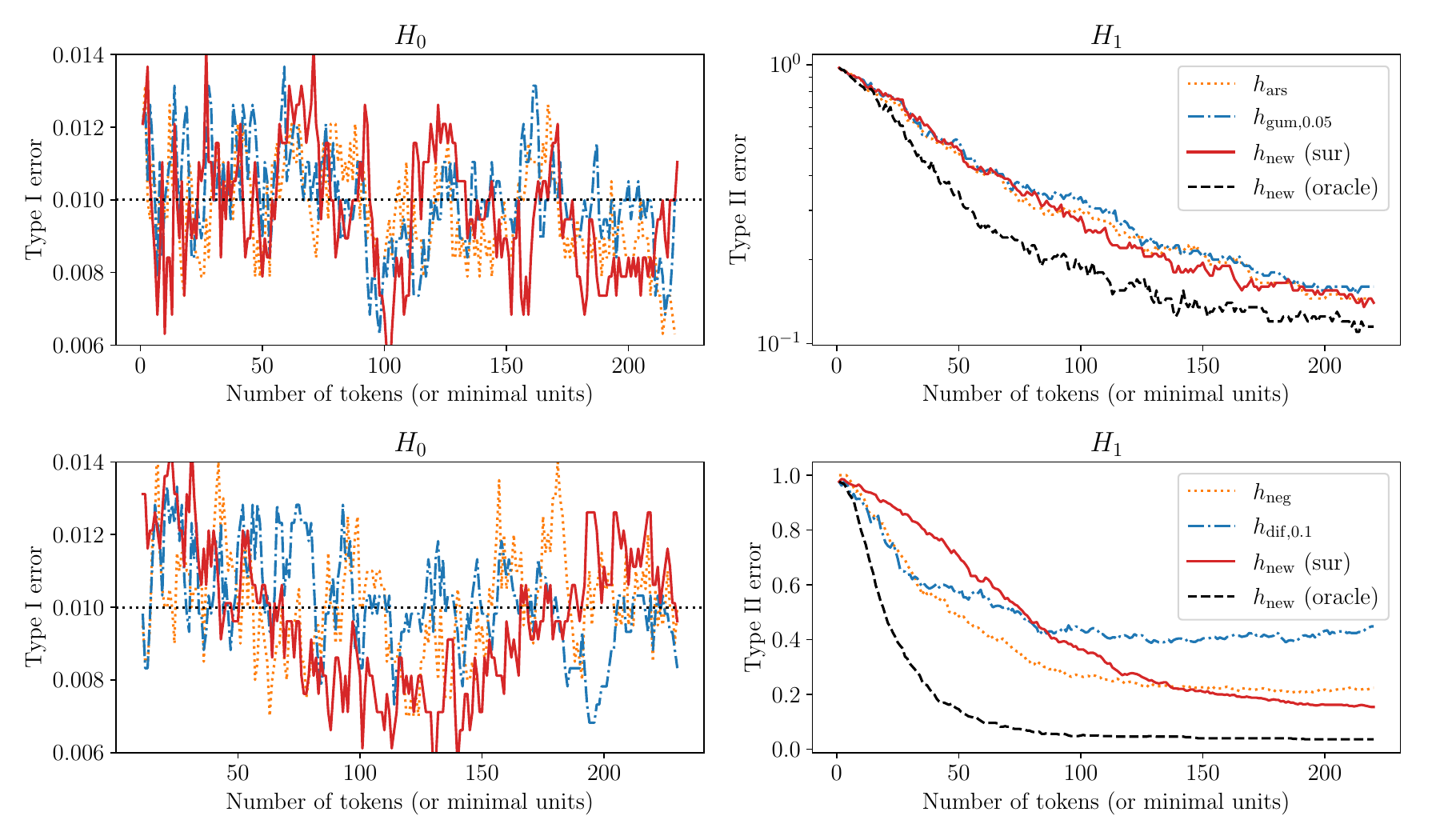}
\caption{Type~I errors (left) and Type~II errors (right, log scale) on C4 dataset for the Gumbel-max watermark (top) and the inverse-transform watermark (bottom). 
Our new detection rules are denoted by $h_{\mathrm{new}}$. Here, ``sur'' and ``oracle'' indicate that the $\Delta$-values are approximated or computed using the ground truth.}
\vspace{-0.1in}
\label{fig:real}
\end{figure}

\subsection{Real-World Examples}
Next, we conduct an empirical analysis of the detection performance of different watermark detection methods on text sequences generated by the language model, OPT-1.3B \citep{zhang2022opt}. We evaluate Type I errors using 2000 human-written samples from the C4 news-like dataset \citep{raffel2020exploring}. To assess Type II errors, we randomly sample prompts from the same dataset, feed them to the model, and let it generate continuations. To ensure a fair evaluation based on unique pivotal statistics, we continue generating until each generated sentence contains at least 300 unique pivotal statistics (or minimal units). This approach guarantees a sufficient number of valid statistics, regardless of the total sequence length. The remaining experimental setup follows \citep{li2024statistical} and is detailed in Supplementary Material~\ref{append:LLM} for completeness.

The empirical Type I (left) and Type II errors (right) are presented in Figure \ref{fig:real}.
The score functions $h_{\mathrm{gum},0.05}$ and $h_{\mathrm{dif},0.1}$ are the two methods proposed in \citep{li2024statistical}, while $h_{\mathrm{ars}}$ and $h_{\mathrm{neg}}$ serve as baseline scores introduced in their original works \citep{scott2023watermarking, kuditipudi2023robust}.
Across most scenarios, all detection methods maintain Type I errors between 0.006 and 0.014, closely aligning with the nominal 0.01 level.
This result is consistent with expectations, as the deduplicated pivotal statistics can be regarded as i.i.d., allowing conventional detection methods to remain effective --- a phenomenon also observed in \citep{fernandez2023three, wu2024distortion}.
To further demonstrate the advantage of our new framework, we consider two approaches for computing the $\Delta$-values for each minimal unit.
The first, denoted as ``oracle,'' uses the ground-truth NTP distributions to compute the regularity level $\Delta_{\VM} = 1 - \max_{t \in \VM}\max_{\token} P_{t,\token}$ for the minimal unit $\VM$.
Since the ground-truth NTP distributions are inaccessible in practice, we introduce a practical surrogate: for a given text, we feed it directly into the detection model (OPT-1.3B in our setup) and autoregressively estimate the NTP distributions.
Although this surrogate approximation omits the preceding context and initial prompt, it still yields a reasonably accurate estimate of $\Delta$. See the red solid curve for $h_{\mathrm{new}}$ ``sur''.
Remarkably, even with this rough approximation, our proposed methods consistently outperform previous state-of-the-art approaches.
Furthermore, when oracle $\Delta$-values are available, our methods demonstrate a clear and substantial advantage, underscoring the effectiveness of our framework in adaptively selecting $\Delta$ for each minimal unit.

\section{Proof of Main Results}

In this section, we provide proof sketches for Theorems~\ref{thm:gumbel} and~\ref{thm:inverse-optimal-rule}, with the proofs of technical lemmas deferred to the Supplementary Material.

\subsection{Proof of Theorem \ref{thm:gumbel}}
\label{sec:proof-gumbel}

Fix a minimal unit $\VM = \{t_1, \dots, t_k\}$ and, without loss of generality, let $Y_{\VM} := Y_{t_1}$. To facilitate analysis, we reparameterize the alternative distribution $F_{\bS}$ of $Y_{\VM}$ in terms of a vector $\bS$ rather than the original NTP distributions $\bP_{\VM}$. This step is motivated by the fact that, as shown in Lemma~\ref{lem:gumbel-alternative}, the mapping $\bP_{\VM} \mapsto F_{\bS}$ is non-convex, making direct optimization over $\bP_{\VM}$ intractable. In contrast, the mapping $\bS \mapsto F_{\bS}$ yields a much simpler structure that is more amenable to analysis.
Specifically, under this parameterization, the alternative distribution takes the form $F_{\bS}(y) = \sum_{w}\frac{S_w y^{1/S_w}}{\sum_{w'}S_{w'}}$, where each $S_w$ is a nonlinear transformation of $\bP_{\VM}$ (see~\eqref{eq:gumbel-alternative-CDF}).

With this reparameterization, we revisit the minimax problem in~\eqref{eq:sub-optimization}, which now is expressed as
\begin{equation}
\label{eq:opt-gumbel-reformulation}
L(h, \bS) 
= \EB_0[h(Y_{t_1})] + \log \EB_{F_\bS}[\re^{-h(Y_{t_1})}]
= \int h(y) F_0(\rd y) + \log \int \re^{-h(y)} F_{\bS}(\rd y),
\end{equation}
where $F_0$ and $F_{\bS}$ denote the null and alternative distributions of $Y_{t_1}$, respectively. 

Let $\DM_{\Delta}$ denote the set of all feasible $\bS$ vectors induced by $\bP_{\VM} \subseteq \FPM$. Our goal is reduced to identify a saddle point pair $(h^\star, \bS^\star)$ that solves the following minimax problem:
\begin{align}
\label{eq:target-minimax}
\min_h \max_{\bS \in \DM_{\Delta}} L(h, \bS) 
= \int h(y) F_0(\rd y) + \log \int \re^{-h(y)} F_{\bS}(\rd y).
\end{align}
 
The function $L(h, \bS)$ is convex in the score function $h$ for a fixed $\bS$, but generally not concave or convex in $\bS$ when $h$ is fixed and $|\VM| > 1$. As a result,  this renders standard minimax tools unusable, and so even the existence of a solution is not guaranteed. We begin by characterizing when a saddle point solution exists in Lemma \ref{lem:saddle}. 

\begin{lem}[Necessity of optimal score functions]
\label{lem:saddle} 
Let $h_\bS = \log (\rd F_\bS/ \rd y)$ denote the loglikelihood ratio with respect to the alternative distribution $F_\bS$.
The saddle point pair $(h^{\star}, \bS^{\star})$ solves the minimax problem \eqref{eq:target-minimax} if and only if there exists a vector $\bS^\star \in \DM_{\Delta}$ such that $h^{\star} = h_{\bS^{\star}}$ and
\begin{equation}
\label{eq:sup-over-S}
\max_{\bS \in \DM_{\Delta}} L(h_{\bS^{\star}}, \bS) =   L(h_{\bS^{\star}}, \bS^\star).
\end{equation}
The optimal objective value is $ -\mathrm{KL}(F_0\|F_{\bS^\star})$ where $F_0 = \UM(0, 1)$ for the Gumbel-max watermark.
\end{lem}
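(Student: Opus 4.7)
The proof splits naturally into two pieces: the inner minimization of $L(h,\bS)$ over $h$ for fixed $\bS$, which is convex and tractable, and the outer maximization over $\bS \in \DM_{\Delta}$, where the non-convexity lives and is absorbed into the condition \eqref{eq:sup-over-S}. My plan is to first pin down the unconstrained minimizer of $h \mapsto L(h,\bS)$ explicitly and then reformulate both saddle-point inequalities through this characterization. A key observation at the outset is the shift invariance $L(h+c,\bS)=L(h,\bS)$ for any constant $c$, so the minimizer in $h$ is identified only up to an additive constant; this is harmless because adding a constant leaves both $L$ and the saddle-point inequalities unchanged.

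For the inner problem, $h \mapsto L(h,\bS)$ is convex (the first term is linear, the second is a log-moment functional that is convex by H\"older's inequality), so any critical point is a global minimizer. Varying along an admissible direction $g$,
\[
\left.\frac{d}{d\varepsilon}L(h+\varepsilon g,\bS)\right|_{\varepsilon=0}
= \int g\,dF_0 - \frac{\int g\,e^{-h}\,dF_\bS}{\int e^{-h}\,dF_\bS},
\]
and requiring this to vanish for all $g$ forces $dF_0 = e^{-h}\,dF_\bS/\int e^{-h}\,dF_\bS$. Since $F_0$ is the uniform distribution on $[0,1]$ with $dF_0/dy = 1$, the minimizer is $h_\bS(y) = \log(dF_\bS/dy)$ up to an additive constant, and direct substitution gives $\int e^{-h_\bS}\,dF_\bS = \int (dF_\bS/dy)^{-1}\,dF_\bS = 1$, whence
\[
L(h_\bS,\bS) = \int \log(dF_\bS/dy)\,dy = -\mathrm{KL}(F_0 \,\|\, F_\bS).
\]

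With the inner minimization settled, both directions of the equivalence follow cleanly. For sufficiency, if $\bS^\star \in \DM_\Delta$ satisfies \eqref{eq:sup-over-S} and we set $h^\star = h_{\bS^\star}$, then $L(h^\star,\bS^\star) \le L(h,\bS^\star)$ for all $h$ is exactly the inner-minimizer property, while $L(h^\star,\bS) \le L(h^\star,\bS^\star)$ for all $\bS \in \DM_\Delta$ is precisely \eqref{eq:sup-over-S}. For necessity, any saddle pair $(h^\star,\bS^\star)$ obeys $L(h^\star,\bS^\star) \le L(h,\bS^\star)$ for all $h$, so $h^\star$ minimizes $L(\cdot,\bS^\star)$ and by the first-variation calculation coincides with $h_{\bS^\star}$ up to the immaterial constant; the remaining saddle inequality is then exactly \eqref{eq:sup-over-S}, and the optimal value is $L(h_{\bS^\star},\bS^\star) = -\mathrm{KL}(F_0 \,\|\, F_{\bS^\star})$. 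The main obstacle I foresee is justifying the variational calculation rigorously over the admissible class of score functions, in particular interchanging differentiation and integration and handling possibly unbounded $h$; Assumption~\ref{asmp:main}~\ref{subasmp:bounded-variance} supplies the integrability needed for this, and everything else reduces to routine convex-analysis bookkeeping combined with the Gibbs variational identity.
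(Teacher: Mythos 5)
Your proof is correct and follows essentially the same route as the paper's: both reduce the problem to the Gibbs/Donsker--Varadhan identity that $\min_h L(h,\bS) = -\mathrm{KL}(F_0\|F_\bS)$ with the minimizer $h_\bS = \log(dF_\bS/dy)$, and then translate the two saddle-point inequalities into $h^\star = h_{\bS^\star}$ and condition \eqref{eq:sup-over-S}. The only cosmetic difference is that you derive the inner minimizer by a first-variation calculation whereas the paper invokes the Donsker--Varadhan representation by name, and you verify the two saddle inequalities directly whereas the paper works through the chain of minimax/maximin equalities characterizing a saddle point; the content is the same.
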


Lemma~\ref{lem:saddle} implies that any optimal score function corresponding to a saddle point pair must be of the log-likelihood ratio form $h_{\bS}$ for some $\bS \in \DM_\Delta$, and that such functions are always non-decreasing from Lemma \ref{lem:gumbel-alternative}. Hence, it suffices to restrict our attention to non-decreasing $h$. A similar approach is used in \citet{li2024statistical}, but while their feasible domain $\FPM$ is straightforward, our domain $\DM_\Delta$ is substantially more complex, as shown in Lemma~\ref{lem:domain}.

\begin{lem}[Properties of the domain $\DM_{\Delta}$]
\label{lem:domain}
$\DM_{\Delta}$ is a permutation-invariant set.\footnote{It means that for any permutation \( \pi \in \mathrm{Perm}(\Voca) \), the permuted vector \( \pi(\bS) := (S_{\pi(w)})_{w \in \Voca} \) also belongs to \( \DM_{\Delta} \).}
For any $\bS = (S_w)_{w \in \Voca}$ in $\DM_{\Delta}$, it follows that \textnormal{(i)} $0 \le S_w \le 1-\Delta$ for any $w$, \textnormal{(ii)} $\sum_{w} S_w \le 1$, \textnormal{(iii)} $\frac{\max_{w} S_w}{1-\Delta} \le 1 - \frac{1-\sum_{w} S_w }{|\VM| \wedge |\Voca|}$, and \textnormal{(iv)} $\bS_{\Delta}^{\star} := (\frac{1-\Delta}{1+\frac{\Delta}{|\VM|\wedge |\Voca|-1}},0,\cdots,0) \in \DM_{\Delta}$.
\end{lem}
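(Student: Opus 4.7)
The plan is to anchor all four claims to a single probabilistic identity valid under the Gumbel-max coupling within $\VM$: conditioning on the shared pseudorandomness $\zeta = (U_{w'})_{w' \in \Voca}$, one has $S_w = \PB[\SMmax(\bP_t, \zeta) = w \text{ for all } t \in \VM]$. I would first verify this by observing that the event is equivalent to $U_{w'} \le U_w^{r_{w,w'}}$ for every $w' \ne w$, where $r_{w,w'} = \max_t P_{t,w'}/P_{t,w}$, so that integrating out $U_w \sim \mathrm{Unif}(0,1)$ yields $1/\sum_{w'} r_{w,w'}$, matching the defining formula for $S_w$. Permutation invariance of $\DM_\Delta$ then follows because $\FPM$ is closed under coordinate permutations, so applying $\pi$ to every $\bP_t$ keeps $\bP_{\VM}$ inside $\FPM^{|\VM|}$ and produces $\pi(\bS)$. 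Property (i) follows because the defining event for $S_w$ is contained in $\{\SMmax(\bP_{t_0}, \zeta) = w\}$ for any $t_0 \in \VM$, giving $S_w \le P_{t_0, w} \le 1-\Delta$. Property (ii) follows since the events indexed by distinct $w$ are disjoint, so $\sum_w S_w \le 1$.

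For (iv), I would exhibit a concrete $\bP_{\VM}$ realizing $\bS_{\Delta}^{\star}$. Taking $k = |\VM| \wedge |\Voca|$ and assuming $|\Voca| \ge k+1$ (the boundary case $|\Voca| = k$ being handled by a small modification), index $w_1$ as the heavy token and $w_2, \ldots, w_{k+1}$ as secondary tokens, and define $k$ NTP distributions by $P_{t_j, w_1} = 1 - \Delta$ and $P_{t_j, w_i} = \Delta/(k-1)$ for every $i \in \{2, \ldots, k+1\} \setminus \{j+1\}$; pad with duplicates when $|\VM| > k$. By construction each secondary $w_i$ is absent from exactly one $\bP_{t_j}$, which forces $S_{w_i} = 0$, while $\max_t P_{t, w_i}/P_{t, w_1} = \Delta/((k-1)(1-\Delta))$ holds uniformly over secondaries, yielding $S_{w_1} = (1-\Delta)(k-1)/(k-1+\Delta)$ as required.

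The principal obstacle is (iii), because the parameterization $\bP_{\VM} \mapsto \bS$ is highly non-convex and admits no obvious minimax duality. My strategy is to recast (iii) through the probabilistic identity. Writing $w^* = \arg\max_w S_w$, $C = \{\text{all tokens in }\VM\text{ coincide}\}$ with $\PB[C] = s$, and $c_w = |\{t : \SMmax(\bP_t, \zeta) = w\}|$, the law of total expectation yields the identity $\sum_t P_{t, w^*} = k S_{w^*} + (1-s)\,\EB[c_{w^*} \mid C^c]$, since the contribution on $C$ equals $k \cdot \PB[\{c_{w^*} = k\}] = k S_{w^*}$. Combining this with the uniform bound $\sum_t P_{t, w^*} \le k(1-\Delta)$, the target inequality $kS_{w^*} \le (1-\Delta)(k-1+s)$ is reduced to the sufficient auxiliary bound $\EB[c_{w^*} \mid C^c] \ge 1-\Delta$. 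To prove this auxiliary inequality, I plan to argue that the extremal configurations must satisfy $P_{t, w^*} = 1-\Delta$ for all $t$ (since lowering any $P_{t, w^*}$ below $1-\Delta$ only loosens the binding constraint while reducing $kS_{w^*}$), and then on this restricted sub-family parametrize the remaining freedom by the vector of secondary maxima $(m_{w'})_{w' \ne w^*}$ subject to $\sum_{w' \ne w^*} m_{w'} \ge \Delta$, and solve the resulting finite-dimensional optimization via a Schur-convexity argument that localizes the supremum at the two equality cases identified in the discussion of Theorem~\ref{thm:gumbel}, namely $\bS_{\Delta}^{\star}$ itself and the all-identical family with $P_{w^*} = 1-\Delta$. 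The main technical hurdle will be controlling the positive dependence induced by the Gumbel coupling among the events $\{\SMmax(\bP_t, \zeta) = w^*\}$, which rules out independence-based large-deviation tools and forces a direct joint analysis.
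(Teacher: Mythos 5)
The probabilistic identity $S_w = \PB[\SMmax(\bP_t,\zeta)=w\text{ for all }t\in\VM]$ is a genuine and elegant reframing that the paper does not use. It gives clean one-line proofs of permutation invariance, (i) (via containment in the single-token event, $S_w\le P_{t_0,w}\le 1-\Delta$), and (ii) (disjointness of the coincidence events, $\sum_w S_w=\PB[C]\le 1$). Your construction for (iv) is essentially the paper's: each secondary token is killed by exactly one $\bP_{t_j}$ so that $S_{w_i}=0$ for $i\ge 2$, and the ratio bookkeeping gives $S_{w_1}=\frac{(1-\Delta)(k-1)}{k-1+\Delta}$. Up to here the argument is correct, and for (i)--(ii) it is arguably more transparent than the paper's algebraic squeeze $(\sum_t 1/P_{t,w}-(|\VM|-1))^{-1}\le S_w\le\min_t P_{t,w}$.

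The treatment of (iii) is where the proof is not closed. Three concrete issues. First, your law-of-total-expectation identity is $\sum_{t\in\VM}P_{t,w^*}=|\VM|\,S_{w^*}+\EB[c_{w^*}\mathbf 1_{C^c}]$, with $|\VM|$, not $k=|\VM|\wedge|\Voca|$; the target bound $kS^*\le(1-\Delta)(k-1+s)$ follows from $\EB[c_{w^*}|C^c]\ge 1-\Delta$ only when $|\VM|\le|\Voca|$. When $|\VM|>|\Voca|$ the reduction requires the strictly stronger statement $\EB[c_{w^*}|C^c]\ge(1-\Delta)\,|\VM|/|\Voca|$, which you do not address. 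Second, the monotonicity step ``lower $P_{t,w^*}$ below $1-\Delta$ only loosens the binding constraint while reducing $kS_{w^*}$'' is not justified: modifying $\bP_{t_0}$ also changes $s=\sum_w S_w$ (hence the right-hand side of (iii)) and can change which index attains $\arg\max_w S_w$, so it is not a one-sided perturbation. Third, the final step --- ``a Schur-convexity argument that localizes the supremum at the two equality cases'' --- is the actual content of (iii), and as stated it appears to lean on the extreme-point characterization of $\DM_\Delta$ (Lemma~\ref{lem:extreme-points} and Lemma~\ref{lem:maximum-CDF-polyhedron}), which the paper derives \emph{from} the present lemma; invoking them here would be circular, and the paper's Schur-convexity tool (Lemma~\ref{lem:schur-convex}) concerns a different functional, $\bS\mapsto\int \re^{-h}\,\rd F_\bS$, not the quantity you need. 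In contrast, the paper proves (iii) directly by sandwiching: an upper bound on $S_w$ in terms of $\sum_{w'\ne w}\max_t P_{t,w'}$, a lower bound on $\sum_{w'\ne w}S_{w'}$ in terms of $\sum_{w'\ne w}\min_t P_{t,w'}$ and $\sum_{w'\ne w}\max_t P_{t,w'}$, and the bridging inequality $\sum_{w'\ne w}\min_t P_{t,w'}+(k-1)\sum_{w'\ne w}\max_t P_{t,w'}\ge k\Delta$ --- all purely algebraic and avoiding the conditional-expectation detour. To salvage your route you would need to (a) handle the $|\VM|>|\Voca|$ regime separately, and (b) give a self-contained proof of the auxiliary inequality $\EB[c_{w^*}\mathbf 1_{C^c}]\ge(1-\Delta)(1-s)$; neither is done, so as written the key claim (iii) is not established.
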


Now, solving the minimax problem \eqref{eq:target-minimax} reduces to identifying a feasible vector $\bS^\star$ that satisfies condition \eqref{eq:sup-over-S}. This requires understanding both \textnormal{(i)} the geometry of the feasible domain $ \DM_{\Delta} $ and \textnormal{(ii)} how to achieve the maximum in the mapping $ \bS \mapsto \EB_{F_{\bS}}[\re^{-h(Y_{t_1})}]=\int \re^{-h(y)} F_\bS(\rd y)$ for any fixed $ y \in [0,1] $ and a given function $ h $.
The first issue is addressed in detail in Lemma~\ref{lem:domain}, while the second issue can be approached by noting that the mapping is \emph{Schur-convex} in $\bS$. In principle, the maximum of a Schur-convex function over a permutation-invariant domain typically occurs at its boundary.  
Hence, both Lemmas \ref{lem:domain} and \ref{lem:schur-convex} assist in solving the inner maximization problem in \eqref{eq:target-minimax}.

\begin{defn}[Schur-convexity]
\label{def:schur}
A function $F$ is Schur-convex if it is isotonic and preserves order. Specifically, if $\bx$ is majorized by $\by$, denoted by, $\bx \le_{m} \by$, then it must satisfy $F(\bx) \le F(\by)$.
For two vectors $\bx, \by \in \RB^d$, $\bx \le_{m} \by$ if and only if \textnormal{(i)} $\sum_{i=1}^k y_{(i)} \ge \sum_{i=1}^k x_{(i)}$ for all $k=1, 2, \ldots, d$ with $y_{(1)} \ge \ldots \ge y_{(d)}$ and $x_{(1)} \ge \ldots \ge x_{(d)}$ the ordered entries and \textnormal{(ii)} $\sum_{i=1}^d x_i = \sum_{i=1}^d y_i$.
\end{defn}

\begin{lem}[Schur-convexity]
\label{lem:schur-convex} 
For any non-decresing function $h$, the map $\bS \mapsto \int \re^{-h(y)} F_\bS(\rd y)$ is Schur-convex in $\bS$.
\end{lem}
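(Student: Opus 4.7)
The plan is to reduce the Schur-convexity claim about the multivariate map to ordinary one-variable convexity, and then establish that convexity via a Laplace-type representation together with a layer-cake decomposition that crucially uses the monotonicity of $h$.

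First, Lemma~\ref{lem:gumbel-alternative} gives $F_{\bS}(y)=T(\bS)^{-1}\sum_{w\in\Voca}S_w y^{1/S_w}$ on $[0,1]$, with $T(\bS):=\sum_{w\in\Voca}S_w$, whose density is $f_{\bS}(y)=T(\bS)^{-1}\sum_w y^{1/S_w-1}$. Hence
\[
\int_0^1 e^{-h(y)}\,F_{\bS}(\rd y)=\frac{1}{T(\bS)}\sum_{w\in\Voca} G(S_w),\qquad G(s):=\int_0^1 e^{-h(y)}\,y^{1/s-1}\,\rd y.
\]
Because majorization preserves the total sum, $T(\bS)$ is invariant along any $\bS\le_m\bS'$, so the prefactor $1/T(\bS)$ plays no role. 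It therefore suffices to show that the symmetric separable sum $\bS\mapsto\sum_w G(S_w)$ is Schur-convex, which by the classical criterion for symmetric sums is equivalent to $G$ being convex on $(0,\infty)$.

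Second, I would change variables $y=e^{-u}$ to rewrite $G$ as a Laplace-type integral,
\[
G(s)=\int_0^\infty \phi(u)\,e^{-u/s}\,\rd u,\qquad \phi(u):=e^{-h(e^{-u})}.
\]
Because $h$ is non-decreasing and $u\mapsto e^{-u}$ is decreasing, $\phi$ is non-negative and non-decreasing. Invoking the layer-cake identity $\phi(u)=\phi(0^+)+\int_{(0,\infty)}\mathbf{1}_{u\ge v}\,\rd\phi(v)$ with non-negative Stieltjes measure $\rd\phi$, followed by Tonelli, yields
\[
G(s)=\phi(0^+)\,s+\int_{(0,\infty)} s\,e^{-v/s}\,\rd\phi(v).
\]
An elementary computation gives $\tfrac{\rd^2}{\rd s^2}\bigl(s\,e^{-v/s}\bigr)=(v^2/s^3)e^{-v/s}\ge 0$ for every $v\ge 0$, so each atom $s\mapsto s\,e^{-v/s}$ is convex. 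Hence $G$ is a non-negative mixture of convex functions plus a linear term, and therefore convex, which combined with the reduction above delivers the claim.

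The decisive step is the convexity of $G$: a direct second derivative in $s$ produces an expression of indefinite sign, and monotonicity of $h$ truly cannot be dropped---if one only assumes $\phi\ge 0$, then for instance the choice $\phi=\delta_{u_0}$ yields $G(s)=e^{-u_0/s}$, which is concave on $s>u_0/2$. The layer-cake representation is thus essential, as it encodes the non-decreasing assumption on $h$ as a non-negative mixing measure over the convex family $\{s\mapsto s\,e^{-v/s}\}_{v\ge 0}$. Remaining technical items---the Tonelli interchange (justified by non-negativity), integrability of $G$ for admissible $S_w$, and the boundary case $S_w=0$ via the continuous extension $G(0)=0$---are routine and do not obstruct the argument.
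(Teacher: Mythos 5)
Your proof is correct, and it takes a genuinely different route from the paper's. The paper first proves an auxiliary lemma that the CDF map $\bS \mapsto F_{\bS}(y)$ is Schur-convex for each fixed $y$ (via the Schur--Ostrowski criterion and a mean-value-theorem computation), then integrates by parts to write $\int e^{-h(y)}\,F_{\bS}(\rd y) = e^{-h(1)} + \int_0^1 F_{\bS}(y)\,e^{-h(y)}\,h(\rd y)$, so that monotonicity of $h$ enters as the non-negativity of the Stieltjes measure $h(\rd y)$ and the result follows by taking a non-negative mixture of Schur-convex functions. You instead exploit the separable form of the density directly, writing the objective as $T(\bS)^{-1}\sum_w G(S_w)$, observing that $T(\bS)$ is constant along any majorization chain, and reducing the claim to convexity of the single-variable function $G$; you then prove that convexity through the Laplace change of variables $y = e^{-u}$ and a layer-cake decomposition of $\phi(u) = e^{-h(e^{-u})}$, which encodes the monotonicity of $h$ as a non-negative mixing measure over the manifestly convex family $\{s \mapsto s\,e^{-v/s}\}_{v \ge 0}$. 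Your reduction to one-variable convexity is arguably more elementary since it avoids the Schur--Ostrowski machinery, and your observation that a direct second derivative of $G$ has indefinite sign (so that the layer-cake rewriting is truly load-bearing) is a nice diagnostic; the paper's route, on the other hand, isolates a reusable auxiliary statement about the CDF. Both are sound. The only loose ends in yours are the routine ones you already flag---interchanging integration via Tonelli, treating $G$ as extended-real-valued where necessary, and the continuous extension $G(0^+) = 0$---none of which obstructs the argument.
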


\begin{lem}[Reduced domain]
\label{lem:maximum}
Let $\HM_{\Delta} =\{\bS: \frac{1-\Delta}{1+\frac{\Delta}{|\VM|\wedge |\Voca|-1}} 
\le \sum_{w}S_w \}$ be a half-space.
For any non-decreasing function $h$, 
\[
\max_{\bS \in \DM_{\Delta}} \int \re^{-h(y)} F_\bS(\rd y)
 = \max_{\bS \in \DM_{\Delta} \cap \HM_{\Delta}}\int \re^{-h(y)} F_\bS(\rd y).
\]
\end{lem}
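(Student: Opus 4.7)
The plan is to reduce the maximization over the complicated set $\DM_{\Delta}$ to a one-dimensional comparison in the total mass $\sigma := \sum_{w \in \Voca} S_w$. The key observation is that the extreme point $\bS_\Delta^\star$ from Lemma~\ref{lem:domain}(iv) is itself a single-spike vector with total mass exactly $c := \frac{1-\Delta}{1+\frac{\Delta}{|\VM|\wedge|\Voca|-1}}$, and $c$ is precisely the threshold defining $\HM_\Delta$. Thus, it suffices to show that $\bS_\Delta^\star$ maximizes the objective over $\{\bS \in \DM_\Delta : \sum_w S_w \le c\}$, since the complementary region $\{\sum_w S_w > c\}$ is contained in $\HM_\Delta$ by definition.

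The first step is a Schur-convexity reduction. For any nonnegative $\bS$ with total mass $\sigma > 0$ (not required to lie in $\DM_\Delta$), introduce the single-spike surrogate $\bS^{(\sigma)} := (\sigma, 0, \ldots, 0) \in \RB^{|\Voca|}$. A direct check of the ordered-partial-sum criterion in Definition~\ref{def:schur} yields $\bS \le_{m} \bS^{(\sigma)}$, so by Lemma~\ref{lem:schur-convex},
\[
\int \re^{-h(y)}\, F_{\bS}(\rd y) \;\le\; \int \re^{-h(y)}\, F_{\bS^{(\sigma)}}(\rd y) \;=:\; \Phi(\sigma),
\]
where plugging into the CDF formula of Lemma~\ref{lem:gumbel-alternative} (with the convention $0 \cdot y^{1/0} = 0$) gives $F_{\bS^{(\sigma)}}(y) = y^{1/\sigma}$. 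Note that $\bS^{(\sigma)}$ itself need not belong to $\DM_\Delta$; Schur-convexity is a pointwise property of the functional and only uses that $F_{\bS^{(\sigma)}}$ is a well-defined CDF. The change of variables $y = U^{\sigma}$ with $U \sim \UM(0,1)$ then yields $\Phi(\sigma) = \EB[\re^{-h(U^{\sigma})}]$. Since $\sigma \mapsto U^{\sigma}$ is pointwise non-increasing for $U \in (0,1)$ and $\re^{-h}$ is non-increasing (as $h$ is non-decreasing), the integrand $\re^{-h(U^{\sigma})}$ is pointwise non-decreasing in $\sigma$, so $\Phi$ is non-decreasing on $(0,1]$.

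Combining the two ingredients, for any $\bS \in \DM_\Delta$ with $\sigma = \sum_w S_w \le c$,
\[
\int \re^{-h(y)}\, F_{\bS}(\rd y) \;\le\; \Phi(\sigma) \;\le\; \Phi(c) \;=\; \int \re^{-h(y)}\, F_{\bS_\Delta^\star}(\rd y),
\]
where the last identity uses $\bS_\Delta^\star = \bS^{(c)}$, and Lemma~\ref{lem:domain}(iv) guarantees $\bS_\Delta^\star \in \DM_\Delta \cap \HM_\Delta$. For $\bS \in \DM_\Delta$ with $\sigma > c$, the vector already lies in $\HM_\Delta$ by definition. Taking suprema on both sides yields the non-trivial inequality $\sup_{\bS \in \DM_\Delta} \le \sup_{\bS \in \DM_\Delta \cap \HM_\Delta}$; the reverse direction is immediate.

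The main technical obstacle I anticipate lies in handling score functions $h$ for which $\re^{-h}$ is unbounded, so that the integrals involved may be infinite. In that case, both the Schur-convexity comparison and the monotonicity of $\Phi$ should be established via a truncation argument using $h_M := h \wedge M$ followed by passage to the limit $M \to \infty$, so that the identities propagate in $[0,+\infty]$. A minor bookkeeping point is ensuring that the majorization $\bS \le_{m} \bS^{(\sigma)}$ is applied consistently in ambient dimension $|\Voca|$ by padding with zeros, which is automatic but worth noting explicitly.
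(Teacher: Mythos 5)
Your proposal is correct and takes essentially the same route as the paper: reduce to the single-spike vector $\bS^{(\sigma)}$ via Schur-convexity (Lemma~\ref{lem:schur-convex}), then show the spike-vector objective is monotone in the total mass $\sigma$, and conclude by comparison with $\bS_\Delta^\star$, the extreme point established in Lemma~\ref{lem:domain}(iv). The only cosmetic difference is in the monotonicity step: the paper integrates by parts and uses that $F_{\bS_1}(y)=y^{1/S_1}$ is pointwise increasing in $S_1$, whereas you use the change of variables $Y=U^\sigma$ to rewrite $\Phi(\sigma)=\EB[\re^{-h(U^\sigma)}]$ and argue pointwise monotonicity of the integrand; the two arguments are equivalent, and your version is arguably a bit cleaner since it sidesteps the integration by parts.
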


Moreover, Lemma \ref{lem:maximum} shows that the maximum of the objective over the domain \( \DM_{\Delta} \) always lies within the half-space \( \HM_{\Delta} \). Consequently, any points in \( \DM_{\Delta} \setminus \HM_{\Delta} \) are suboptimal and can be safely excluded from consideration. 
This reduction allows us to focus on the reduced domain \( \DM_{\Delta} \cap \HM_{\Delta} \). To proceed with the proof, we aim to characterize its convex envelope, which provides a tractable outer approximation while preserving all potential maximizers of the objective.

\begin{lem}[Convex envelope of $\DM_{\Delta} \cap \HM_{\Delta}$]
\label{lem:extreme-points}
We define new sets $\KM_{\Delta}$ and $\EM_{\Delta}$ by
\begin{gather*}
\KM_{\Delta} = \left\{\bS: 
\forall w,0 \le S_w \le 1-\Delta,~\frac{1-\Delta}{1+\frac{\Delta}{|\VM|\wedge |\Voca|-1}} 
\le \sum_{w}S_w \le 1
~\text{and}~
\frac{\max_{w} S_w}{1-\Delta} \le 1 - \frac{1-\sum_{w} S_w }{|\VM|\wedge |\Voca|} \right\},\\
\EM_{\Delta} := \{ \pi(\bP_{\Delta}^{\star}), \pi(\bS_{\Delta}^{\star}),~\forall \pi \in \mathrm{Perm}(\Voca) \},
\end{gather*}
where $\bP_{\Delta}^{\star} := \left(1-\Delta,\Delta,0,\cdots,0\right)$ and $\bS_{\Delta}^{\star} := (\frac{1-\Delta}{1+\frac{\Delta}{|\VM|\wedge |\Voca|-1}},0,\cdots,0)$.
With $\Delta \in (0, 0.5]$, then
\begin{enumerate}  
    \item $\KM_{\Delta}$ is a convex polyhedron with extreme points given by  $\EM_{\Delta} $, that is, $\KM_{\Delta} = \mathrm{conv}(\EM_{\Delta}) $.
    \item $\EM_{\Delta} \subseteq \DM_{\Delta} \cap \HM_{\Delta}$.
    \item $\KM_{\Delta}$ is the convex envelop of $\DM_{\Delta} \cap \HM_{\Delta}$, that is, $\KM_{\Delta} = \mathrm{conv}(\DM_{\Delta} \cap \HM_{\Delta})$.
\end{enumerate}
\end{lem}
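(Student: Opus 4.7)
The plan is to establish the three statements in order, building on Lemma~\ref{lem:domain} and exploiting the polyhedral structure of $\KM_\Delta$. First, for part~(1), I would rewrite the defining constraints of $\KM_\Delta$ as linear inequalities: non-negativity $S_w \ge 0$, the two sum bounds, and, crucially, the max constraint which splits into $|\Voca|$ linear inequalities $S_w \le (1-\Delta)(K-1+\sum_{w'} S_{w'})/K$ for each $w$, where $K := |\VM|\wedge|\Voca|$; note that the upper bound $S_w \le 1-\Delta$ is redundant once $\sum_{w'} S_{w'} \le 1$ is imposed. This certifies $\KM_\Delta$ as a polyhedron, so its extreme points are precisely the vertices cut out by $|\Voca|$ linearly independent active constraints. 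I would then enumerate vertices by case analysis on which sum constraint is tight. When $\sum_w S_w = 1$ is active, a short count (using $\Delta \le 1/2$ to rule out two coordinates simultaneously at $1-\Delta$) shows the only feasible configurations are permutations of $\bP_\Delta^\star = (1-\Delta,\Delta,0,\ldots,0)$. When $\sum_w S_w = \sigma_{\min} := (1-\Delta)/(1+\Delta/(K-1))$ is active, a direct calculation (already performed implicitly in verifying $\bS_\Delta^\star \in \DM_\Delta$) shows the max constraint reduces to $S_w \le \sigma_{\min}$, so the entire mass must concentrate on one coordinate, yielding permutations of $\bS_\Delta^\star$. Finally, when neither sum constraint is active, the $|\Voca|$ active constraints must come from non-negativity plus tight max constraints; solving the induced linear system for $\sigma$ returns $\sigma = \sigma_{\min}$ (when one max is tight) or forces $\sigma > 1$ (when two or more are tight, under $\Delta \le 1/2$), a contradiction. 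Hence no new vertex appears and $\KM_\Delta = \mathrm{conv}(\EM_\Delta)$.

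For part~(2), I would verify each element of $\EM_\Delta$ lies in $\DM_\Delta \cap \HM_\Delta$. Membership of $\bS_\Delta^\star$ in $\DM_\Delta$ is exactly Lemma~\ref{lem:domain}(iv), and $\sum_w [\bS_\Delta^\star]_w = \sigma_{\min}$ places it in $\HM_\Delta$. For $\bP_\Delta^\star$, I would exhibit an explicit witness: take $\bP_t = (1-\Delta,\Delta,0,\ldots,0) \in \FPM$ for every $t \in \VM$, and plug into the closed form $S_w = \bigl(\sum_{w' \ne w} \max_{t \in \VM} P_{t,w'}/P_{t,w} + 1\bigr)^{-1}$ from Lemma~\ref{lem:gumbel-alternative}. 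A direct computation gives $S_1 = 1-\Delta$ and $S_2 = \Delta$, while for $w \ge 3$ the denominator diverges so $S_w = 0$; to avoid division-by-zero ambiguity I would make this rigorous through a vanishing $\varepsilon$-perturbation $\bP_t^{(\varepsilon)} = (1-\Delta-(|\Voca|-2)\varepsilon,\Delta,\varepsilon,\ldots,\varepsilon)$ and pass to the closed limit in $\DM_\Delta$. Permutation invariance of $\DM_\Delta$ from Lemma~\ref{lem:domain} then promotes this to every $\pi(\bP_\Delta^\star)$ and $\pi(\bS_\Delta^\star)$, giving $\EM_\Delta \subseteq \DM_\Delta \cap \HM_\Delta$.

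For part~(3), the two previous parts already assemble the result. Lemma~\ref{lem:domain}(i)--(iii) together with the defining inequality of $\HM_\Delta$ show that every $\bS \in \DM_\Delta \cap \HM_\Delta$ satisfies the constraints defining $\KM_\Delta$, so $\DM_\Delta \cap \HM_\Delta \subseteq \KM_\Delta$, and convexity of $\KM_\Delta$ upgrades this to $\mathrm{conv}(\DM_\Delta \cap \HM_\Delta) \subseteq \KM_\Delta$. Conversely, part~(1) gives $\KM_\Delta = \mathrm{conv}(\EM_\Delta)$, and part~(2) gives $\EM_\Delta \subseteq \DM_\Delta \cap \HM_\Delta$, so $\KM_\Delta = \mathrm{conv}(\EM_\Delta) \subseteq \mathrm{conv}(\DM_\Delta \cap \HM_\Delta)$, and equality follows.

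The main obstacle is the vertex enumeration in part~(1): the max constraint expands into $|\Voca|$ linear inequalities whose interaction with the sum bounds is delicate, and the feasibility of candidate vertices depends on the hypothesis $\Delta \in (0,1/2]$ in a nontrivial way (e.g., two coordinates pinned at $1-\Delta$ is infeasible exactly because $\Delta < 1/2$ forces their sum to exceed $1$). Handling the degenerate vertex $\bS_\Delta^\star$, where more than $|\Voca|$ constraints are active simultaneously, also requires care to avoid spuriously generating new extreme points from linearly dependent combinations. The rest of the argument is largely bookkeeping on top of Lemmas~\ref{lem:gumbel-alternative} and~\ref{lem:domain}.
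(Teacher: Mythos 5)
Your proposal is correct in substance, but for part~(1) it takes a genuinely different route from the paper. You enumerate the vertices of $\KM_\Delta$ directly through the active-constraint characterization of polyhedral extreme points: expand the max constraint into $|\Voca|$ linear inequalities, argue redundancy of $S_w \le 1-\Delta$, and run a case analysis on which (if either) sum bound is tight, showing that $\Delta \le 1/2$ rules out all candidate vertices beyond permutations of $\bP_\Delta^\star$ and $\bS_\Delta^\star$. The paper instead avoids vertex enumeration entirely via a majorization argument: for any $\bS \in \KM_\Delta$ with $C=\sum_w S_w$, it exhibits the two-support vector $\sS_{\mathrm{new}} = \bigl((1-\Delta)(1-\tfrac{1-C}{K}),\, C-(1-\Delta)(1-\tfrac{1-C}{K}),\,0,\ldots\bigr)$ which majorizes $\bS$ by the max constraint, invokes the Hardy--Littlewood--P\'olya fact (Lemma~\ref{lem:convex-combination}) that a majorized vector is a convex combination of the majorizing vector and its permutations, and then observes that $\sS_{\mathrm{new}}$ lies on the line segment from $\bS_\Delta^\star$ to $\bP_\Delta^\star$. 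This one-shot argument sidesteps exactly the two issues you flag as the ``main obstacle'' --- the interaction of the max constraint with the sum bounds, and linear dependence at the degenerate vertex $\bS_\Delta^\star$ --- because it never needs to certify linear independence of active constraints at all. Your approach is more elementary and self-contained (it does not need the Schur/majorization machinery), but the paper's is shorter and cleaner. Parts~(2) and~(3) of your argument coincide with the paper's; one small remark on part~(2) is that the paper does not bother with an $\varepsilon$-perturbation to place $\bP_\Delta^\star$ in $\DM_\Delta$ --- it simply evaluates the map of Lemma~\ref{lem:gumbel-alternative} at $\bP_t \equiv \bP_\Delta^\star$ under the convention $1/\infty = 0$, which gives $S_w=0$ for $w\ge 3$ directly without passing to a limit (and thus without needing $\DM_\Delta$ to be closed, a point your limiting argument implicitly relies on but does not verify).
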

By Lemma~\ref{lem:extreme-points}, the convex envelope of \( \DM_{\Delta} \cap \HM_{\Delta} \) is characterized by \( \KM_{\Delta} \), which forms a convex polyhedron whose extreme points are explicitly known. This structure enables the inner maximization to be reduced to a binary comparison, leveraging permutation invariance and Schur-convexity. In particular, Lemma~\ref{lem:maximum-CDF-polyhedron} shows that maximizing over this relaxed domain \( \KM_{\Delta} \) is straightforward.

\begin{lem}[Maximum over polyhedron]
\label{lem:maximum-CDF-polyhedron}
Let the points \( \bP_{\Delta}^{\star} \), \( \bS_{\Delta}^{\star} \), and the set \( \KM_{\Delta} \) be as defined in Lemma \ref{lem:extreme-points}. When $\Delta \in (0, 0.5)$, it follows that for any non-decreasing function $h$, 
\[
\max_{\bS \in \KM_{\Delta}}\int \re^{-h(y)} F_\bS(\rd y)
= \max\left\{ \int \re^{-h(y)} F_{\bP_{\Delta}^{\star}}(\rd y), \int \re^{-h(y)} F_{\bS_{\Delta}^{\star}}(\rd y)\right\}.
\]
\end{lem}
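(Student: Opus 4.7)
The plan is to combine the polyhedral description $\KM_{\Delta} = \mathrm{conv}(\EM_{\Delta})$ from Lemma \ref{lem:extreme-points} with the Schur-convexity of $f(\bS) := \int \re^{-h(y)} F_\bS(\rd y)$ from Lemma \ref{lem:schur-convex}. Since Schur-convex functions are invariant under permutations of their argument, every extreme point in $\EM_\Delta$ yields one of only two values of $f$, namely $f(\bP_\Delta^\star)$ or $f(\bS_\Delta^\star)$. The bulk of the work is therefore to reduce the maximum over all of $\KM_\Delta$ to the maximum over $\EM_\Delta$.

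My approach is to split any $\bS \in \KM_\Delta$ into its $\bP$- and $\bS$-type parts. Writing $\bS = \sum_i \lambda_i \pi_i(\bP_\Delta^\star) + \sum_j \mu_j \tau_j(\bS_\Delta^\star)$ with $\sum_i \lambda_i + \sum_j \mu_j = 1$, I set $\alpha := \sum_i \lambda_i$ and $\beta := 1 - \alpha$, and let $\bar{\bP}$ and $\bar{\bS}$ denote the corresponding normalized means, so $\bS = \alpha \bar{\bP} + \beta \bar{\bS}$ with $\bar{\bP}$ in the convex hull of the permutations of $\bP_\Delta^\star$ and $\bar{\bS}$ in the convex hull of the permutations of $\bS_\Delta^\star$. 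By the classical identification of the permutohedron with the majorization polytope (Rado / Hardy--Littlewood--P\'olya), $\bar{\bP}$ is majorized by $\bP_\Delta^\star$ and $\bar{\bS}$ by $\bS_\Delta^\star$, so Schur-convexity gives $f(\bar{\bP}) \le f(\bP_\Delta^\star)$ and $f(\bar{\bS}) \le f(\bS_\Delta^\star)$.

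The remaining step, and the main obstacle, is to verify that $f$ is convex along the segment $\{\alpha \bar{\bP} + (1-\alpha)\bar{\bS} : \alpha \in [0,1]\}$, which would yield $f(\bS) \le \alpha f(\bar{\bP}) + (1-\alpha) f(\bar{\bS}) \le \max\{f(\bP_\Delta^\star), f(\bS_\Delta^\star)\}$ and finish the proof. This convexity cannot be deduced from the pointwise behavior of $F_\bS(y)$, which may be concave in $\bS$ for a fixed $y$. Instead, I would exploit the mixture representation $F_\bS(y) = \sum_w (S_w/T)\,y^{1/S_w}$ with $T := \sum_w S_w$, so that $f(\bS) = T^{-1}\sum_w S_w \Psi(S_w)$ with $\Psi(s) := \int_0^1 \re^{-h(y)}\,s^{-1}\,y^{1/s - 1}\,\rd y$. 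Parameterizing the segment by $\alpha$ makes each $S_w$ and $T$ affine in $\alpha$, reducing the convexity question to a one-dimensional second-derivative check in which the monotonicity of $h$ controls the sign of the cross terms. This ties the argument back to the explicit coordinates of the extreme points in $\EM_\Delta$ from Lemma \ref{lem:extreme-points}.
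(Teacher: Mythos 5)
Your reduction to the vertices via $\KM_{\Delta}=\mathrm{conv}(\EM_{\Delta})$, and the use of Rado's theorem plus Schur-convexity to replace the permutation-averages $\bar{\bP}$ and $\bar{\bS}$ by $\bP_\Delta^\star$ and $\bS_\Delta^\star$, are both fine. The gap is exactly the ``main obstacle'' you flag: the map $f(\bS):=\int \re^{-h(y)}F_\bS(\rd y)$ is \emph{not} convex along the segment joining $\bS_\Delta^\star$ and $\bP_\Delta^\star$, so the second-derivative check you sketch cannot go through. Concretely, take $|\VM|\wedge|\Voca|=2$, $\Delta=0.45$, and $h=\hars$, so that $\re^{-h(y)}h(\rd y)=\rd y$ and $f(\bS)=\sum_w\Phi(S_w)/\sum_w S_w$ with $\Phi(S)=\int_0^1 S\,y^{1/S}\rd y=S^2/(1+S)$. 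Here $\bS_\Delta^\star=(0.55/1.45,0,\dots)$ and $\bP_\Delta^\star=(0.55,0.45,0,\dots)$. Along $\bS_\alpha=\alpha\bP_\Delta^\star+(1-\alpha)\bS_\Delta^\star$, with $G(\alpha)=\sum_w\Phi(S_w(\alpha))$, $T(\alpha)=\sum_w S_w(\alpha)$, one has $T^3 f''=G''T^2-2T'G'T+2(T')^2G$; evaluating at $\alpha=1$ gives $\approx 0.1485-0.4166+0.2580=-0.010<0$. So $f$ is strictly concave at $\bP_\Delta^\star$ along this direction, and the chord inequality $f(\alpha\bar{\bP}+(1-\alpha)\bar{\bS})\le\alpha f(\bar{\bP})+(1-\alpha)f(\bar{\bS})$ that your proof needs is false.

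The paper sidesteps this. It first freezes the total mass $C=\sum_w S_w$ and uses Schur-convexity \emph{within the $C$-slice} to reduce to the unique point $\lambda\bS_\Delta^\star+(1-\lambda)\bP_\Delta^\star$ with that mass; no cross-slice convexity of $f$ is invoked. Then, instead of a $(\lambda,1-\lambda)$-convexity bound, it proves the pointwise CDF inequality
\[
F_{\lambda\bS_\Delta^\star+(1-\lambda)\bP_\Delta^\star}(y)\;\le\;\mu\,F_{\bS_\Delta^\star}(y)+(1-\mu)\,F_{\bP_\Delta^\star}(y),
\qquad
\mu=\frac{\lambda c}{\lambda c+1-\lambda},\quad c=\frac{1-\Delta}{1+\Delta/(|\VM|\wedge|\Voca|-1)},
\]
obtained by applying the convexity of $S\mapsto S\,y^{1/S}$ only to the leading coordinate and the monotonicity bound $y^{1/S_2}\le y^{1/\Delta}$ to the second. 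Because $\mu<\lambda$ for $\lambda\in(0,1)$, this is a reweighted bound strictly weaker than segment-convexity of $f$ --- and it is what is actually true. Integrating against the positive measure $\re^{-h(y)}h(\rd y)$ then gives $f(\lambda\bS_\Delta^\star+(1-\lambda)\bP_\Delta^\star)\le\mu f(\bS_\Delta^\star)+(1-\mu)f(\bP_\Delta^\star)\le\max\{f(\bS_\Delta^\star),f(\bP_\Delta^\star)\}$. To repair your proof, you would need to replace the segment-convexity step by this (or some other) pointwise CDF bound with data-dependent weights.
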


With all supporting lemmas established, we are ready to prove Theorem \ref{thm:gumbel}.

\begin{proof}[Proof of Theorem \ref{thm:gumbel}]

By Lemma \ref{lem:saddle}, solving the minimax problem \eqref{eq:target-minimax} by a saddle point reduces to obtaining a feasible solution $\bS \in \DM_{\Delta}$ that satisfies the optimality condition \eqref{eq:sup-over-S}. 
By Lemmas \ref{lem:maximum}, \ref{lem:extreme-points}, and \ref{lem:maximum-CDF-polyhedron}, for any non-decreasing function $h$, 
 \[
\max_{\bS \in \DM_{\Delta}}\int \re^{-h(y)} F_\bS(\rd y) = \max\left\{ \int \re^{-h(y)} F_{\bP_{\Delta}^{\star}}(\rd y), \int \re^{-h(y)} F_{\bS_{\Delta}^{\star}}(\rd y)\right\}.
\]
This implies that the maximum is achieved by either $\bS_{\Delta}^\star$ or $\bP_{\Delta}^\star$. According to Lemma \ref{lem:saddle}, if an optimal score function exists, it must be either $h_{\bS_{\Delta}^\star}$ or $h_{\bP_{\Delta}^\star}$. Therefore, we verify whether either pair---$(h_{\bS_{\Delta}^{\star}}, \bS_{\Delta}^{\star})$ or $(h_{\bP_{\Delta}^{\star}}, \bP_{\Delta}^{\star})$---solves the minimax problem \eqref{eq:target-minimax}.

\begin{itemize}
\item If $h_{\bS_{\Delta}^\star}$ is the optimal score function, then it must satisfy $L(h_{\bS_{\Delta}^\star}, \bS_{\Delta}^\star)  \ge L(h_{\bS_{\Delta}^\star}, \bP_{\Delta}^\star)$.
  This condition is equivalent to the inequality
  \begin{equation}\label{eq:if-and-only-if1}
  1 \ge \int \re^{-h_{\bS_{\Delta}^\star}(y)} F_{\bP_{\Delta}^\star}(\rd y) = \int \frac{\rd F_{\bP_{\Delta}^\star}}{\rd F_{\bS_{\Delta}^\star}} \rd F_0,
  \end{equation}
  which leads to an algebraic constraint. By numerically solving this condition, we identify the first valid parameter range: $\Delta \in [0, \Delta_1^\star)$.

\item If $h_{\bP_{\Delta}^\star}$ is the optimal score function, it must satisfy $ L(h_{\bP_{\Delta}^\star}, \bP_{\Delta}^\star) \ge L(h_{\bP_{\Delta}^\star}, \bS_{\Delta}^\star).$
  This is equivalent to the inequality
  \begin{equation}\label{eq:if-and-only-if2}
  1 \ge \int \re^{-h_{\bP_{\Delta}^\star}(y)} F_{\bS_{\Delta}^\star}(\rd y) = \int \frac{\rd F_{\bS_{\Delta}^\star}}{\rd F_{\bP_{\Delta}^\star}} \rd F_0.
  \end{equation}
  Numerically solving this condition yields the second valid range: $\Delta \in (\Delta_2^\star, 0.5]$.

\item 
We always have $\Delta_1^{\star} \le \Delta_2^{\star}$ because the Chebyshev inequality ensures that the sum of the right-hand sides of both \eqref{eq:if-and-only-if1} and \eqref{eq:if-and-only-if2} is at least $2$. This implies that the intervals $[0, \Delta_1^\star)$ and $(\Delta_2^\star, 0.5]$ are disjoint, and hence $\Delta_1^{\star} \le \Delta_2^{\star}$. By Lemma~\ref{lem:saddle}, no optimal score function exists when $\Delta \in (\Delta_1^\star, \Delta_2^\star)$. The gray region in Figure~\ref{fig:deltavalue} highlights where this breakdown occurs.
\end{itemize}
\end{proof}

\subsection{Proof of Theorem~\ref{thm:inverse-optimal-rule}}
\label{sec:inverse-optimal-rule-proof}

Recall that $\Pow = \{\overline{\PM}_{\Delta_{\VM}}\}_{\VM \in \Pi}$.
For the score functions $\bh=\{h_{\VM}\}_{\VM \in \Pi}$, it follows that
\begin{align}
\bar{R}_{n, \Pow}(\bh)
&\overset{(a)}{\ge} \liminf_{|\Voca|\to \infty} B_{n, \Pow}(\bh) - \omega_{N_n}, \nonumber \\
&\overset{(b)}{\ge} - \inf_{\theta \ge 0} \limsup_{|\Voca| \to \infty} \frac{1}{N_n} \sum_{\VM \in \Pi} \left( 
\theta\, \EB_0[h_{\VM}(Y_{\VM})] + \sup_{\bP_{\VM} \subseteq \overline{\PM}_{\Delta_{\VM}}} \log \phi_{\bP_{\VM}, h_{\VM}}(\theta)
\right)- \omega_{N_n} \nonumber\\
&\overset{(c)}{\ge} - \limsup_{|\Voca| \to \infty} \frac{1}{N_n} \sum_{\VM \in \Pi} \left(\EB_0[h_{\VM}(Y_{\VM})] + \sup_{\bP_{\VM} \subseteq \overline{\PM}_{\Delta_{\VM}}} \log \phi_{\bP_{\VM}, h_{\VM}}(1)
\right)- \omega_{N_n} \nonumber \\
&\overset{(d)}{=} -  \frac{1}{N_n} \sum_{\VM \in \Pi} \limsup_{|\Voca| \to \infty} \left(\EB_0[h_{\VM}(Y_{\VM})] + \sup_{\bP_{\VM} \subseteq \overline{\PM}_{\Delta_{\VM}}} \log \phi_{\bP_{\VM}, h_{\VM}}(1)
\right)- \omega_{N_n}, \label{eq:Inverse-R-lower-bound}
\end{align}
where (a) applies \eqref{eq:new-efficiency}, (b) uses the expression in \eqref{eq:non_asymptotic_bound_B} with the MGF $\phi_{\bP_{\VM}, h_{\VM}}$ defined in \eqref{eq:moment-generating-function}, (c) follows by setting $\theta = 1$, and (d) exchanges the order of summation and $\limsup$ since the number of minimal units $|\Pi|$ is finite and independent of the vocabulary size $|\Voca|$.

The last lower bound~\eqref{eq:Inverse-R-lower-bound} separates over the scores of each sub-block, so it suffices to consider each subproblem individually. 
Lemma~\ref{lem:inverse-optimal-rule-lower-bound} shows that, as $|\Voca| \to \infty$, the objective function for each subproblem simplifies exactly to \eqref{eq:new-sub-optimization}. 
Its proof essentially exchanges the order of $\limsup$ and $\sup$, and then applies the weak convergence result in Theorem~\ref{thm:inverse-asymptotic-distribution}.

\begin{lem}[Simplified limits]
\label{lem:inverse-optimal-rule-lower-bound}
For a minimal unit $\VM = \IM_k^\zeta$ containing $m_k$ sub-blocks, we represent its associated pivotal statistics $Y_{\VM}$ as the vector $\bm{Y}_k = (Y_{k,1}, \ldots, Y_{k,m_k})$, where each component corresponds to a distinct sub-block. Under Assumptions~\ref{asmp:blocks-independence} and \ref{asmp:heavy-tokensa}, for any Lipschitz continuous function $h : \RB^{m_k} \to \RB$,
\[
\limsup_{|\Voca| \to \infty} \left( \EB_0[h(\bm{Y}_k)] + \sup_{\bP_{\VM} \subseteq \overline{\PM}_{\Delta_{\VM}}} \log \EB_{1, \bP_{\VM}}[\exp(-h(\bm{Y}_k))] \right)  
= \sup_{\Delta_{\VM} \le \bm{\bDelta}' \le 1-\delta} L'(h, \bm{\bDelta}'),
\]
where $\bm{\bDelta}' = (\Delta_{1}', \ldots, \Delta_{m_k}')$ is a regularity-level vector and $L'$ is given in~\eqref{eq:new-sub-optimization}.
\end{lem}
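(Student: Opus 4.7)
The plan is to decompose the expression into its null expectation and alternative log-MGF contributions and treat each via the weak-convergence results of Theorem~\ref{thm:inverse-asymptotic-distribution}. The key simplification is that Assumption~\ref{asmp:heavy-tokensa} pins down the dependence of the asymptotic alternative distribution on $\bP_{\VM}$: condition~(ii) fixes the heavy token in each sub-block $\IM^Y_{k,\ell}$, so $\bP_t$ enters the limit only through the scalar $\Delta_t = 1 - P_{t,(1)}$, while condition~(i) forces $\Delta_t$ into the compact interval $[\Delta_{\VM}, 1-\delta]$. Consequently the supremum over $\bP_{\VM} \subseteq \overline{\PM}_{\Delta_{\VM}}$ reduces, in the limit, to a supremum over the block-wise regularity vector $\bm{\bDelta}' \in [\Delta_{\VM}, 1-\delta]^{m_k}$, with $\Delta_\ell' = \max_{t \in \IM^Y_{k,\ell}} \Delta_t$ (the max arises because only the largest $\Delta_t$ within each sub-block affects the asymptotic joint law via Lemma~\ref{lem:asymptotic-joint-distributions-inverse}).

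The null term is immediate: Theorem~\ref{thm:inverse-asymptotic-distribution} gives $\bm{Y}_k \xrightarrow{d} f_0$ under $H_0$, and since $\bm{Y}_k \in [0,1]^{m_k}$ is bounded and $h$ is Lipschitz (hence bounded continuous on the support), the Portmanteau theorem yields $\EB_0[h(\bm{Y}_k)] \to \EB_{f_0}[h(\bm{Y}_k)]$, which is independent of $\bm{\bDelta}'$ and matches the first term of $L'(h, \bm{\bDelta}')$. For the ``$\ge$'' direction of the alternative term, I would fix any $\bm{\bDelta}' \in [\Delta_{\VM}, 1-\delta]^{m_k}$ and construct a realizing sequence $\bP_{\VM}^{(|\Voca|)} \subseteq \overline{\PM}_{\Delta_{\VM}}$ whose regularity vector equals $\bm{\bDelta}'$, by placing mass $1-\Delta_\ell'$ on the heavy token $w_\ell$ and spreading the rest uniformly over the other $|\Voca|-1$ tokens, so that $P_{(2)} = \Delta_\ell'/(|\Voca|-1) \le \eps_{|\Voca|}$ eventually. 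Theorem~\ref{thm:inverse-asymptotic-distribution} then gives
\[
\EB_{1, \bP_{\VM}^{(|\Voca|)}}\!\bigl[e^{-h(\bm{Y}_k)}\bigr] \;\to\; \EB_{f_{\bm{\bDelta}'}}\!\bigl[e^{-h(\bm{Y}_k)}\bigr],
\]
and taking $\log$---licit because $e^{-h}$ is bounded above and below on $[0,1]^{m_k}$, so the expectations stay uniformly away from $0$ and $\infty$---yields $L'(h, \bm{\bDelta}')$. Supremizing over $\bm{\bDelta}'$ delivers the lower bound.

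The main obstacle is the reverse ``$\le$'' direction, which requires interchanging $\limsup_{|\Voca|\to\infty}$ with $\sup_{\bP_{\VM}}$. I would argue by extracting a near-maximizing sequence: for each $|\Voca|$, choose $\bP_{\VM}^{(|\Voca|)} \subseteq \overline{\PM}_{\Delta_{\VM}}$ within $|\Voca|^{-1}$ of the inner supremum, and let $\bm{\bDelta}'_{|\Voca|}$ be its regularity vector. By compactness of $[\Delta_{\VM}, 1-\delta]^{m_k}$, pass to a subsequence along which $\bm{\bDelta}'_{|\Voca|} \to \bm{\bDelta}'_\infty$. The delicate step is verifying that
\[
\EB_{1, \bP_{\VM}^{(|\Voca|)}}\!\bigl[e^{-h(\bm{Y}_k)}\bigr] \;\longrightarrow\; \EB_{f_{\bm{\bDelta}'_\infty}}\!\bigl[e^{-h(\bm{Y}_k)}\bigr]
\]
along this subsequence---that is, that the weak convergence in Theorem~\ref{thm:inverse-asymptotic-distribution} is locally uniform in the regularity parameters. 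This reduces to (a) joint continuity of $\bm{\bDelta}' \mapsto \EB_{f_{\bm{\bDelta}'}}[e^{-h(\bm{Y}_k)}]$, which follows by dominated convergence from the explicit density formula in Theorem~\ref{thm:inverse-asymptotic-distribution}, and (b) a finite-vocabulary approximation bound of order $O(\log|\Voca| \cdot \eps_{|\Voca|})$ extracted from the proof of that theorem, uniform over $\bm{\bDelta}' \in [\Delta_{\VM}, 1-\delta]^{m_k}$. The Lipschitz hypothesis on $h$ is what converts distributional closeness into closeness of integrated expectations in step~(b) via a smoothing argument. Combining the two inequalities with continuity of $\log$ yields the claimed identity.
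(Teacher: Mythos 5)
Your proposal is correct and reaches the same conclusion by a route that is closely parallel to, but slightly more circuitous than, the paper's. Both proofs hinge on the same essential fact: the weak convergence established in Theorem~\ref{thm:inverse-asymptotic-distribution} is actually uniform over the belief class and the regularity vectors, a fact the paper records explicitly in the technical Theorem~\ref{thm:inverse-perfect-general-technical-main} in the Supplementary Material. Where the paper differs is in how this uniformity is deployed: it applies the uniform bound directly, together with the observation that the supremum operator is nonexpansive (Lemma~\ref{lem:Lipschitz-max}), to exchange the $\limsup_{|\Voca|\to\infty}$ with the outer supremum in a single chain of (in)equalities, after re-expressing the sup over $\bP_{\VM}\subseteq\SPM$ as a double sup via the identity~\eqref{eq:relation-P-Q}. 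You instead split the claim into two inequalities, obtaining ``$\ge$'' by constructing a realizing sequence for each fixed $\bm{\bDelta}'$ and ``$\le$'' by extracting a near-maximizing subsequence and appealing to compactness of $[\Delta_{\VM},1-\delta]^{m_k}$; this buys you a more elementary presentation that never manipulates suprema over the growing-dimensional space of $\bP_{\VM}$'s, at the cost of additional bookkeeping (subsequence extraction plus a separate continuity-in-$\bm{\bDelta}'$ lemma). The only inaccuracy is your stated error rate $O(\log|\Voca|\cdot\eps_{|\Voca|})$ for the finite-vocabulary approximation in step~(b): tracing through Lemma~\ref{lem:concentration-max-cite} and~\eqref{eq:integral-approximation-error1}, the dominant term is $O(\sqrt{\eps_{|\Voca|}})$ (from $\sqrt{P_{t,(2)}\log|\Voca|}$), not $O(\log|\Voca|\cdot\eps_{|\Voca|})$; this is immaterial to the conclusion since both vanish under Assumption~\ref{asmp:heavy-tokensa}, but the exponent matters if one wants quantitative convergence. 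You should also be explicit that the uniform bound needed in step~(b) must hold over all $\bP_{\VM}$ sharing a given regularity vector, not merely over the regularity vectors themselves---this is exactly what Theorem~\ref{thm:inverse-perfect-general-technical-main} supplies.
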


\begin{lem}
\label{lem:inverse-optimal-rule-upper-bound-true}
Let $h_{\VM}^{\mathrm{inv}}(\bm{y}) = \log \frac{f_{\bm{\bDelta}_{\VM}}(\bm{y})}{f_{0} (\bm{y})}$ be defined as in~\eqref{eq:optimal-detection-rule-def}. For any $\Delta_{\VM} \in (0, 1)$, it follows that
\[
\lim_{M \to \infty}\sup_{\Delta_{\VM} \le \bm{\bDelta}' \le 1-\delta} L'([h_{\VM}^{\mathrm{inv}}]_{[-M, M]}, \bm{\bDelta}') =  -\infty,
\]
where $[\cdot]_{[-M, M]}$ denotes the clipping operator onto the interval $[-M, M]$.
\end{lem}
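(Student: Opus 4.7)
The plan is to decompose $L'(h_M, \bm{\bDelta}')$ with $h_M := [h_{\VM}^{\mathrm{inv}}]_{[-M, M]}$, exploiting that the first summand is independent of $\bm{\bDelta}'$:
\[
\sup_{\Delta_{\VM} \le \bm{\bDelta}' \le 1-\delta} L'(h_M, \bm{\bDelta}')
= \EB_{f_0}[h_M(\bm{Y}_k)]
+ \sup_{\bm{\bDelta}'} \log \EB_{f_{\bm{\bDelta}'}}[\exp(-h_M(\bm{Y}_k))].
\]
The strategy is to show that the first term diverges linearly to $-\infty$ in $M$, while the supremum in the second term stays uniformly bounded above in both $M$ and $\bm{\bDelta}'$.

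The analysis of the first term hinges on the support-mismatch set $A := \{\bm{y} : f_{\bm{\bDelta}_{\VM}}(\bm{y}) = 0\}$. Since $h_{\VM}^{\mathrm{inv}} \equiv -\infty$ on $A$, clipping forces $h_M \equiv -M$ there, so
\[
\EB_{f_0}[h_M] = -M \cdot \PB_{f_0}(A) + \EB_{f_0}[h_M \mathbf{1}_{A^c}].
\]
I will prove $\PB_{f_0}(A) > 0$ by exhibiting an open neighborhood of $\bm{y}^\star := (1-\epsilon, \ldots, 1-\epsilon)$ with $0 < \epsilon < (\min_\ell \bDelta_{k,\ell}) \wedge \tfrac{1}{2}$ on which $f_{\bm{\bDelta}_{\VM}} = 0$ but $f_0 > 0$: in~\eqref{eq:joint-density-representationh1good}, every constant-sign piece reduces to $(\epsilon - \max_\ell \bDelta_{k,\ell}(1-\epsilon)/(1-\bDelta_{k,\ell}))_+ = 0$ by the choice of $\epsilon$, and every mixed-sign piece satisfies $L_{\bm{\sigma}} = 1-\epsilon > \epsilon = U_{\bm{\sigma}}$ which already forces vanishing; meanwhile, plugging $u = 1 - \epsilon/2$ into~\eqref{eq:joint-density-representationh0} yields $I_1(u) = I_2(u) = \emptyset$, giving $f_0(\bm{y}^\star) > 0$, and continuity of each piece extends both conclusions to a neighborhood. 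For the residual, the pointwise bound $|h_M \mathbf{1}_{A^c}| \le |h_{\VM}^{\mathrm{inv}} \mathbf{1}_{A^c}|$ together with $f_0 \le 2^{m_k}$ and local integrability of $\log f_{\bm{\bDelta}_{\VM}}$ near the boundary of $\mathrm{supp}(f_{\bm{\bDelta}_{\VM}})$ (from the piecewise-affine form) yields, via dominated convergence, a uniform constant $C$ with $|\EB_{f_0}[h_M \mathbf{1}_{A^c}]| \le C$ for every $M$.

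For the second term, I rely on support monotonicity: if $\bm{\bDelta}' \ge \bm{\bDelta}_{\VM}$ entrywise, then $\mathrm{supp}(f_{\bm{\bDelta}'}) \subseteq \mathrm{supp}(f_{\bm{\bDelta}_{\VM}}) = A^c$. This follows immediately from Lemma~\ref{lem:asymptotic-joint-distributions-inverse}: the admissible region $\{(u, \bm{x}) : \bDelta_{k,\ell} x_\ell \le u \le 1 - \bDelta_{k,\ell}(1-x_\ell)\,\forall \ell\}$ shrinks pointwise as each $\bDelta_{k,\ell}$ grows, and $\mathrm{supp}(\bm{Y}_k)$ is its image under the continuous map $(u, \bm{x}) \mapsto (|u - x_\ell|)_{\ell}$. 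On $A^c$ the clipped score obeys $\exp(-h_M) \le \max(e^{-M}, f_0/f_{\bm{\bDelta}_{\VM}}) \le e^{-M} + f_0/f_{\bm{\bDelta}_{\VM}}$, so
\[
\EB_{f_{\bm{\bDelta}'}}[\exp(-h_M)] \le e^{-M} + \int_{A^c} f_0 \cdot \frac{f_{\bm{\bDelta}'}}{f_{\bm{\bDelta}_{\VM}}} \, d\bm{y} \le e^{-M} + C_\star,
\]
where $C_\star := \sup_{\bm{\bDelta}' \in [\Delta_{\VM}, 1-\delta]^{m_k}} \sup_{\bm{y} \in A^c} f_{\bm{\bDelta}'}(\bm{y})/f_{\bm{\bDelta}_{\VM}}(\bm{y})$ and I used $\int f_0 = 1$. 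Combining,
\[
\sup_{\bm{\bDelta}'} L'(h_M, \bm{\bDelta}') \le -M \cdot \PB_{f_0}(A) + C + \log(e^{-M} + C_\star) \longrightarrow -\infty
\]
as $M \to \infty$, since the positive coefficient $\PB_{f_0}(A)$ dominates the bounded remainders.

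The main obstacle is establishing the uniform density-ratio bound $C_\star < \infty$. The alternative density from Theorem~\ref{thm:inverse-asymptotic-distribution} decomposes into $2^{m_k}$ piecewise-affine pieces whose active regions depend intricately on $\bm{\bDelta}$, and the normalizing constant $I_{m_k}(\bm{\bDelta})$ must be shown to stay bounded away from zero over the compact cube $[\Delta_{\VM}, 1-\delta]^{m_k}$. I expect to resolve this by combining pointwise continuity of $\bm{\bDelta} \mapsto f_{\bm{\bDelta}}(\bm{y})$ with compactness, transferring the explicit one-dimensional bound from Corollary~\ref{cor:inverse-m=1}---where $f_{\Delta'}/f_{\Delta_{\VM}} \le (1-\Delta_{\VM})/\delta$ holds directly---to general $m_k$ via a case analysis over sign vectors, since the constant-sign pieces dominate on the effective support.
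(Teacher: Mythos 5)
Your proposal follows the same blueprint as the paper's proof: decompose $L'$ into the two terms, show the first term decays linearly in $M$ because the null distribution puts positive mass outside the support of $f_{\bm{\bDelta}_{\VM}}$, and show the second term stays bounded via a uniform bound on the density ratio $f_{\bm{\bDelta}'}/f_{\bm{\bDelta}_{\VM}}$ together with support monotonicity. The support-monotonicity observation is correct and is exactly what the paper uses (Lemma~\ref{lem:joint-density-function-propertiesa} characterizes the support as $\prod_\ell [0,1-\bDelta_\ell)$).

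Two remarks. First, for Term (I), the paper avoids your dominated-convergence argument by using a simpler upper bound: the log-likelihood ratio $\log(f_{\bm{\bDelta}_{\VM}}/f_0)$ is bounded above on $A^c$ by the fixed constant $\log M' := \log\bigl(\sup_{\Delta',\bm{y}} f_{1,\Delta'}(\bm{y}) / \inf_{\bm{y} \in [0,1-\Delta)^m} f_0(\bm{y})\bigr)$, so $\EB_{f_0}[h_M\mathbf{1}_{A^c}] \le (\log M')\,\PB_{f_0}(A^c)$ with no need to integrate a near-boundary logarithmic singularity of $\log f_{\bm{\bDelta}_{\VM}}$. Your route through $|\EB_{f_0}[h_M\mathbf{1}_{A^c}]| \le C$ relies on local integrability of $\log f_{\bm{\bDelta}_{\VM}}$ near $\partial A^c$, which you assert from the piecewise-affine form but do not verify (and you only need the one-sided bound anyway). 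Your explicit construction of the point $\bm{y}^\star = (1-\epsilon,\ldots,1-\epsilon)$ to establish $\PB_{f_0}(A)>0$ is correct but unnecessary: since $A = [0,1]^m \setminus [0,1-\Delta_{\VM})^m$ has positive Lebesgue measure for $\Delta_{\VM}>0$ and $f_0 > 0$ on $[0,1)^m$, this is immediate.

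Second — and this is the substantive gap — the constant $C_\star$ is the crux of the lemma, and your proposal only sketches why it should be finite. A plain compactness/continuity argument does not suffice because both numerator and denominator vanish as $\bm{y}$ approaches the boundary of $A^c = [0,1-\Delta_{\VM})^m$, so the ratio is a $0/0$ indeterminate that must be resolved by showing the rates of vanishing match. The paper resolves this precisely in Lemma~\ref{lem:bounded-ratio}: it bounds the numerator $f_{\bm{\bDelta}'}(\bm{y})$ above by $C(1-\Delta_{\VM} - \max_\ell y_\ell)$ via the Lipschitz continuity of $(\bm{y},\bm{\bDelta}) \mapsto f_{\bm{\bDelta}}(\bm{y})$ together with the fact that $f_{\bm{\bDelta}'}$ vanishes at the point where $\max_\ell y_\ell$ is replaced by $1-\Delta'_\ell$, and it bounds the denominator $f_{\bm{\bDelta}_{\VM}}(\bm{y})$ below by $1 - \max_\ell y_\ell/(1-\Delta_{\VM})$ by retaining only the $\bm{\sigma}=(-1,\ldots,-1)$ term in the sum and noting $I_{m_k}(\bm{\bDelta}) \le 1$. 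This pairing of a Lipschitz upper bound with a single-term lower bound is what makes the ratio uniformly bounded; until you supply an argument of this kind, your proof is incomplete at the point you yourself flag as the main obstacle.
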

Finally, Theorem~\ref{thm:inverse-optimal-rule} is obtained by combining the lower bound~\eqref{eq:Inverse-R-lower-bound} with Lemmas~\ref{lem:inverse-optimal-rule-lower-bound} and~\ref{lem:inverse-optimal-rule-upper-bound-true}:
\[
\lim_{M \to \infty} \bar{R}_{n, \Pow}(\{[h_{\VM}^{\mathrm{inv}}]_{[-M, M]}\}_{\VM \in \Pi}) = \infty.
\]

\section{Discussion}
\label{sec:discussion}

In this paper, we study how to optimally perform watermark detection under pseudorandomness collisions, a phenomenon arising from text repetition in both human-written and low-random LLM outputs. Our central idea is to capture the repetition structure through a hierarchical two-layer partition, identifying minimal units within which strong dependence exists but across which independence is preserved. Using these minimal units as basic components, we develop a new non-asymptotic efficiency measure for evaluating detection rules that take the form of sum-based scores over the minimal units. This formulation naturally casts the search for optimal detection rules as a minimax problem. We then apply our framework to two watermarking schemes---the Gumbel-max watermark and the inverse-transform watermark. For both schemes, we derive the corresponding optimal detection rules and show, both theoretically and empirically, that our rules enable valid Type I error control while achieving comparable or even higher detection power. Moreover, our framework provides a theoretical justification for the widely used heuristic of discarding repeated statistics. At a broader level, our contribution of incorporating pseudorandomness collisions into watermark analysis advances the development of statistical foundations for LLMs \citep{su2025large}.

Building on this foundation, our work opens several promising directions for future research.
First, our framework empirically demonstrates the benefit of assigning different regularity levels $\Delta$ to different minimal units. Further efforts could focus on more accurately approximating the NTP distribution for a given text \citep{li2025likelihood}.
Second, our current analysis adopts a $\Delta$-regular belief class of NTP distributions to represent the least favorable case. Exploring alternative or more refined belief classes may sharpen efficiency guarantees and yield stronger detection rules, particularly when the existing worst-case formulation is overly conservative.
Last, many downstream statistical tasks merit reexamination under pseudorandomness collisions. Examples include detection under human edits \citep{li2024robust} and estimation of the proportion of watermarked tokens in AI-mixed text \citep{li2025optimal}. Both problems can be reformulated with minimal units as the basic component, offering a principled alternative to methods that still assume perfect pseudorandomness.

Beyond methodological development, our study also connects to a classical statistical problem called content authenticity.
Traditional approaches such as stylometry and authorship attribution identify an author’s linguistic fingerprints from stylistic patterns like word-length distributions or function-word usage \citep{mendenhall1887characteristic,mosteller1964inference,juola2006authorship,stamatatos2009survey}.
Plagiarism detection represents another related line, leveraging information-retrieval techniques to identify surface-level overlaps with existing corpora \citep{maurya2024comparative}.
Watermark detection, however, is fundamentally distinct, as its objective is not to detect unconscious stylistic features or verbatim copies, but to verify the presence of a deliberately embedded statistical signal with explicitly specified properties \citep{li2024statistical}.
This distinction makes the reliability of the underlying statistical dependence crucial—precisely the aspect that pseudorandomness collisions undermine. At the same time, our framework may inspire new revisitations of these classical authenticity problems, where one could deliberately embed structured dependence or repeated linguistic patterns via watermarking to enhance detectability and robustness in the era of generative AI.

\section*{Acknowledgments}
This work was supported in part by NIH grants U01CA274576, and R01EB036016, NSF grant DMS-2310679, a Meta Faculty Research Award, and Wharton AI for Business. The content is solely the responsibility of the authors and does not necessarily represent the official views of the NIH.

{
\bibliographystyle{abbrv}
\bibliography{bib/chatgpt,bib/privacy,bib/stat}
} 

\appendix
\begin{appendix}
\onecolumn

\begin{center}
{\huge {Supplementary Material}}
\end{center}

This Supplementary Material contains the remaining proofs and technical details.
The proof that supports the general framework is collected in Section \ref{proof:main}.
The proofs about the Gumbel-max watermark are presented in Section \ref{append:gumbel}.
Section \ref{append:inverse} includes the proofs of results for the inverse transform watermark.
Sections \ref{append:simulation} and \ref{append:LLM} contain experiment details for simulation and real-world examples, respectively.

\section{Proof for the General Framework}
\label{proof:main}

\subsection{Proof of Theorem \ref{thm:power}}
\label{proof:power}

\begin{proof}[Proof of Theorem \ref{thm:power}]
By Markov's inequality, it follows that for any $\theta \ge 0$,
\[
\PB_{1}(S_n < \gamma_{n,\alpha}) 
= \PB_{1}(e^{-\theta S_n} > e^{-\theta \gamma_{n,\alpha}})
\le e^{\theta \gamma_{n,\alpha}}\, \EB_{1}[\re^{-\theta S_n}].
\]
Recall that $S_n = \sum_{\VM \in \Pi} h_{\VM}(Y_{\VM})$ and scores for each minimal unit $h_{\VM}(Y_{\VM})$ are independent.
It then follows that
\[
\EB_{1}[\re^{-\theta S_n}] 
= \prod_{\VM \in \Pi}\phi_{\bP_{\VM}, h_{\VM}}(\theta).
\]
Taking logarithms yields
\[
\log \EB_{1}[\re^{-\theta S_n}] = \sum_{\VM \in \Pi} \log \phi_{\bP_{\VM}, h_{\VM}}(\theta).
\]
Thus, the Type II error satisfies
\[
1 - \EB_{1}[T_n] \le \exp\!\left(\theta \gamma_{n,\alpha} +  
\sum_{\VM \in \Pi} \log \phi_{\bP_{\VM}, h_{\VM}}(\theta)\right).
\]
Dividing by $N_n$ (the total number of minimal units $|\Pi|$), we have for any $\theta \ge 0$,
\begin{align}
\label{eq:help1}
\left(1 - \EB_{1}[T_n]\right)^{1/N_n} 
&\le \exp\!\left(\frac{\theta \gamma_{n,\alpha}}{N_n} + 
\frac{1}{N_n}  \sum_{\VM \in \Pi} \log \phi_{\bP_{\VM}, h_{\VM}}(\theta)\right)\nonumber \\
&\le \exp\!\left(\frac{\theta \gamma_{n,\alpha}}{N_n} + 
\frac{1}{N_n} \sum_{\VM \in \Pi}  \sup_{\bP_{\VM} \subseteq \PM_{\VM}}\log \phi_{\bP_{\VM}, h_{\VM}}(\theta)\right).
\end{align}

To proceed with the proof, we introduce a new quantity, denoted by $D_{n, \Pow}(\bh)$:
\begin{align}
\label{eq:Bndefinition}
D_{n, \Pow}(\bh) := -\inf_{\theta \ge 0} \left\{\theta \cdot \frac{\gamma_{n,\alpha}}{N_n} + \frac{1}{N_n} \sum_{\VM \in \Pi} \sup_{\bP_{\VM} \subseteq \PM_{\VM}} \log  \phi_{\bP_{\VM}, h_{\VM}}(\theta) \right\}.
\end{align}
Therefore, by taking the minimum with respect to $\theta \ge 0$ in \eqref{eq:help1}, we have that
\begin{align*} 
&\exp(-R_{n,\Pow}(\bh)) 
=  \sup_{\bP_{\VM} \subseteq \PM_{\VM}, \forall \VM} \left(1 - \EB_{1}[T_n]\right)^{1/N_n}  \\
&\quad \le \exp\!\left( \inf_{\theta \ge 0} \left\{\theta \cdot \frac{\gamma_{n,\alpha}}{N_n} +  \frac{1}{N_n}\sum_{\VM \in \Pi} \sup_{\bP_{\VM} \subseteq \PM_{\VM}} \log  \phi_{\bP_{\VM}, h_{\VM}}(\theta) \right\}\right) = \exp(-D_{n, \Pow}(\bh)),
\end{align*}
which implies that we have $R_{n,\Pow}(\bh) \ge D_{n, \Pow}(\bh)$ for any scores $\bh$.

\begin{lem}
\label{lem:minimaxgamman}
Let Assumptions \ref{asmp:main}~\ref{subasmp:indepedence} and~\ref{subasmp:bounded-variance} hold with $0 < C_{\mathrm{var}} < \infty$ the uniform variance bound for each $h_{\VM}(Y_\VM)$, that is, $\Var_0(h_{\VM}(Y_\VM)) \le  C_{\mathrm{var}}$ for any minimal unit $\VM$.
It follows that for any $\alpha \in (0, 1)$,
\begin{align}
\left| \frac{\gamma_{n,\alpha}}{N_n} - \mu_n \right| \le \varepsilon_0 = \sqrt{\frac{C_{\mathrm{var}}}{N_n \cdot \min(\alpha, 1-\alpha)}}
\quad \text{where} \quad
\mu_n = \frac{1}{N_n} \sum_{\VM \in \Pi}  \EB_0[h_{\VM}].
\end{align}
\end{lem}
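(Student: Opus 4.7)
The plan is a two-sided Chebyshev estimate for $S_n$ around its null mean $\EB_0[S_n] = N_n \mu_n$, combined with the defining inequalities of the $(1-\alpha)$-quantile $\gamma_{n,\alpha}$. The crucial enabling fact is that independence across minimal units (Assumption~\ref{asmp:main}~\ref{subasmp:indepedence}) turns $S_n = \sum_{\VM \in \Pi} h_{\VM}(Y_{\VM})$ into a sum of mutually independent terms under $H_0$, because each score $h_{\VM}$ depends only on $Y_t$ for $t \in \VM$ and hence only on the value of $Y_{\VM}$ as a whole. Combined with the uniform variance control in Assumption~\ref{asmp:main}~\ref{subasmp:bounded-variance}, this yields the key input
\[
\Var_0(S_n) \;=\; \sum_{\VM \in \Pi} \Var_0(h_{\VM}(Y_{\VM})) \;\le\; N_n\, C_{\mathrm{var}},
\]
on which the rest of the argument rests.

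With this variance bound in hand, I would handle the two sides of $\gamma_{n,\alpha}$ separately. For the upper direction, the quantile definition gives $\PB_0(S_n \ge \gamma_{n,\alpha}) \ge \alpha$; if $\gamma_{n,\alpha} - \EB_0[S_n] \ge t$ for some $t > 0$, then Chebyshev's inequality implies
\[
\alpha \;\le\; \PB_0(S_n - \EB_0[S_n] \ge t) \;\le\; \Var_0(S_n)/t^2 \;\le\; N_n C_{\mathrm{var}}/t^2,
\]
forcing $t \le \sqrt{N_n C_{\mathrm{var}}/\alpha}$. For the lower direction, the complementary quantile property $\PB_0(S_n \le \gamma_{n,\alpha}) \ge 1-\alpha$ combined with Chebyshev analogously yields $\EB_0[S_n] - \gamma_{n,\alpha} \le \sqrt{N_n C_{\mathrm{var}}/(1-\alpha)}$. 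Taking the worse of the two bounds and dividing by $N_n$ produces exactly $|\gamma_{n,\alpha}/N_n - \mu_n| \le \sqrt{C_{\mathrm{var}}/(N_n \min(\alpha, 1-\alpha))} = \varepsilon_0$, which is the claim.

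The argument is essentially routine, so I do not expect any serious obstacle. The only points worth double-checking are: (i) that the minimal-unit structure propagates from independence of the $Y_{\VM}$ to independence of the scores $h_{\VM}(Y_{\VM})$, which is immediate since each $h_{\VM}$ acts on a single unit; and (ii) that possible atoms of $S_n$ at $\gamma_{n,\alpha}$ are handled by using the one-sided forms $\PB_0(S_n \ge \gamma_{n,\alpha}) \ge \alpha$ and $\PB_0(S_n \le \gamma_{n,\alpha}) \ge 1-\alpha$ directly, rather than relying on strict inequalities. The appearance of $\min(\alpha, 1-\alpha)$ in $\varepsilon_0$ is precisely what the two separate one-sided applications produce, so no sharper concentration tool (Cantelli, Bernstein, or otherwise) is required.
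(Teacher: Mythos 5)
Your proof is correct and follows essentially the same route as the paper's: establish $\Var_0(S_n) \le N_n C_{\mathrm{var}}$ from independence across minimal units, then apply Chebyshev together with the defining inequalities of the $(1-\alpha)$-quantile to bound $|\gamma_{n,\alpha}/N_n - \mu_n|$. The paper sets $\varepsilon_0$ up front so that the two-sided Chebyshev probability is $\min(\alpha,1-\alpha)$ and then derives a contradiction if $\gamma_{n,\alpha}$ lay outside $[(\mu_n-\varepsilon_0)N_n,(\mu_n+\varepsilon_0)N_n]$; you instead apply one-sided Chebyshev twice, solve for the maximal deviation $t$, and take the worse of the two bounds. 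These are contrapositives of each other and yield the identical $\varepsilon_0$. Your explicit use of $\PB_0(S_n \ge \gamma_{n,\alpha}) \ge \alpha$ and $\PB_0(S_n \le \gamma_{n,\alpha}) \ge 1-\alpha$ (rather than the equalities the paper writes) is a small but genuine improvement in robustness against possible atoms of $S_n$ at the quantile, though under the paper's stated convention $\PB_0(S_n \ge \gamma_{n,\alpha}) = \alpha$ the distinction disappears.
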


\begin{lem}
\label{lem:bounded-optimal-theta}
Under Assumption~\ref{asmp:main}, there exists a universal constant $\overline{M} > 0$, independent of $n$ and the partition $\Pi$, such that for any family of belief classes $\Pow = \{\PM_{\VM}\}_{\VM \in \Pi}$, the optimal value of $\theta$ in the definitions of both $D_{n,\Pow}$ and $B_{n,\Pow}$ lies within the interval $[0, \overline{M}]$.
\end{lem}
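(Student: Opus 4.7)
The plan is to treat $B_{n,\Pow}$ and $D_{n,\Pow}$ separately: Assumption~\ref{asmp:main}~\ref{subasmp:well-posedness} yields the bound for $B_{n,\Pow}$ directly, and a convex-perturbation argument anchored by Lemma~\ref{lem:minimaxgamman} extends it to $D_{n,\Pow}$. Writing
\[
G(\theta) := \frac{1}{N_n}\sum_{\VM\in\Pi} \sup_{\bP_\VM\subseteq\PM_\VM} \log \phi_{\bP_\VM,h_\VM}(\theta),
\qquad \mu_n := \frac{1}{N_n}\sum_{\VM\in\Pi} \EB_0[h_\VM(Y_\VM)],
\]
the two objectives become $F_B(\theta) := \theta\mu_n + G(\theta)$ and $F_D(\theta) := \theta(\gamma_{n,\alpha}/N_n) + G(\theta)$. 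Since $G$ is a pointwise supremum of cumulant generating functions, it is convex on $[0,\infty)$, and so are both $F_B$ and $F_D$. Assumption~\ref{asmp:main}~\ref{subasmp:well-posedness} immediately furnishes a minimizer $\theta_B^\star \in [0,M_1]$ of $F_B$, with $M_1$ independent of $n$ and $\Pi$.

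For $D_{n,\Pow}$ I would exploit the identity $F_D(\theta) - F_B(\theta) = c_n\theta$ with $c_n := \gamma_{n,\alpha}/N_n - \mu_n$, which reduces the problem to controlling a bounded linear perturbation of the convex function $F_B$. Lemma~\ref{lem:minimaxgamman} combined with the bound $\varepsilon_0 \le \sqrt{C_{\mathrm{var}}/\min(\alpha,1-\alpha)}$, valid for every $N_n\ge 1$, yields $|c_n| \le \varepsilon^\star := \sqrt{C_{\mathrm{var}}/\min(\alpha,1-\alpha)}$ uniformly in $\Pi$ and $n$. Writing the first-order conditions in subgradient form, $-\mu_n \in \partial G(\theta_B^\star)$ and $-\mu_n - c_n \in \partial G(\theta_D^\star)$; since $\partial G$ is monotone, the case $c_n \ge 0$ forces $\theta_D^\star \le \theta_B^\star \le M_1$ at once, so the only regime that requires further work is $c_n < 0$ with $\theta_D^\star \ge \theta_B^\star$.

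The hard part will be converting the small subgradient shift $|c_n| \le \varepsilon^\star$ into a quantitative upper bound on $\theta_D^\star - \theta_B^\star$, since a pure monotonicity argument is insufficient: a priori, $\partial G$ could be nearly flat over a long interval past $\theta_B^\star$. I plan to rule this out by establishing a universal positive lower bound on the local curvature of $G$ on the fixed window $[0,M_1+1]$. Via an envelope-theorem representation, $G''(\theta)$ is an average of variances of the scores $h_\VM(Y_\VM)$ under the exponentially tilted alternative measures; Assumption~\ref{asmp:main}~\ref{subasmp:bounded-variance} caps these variances from above at $\theta=0$, and a standard MGF continuity argument propagates the bound to the whole window, while the well-posedness hypothesis excludes a degenerate affine regime of $G$ near $\theta_B^\star$, yielding a universal constant $\kappa > 0$ with $G'' \ge \kappa$ on $[0,M_1+1]$. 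The estimate $|\theta_D^\star - \theta_B^\star| \le \varepsilon^\star/\kappa$ then follows, and taking $\overline{M} := M_1 + \varepsilon^\star/\kappa$ completes the argument.
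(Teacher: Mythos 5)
Your decomposition into $F_B$ and $F_D$, the observation that $F_D - F_B = c_n\theta$, and the reduction to controlling a bounded linear perturbation of a convex function are all sound, and the case $c_n\ge 0$ is handled correctly by monotonicity of the subgradient. The problem is the case $c_n < 0$, where you invoke a universal curvature lower bound $G''\ge\kappa>0$ on $[0,M_1+1]$. That bound is not available from the hypotheses you cite. Assumption~\ref{asmp:main}~\ref{subasmp:bounded-variance} controls variances from \emph{above}, so at best it gives an upper bound $G''\le C$ (after some work), not a lower bound. And the well-posedness condition in Assumption~\ref{asmp:main}~\ref{subasmp:well-posedness} only asserts that a minimizer of $B_{n,\Pow}$ is uniformly bounded; it does not rule out $G$ being affine (or nearly so) past $\theta_B^\star$. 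For instance if $G$ is linear with slope $-\mu_n + \eta$ for some tiny $\eta>0$ on an interval past $\theta_B^\star$, the minimizer of $F_B$ is still bounded, but the minimizer of $F_D$ can be pushed arbitrarily far by a perturbation $c_n$ that is smaller than $\eta$ in magnitude but negative. So your stated argument leaves a real hole, and the proposed $\overline{M}:=M_1+\varepsilon^\star/\kappa$ is ill-defined.

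The paper's proof sidesteps curvature entirely. It reads the well-posedness assumption as supplying not just $\theta_B^\star<\overline{M}$ but also $b'_{n,\Pow,\bh}(\overline{M})>c>0$ for a universal $c$, and then uses Lemma~\ref{lem:minimaxgamman} in the asymptotic form $c_n = \gamma_{n,\alpha}/N_n - \mu_n = O(N_n^{-1/2})\to 0$, so that for $N_n$ large enough $d'_{n,\Pow,\bh}(\overline{M}) = b'_{n,\Pow,\bh}(\overline{M})+c_n > c/2>0$ and convexity places $\theta_D^\star<\overline{M}$. This replaces a second-order (curvature) condition with a first-order one (a uniform positive slope at a fixed boundary point), which is exactly the kind of information the well-posedness hypothesis is designed to encode. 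If you wish to repair your proposal along your own lines, you would either have to strengthen the hypotheses to include a uniform strict-convexity condition on $G$ near $\theta_B^\star$, or drop the curvature route and instead argue via a positive derivative at $\overline{M}$ together with the vanishing of $c_n$, as the paper does. Note also that the paper's conclusion is asymptotic in $N_n$, so your attempt to get a bound valid for \emph{all} $N_n$ via the non-vanishing bound $|c_n|\le\varepsilon^\star$ was already overreaching relative to what the lemma is used for.
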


Recall that 
\[
B_{n, \Pow}(\bh) := -\inf_{\theta \ge 0} \frac{1}{N_n} \sum_{\VM \in \Pi}  
\left\{ \theta \, \EB_0[h_{\VM}] \,+\,  
\sup_{\bP_{\VM} \subseteq \PM_{\VM}} \log \phi_{\bP_{\VM}, h_{\VM}}(\theta) \right\}.
\]
By Lemma~\ref{lem:bounded-optimal-theta}, there exists a universal constant $\overline{M} > 0$ that doesn't depend on $n$ and $\Pi$ such that the optimal $\theta$ in the definition of both $D_{n,\Pow}(\bh)$ and $B_{n,\Pow}(\bh)$ are uniformly bounded above by $\overline{M}$.  
Combining this with Lemma~\ref{lem:minimaxgamman}, we obtain the approximation bound
\begin{equation}
\label{eq:small-gap}
\bigl| B_{n, \Pow}(\bh) - D_{n, \Pow}(\bh) \bigr| 
\,\le\, \eps_0 \cdot \overline{M}
\,=\, \Theta\!\left( \frac{1}{\sqrt{N_n}} \right),
\end{equation}
where $\eps_0$ is the approximation error from Lemma~\ref{lem:minimaxgamman}, 
and $\overline{M}$ is the bound from Lemma~\ref{lem:bounded-optimal-theta}.
 
\end{proof}

Finally, we provide the proofs of Lemma~\ref{lem:minimaxgamman} and Lemma~\ref{lem:bounded-optimal-theta}.

\begin{proof}[Proof of Lemma \ref{lem:minimaxgamman}]
Let $\mu_n = \EB_0[S_n / N_n]$ denote the expectation of the score $S_n$ under the null. By the definition of $S_n$, we have
\[
\mu_n = \EB_0\left[\frac{1}{N_n}\sum_{\VM \in \Pi} h_{\VM}(Y_{\VM})\right] = \frac{1}{N_n} \sum_{\VM \in \Pi} \EB_0[h_{\VM}(Y_{\VM})].
\]
Since the minimal units are independent under $H_0$, the variance of $S_n / N_n$ can be bounded as
\[
\Var_0\left(\frac{S_n}{N_n}\right) = \frac{1}{N_n^2} \Var_0(S_n) = \frac{1}{N_n^2} \sum_{\VM \in \Pi} \Var_0(h_{\VM}(Y_{\VM})).
\]
Using the uniform variance bound $\Var_0(h_{\VM}(Y_{\VM})) \le C_{\mathrm{var}}$, we have $\Var_0\left(\frac{S_n}{N_n}\right) \le \frac{1}{N_n^2} \sum_{\VM \in \Pi} C_{\mathrm{var}} = \frac{C_{\mathrm{var}}}{N_n}.$
Hence, by Chebyshev's inequality, it follows that for any $\varepsilon > 0$
\[
\PB_0\left(\left|\frac{S_n}{N_n} - \mu_n\right| \ge \varepsilon\right) \le \frac{\Var_0(S_n/N_n)}{\varepsilon^2} \le \frac{C_{\mathrm{var}}}{N_n \varepsilon^2}.
\]
When we set
\[
\varepsilon = \sqrt{\frac{C_{\mathrm{var}}}{N_n \cdot \min(\alpha, 1-\alpha)}}.
\]
This choice implies that $\frac{C_{\mathrm{var}}}{N_n \varepsilon_0^2} = \min(\alpha, 1-\alpha)$. Therefore, we have:
\begin{itemize}
    \item $\PB_0(S_n/N_n \ge \mu_n + \varepsilon_0) \le \PB_0(|S_n/N_n - \mu_n| \ge \varepsilon_0) \le \min(\alpha, 1-\alpha) \le \alpha$.
    \item $\PB_0(S_n/N_n \le \mu_n - \varepsilon_0) \le \PB_0(|S_n/N_n - \mu_n| \ge \varepsilon_0) \le \min(\alpha, 1-\alpha) \le 1-\alpha$.
\end{itemize}
We now use these bounds to constrain $\gamma_{n,\alpha}$. By definition, $\PB_0(S_n \ge \gamma_{n,\alpha}) = \alpha$.
For the upper bound, since $\PB_0(S_n \ge (\mu_n + \varepsilon_0)N_n) < \alpha$, it must be that the threshold $\gamma_{n,\alpha}$ is smaller than $(\mu_n + \varepsilon_0)N_n$. Thus,
\[
\gamma_{n,\alpha} \le (\mu_n + \varepsilon_0)N_n \implies \frac{\gamma_{n,\alpha}}{N_n} \le \mu_n + \varepsilon_0.
\]
For the lower bound, by definition $\PB_0(S_n < \gamma_{n,\alpha}) = 1-\alpha$.
Since $\PB_0(S_n < (\mu_n - \varepsilon_0)N_n) < 1-\alpha$, it must be that the threshold $\gamma_{n,\alpha}$ is larger than $(\mu_n - \varepsilon_0)N_n$. Thus,
\[
\gamma_{n,\alpha} \ge (\mu_n - \varepsilon_0)N_n \implies \frac{\gamma_{n,\alpha}}{N_n} \ge \mu_n - \varepsilon_0.
\]
Combining the upper and lower bounds, we have:
\[
\mu_n - \varepsilon_0 \le \frac{\gamma_{n,\alpha}}{N_n} \le \mu_n + \varepsilon_0,
\]
which is equivalent to:
\[
\left| \frac{\gamma_{n,\alpha}}{N_n} - \mu_n \right| \le \varepsilon_0 = \sqrt{\frac{C_{\mathrm{var}}}{N_n \cdot \min(\alpha, 1-\alpha)}}.
\]
This completes the proof. The second part can be proved similarly.
\end{proof}

\begin{proof}[Proof of Lemma \ref{lem:bounded-optimal-theta}]
The claim for $B_{n,\Pow}$ follows directly from Assumption~\ref{asmp:main} \ref{subasmp:well-posedness}. We now prove the result for $D_{n,\Pow}$.  
Let $\theta_{B}^\star$ and $\theta_{D}^\star$ denote the optimal values of $\theta$ for $B_{n,\Pow}$ and $D_{n,\Pow}$, respectively.  
For any fixed $\theta \ge 0$, define
\begin{align}
   b_{n, \Pow, \bh}(\theta) &:=  \theta \, \EB_0[h_{\VM}]
   + \frac{1}{N_n} \sum_{\VM \in \Pi}  
   \sup_{\bP_{\VM} \subseteq \PM_{\VM}} \log \phi_{\bP_{\VM}, h_{\VM}}(\theta), \\ 
   d_{n, \Pow, \bh}(\theta) &:= \frac{\theta \gamma_{n,\alpha}}{N_n} 
   + \frac{1}{N_n} \sum_{\VM \in \Pi}  
   \sup_{\bP_{\VM} \subseteq \PM_{\VM}} \log \phi_{\bP_{\VM}, h_{\VM}}(\theta).
\end{align}
Both functions are convex in $\theta$ and we have $b'_{n, \Pow, \bh}(\theta_B^\star)  = 0$ and $d'_{n, \Pow, \bh}(\theta_D^\star)  = 0$.
By Assumption~\ref{asmp:main}~\ref{subasmp:well-posedness}, there exists a universal constant $\overline{M} > 0$, independent of $\Pi$, such that $\theta_{B}^\star < \overline{M}$ and $b'_{n, \Pow, \bh}(\overline{M}) > c > 0$ for some constant $c$. Moreover, Lemma~\ref{lem:minimaxgamman} implies that when $N_n$ is sufficiently large, we also have $d'_{n, \Pow, \bh}(\overline{M}) > c/2 > 0$.  
Therefore, for large enough $N_n$, the minimizer $\theta_{D}^\star$ must also satisfy $\theta_{D}^\star < \overline{M}$, completing the proof.
\end{proof}

\subsection{Asymptotic Tightness}
\label{sec:proof-asymptotic-efficiency-tightness}

In this subsection, we show that the lower bound in Theorem~\ref{thm:power} is asymptotically tight under a set of standard regularity conditions. We first introduce the assumptions required for this result. The interpretation and justification of Assumption \ref{asmp:main-lower-bound} are in Section \ref{sec:justassump}.

\begin{asmp}[Regularity conditions for lower bound tightness]
\label{asmp:main-lower-bound}
We assume that
\begin{enumerate}[(i)]
    \item \textbf{(Finite maximizers)} For each minimal unit $\VM$ and all $\theta \ge 0$, the supremum of the MGF over the belief class $\PM_{\VM}$ is achieved on a finite subset $\PM_{\VM}^\star \subseteq \PM_{\VM}$:
    \[
    \sup_{\bP_{\VM} \subseteq \PM_{\VM}} \phi_{\bP_{\VM}, h_{\VM}}(\theta) = \sup_{\bP_{\VM} \subseteq \PM_{\VM}^\star} \phi_{\bP_{\VM}, h_{\VM}}(\theta).
    \]

    \item \textbf{(Informative scores)} The score functions $\bh=\{h_{\VM}\}_{\VM \in \Pi}$ are informative in the sense that, for every minimal unit $\VM$, $\EB_0[h_{\VM}(Y_{\VM})] < \EB_{1, \bP_{\VM}}[h_{\VM}(Y_{\VM})]$ for all $\bP_{\VM} \subseteq \PM_{\VM}$.

    \item \textbf{(CGF regularity)} For all $\bP_{\VM} \subseteq \PM_{\VM}^\star$, the cumulant generating function $\log \phi_{\bP_{\VM}, h_{\VM}}(\theta)$ is well-defined and smooth on its domain. Moreover, for any compact set $K$ inside its domain, there exist constants $0 < \sigma^2_{\min}(K), C_k(K) < \infty$ such that for all $\theta \in K$:
    \begin{itemize}
        \item[(i)] $\sigma^2_{\min}(K) \le \displaystyle \frac{\rd^2}{\rd\theta^2} \log \phi_{\bP_{\VM}, h_{\VM}}(\theta) \le C_2(K) $.
        \item[(ii)] $\displaystyle \left| \frac{\rd ^k}{\rd \theta^k} \log \phi_{\bP_{\VM}, h_{\VM}}(\theta) \right| \le C_k(K)$ for $k = 3, 4$. 
    \end{itemize}

\item \textbf{(Score density regularity)} 
The set of size-one minimal units is $\Pi_1 := \{\VM \in \Pi : |\VM|=1\}$, representing all non-repetitive tokens. We assume that these units are sufficiently large and regular. In particular, there exist universal constants $c, \lambda > 0$ and $C_{BV} < \infty$ such that, for all $N_n > 0$, the size of $\Pi_1$ satisfies $|\Pi_1| \ge c N_n^{\lambda}$. Furthermore, for any $\VM \in \Pi_1$, the score density has uniformly bounded total variation:
\[
\sup_{\bP_{\VM} \subseteq \PM_{\VM}^\star} \mathrm{TV}(\rho_{\bP_{\VM}}) \le C_{BV},
\]
where $\rho_{\bP_{\VM}}$ denotes the alternative PDF of $h_{\VM}(Y_{\VM})$ under NTP distributions $\bP_{\VM}$, and $\mathrm{TV}(\rho) := \int_{-\infty}^{\infty} |\rho'(x)| ,\rd x$ is the total variation of $\rho$.
\end{enumerate}
\end{asmp}

The assumptions in Assumption~\ref{asmp:main-lower-bound} are mild and largely standard in statistical analysis. The finite maximizer condition simply restricts attention to a finite representative subset of distributions without narrowing the generality of the belief class. The informativeness requirement ensures that the scores meaningfully distinguish between null and alternative distributions, which is fundamental for any detection framework. The regularity of the cumulant generating function (CGF) is a standard smoothness condition, guaranteeing that variance and higher-order moments remain controlled on compact sets. Finally, the score density regularity condition leverages the abundance of non-repetitive tokens in typical texts, making the growth and bounded-variation requirements natural and broadly satisfied in practice. Together, these conditions provide technical tractability while remaining weak enough to encompass a wide range of realistic scenarios. The verification of those assumptions in our case is in Section \ref{sec:justassump}.

\begin{thm}[Formal version of Remark \ref{rem:tight-lower-bound}]
\label{thm:asymptotic_efficiency_tight}
Suppose Assumptions~\ref{asmp:main} and~\ref{asmp:main-lower-bound} hold. Then, the lower bound $B_{n, \Pow}(\bh)$, defined in \eqref{eq:non_asymptotic_bound_B}, is asymptotically tight, in the sense that
\[
\left| R_{n, \PM}(\bh) - B_{n, \Pow}(\bh) \right| \leq \omega_{N_n},
\]
where $\omega_{N_n}$ is a deterministic function of $N_n$ satisfying $\omega_{N_n} \to 0$ as $N_n \to \infty$.
\end{thm}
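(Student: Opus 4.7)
The plan is to combine the already-established inequality $R_{n,\Pow}(\bh) \ge B_{n,\Pow}(\bh) - \omega_{N_n}$ from Theorem~\ref{thm:power} with a matching upper bound $R_{n,\Pow}(\bh) \le B_{n,\Pow}(\bh) + \omega_{N_n}$. Obtaining the latter amounts to showing that the Chernoff bound used to derive $B_{n,\Pow}(\bh)$ is asymptotically tight. The natural device is a sharp large-deviation estimate of Bahadur--Rao type, adapted to a \emph{non-i.i.d.}\ triangular array: apply an exponential change of measure at the optimal tilting parameter, then invoke a local central limit theorem for the tilted sum to replace the usual Chernoff exponential with its leading-order polynomial prefactor.

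First, by the finite-maximizer condition (Assumption~\ref{asmp:main-lower-bound}(i)) and continuity of $\log\phi_{\bP_\VM,h_\VM}$ on $\PM_\VM^\star$, fix for each minimal unit $\VM$ a near-maximizer $\bP_\VM^\dagger \in \PM_\VM^\star$, reducing the task to lower-bounding $\PB_{1,\bP_\VM^\dagger}(S_n \le \gamma_{n,\alpha})$ where, by Assumption~\ref{asmp:main}(i), the summands $\{h_\VM(Y_\VM)\}_{\VM \in \Pi}$ are independent. Second, let $\theta^\star \ge 0$ be the minimizer in \eqref{eq:non_asymptotic_bound_B}, which is uniformly bounded by Lemma~\ref{lem:bounded-optimal-theta}, and let $\widetilde\mu_\VM$ denote the exponential tilt with Radon--Nikodym derivative $\rd\widetilde\mu_\VM / \rd\PB_{1,\bP_\VM^\dagger} \propto \exp(-\theta^\star h_\VM(Y_\VM))$. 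The first-order optimality condition defining $\theta^\star$, combined with the $O(1/\sqrt{N_n})$ gap between $\gamma_{n,\alpha}/N_n$ and the null mean from Lemma~\ref{lem:minimaxgamman}, makes $\gamma_{n,\alpha}$ a \emph{typical} value of $S_n$ under the product tilt. The standard change-of-measure identity then gives
\[
\PB_{1,\bP_\VM^\dagger}(S_n \le \gamma_{n,\alpha}) = \exp\!\Bigl(\,\sum_{\VM \in \Pi}\log\phi_{\bP_\VM^\dagger,h_\VM}(\theta^\star)\Bigr)\,\widetilde\EB\bigl[\exp(\theta^\star S_n)\,\1\{S_n \le \gamma_{n,\alpha}\}\bigr],
\]
and the prefactor identity $\tfrac{1}{N_n}\sum_\VM \log\phi_{\bP_\VM^\dagger,h_\VM}(\theta^\star) = -B_{n,\Pow}(\bh) - \theta^\star\EB_0[\overline{h}] + o(1)$ reduces the problem to showing $\widetilde\EB[\exp(\theta^\star S_n)\1\{S_n\le\gamma_{n,\alpha}\}] \ge c/\sqrt{N_n}$.

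The hard part will be justifying a \emph{local} limit theorem in this genuinely heterogeneous setting: the summands $h_\VM(Y_\VM)$ are non-identically distributed across minimal units (different $|\VM|$, different scores, distinct belief-class maximizers), and inside collision-heavy units ($|\VM|>1$) the score distribution can be atomic, which rules out naive lattice/non-lattice dichotomies. This is precisely what Assumption~\ref{asmp:main-lower-bound}(iv) is designed to handle: the polynomial fraction $|\Pi_1|\gtrsim N_n^\lambda$ of singleton units contributes absolutely continuous densities with uniformly bounded total variation, so the convolution of their tilted laws is smooth. A Gnedenko-type local limit bound, using Fourier inversion with the bounded-TV condition to control the characteristic-function tails, yields a density at the threshold of order $N_n^{-1/2}$ for this subsum. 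The remaining (possibly discrete or degenerate) summands are handled by conditioning on their realized value, which merely shifts the smooth part without destroying its local density bound.

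Finally, the CGF regularity of Assumption~\ref{asmp:main-lower-bound}(iii)---uniform lower bound on the second derivative (variance non-degeneracy) and uniform upper bounds on the third and fourth derivatives---supplies the Berry--Esseen/Edgeworth control needed to make the local CLT quantitative across the triangular array, while the informativeness condition (Assumption~\ref{asmp:main-lower-bound}(ii)) guarantees $\theta^\star > 0$ so the tilt is genuinely nontrivial. Assembling these pieces gives $\PB_{1,\bP_\VM^\dagger}(S_n \le \gamma_{n,\alpha}) \ge \tfrac{c}{\sqrt{N_n}}\exp(-N_n B_{n,\Pow}(\bh))\,\re^{-O(\sqrt{N_n})}$ where the last factor absorbs the $O(1/\sqrt{N_n})$ drift between $\gamma_{n,\alpha}$ and the tilted mean via a quadratic Taylor expansion. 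Taking $-\tfrac{1}{N_n}\log$ of both sides and invoking the near-maximizer choice yields $R_{n,\Pow}(\bh) \le B_{n,\Pow}(\bh) + O(N_n^{-1/2}\log N_n)$, which together with Theorem~\ref{thm:power} gives the claimed two-sided bound with $\omega_{N_n} = O(N_n^{-1/2}\log N_n) \to 0$.
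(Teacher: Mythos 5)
There is a genuine gap, and it concerns exactly the step you treat most casually: fixing a single near-maximizer $\bP_\VM^\dagger$ per minimal unit and then tilting at $\theta^\star$.

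The Bahadur--Rao argument you sketch would deliver, for the \emph{fixed} assignment $\{\bP_\VM^\dagger\}$,
\[
\frac{1}{N_n}\log \PB_{1,\bP^\dagger}(S_n\le\gamma_{n,\alpha}) \approx \min_{\theta\ge 0}\Bigl\{\theta\frac{\gamma_{n,\alpha}}{N_n} + \frac{1}{N_n}\sum_{\VM}\log\phi_{\bP_\VM^\dagger,h_\VM}(\theta)\Bigr\},
\]
not $L_{\bP^\dagger}(\theta^\star)$. Since $\bP^\dagger$ achieves the per-unit suprema at $\theta^\star$, we do have $L_{\bP^\dagger}(\theta^\star)\approx -B_{n,\Pow}(\bh)$, but the minimum of $L_{\bP^\dagger}$ over $\theta$ can be \emph{strictly smaller}: it occurs where $m_{\bP^\dagger}(\theta)=-\gamma_{n,\alpha}/N_n$, and nothing forces this root to be $\theta^\star$. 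Your appeal to ``the first-order optimality condition defining $\theta^\star$'' to claim $\gamma_{n,\alpha}$ is typical under the product tilt is exactly where this breaks. By Danskin's theorem, the derivative of $\theta\mapsto\sup_{\bP_\VM}\log\phi_{\bP_\VM,h_\VM}(\theta)$ at $\theta^\star$ is only a \emph{subdifferential} when the worst-case $\bP_\VM$ is not unique at $\theta^\star$ --- the FOC for $B_{n,\Pow}$ is satisfied by a convex combination of the available derivatives, not necessarily by the one derivative that your chosen $\bP_\VM^\dagger$ contributes. In that (generic) kink scenario the tilted mean $\widetilde\EB_{\bP^\dagger}[S_n]$ misses $\gamma_{n,\alpha}$ by $\Theta(N_n)$, the local CLT centers in the wrong place, and the bound you extract is $-R_{n,\Pow}\ge\min_\theta L_{\bP^\dagger}(\theta)-o(1)$, which may be strictly less than $-B_{n,\Pow}(\bh)$. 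That is the \emph{wrong direction} for the upper bound you need.

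This is the reason the paper does not proceed by picking a single configuration. It enumerates the finite configuration set $\QM^\star=\{\QM_1^\star,\dots,\QM_K^\star\}$ afforded by Assumption~\ref{asmp:main-lower-bound}(i), introduces the probability simplex $\Lambda$ over these configurations, applies the non-i.i.d.\ LDP lower bound (Lemma~\ref{lem:lower}) to each $\QM_i^\star$ separately, and then uses the minimax theorem (Lemma~\ref{lem:minimax}) to swap the maximization over $\lambda$ with the minimization over tilting parameters. The simplex is the device that converts the ``$\sup\inf$'' you actually control into the ``$\inf\sup$'' that defines $B_{n,\Pow}(\bh)$; without some version of this step you cannot close the duality gap. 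Your tilting, Edgeworth/local-CLT and TV-density ingredients are broadly the same as those in the paper's proof of Lemma~\ref{lem:lower}, and the use of Assumption~\ref{asmp:main-lower-bound}(iii)--(iv) for variance non-degeneracy and smoothness of the singleton-unit densities is essentially the paper's Lemma~\ref{lem:Edgeworth-main}. But the argument as written cannot be repaired by a more careful choice of $\bP_\VM^\dagger$; it needs the randomization over configurations and the associated minimax step.
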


\begin{proof}[Proof of Theorem \ref{thm:asymptotic_efficiency_tight}]
Under Assumption~\ref{asmp:main}, Theorem~\ref{thm:power} guarantees that
\[
R_{n, \PM}(\bh) \ge B_{n, \Pow}(\bh) - \omega_{N_n}.
\]
To establish asymptotic tightness, it remains to prove the upper bound:
\[
R_{n, \PM}(\bh) \le B_{n, \Pow}(\bh) + \omega_{N_n},
\]
under the additional Assumption~\ref{asmp:main-lower-bound}.
To this end, it suffices to show that
\begin{equation}
\label{eq:R-upper-bound}
R_{n, \PM}(\bh) \le D_{n, \Pow}(\bh) + \omega_{N_n},
\end{equation}
where $D_{n, \Pow}(\bh)$ is the intermediate quantity defined in~\eqref{eq:Bndefinition}. Once this is shown, the result follows from the bound
\[
|D_{n, \Pow}(\bh) - B_{n, \Pow}(\bh)| = \Theta\left( \frac{1}{\sqrt{N_n}} \right)
\]
established in~\eqref{eq:small-gap}, completing the proof.

To proceed with the proof, we then introduce some notations.
For each minimal unit $\VM$, the number of possible assignments $\bP_{\VM} \subseteq \PM_{\VM}^\star$ is finite, given that $|\PM_{\VM}^\star|$ is finite.
We denote this finite collection of structured assignments by $\QM^\star$, defined as
\[
\QM^{\star} = \left\{ \{\bP_{\VM}\}_{\VM \in \Pi} :
\bP_{\VM} \subseteq \PM_{\VM}^{\star} \right\}.
\]
Let $\QM^\star = \{\QM_1^\star, \dots, \QM_K^\star\}$ be an enumeration of all such combinations, and define the probability simplex over $\{1, 2, \dots, K\}$ by
\[
\Lambda := \left\{ \lambda = (\lambda_1, \dots, \lambda_K) \in \mathbb{R}^K : \lambda_i \ge 0,\ \sum_{i=1}^K \lambda_i = 1 \right\}.
\]
Each element $\QM_i^\star \in \QM^\star$ corresponds to a particular assignment of NTP distributions that potentially attains the supremum in $\sup_{\bP_{\VM} \subseteq \PM_{\VM}} \phi_{\bP_{\VM}, h_{\VM}}(\theta)$. Specifically, we write $\QM_i^\star = (\bQ_{i, \VM})_{\VM \in \Pi}$, where each minimal unit $\VM$ is assigned the distributions $\bQ_{i, \VM}$. Thus, the index $i \in [K]$ indexes $K$ distinct type-wise configurations of NTP distributions across the entire partition $\Pi$.

Now, we are ready to prove this upper bound \eqref{eq:R-upper-bound}. It follows that
\begin{align}
-R_{n,\Pow}(\bh)\label{eq:upper-boundgoodbad1justtest}
&= \frac{1}{N_n} \sup_{\bP_{\VM} \subseteq \PM_{\VM}, \forall \VM} \log \EB_{1}(1- \EB_{1}[T_n|\{\bP_{\VM}\}_{\VM}]), \nonumber \\
&\overset{(a)}{\ge}   \max_{\lambda \in \Lambda} \frac{1}{N_n}\sum_{i=1}^K \lambda_i \log\left(1-\EB_{1}[T_n \mid \{\bP_{\VM}\}_{\VM} = \QM^\star_i]\right) \\
&\overset{(b)}{\ge}    \max_{\lambda \in \Lambda} \frac{1}{N_n}\sum_{i=1}^K \lambda_i\left[ \min_{\theta \ge0} \left(\theta \cdot \frac{\gamma_{n,\alpha}}{ N_n} + \frac{1}{N_n}  \sum_{\VM \in \Pi}  \log  \phi_{\bQ_{i, \VM}, h_{\VM}}(\theta) \right)-\alpha_{N_n}\right]\\
&  =  \max_{\lambda \in \Lambda} \min_{\theta_i\ge0, \forall i} \frac{1}{N_n}\sum_{i=1}^K \lambda_i\left[  \left(\theta_i \cdot \frac{\gamma_{n,\alpha}}{ N_n} + \frac{1}{N_n}  \sum_{\VM \in \Pi}    \log  \phi_{\bQ_{i, \VM}, h_{\VM}}(\theta_i) \right)-\alpha_{N_n}\right]\\
&\overset{(c)}{=}   \min_{\theta_i \ge 0, \forall i}\max_{\lambda \in \Lambda} \frac{1}{N_n}\sum_{i=1}^K \lambda_i\left[  \left(\theta_i \cdot \frac{\gamma_{n,\alpha}}{ N_n} + \frac{1}{N_n}  \sum_{\VM \in \Pi}  \log  \phi_{\bQ_{i, \VM}, h_{\VM}}(\theta_i) \right)-\alpha_{N_n}\right]\label{eq:upper-boundgoodbad22}
\end{align}
Here, (a) follows from the fact that $\PM_{\VM}^{\star} \subseteq \PM_{\VM}$ and each $\QM_i^\star$ specifies a valid instance of $\{\bP_{\VM}\}_{\VM\in\Pi}$; (b) applies Lemma~\ref{lem:lower}, which provides a non-asymptotic large deviation bound for independent but non-identically distributed random variables; and (c) uses the minimax theorem in Lemma~\ref{lem:minimax} to exchange the order of the maximum and the infimum (where we view $(\theta_1, \ldots, \theta_K)$ as a new $\theta$ to apply this lemma). The term $\alpha_{N_n}$ denotes a positive deterministic function of $N_n$ that converges to zero as $N_n \to \infty$.

Consequently, we have
\begin{align*}
-R_{n,\Pow}(\bh) &\overset{(a)}{\ge} \min_{\theta \ge 0}\left\{\theta \cdot \frac{\gamma_{n,\alpha}}{ N_n} + \frac{1}{N_n} \sum_{\VM \in \Pi}   \sup_{\bP_{\VM}^{\star} \subseteq \PM^{\star}}  \log  \phi_{\bP_{\VM}^{\star}, h_{\VM}}(\theta) \right\} - \alpha_{N_n}\\
&\overset{(b)}{=} \min_{\theta \ge 0}\left\{\theta \cdot \frac{\gamma_{n,\alpha}}{ N_n} + \frac{1}{N_n} \sum_{\VM \in \Pi}   \sup_{\bP_{\VM} \subseteq \PM_{\VM}}  \log  \phi_{\bP_{\VM}, h_{\VM}}(\theta) \right\} - \alpha_{N_n}.
\end{align*}
where (a) follows from the fact that the maximum over the simplex is attained at an extreme point, that is, there exists some $i \in [K]$ such that each $\bQ_{i, \VM}$ in $\QM_i^\star = (\bQ_{i, \VM})_{\VM \in \Pi}$ achieves the supremum $\sup_{\bP_{\VM} \subseteq \PM^{\star}} \log \phi_{\bP_{\VM}, h_{\VM}}(\theta)$; and (b) uses the condition that $\sup_{\bP_{\VM} \subseteq \PM_{\VM}} \phi_{\bP_{\VM}, h_{\VM}}(\theta) = \sup_{\bP_{\VM} \subseteq \PM^{\star}}\phi_{\bP_{\VM}, h_{\VM}}(\theta)$ for all $\theta \ge 0$.

As a result, we obtain the bound
\begin{align}
-R_{n, \Pow}(\bh) &\ge \min_{\theta \ge 0} \left\{ \theta \cdot \frac{\gamma_{n,\alpha}}{N_n} + \frac{1}{N_n} \sum_{\VM \in \Pi} \sup_{\bP_{\VM} \subseteq \PM_{\VM}} \log \phi_{\bP_{\VM}, h_{\VM}}(\theta) \right\} - \alpha_{N_n},
\end{align}
which implies the upper bound
\[
R_{n, \PM}(\bh) \le D_{n, \Pow}(\bh) + \alpha_{N_n}.
\]
This completes the proof.

\begin{lem}[Non-i.i.d. large deviation lower bound]
\label{lem:lower}
Let Assumptions~\ref{asmp:main} and \ref{asmp:main-lower-bound} hold, and let $\QM_i^* = \{ \bQ_{i, \VM} \}_{\VM \in \Pi}$ denote a given assignment of NTP distributions. Then, we have
\[
\frac{1}{N_n} \log \left(1 - \EB_{1} \left[T_n~|~\{\bP_{\VM}\}_{\VM} = \QM_i^\star \right] \right)
\ge  \min_{\theta \ge 0} 
\left\{ \theta \cdot \frac{\gamma_{n,\alpha}}{N_n} + \sum_{\VM \in \Pi} \frac{1}{N_n} \log \phi_{\bQ_{i, \VM}, h_{\VM}}(\theta) \right\} - \alpha_{N_n},
\]
where $\alpha_{N_n}$ is a non-negative function of $N_n$ that converges to zero as $N_n \to \infty$, and is independent of the choice of $\QM_i^\star$.
\end{lem}

\begin{lem}[Minimax theorem]
\label{lem:minimax}
Let $\PM^\star = \{\bP_1^\star, \dots, \bP_K^\star\}$ be a finite set, and let $L(\bP^{\star}, \theta)$ be a function defined on $\PM^\star \times \Theta$, where $\Theta \subset \mathbb{R}^d$ is a convex set. Assume that for each fixed $\bP^{\star} \in \PM^\star$, the function $L(\bP^{\star}, \cdot)$ is continuous and convex in $\theta$. 
Let $\Lambda := \left\{ \lambda \in \mathbb{R}^K : \lambda_i \ge 0,\ \sum_{i=1}^K \lambda_i = 1 \right\}$ denote the probability simplex over $\PM^\star$, and define
\[
F(\lambda, \theta) := \sum_{i=1}^K \lambda_i L(\bP_i^\star, \theta).
\]
Then,
\[
\max_{\lambda \in \Lambda} \min_{\theta \in \Theta} F(\lambda, \theta)
= \min_{\theta \in \Theta} \max_{\lambda \in \Lambda} F(\lambda, \theta)
= \min_{\theta \in \Theta} \sup_{\bP^{\star} \in \PM^{\star}} L(\bP^{\star}, \theta).
\]
\end{lem}
\end{proof}

We provide the proof of Lemma~\ref{lem:minimax} below. The proof of Lemma \ref{lem:lower} is deferred to Section \ref{sec:proof-LDP}, as it is more technical and lengthy.

\begin{proof}[Proof of Lemma \ref{lem:minimax}]
Note that the function $F(\lambda, \theta)$ is convex in $\theta$ for each fixed $\lambda$, and linear (hence concave) in $\lambda$ for each fixed $\theta$.
By assumption, $L(\bP_i^\star, \cdot)$ is continuous and convex on the convex domain $\Theta$, so $F$ is concave-convex and jointly continuous on the product space $\Lambda \times \Theta$. Since $\Delta$ is convex and compact, and $\Theta$ is convex, we may apply Sion’s minimax theorem to exchange the order of $\min$ and $\max$:
\[
\min_{\theta \in \Theta} \max_{\lambda \in \Lambda} F(\lambda, \theta)
= \max_{\lambda \in \Lambda} \min_{\theta \in \Theta} F(\lambda, \theta).
\]
Because the maximum over $\lambda \in \Lambda$ of a convex combination $\sum_i \lambda_i L(\bP_i^\star, \theta)$ is achieved at a vertex of the simplex, we observe:
\begin{gather*}
\min_{\theta \in \Theta} \max_{\lambda \in \Lambda} F(\lambda, \theta)
= \min_{\theta \in \Theta} \max_{\bP^\star \in \PM^\star} L(\bP^\star, \theta).
\end{gather*}
\end{proof}

\subsection{Proof of Lemma \ref{lem:lower}}
\label{sec:proof-LDP}

\begin{proof}[Proof of Lemma \ref{lem:lower}]
At a high level, we analyze the quantity
\[
1 - \EB_{1} \left[T_n~|~\{\bP_{\VM}\}_{\VM} = \QM_i^\star \right] = \PB_{1}(S_n\leq \gamma_{n, \alpha})
\]
by isolating its dominant term and deriving the lower bound stated in the lemma.
Since all NTP distributions are fixed by the assignment $\QM_i^{\star} = \{\bQ_{i, \VM}\}_{\VM \in \Pi}$, we omit this dependence from the notation for clarity.
 
\paragraph{Step 1: Define tilted random variables.}
We begin by defining the random variables:
\[
X_{\VM} := -h_{\VM}(Y_{\VM}).
\]
for each minimal unit $\VM$. Let $F_{\VM}$ denote the alternative CDF of $X_{\VM}$. 
Next, for any $\theta \ge 0$, we define the tilted random variable $\bar X_{\VM}$ with its CDF given by
\[
\bar F_{\VM}(x) := \frac{1}{\phi_{\bQ_{i, \VM}, h_{\VM}}(\theta)} \int_{-\infty}^x \re^{\theta y} \rd F_{\VM}(y),
\]
where the normalizing constant
\[
\phi_{\bQ_{i, \VM}, h_{\VM}}(\theta)
:= \EB_{1, \bQ_{i, \VM}}[\re^{-\theta h_{\VM}(Y_{\VM})}]
= \int_{-\infty}^\infty \re^{\theta y} \rd F_{\VM}(y)
\]
is the MGF of $X_{\VM}$. 
The mean and variance of the tilted random variable $\bar X_{\VM}$ are given by:
\begin{align}
\label{eq:mean-and-variance}
\begin{split}
\EB_1[\bar X_{\VM}] 
&= \frac{1}{\phi_{\bQ_{i, \VM}, h_{\VM}}(\theta)} \int_{-\infty}^{\infty} y\, \re^{\theta y} \rd F_{\VM}(y) 
= \frac{\rd}{\rd \theta} \log \phi_{\bQ_{i, \VM}, h_{\VM}}(\theta) =: m_{\VM}(\theta), \\
\Var_1(\bar X_{\VM}) 
&= \frac{1}{\phi_{\bQ_{i, \VM}, h_{\VM}}(\theta)} \int_{-\infty}^{\infty} (y - \EB_1[\bar X_{\VM}])^2 \re^{\theta y} \rd F_{\VM}(y) 
= \frac{\rd^2}{\rd \theta^2} \log \phi_{\bQ_{i, \VM}, h_{\VM}}(\theta) =: \sigma^2_{\VM}(\theta).
\end{split}
\end{align}

\paragraph{Step 2: Reformulate the tail probability.}

Recall the test statistic $S_n = \sum_{\VM \in \Pi} h_{\VM}(Y_{\VM}) = -\sum_{\VM \in \Pi} X_{\VM}$. Thus, for any $x \in \RB$, we can write $\PB_{1}(S_n\leq -N_n x) = \PB_{1}\left(\sum_{\VM \in \Pi} X_{\VM} \geq N_n x\right)$. We then express the tail probability $\PB_1\left(\sum_{\VM \in \Pi} X_{\VM} \geq N_n x\right)$ in terms of the CDF of tilted random variables, as stated in the following lemma, whose proof can be found in Section \ref{proof:prob-decomposition}.
\begin{lem}
\label{lem:prob-decomposition}
Let $\bar{H}(t)$ be the CDF of the standardized sum $\frac{\sum_{\VM \in \Pi} \bar X_{\VM}  - N_n m(\theta)}{\sqrt{N_n \sigma^2(\theta)}}$. Then we have
\[
\PB_{1}\left(\sum_{\VM \in \Pi} X_{\VM} \geq N_n x\right) 
= \prod_{\VM \in \Pi} \phi_{\bQ_{i,\VM}, h_{\VM}}(\theta)\re^{-\theta N_n m(\theta)} 
\int_{\frac{N_n x-N_n m(\theta)}{\sqrt{N_n \sigma^2(\theta)}}}^{\infty} \re^{-\theta \sqrt{N_n \sigma^2(\theta)} t} \rd \bar{H}_n(t),
\]
where the mixture mean and variance are defined as
\[
m(\theta) :=  \frac{1}{N_n} \sum_{\VM \in \Pi} m_{\VM}(\theta), \qquad
\sigma^2(\theta) :=  \frac{1}{N_n} \sum_{\VM \in \Pi} \sigma^2_{\VM}(\theta).
\]
with each $m_{\VM}(\theta)$ and $\sigma^2_{\VM}(\theta)$ given in \eqref{eq:mean-and-variance}.
\end{lem}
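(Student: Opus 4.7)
The plan is to apply an exponential change-of-measure (tilting) argument to rewrite the tail probability in terms of the tilted CDFs $\bar F_\VM$, and then standardize the resulting sum to extract the integral against $\rd \bar H_n$.

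First, I would invoke Assumption~\ref{asmp:main}\ref{subasmp:indepedence}: under either Assumption~\ref{asmp:blocks-independence} or \ref{asmp:subblocks-independence}, the pivotal statistics $\{Y_\VM\}_{\VM \in \Pi}$ are independent across minimal units (by Proposition~\ref{thm:data_gen}), hence so are $X_\VM = -h_\VM(Y_\VM)$. The joint distribution therefore factorizes as $\prod_{\VM} \rd F_\VM(y_\VM)$. By the definition of the tilted CDF, $\rd F_\VM(y) = \phi_{\bQ_{i,\VM}, h_\VM}(\theta)\, \re^{-\theta y}\, \rd \bar F_\VM(y)$ for each $\VM$. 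Substituting this identity into every factor and pulling out the $\VM$-dependent normalizers gives
\[
\PB_1\!\left(\sum_{\VM \in \Pi} X_\VM \geq N_n x\right)
= \prod_{\VM \in \Pi} \phi_{\bQ_{i,\VM}, h_\VM}(\theta)\cdot
\bar{\EB}\!\left[\re^{-\theta \sum_{\VM} \bar X_\VM}\,\mathbf{1}\!\left\{\sum_{\VM} \bar X_\VM \geq N_n x\right\}\right],
\]
where $\bar{\EB}$ is taken under the tilted product measure in which $\{\bar X_\VM\}$ are independent with marginal CDFs $\bar F_\VM$.

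Next, I would perform the standardization $T := (\sum_{\VM} \bar X_\VM - N_n m(\theta))/\sqrt{N_n \sigma^2(\theta)}$, whose CDF equals $\bar H_n$ by definition. Writing $\sum_{\VM} \bar X_\VM = N_n m(\theta) + \sqrt{N_n \sigma^2(\theta)}\,T$ splits the exponential factor as $\re^{-\theta N_n m(\theta)}\,\re^{-\theta \sqrt{N_n \sigma^2(\theta)}\,T}$ and converts the event $\{\sum_{\VM} \bar X_\VM \geq N_n x\}$ into $\{T \geq (N_n x - N_n m(\theta))/\sqrt{N_n \sigma^2(\theta)}\}$. Expressing the resulting tilted expectation as a Stieltjes integral against $\rd \bar H_n$ yields exactly the stated identity.

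The only genuine subtlety is justifying the simultaneous change of measure across all minimal units: this is what requires the independence from Assumption~\ref{asmp:main}\ref{subasmp:indepedence}, since otherwise the tilted variables $\bar X_\VM$ would not be independent and the CDF $\bar H_n$ of the standardized sum would not decouple into the product form used above. Every remaining step is a routine algebraic manipulation once this factorization is in place.
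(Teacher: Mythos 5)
Your proof is correct, and it takes a genuinely different (though equally standard) route to the same change-of-measure identity. The paper argues via characteristic functions: it computes the characteristic function $w_n$ of $\sum_{\VM} X_\VM$ and the characteristic function $\bar w_n$ of $\sum_{\VM} \bar X_\VM$, observes the relation $w_n(z) = \bar w_n(z + \mathrm{i}\theta)\,\prod_{\VM}\phi_{\bQ_{i,\VM},h_\VM}(\theta)$, recognizes the right-hand side as the characteristic function of the (sub)probability measure with CDF $\prod_{\VM}\phi_{\bQ_{i,\VM},h_\VM}(\theta)\int_{-\infty}^x \re^{-\theta y}\,\rd\bar W_n(y)$, and then invokes uniqueness of characteristic functions to conclude $W_n(x) = \prod_{\VM}\phi_{\bQ_{i,\VM},h_\VM}(\theta)\int_{-\infty}^x \re^{-\theta y}\,\rd\bar W_n(y)$; the substitution and standardization steps then match yours. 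You instead carry out the change of measure directly at the level of the joint law $\prod_{\VM}\rd F_\VM$, substituting $\rd F_\VM(y) = \phi_{\bQ_{i,\VM},h_\VM}(\theta)\,\re^{-\theta y}\,\rd\bar F_\VM(y)$ into each factor and using the product structure to pull out the normalizers. Both proofs use independence across minimal units in an essential way — yours explicitly to factorize the joint law and to guarantee that the tilted variables remain independent (so that $\bar H_n$ is the CDF of the standardized sum of independent tilted variables); the paper's implicitly in the factorization $w_n(z) = \prod_\VM \phi_{\bQ_{i,\VM},h_\VM}(\mathrm{i}z)$. Your route is arguably the more elementary of the two, since it avoids the Fourier-inversion detour and the appeal to the uniqueness theorem for characteristic functions; the paper's route is the textbook treatment found in sources like Petrov that the subsequent Edgeworth-expansion machinery draws on. Either is perfectly valid here.
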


\paragraph{Step 3: Decompose the tail integral via Edgeworth expansion.}
We next apply Edgeworth expansion to approximate the PDF of $\bar{H}_n(t)$.
\begin{lem}
\label{lem:Edgeworth-main} 
Let $\varphi$ denote the PDF of the standard normal distribution $\NM(0, 1)$. Under Assumptions \ref{asmp:main} and \ref{asmp:main-lower-bound}, for any $x \in \RB$, we have:
\[
\frac{\rd \bar{H}_n(x)}{\rd x} = \varphi(x) + \frac{\lambda_{3,N_n}}{6\sqrt{N_n}}(x^3-3x)\varphi(x) + R_{N_n}(x),
\] 
where $R_{N_n}(x)$ is a residual term satisfying $\sup_{x \in \RB} |R_{N_n}(x)| = o(1/\sqrt{N_n})$, and
\[
\lambda_{3,N_n} = \frac{1}{N_n} \sum_{\VM \in \Pi}  \frac{\EB[|X_\VM - \EB_{1, \bQ_{\VM_\tau}}[X_\VM]|^3]}{\sigma^3(\theta)}.
\]
\end{lem}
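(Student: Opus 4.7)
\textbf{Proof proposal for Lemma \ref{lem:Edgeworth-main}.} The plan is to establish the Edgeworth expansion via Fourier inversion, adapting the classical non-i.i.d. argument of Petrov (1975) to exploit the bounded-variation condition on the polynomially large subset $\Pi_1$. Write the density of the standardized sum as
\[
\frac{\rd \bar H_n(x)}{\rd x} = \frac{1}{2\pi}\int_{-\infty}^{\infty} \re^{-\ri t x}\,\psi_n(t)\,\rd t,
\qquad
\psi_n(t) := \prod_{\VM \in \Pi} \EB_1\!\left[\exp\!\left(\ri t\,\frac{\bar X_{\VM}-m_{\VM}(\theta)}{\sqrt{N_n\sigma^2(\theta)}}\right)\right].
\]
The two target terms $\varphi(x)$ and $\tfrac{\lambda_{3,N_n}}{6\sqrt{N_n}}(x^3-3x)\varphi(x)$ are the inverse transforms of the first two terms in the formal expansion $\re^{-t^2/2}\bigl(1+\tfrac{(\ri t)^3}{6\sqrt{N_n}}\lambda_{3,N_n}\bigr)$; the task is therefore to show the remainder of the Fourier integral is $o(1/\sqrt{N_n})$, uniformly in $x$.

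First, I would split the inversion integral at a threshold $|t|=\eta\sqrt{N_n}$ for a small constant $\eta>0$. In the central regime $|t|\le\eta\sqrt{N_n}$, Assumption~\ref{asmp:main-lower-bound}(iii) gives uniform lower and upper bounds on the cumulants up to order four of each tilted summand $\bar X_{\VM}$. A standard Taylor expansion of $\log\psi_n(t)$ then produces
\[
\psi_n(t) = \re^{-t^2/2}\!\left(1+\frac{(\ri t)^3}{6\sqrt{N_n}}\lambda_{3,N_n} + \frac{r_n(t)}{N_n}\right),
\qquad |r_n(t)|\le C(1+|t|^6),
\]
on this range; inverting and integrating against $\re^{-t^2/2}$ with an extra power of $t$ yields an error of order $O(1/N_n)=o(1/\sqrt{N_n})$ uniformly in $x$ after multiplying by $\re^{-\ri tx}$.

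The main obstacle, and where I would spend most of the effort, is the tail regime $|t|>\eta\sqrt{N_n}$, which requires a quantitative Cramér-type bound in a non-identically-distributed setting. For this I would invoke Assumption~\ref{asmp:main-lower-bound}(iv): for each size-one unit $\VM\in\Pi_1$, the alternative score density $\rho_{\bP_{\VM}}$ has total variation at most $C_{BV}$, so a single integration by parts gives
\[
\Bigl|\EB_{1}\!\bigl[\re^{\ri s\, h_{\VM}(Y_{\VM})}\bigr]\Bigr|
\;=\;\left|\int \re^{\ri s y}\rho_{\bP_{\VM}}(y)\,\rd y\right|
\;\le\; \frac{C_{BV}}{|s|}.
\]
After tilting and standardization, each such factor contributes at most $C_{BV}\sqrt{N_n\sigma^2(\theta)}/|t|$ to $|\psi_n(t)|$, so
\[
|\psi_n(t)|\;\le\;\prod_{\VM\in\Pi_1}\Bigl|\EB_1[\re^{\ri t(\bar X_\VM-m_\VM(\theta))/\sqrt{N_n\sigma^2(\theta)}}]\Bigr|
\;\le\;\Bigl(C'\sqrt{N_n}/|t|\Bigr)^{|\Pi_1|}
\;\le\;\Bigl(C'/\eta\Bigr)^{-cN_n^{\lambda}+\text{correction}}
\]
on the tail, where $|\Pi_1|\ge cN_n^{\lambda}$. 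This is super-polynomially small, so $\int_{|t|>\eta\sqrt{N_n}}|\psi_n(t)|\,\rd t$ decays faster than any power of $N_n$, in particular $o(1/\sqrt{N_n})$, uniformly in $x$ since the $\re^{-\ri tx}$ factor has modulus one.

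Combining the two regimes yields the claimed expansion with remainder $\sup_x|R_{N_n}(x)|=o(1/\sqrt{N_n})$. The delicate point is to make the constant $C'$ in the tail bound independent of $\theta$ (which holds because Lemma~\ref{lem:bounded-optimal-theta} confines $\theta$ to a bounded set where Assumption~\ref{asmp:main-lower-bound}(iii) gives $\sigma^2(\theta)$ bounded below) and to verify that the central expansion can absorb the possibly $\theta$-dependent constants into the $C$ of the cumulant remainder, so the final rate $\omega_{N_n}=o(1/\sqrt{N_n})$ depends only on universal constants and $N_n$, as required for Lemma~\ref{lem:lower}.
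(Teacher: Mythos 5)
Your plan re-proves the Edgeworth expansion from scratch via Fourier inversion and a central/tail split of the characteristic-function integral. The paper instead cites the classical Edgeworth theorem for independent non-identically distributed summands (Theorem~7, Ch.~VI, \S4 of Petrov) as a black box (stated as Lemma~\ref{lem:edgeworth-inid}) and merely verifies its three hypotheses---uniformly bounded moments, a Lindeberg-type tail condition, and a Cramér-type condition---using Fact~\ref{fact:center-tilted} and Assumption~\ref{asmp:main-lower-bound}. Your route is more self-contained but also more exposed, and as written it has a gap at the split point.

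The problem is the choice of threshold. You split at $|t| = \eta\sqrt{N_n}$ with $\eta$ ``a small constant'' so that the Taylor expansion of $\log\psi_n$ converges in the central zone, but your tail estimate from the total-variation bound reads
\[
|\psi_n(t)| \;\le\; \prod_{\VM\in\Pi_1}\frac{C'\sqrt{N_n}}{|t|}
\;=\;\Bigl(\frac{C'\sqrt{N_n}}{|t|}\Bigr)^{|\Pi_1|},
\]
which is only smaller than $1$ when $|t| > C'\sqrt{N_n}$. For $\eta$ small (in particular $\eta < C'$), the bound at $|t|=\eta\sqrt{N_n}$ is $(C'/\eta)^{|\Pi_1|}\ge 1$, so your tail integral is not controlled and your claim that it decays super-polynomially does not follow from the stated inequality. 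The intermediate band $\eta\sqrt{N_n} < |t| \lesssim C'\sqrt{N_n}$ is covered by neither estimate. The standard fix is to observe that bounded total variation of each $\rho_{\bP_{\VM}}$ implies $\hat\rho_{\bP_{\VM}}$ is continuous with $|\hat\rho_{\bP_{\VM}}(s)|<1$ for $s\ne 0$, hence $\sup_{\eta\le |s|\le C'}|\hat\rho_{\bP_{\VM}}(s)|\le q<1$ uniformly over $\PM_{\VM}^\star$ by compactness, giving geometric decay $q^{|\Pi_1|}$ in this band; this needs to be stated and invoked, and it is precisely the content of the Cramér-type condition~(iii) that the paper's Lemma~\ref{lem:edgeworth-inid} packages away. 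A second, smaller issue: Assumption~\ref{asmp:main-lower-bound}(iv) bounds the total variation of the un-tilted alternative density $\rho_{\bP_{\VM}}$, whereas $\psi_n$ is built from the tilted $\bar X_{\VM}$; you gesture at $\theta$ being confined to a compact set by Lemma~\ref{lem:bounded-optimal-theta}, but you should explicitly show that $\mathrm{TV}(\bar\rho_{\VM}) \le \theta + \phi(\theta)^{-1}\int \re^{\theta y}|\rho'(y)|\,\rd y$ remains uniformly bounded under that restriction before applying the $1/|s|$ integration-by-parts bound.
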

The proof of Lemma \ref{lem:Edgeworth-main} can be found in Section \ref{proof:Edgeworth-main}.
Using the above expansion, we evaluate the integral in the tail expression whose proof can be found in Section \ref{proof:residual-integral}
\begin{lem}
\label{lem:residual-integral}
Under Assumption \ref{asmp:main-lower-bound}, for any $\theta$ in the fixed interval (e.g., from Lemma \ref{lem:root_stability}), then
\[
\int_{0}^{\infty} \re^{-\theta \sqrt{N_n \sigma^2(\theta)} t} \rd \bar{H}_n(t) = \Theta\left(\frac{1}{\sqrt{N_n}}\right).
\]
where $\Theta(\cdot)$ denote asymptotic equivalence up to constant factors.\footnote{That is, for two positive sequences $a_n$ and $b_n$, we write $a_n = \Theta(b_n)$ if there exist constants $0 < c < C < \infty$ such that $c \cdot b_n \le a_n \le C \cdot b_n$ for all sufficiently large $n$.}
\end{lem}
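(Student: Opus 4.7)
\textbf{Proof plan for Lemma \ref{lem:residual-integral}.} The plan is to substitute the Edgeworth expansion from Lemma \ref{lem:Edgeworth-main} into the integral and evaluate the three resulting pieces using standard Laplace-type estimates. Write $a_n := \theta\sqrt{N_n \sigma^2(\theta)}$. By Assumption \ref{asmp:main-lower-bound}(iii), $\sigma^2(\theta)$ is bounded above and below by positive constants uniformly over the fixed compact $\theta$-interval, so $a_n = \Theta(\sqrt{N_n})$. Substituting the expansion into $\int_0^\infty e^{-a_n t}\,\rd\bar H_n(t)$ yields three terms:
\[
\mathrm{(I)}\ \int_0^\infty e^{-a_n t}\varphi(t)\,\rd t,\qquad
\mathrm{(II)}\ \frac{\lambda_{3,N_n}}{6\sqrt{N_n}}\int_0^\infty e^{-a_n t}(t^3-3t)\varphi(t)\,\rd t,\qquad
\mathrm{(III)}\ \int_0^\infty e^{-a_n t}R_{N_n}(t)\,\rd t.
\]

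For term (I), completing the square gives $\int_0^\infty e^{-a_n t}\varphi(t)\,\rd t = e^{a_n^2/2}\bar\Phi(a_n)$, where $\bar\Phi$ is the standard normal tail. The Mills ratio asymptotic $\bar\Phi(a_n) = \frac{\varphi(a_n)}{a_n}(1+o(1))$ then yields $(\mathrm{I}) = \frac{1}{a_n\sqrt{2\pi}}(1+o(1)) = \Theta(1/\sqrt{N_n})$, with both the upper and lower constants uniform in $\theta$ on the prescribed compact interval. This is the dominant contribution.

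For terms (II) and (III), I would use the elementary estimate $\int_0^\infty e^{-a_n t} t^k\varphi(t)\,\rd t \le \varphi(0)\int_0^\infty t^k e^{-a_n t}\rd t = \frac{k!\,\varphi(0)}{a_n^{k+1}}$, which gives $O(1/a_n^{k+1})$. Applying this with $k=1,3$, term (II) is $O\bigl(\frac{1}{\sqrt{N_n}}\cdot\frac{1}{a_n^2}\bigr)=O(N_n^{-3/2})$, using that $\lambda_{3,N_n}$ is uniformly bounded thanks to the third-derivative bound in Assumption \ref{asmp:main-lower-bound}(iii) together with the lower bound on $\sigma^2(\theta)$. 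Term (III) is bounded by $\sup_t|R_{N_n}(t)|\cdot\int_0^\infty e^{-a_n t}\rd t = o(1/\sqrt{N_n})\cdot O(1/\sqrt{N_n}) = o(1/N_n)$, using the uniform smallness of $R_{N_n}$ from Lemma \ref{lem:Edgeworth-main}. Both corrections are strictly lower order than (I), so the total integral is $\Theta(1/\sqrt{N_n})$.

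The main delicate point is ensuring the $\Theta$ bound is uniform rather than a pointwise $O$ estimate: this requires $\theta$ to stay in a compact subinterval bounded away from zero (otherwise $a_n$ would fail to be $\Theta(\sqrt{N_n})$ and the Laplace approximation would break down), and requires $\sigma^2(\theta)$ and $\lambda_{3,N_n}$ to be controlled uniformly. Both are supplied by the compactness provided by the referenced root-stability lemma and by the CGF regularity in Assumption \ref{asmp:main-lower-bound}(iii). A minor technical care is that the Edgeworth density expansion is only uniformly accurate in $t$, not integrable by itself; but because $e^{-a_n t}$ decays exponentially on $[0,\infty)$ with rate $a_n\to\infty$, the Laplace-type bounds above absorb all contributions from large $t$ and make the uniform $o(1/\sqrt{N_n})$ control on $R_{N_n}$ sufficient.
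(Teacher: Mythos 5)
Your proof is correct and follows essentially the same route as the paper's: substitute the Edgeworth expansion from Lemma~\ref{lem:Edgeworth-main}, evaluate the leading Gaussian piece by completing the square and invoking the Mills ratio (Lemma~\ref{lem:mills} in the paper, your asymptotic form), bound the cubic correction via elementary Laplace estimates to get $O(N_n^{-3/2})$, and absorb the remainder via the uniform $o(1/\sqrt{N_n})$ control and $\int_0^\infty e^{-a_n t}\,\rd t = \Theta(1/\sqrt{N_n})$. Your explicit remark that $\theta$ must be bounded away from zero (supplied by $\underline{M}>0$ in Lemma~\ref{lem:root_stability}) is the same uniformity point the paper relies on implicitly.
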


\paragraph{Step 4: Putting the pieces together.}
Combining Lemmas \ref{lem:prob-decomposition}, \ref{lem:Edgeworth-main}, and \ref{lem:residual-integral}, and setting $x = m(\theta)$, we obtain that
\begin{align}
\PB_{1}(S_n\leq  -N_n x) 
&= \prod_{\VM \in \Pi} \phi_{\bQ_{i, \VM}, h_{\VM}}(\theta) \re^{-\theta N_n m(\theta)} \cdot \Theta\left(\frac{1}{\sqrt{N_n}}\right).
\end{align}
Taking logarithms and normalizing, we complete the proof by noting that
\begin{align}
\frac{1}{N_n} \log \PB_{1}(S_n\leq -N_n x)
&= \frac{1}{N_n} \log\prod_{\VM \in \Pi} \left(\phi_{\bQ_{i, \VM}, h_{\VM}}(\theta) \re^{-\theta N_n m(\theta)} \cdot \Theta\left(\frac{1}{\sqrt{N_n}}\right)\right)\\
&= -\theta x + \frac{1}{N_n} \sum_{\VM \in \Pi} \log\left(\phi_{\bQ_{i, \VM}, h_{\VM}}(\theta)\right) - \Theta\left(\frac{\log N_n}{N_n}\right),\\
&\overset{(a)}{=} \theta \frac{\gamma_{n, \alpha}}{N_n} + \frac{1}{N_n} \sum_{\VM \in \Pi} 1 \log\left(\phi_{\bQ_{i, \VM}, h_{\VM}}(\theta)\right) - \Theta\left(\frac{\log N_n}{N_n}\right),\\
&\overset{(b)}{\ge} \inf_{\theta \ge 0} \left\{ \theta \frac{\gamma_{n, \alpha}}{N_n} + \frac{1}{N_n} \sum_{\VM \in \Pi} \log\left(\phi_{\bQ_{i, \VM}, h_{\VM}}(\theta)\right) ) \right\} - \Theta\left(\frac{\log N_n}{{N_n}}\right)
\end{align}
where (a) follows by setting $x = -\frac{\gamma_{n, \alpha}}{N_n}$, and (b) uses the conclusion that the solution to $m(\theta) = - \frac{\gamma_{n, \alpha}}{N_n}$ satisfies $\theta \ge 0$ from Lemma \ref{lem:root_stability}. The proof of Lemma \ref{lem:root_stability} is in Section \ref{proof:root_stability}.

\begin{lem}[Stability of roots for mixture equations]\label{lem:root_stability}
Let Assumptions~\ref{asmp:main} and \ref{asmp:main-lower-bound} hold. Consider the equation $m(\theta) = - \frac{\gamma_{n, \alpha}}{N_n}$, where $m(\theta)$ is given in Lemma \ref{lem:prob-decomposition}. 
Then the root of this equation is well-defined and lies within a fixed interval $[\underline{M}, \overline{M}]$, where the constants $\underline{M}, \overline{M}> 0$ are independent of $N_n$ and the specific choice of NTP assignment $\QM_i^\star = (\bQ_{i, \VM})_{\VM \in \Pi}$.
\end{lem}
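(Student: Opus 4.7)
The plan is to prove uniqueness, existence, and two-sided boundedness of the root separately, exploiting strict convexity of the cumulant-generating function. The underlying picture is simple: $m(\theta)$ is the averaged tilted mean, so $m'(\theta)=\frac{1}{N_n}\sum_{\VM}\sigma^2_{\VM}(\theta)$ is the averaged tilted variance. By Assumption~\ref{asmp:main-lower-bound}(iii), $m'(\theta)\in[\sigma^2_{\min}(K),C_2(K)]$ on any compact $K\subset[0,\infty)$, uniformly in the assignment $\QM_i^\star=\{\bQ_{i,\VM}\}_{\VM\in\Pi}$. Hence $m$ is strictly increasing, and the equation $m(\theta)=-\gamma_{n,\alpha}/N_n$ has at most one root.

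For the upper bound $\overline{M}$, I would note that both endpoints are controlled. The quantity $|m(0)|=|\tfrac{1}{N_n}\sum_{\VM}\EB_{1,\bQ_{i,\VM}}[h_{\VM}(Y_{\VM})]|$ is bounded because the MGF is finite in a neighborhood of $0$ and the supremum is attained on the finite set $\PM_{\VM}^\star$; and $|\gamma_{n,\alpha}/N_n|$ is bounded because Lemma~\ref{lem:minimaxgamman} gives $\gamma_{n,\alpha}/N_n=\mu_n+O(1/\sqrt{N_n})$ with $|\mu_n|$ uniformly bounded by Assumption~\ref{asmp:main}\ref{subasmp:bounded-variance}. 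Combining with the linear estimate $m(\theta)\ge m(0)+\sigma^2_{\min}(K)\theta$ on $K=[0,\overline{M}]$ yields
\[
\theta^\star\le\bigl(-m(0)-\gamma_{n,\alpha}/N_n\bigr)/\sigma^2_{\min}(K)=:\overline{M},
\]
which is finite and independent of $N_n$ and $\QM_i^\star$.

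For the lower bound $\underline{M}>0$, I would invoke the informative-scores condition. Because the supremum in the MGF is attained on the finite set $\PM_{\VM}^\star$ (Assumption~\ref{asmp:main-lower-bound}(i)), a uniform gap $\delta_0:=\inf_{\VM,\bP\in\PM_{\VM}^\star}(\EB_{1,\bP}[h_{\VM}]-\EB_0[h_{\VM}])>0$ is available. Applying Lemma~\ref{lem:minimaxgamman} once more,
\[
m(0)+\gamma_{n,\alpha}/N_n=\tfrac{1}{N_n}\sum_{\VM}\bigl(\EB_0[h_{\VM}]-\EB_{1,\bQ_{i,\VM}}[h_{\VM}]\bigr)+O(1/\sqrt{N_n})\le-\tfrac{\delta_0}{2}
\]
for $N_n$ large. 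Since $m'(\theta)\le C_2(K)$ on $[0,\overline{M}]$, closing a gap of at least $\delta_0/2$ forces $\theta^\star\ge\delta_0/(2C_2(K))=:\underline{M}>0$. In particular, the root is strictly positive, and together with the previous step the bounds $[\underline{M},\overline{M}]$ are independent of $N_n$ and of $\QM_i^\star$.

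The main obstacle is ensuring that $\delta_0$, $C_2(K)$, and $\sigma^2_{\min}(K)$ are genuinely uniform in the partition $\Pi$, since Assumption~\ref{asmp:main-lower-bound}(iii) is stated unit-by-unit. To obtain constants that do not depend on which minimal units appear, I would argue that the score functions $\{h_{\VM}\}$ vary only through a finite set of parameters---for instance, the closed-form rules in Theorems~\ref{thm:gumbel} and~\ref{thm:inverse-optimal-rule} depend only on $\Delta$ and $|\VM|\wedge|\Voca|$, each ranging over a compact set---and that the tilted-mean gap and tilted-variance bounds are continuous in those parameters. A compactness argument then yields a single triple $(\delta_0,\sigma^2_{\min},C_2)$, and hence a single pair $(\underline{M},\overline{M})$, that works for every admissible $\Pi$ and every assignment.
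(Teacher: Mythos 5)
Your uniqueness argument and your lower-bound argument match the paper's: both rely on $m'(\theta)=\frac{1}{N_n}\sum_{\VM}\sigma^2_{\VM}(\theta)>0$ for strict monotonicity, and both produce $\underline{M}$ as the informative-scores gap divided by the CGF second-derivative upper bound $C_2(K)$. That part of the proposal is essentially the paper's proof.

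The upper bound is where the proposal goes astray, and the issue is a circularity. You define $\overline{M}:=(-m(0)-\gamma_{n,\alpha}/N_n)/\sigma^2_{\min}(K)$ with $K=[0,\overline{M}]$. But Assumption~\ref{asmp:main-lower-bound}(iii) only supplies the constant $\sigma^2_{\min}(K)$ once the compact set $K$ is given, and nothing in the assumption prevents $\sigma^2_{\min}([0,M])\to 0$ as $M\to\infty$. So the quantity you call $\overline{M}$ depends on $\overline{M}$ itself, and the definition does not close without additional structure (a $K$-independent variance floor, or a fixed-point argument under extra monotonicity hypotheses, neither of which the paper assumes). The paper does not derive $\overline{M}$ from CGF regularity at all; it obtains $\theta^\star\le\overline{M}$ by invoking Lemma~\ref{lem:bounded-optimal-theta}, which in turn rests on Assumption~\ref{asmp:main}~\ref{subasmp:well-posedness}. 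That assumption directly posits a bounded minimizer, and its purpose in the paper is precisely to sidestep the circularity you run into. Only after $\overline{M}$ is in hand can one freeze $K=[0,\overline{M}]$ and read off the CGF constants, which is the order the paper follows. A secondary gap: you establish "at most one root," but existence is not addressed. The paper closes this via Lemma~\ref{lem:mean_limit}, which gives $\lim_{\theta\to\infty}m(\theta)=-\frac{1}{N_n}\sum_{\VM}\mathrm{essinf}(h_{\VM})$ and, combined with the concentration of $\gamma_{n,\alpha}/N_n$ around the null mean, shows that $-\gamma_{n,\alpha}/N_n$ lies strictly inside the range of $m$. Finally, your closing paragraph about uniformity over partitions raises a legitimate concern, but the compactness-in-$(\Delta,|\VM|\wedge|\Voca|)$ argument you sketch is not how the paper treats it; the paper simply relies on the assumption constants being stated uniformly in $\Pi$ and $\bP_{\VM}$, which is implicit in how Assumption~\ref{asmp:main-lower-bound} is written.
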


\end{proof}

\subsection{Proof of Lemma \ref{lem:prob-decomposition}}
\label{proof:prob-decomposition}

\begin{proof}[Proof of Lemma \ref{lem:prob-decomposition}]
Let $W_n(x)$, $\bar W_n(x)$, and $\bar{H}_n(x)$ denote the CDFs of the random variables $\sum_{\VM \in \Pi} X_{\VM}$, $\sum_{\VM \in \Pi} \bar X_{\VM}$, and the standardized sum $\frac{\sum_{\VM \in \Pi} \bar X_{\VM}  - N_n m(\theta)}{\sqrt{N_n \sigma^2(\theta)}}$, respectively.  
By definition, it follows that
\begin{align}
\bar{H}_n(x) = \bar W_n\left(\sqrt{N_n \sigma^2(\theta)} x + N_n m(\theta)\right) .
\end{align}
Let $\ri$ denote the imaginary unit. The characteristic function of $W_n(x)$ is given by
\begin{align}
  w_n(z) := \EB_{1, \QM_i^\star}[\re^{\ri z\sum_{\VM \in \Pi} X_{\VM}}] = \prod_{\VM \in \Pi} \phi_{\bQ_{i, \VM}, h_{\VM}}(\ri z).
\end{align}
Similarly, the characteristic function of $\bar W_n(x)$ is
\begin{align}
  \bar w_n(z) := \EB_{1, \QM_i^\star}[\re^{\ri z\sum_{\VM \in \Pi} \bar X_{\VM}}] = \prod_{\VM \in \Pi} \bar \phi_{\bQ_{i, \VM}, h_{\VM}}(\ri z),
\end{align}
where by definition, we have
\[
\bar \phi_{\bQ_{i, \VM}, h_{\VM}}(\ri z) = \phi_{\bQ_{i, \VM}, h_{\VM}}(\ri (z-\ri\theta))/\phi_{\bQ_{i, \VM}, h_{\VM}}(\theta)
\]
is the MGF of the centered variable $\bar X_{\VM}$ under the NTP distribution $\bQ_{i, \VM}$. Using this relation, we obtain the identity:
\begin{align}
  w_n(z) = \bar w_n(z+\ri \theta) \cdot \prod_{\VM \in \Pi} \phi_{\bQ_{i, \VM}, h_{\VM}}(\theta).
\end{align}
If the imaginary part of $z$ is zero, the left side of the last equation is the characteristic function of $W_n(x)$, while the right side is the characteristic function of 
\[
\prod_{\VM \in \Pi} \phi_{\bQ_{i, \VM}, h_{\VM}}(\theta)
\int_{-\infty}^x \re^{-\theta y} \rd \bar W_n(y).
\]
Thus, for all $x \in \mathbb{R}$, we have\begin{align}
  W_n(x) = \prod_{\VM \in \Pi} \phi_{\bQ_{i, \VM}, h_{\VM}}(\theta) \int_{-\infty}^x \re^{-\theta y} \rd \bar W_n(y).
\end{align}

Now, make the change of variables $y = N_n m(\theta) + \sqrt{N_n \sigma^2(\theta)} t$, which yields
\begin{align}
  W_n\left(x\right)& = \prod_{\VM \in \Pi} \phi_{\bQ_{i, \VM}, h_{\VM}}(\theta) \int_{-\infty}^{\frac{x-N_n m(\theta)}{\sqrt{N_n \sigma^2(\theta)}}} \re^{-\theta (N_n m(\theta) + \sqrt{N_n \sigma^2(\theta)} t)} \rd \bar{H}_n(t)\\
&= \prod_{\VM \in \Pi} \phi_{\bQ_{i, \VM}, h_{\VM}}(\theta) \re^{-\theta N_n m(\theta)} \int_{-\infty}^{\frac{x-N_n m(\theta)}{\sqrt{N_n \sigma^2(\theta)}}} \re^{-\theta \sqrt{N_n \sigma^2(\theta)} t} \rd \bar{H}_n(t).
\end{align}
Therefore, by substituting $x \gets N_n x$, we obtain
\[
1-W_n(N_n x) = \prod_{\VM \in \Pi} \phi_{\bQ_{i, \VM}, h_{\VM}}(\theta) \re^{-\theta N_n m(\theta)} \int_{\frac{N_n x-N_n m(\theta)}{\sqrt{N_n \sigma^2(\theta)}}}^{\infty} \re^{-\theta \sqrt{N_n \sigma^2(\theta)} t} \rd \bar{H}_n(t),
\]
which concludes the proof.
\end{proof}

\subsection{Proof of Lemma \ref{lem:Edgeworth-main}}
\label{proof:Edgeworth-main}

\begin{proof}[Proof of Lemma \ref{lem:Edgeworth-main}]

At a high level, we apply the Edgeworth expansion to approximate the PDF of $\bar{H}_n$ using functionals of the standard Gaussian distribution. We will make use of the following lemma and verify that its conditions are satisfied in our setting.

\begin{lem}[Classical Edgeworth expansion]
\label{lem:edgeworth-inid}
Let $X_1,\dots,X_n$ be independent, zero-mean real-valued random variables with variances $\sigma_i^2 = \Var(X_i)$ and finite third moments. Define
\[
S_n := \sum_{i=1}^n X_i, \quad 
B_n := \sqrt{\frac{1}{n} \sum_{i=1}^n \sigma_i^2}, \quad 
Z_n := \frac{S_n}{\sqrt{n} B_n}, \quad \text{and} \quad
\lambda_{3,n} := \frac{1}{n} \sum_{i=1}^n \frac{\EB[X_i^3]}{B_n^3}.
\]
Suppose the following conditions hold:
\begin{enumerate}[(i)]
    \item $\liminf\limits_{n \to \infty} B_n > 0$ and $\limsup_{n \to \infty} \frac{1}{n}\EB[|X_j|^3] < \infty$.
    \item For some positive $\tau < 1/2$, $\frac{1}{n} \sum_{j=1}^n \EB\left[|X_j|^3 \mathbf{1}_{|X_j| > n^{\tau}}\right] \to 0$ as $n \to \infty$.
    \item \textbf{(Cramér's condition)} For every fixed $\varepsilon > 0$, ${n} \int_{|t|>\varepsilon} \prod_{j=1}^n |v_j(t)| dt \to 0$ as $n \to \infty$ where $v_j(t) = \EB[\exp(t\ri X_j)]$ is the characteristic function of $X_j$. 
\end{enumerate}
Then, the Edgeworth expansion satisfies
\[
\sup_{x \in \RB} \left|p_{Z_n}(x) - \left[ \varphi(x) + \frac{\lambda_{3,n}}{6\sqrt{n}}(x^3-3x)\varphi(x)\right] \right| = o\left(\frac{1}{\sqrt{n}}\right),
\]
where $\varphi(x)$ and $p_{Z_n}(x)$ are the PDFs of the standard normal distribution $\NM(0, 1)$ and $Z_n$, respectively.
\end{lem}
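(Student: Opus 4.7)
The plan is to follow the classical Fourier inversion / split argument for Edgeworth expansions (as in Petrov's monograph), adapted to the non-identically distributed setting. Under Cram\'er's Condition~(iii) the sum $Z_n$ admits a density, so I would start from the inversion representation
\[
p_{Z_n}(x) - \varphi(x) - \frac{\lambda_{3,n}}{6\sqrt{n}}(x^3-3x)\varphi(x) = \frac{1}{2\pi}\int_{-\infty}^{\infty} e^{-itx}\!\left[\phi_{Z_n}(t) - e^{-t^2/2}\!\left(1+\tfrac{(it)^3\lambda_{3,n}}{6\sqrt{n}}\right)\right]\rd t,
\]
where $\phi_{Z_n}$ is the characteristic function of $Z_n$; here I have used the identity $\mathcal{F}^{-1}[(it)^3 e^{-t^2/2}](x) = (x^3-3x)\varphi(x)$. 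The goal thus reduces to showing the right-hand integral is $o(1/\sqrt{n})$ uniformly in $x$. I would accomplish this by picking a cutoff $T_n = \delta\sqrt{n}$ for a small constant $\delta>0$ and bounding the integrand separately on the near region $|t|\le T_n$ and the tail $|t|>T_n$.

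On the near region I would Taylor expand $\log v_j\!\bigl(t/(\sqrt n B_n)\bigr)$ to third order and sum over $j$. Condition~(i) keeps the rescaling uniformly nondegenerate via $\liminf B_n > 0$; the zero-mean assumption kills the linear term; the quadratic contributions sum to $-t^2/2$ by the normalization $\sum_j \sigma_j^2 = nB_n^2$; and the cubic contributions sum to $(it)^3\lambda_{3,n}/(6\sqrt n)$. Exponentiating and keeping only the first two terms of the power series of $e^x$ yields the bracketed approximant, and I must bound the residuals. A naive third-moment bound produces only $O(1/\sqrt n)$, so to sharpen the residual to $o(1/\sqrt n)$ I would use the truncation in Condition~(ii): splitting each $|X_j|^3$ at the level $n^\tau$, the small-value part feeds into a fourth-order Taylor term of size $|t|^4/n$ (integrable against the Gaussian envelope), while the large-value part contributes $o(1/\sqrt n)$ directly by Condition~(ii).

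On the tail $|t|>T_n$, Cram\'er's Condition~(iii), after the change of variables $s = t/(\sqrt n B_n)$ (well-conditioned thanks to $\liminf B_n > 0$), yields $\int_{|t|>T_n}|\phi_{Z_n}(t)|\,\rd t = o(1/\sqrt n)$; the Gaussian comparison term decays super-polynomially and is absorbed trivially. The main obstacle I anticipate is the calibration at the cutoff: the Taylor expansion is valid only for $T_n$ not too large, while Cram\'er's tail bound is effective only for $T_n$ not too small, so the near and far bounds must be glued carefully. The delicate point is precisely where the truncation in Condition~(ii) intervenes, upgrading the standard Berry--Esseen-type $O(1/\sqrt n)$ bound to the sharper $o(1/\sqrt n)$ needed here. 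Uniformity in $x$ is automatic because all modulus bounds on the integrand depend only on $|t|$, not on $x$.
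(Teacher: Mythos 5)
The paper does not prove this lemma; it simply cites Theorem~7, Chapter~VI, \S4 of Petrov's monograph and declares that ``its proof, which we omit,'' rests on Petrov's class $S(3,1,1)$. Your sketch reconstructs precisely that classical Fourier-inversion argument: inversion formula, split at $T_n=\delta\sqrt n$, third-order log-CF Taylor expansion on the near region, Cram\'er's integral condition for the tail. So you are not taking a different route --- you are filling in the proof the paper chose to omit, and in outline it matches Petrov's argument.

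Two points of caution, both of which you partly anticipate but do not resolve. First, the validity of the third-order Taylor expansion of $\log v_j\bigl(t/(\sqrt n B_n)\bigr)$ out to $|t|\le\delta\sqrt n$ requires $|v_j(u)-1|$ to be uniformly bounded below $1$; condition~(i) as stated controls only the \emph{average} third moment, not $\max_j\sigma_j^2$, so one must either invoke the truncation of~(ii) again at this step or add an intermediate subregion where a crude bound $|\phi_{Z_n}(t)|\le e^{-ct^2}$ takes over. Second, the passage from the expanded cumulant $-t^2/2+(it)^3\lambda_{3,n}/(6\sqrt n)+R_n(t)$ back through the exponential to the multiplicative form $e^{-t^2/2}\bigl(1+(it)^3\lambda_{3,n}/(6\sqrt n)\bigr)$ produces a cross term of size $|t|^6/n$ and the remainder $R_n(t)$, both of which must be dominated by a Gaussian envelope $e^{-ct^2}$ on the full near region to be integrable at the $o(1/\sqrt n)$ rate. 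This is exactly the bookkeeping where the Lindeberg-type truncation (ii) earns its keep, as you say, but writing it out is the bulk of Petrov's Chapter~VI argument; the sketch as given asserts it rather than establishes it. None of this is a wrong idea --- it is the gap between a proof plan and a proof, and it is the content the paper off-loads to the citation.
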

\begin{proof}[Proof of Lemma \ref{lem:edgeworth-inid}]
The result follows directly from Theorem 7 in Chapter VI, §4 of \citet{petrov2012sums}. Its proof, which we omit, relies on the analysis of the class of random variables denoted by $S(3,1,1)$, as defined in the same reference.
\end{proof}

To apply Lemma \ref{lem:edgeworth-inid}, we define the mean-zero, independent variables $\{\widetilde{X}_{\VM}\}_{\VM \in \Pi}$ by centering the tilted variables:
\[
\widetilde{X}_{\VM} = \bar{X}_{\VM} - \EB_{1}[\bar{X}_{\VM}] = \bar{X}_{\VM} - m_{\VM}(\theta)
\]
where $ m_{\VM}(\theta)$ is defined in \eqref{eq:mean-and-variance}. These variables remain independent because they preserve the dependence structure of the original variables $\{X_{\VM}\}_{\VM \in \Pi}$. The required regularity conditions for applying the Edgeworth expansion are ensured by Assumption \ref{asmp:main-lower-bound}, as we now formalize.

\begin{fact}[Facts about centered tilted distributions]
\label{fact:center-tilted}
Let $[\underline{M}, \overline{M}]$ be the interval defined in Lemma \ref{lem:root_stability} and fix any $\theta \in [\underline{M}, \overline{M}]$. Under Assumption \ref{asmp:main-lower-bound}, the centered tilted variables $\widetilde{X}_{\VM}$ satisfy:
\label{asmp:edgeworth}
\begin{enumerate}
\item \textbf{Uniformly bounded moments:} There exists a constant $C_{\max} > 0$ such that for all $\VM \in \Pi$ and all $\theta \in [\underline{M}, \overline{M}]$,
\[
\EB_1[\widetilde{X}_{\VM}^4] \le C_{\max}.
\]
In particular, this also implies uniform bounds on third moments, that is, $\EB[|\widetilde{X}_{\VM}|^3] \le C'_{\max}$.

\item \textbf{(Uniform Non-degeneracy of Variance)} \textbf{Uniformly bounded variance away from zero:} There exists a constant $\sigma_{\min}^2 > 0$ such that for all $\VM \in \Pi$ and all $\theta \in [\underline{M}, \overline{M}]$,
\[
\Var(\bar{X}_\VM) = \EB[(\widetilde{X}_\VM)^2] = \sigma_{\VM}^2(\theta) \ge \sigma^2_{\min}.
\]
\end{enumerate}
\end{fact}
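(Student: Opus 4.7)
The plan is to connect the central moments of the tilted variables $\widetilde{X}_{\VM}$ directly to derivatives of the cumulant generating function $\psi_{\VM}(\theta) := \log \phi_{\bQ_{i,\VM}, h_{\VM}}(\theta)$, and then invoke the uniform smoothness guarantees supplied by Assumption \ref{asmp:main-lower-bound}(iii). The key observation is that, by the standard moment identities for exponentially tilted distributions already recorded in~\eqref{eq:mean-and-variance}, one has $\EB_1[\bar X_{\VM}] = \psi_{\VM}'(\theta)$ and $\Var_1(\bar X_{\VM}) = \psi_{\VM}''(\theta)$; more generally, the $k$-th cumulant of $\bar X_{\VM}$ coincides with $\psi_{\VM}^{(k)}(\theta)$, and the central moments of $\widetilde{X}_{\VM} = \bar X_{\VM} - \psi_{\VM}'(\theta)$ can be recovered from these cumulants by elementary combinatorial identities.

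First I would establish the variance lower bound. Since centering does not affect variance, $\Var_1(\widetilde{X}_{\VM}) = \psi_{\VM}''(\theta) = \sigma^2_{\VM}(\theta)$. Taking the compact set $K = [\underline{M}, \overline{M}]$ from Lemma~\ref{lem:root_stability}, Assumption \ref{asmp:main-lower-bound}(iii) delivers $\psi_{\VM}''(\theta) \ge \sigma^2_{\min}(K)$ uniformly over $\theta \in K$, every $\VM \in \Pi$, and every $\bQ_{i,\VM} \in \PM_{\VM}^{\star}$. Setting $\sigma^2_{\min} := \sigma^2_{\min}(K)$ then proves the second claim immediately.

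For the fourth-moment bound, I would use the classical cumulant-to-moment identity $\EB_1[\widetilde{X}_{\VM}^4] = \psi_{\VM}^{(4)}(\theta) + 3[\psi_{\VM}''(\theta)]^2$, together with the uniform bounds $|\psi_{\VM}^{(4)}(\theta)| \le C_4(K)$ and $\psi_{\VM}''(\theta) \le C_2(K)$ from Assumption \ref{asmp:main-lower-bound}(iii), to obtain $C_{\max} := C_4(K) + 3 C_2(K)^2$. The third absolute moment is then controlled via Lyapunov's inequality, $\EB_1[|\widetilde{X}_{\VM}|^3] \le \bigl(\EB_1[\widetilde{X}_{\VM}^4]\bigr)^{3/4}$, yielding $C'_{\max} := C_{\max}^{3/4}$.

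The thing to verify---rather than a serious obstacle---is that all these bounds are truly uniform across both $\VM \in \Pi$ and the range of tilting parameters. This is automatic in our setup: the interval $[\underline{M}, \overline{M}]$ from Lemma~\ref{lem:root_stability} is a fixed set depending on neither $\VM$ nor $\bQ_{i,\VM}$; Assumption \ref{asmp:main-lower-bound}(i) restricts attention to the finite extremal set $\PM_{\VM}^{\star}$; and the CGF constants in Assumption \ref{asmp:main-lower-bound}(iii) are stated uniformly over this class. A single choice of $(\sigma^2_{\min}, C_2, C_4)$ therefore works for every minimal unit and every $\theta \in K$, completing the argument.
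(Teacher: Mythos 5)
Your proposal is correct, and it provides the explicit derivation that the paper takes as immediate (the statement is labeled a ``Fact'' and no proof is given in the source). The two ingredients you use are exactly the right ones: the standard identity that the $k$-th cumulant of the tilted variable $\bar X_{\VM}$ equals $\psi_{\VM}^{(k)}(\theta)$ (the variance case of which the paper records in~\eqref{eq:mean-and-variance}), and the cumulant-to-central-moment conversion $\EB_1[\widetilde{X}_{\VM}^4]=\psi_{\VM}^{(4)}(\theta)+3[\psi_{\VM}''(\theta)]^2$. Feeding these into the CGF regularity bounds of Assumption~\ref{asmp:main-lower-bound}(iii) on the compact interval $K=[\underline{M},\overline{M}]$ gives $C_{\max}=C_4(K)+3C_2(K)^2$ and $\sigma^2_{\min}=\sigma^2_{\min}(K)$, and Lyapunov's inequality handles the third absolute moment. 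Your remark about uniformity is well placed: the constants in Assumption~\ref{asmp:main-lower-bound}(iii) are posited uniformly over $\bP_{\VM}\subseteq\PM_{\VM}^\star$ and over $\VM$, and the interval from Lemma~\ref{lem:root_stability} is chosen independently of $N_n$, $\VM$, and $\QM_i^\star$, so a single choice of constants suffices. One minor cosmetic caution: since $\psi_{\VM}^{(4)}(\theta)$ is only bounded in absolute value, write the bound as $C_{\max}\le C_4(K)+3C_2(K)^2$ with the justification $\psi_{\VM}^{(4)}(\theta)\le|\psi_{\VM}^{(4)}(\theta)|\le C_4(K)$; this is what you implicitly do, but it is worth making explicit.
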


We now verify the conditions in Lemma \ref{lem:edgeworth-inid} using the above properties:

\begin{itemize}
    \item  \textbf{Condition (i):} The term
    \[
    \frac{1}{N_n} \sum_{\VM \in \Pi} \EB[|\widetilde{X}_{\VM}|^3] \leq C'_{\max}< \infty,
    \]
    is uniformly bounded. Moreover, \[
    \frac{1}{N_n} \sum_{\VM \in \Pi} \EB[|\widetilde{X}_{\VM}|^2] = \frac{1}{N_n} \sum_{\VM \in \Pi} \sigma_{\VM}^2(\theta) \geq \sigma_{\min}^2>0.
    \]

    \item \textbf{Condition (ii):} Since we have uniform bounds on the fourth moments, for any $\tau < 1/2$, when $n \to \infty$, we have 
    \begin{align}
      \frac{1}{N_n} \sum_{\VM \in \Pi} \EB[|\widetilde{X}_{\VM}|^3 \mathbf{1}_{|\widetilde{X}_{\VM}| > N_n^{\tau}}] \leq \frac{1}{N_n^{1+\tau}} \sum_{\VM \in \Pi} \EB[|\widetilde{X}_{\VM}|^4 \mathbf{1}_{|\widetilde{X}_{\VM}| > N_n^{\tau}}] \leq \frac{C_{\max}}{N_n^{\tau}} \to 0.
\end{align}
 \item \textbf{Condition (iii):} Cramér's condition is satisfied as a consequence of the results presented in \citet{petrov2012sums} (Chapter VI, §4, Lemma 10 and the subsequent discussion), combined with our assumptions on score density regularity in Assumption~\ref{asmp:main-lower-bound}.
\end{itemize}
Therefore, all conditions in Lemma \ref{lem:edgeworth-inid} are satisfied for the centered tilted variables $\{\widetilde{X}_{\VM}\}_{\VM \in \Pi}$. Applying the lemma yields
\begin{align}
\frac{\rd\bar{H}_n(x)}{\rd x}
 = \varphi(x) + \frac{\lambda_{3,N_n}}{6\sqrt{N_n}}(x^3-3x)\varphi(x) + R_{N_n}(x),
\end{align} 
where the remainder satisfies
\begin{align}
\sup_{x \in \RB} |R_{N_n}(x)| = o\left(\frac{1}{{\sqrt{N_n}}}\right),
\end{align}
concluding the proof.
\end{proof}

\subsection{Proof of Lemma \ref{lem:residual-integral}}
\label{proof:residual-integral}

\begin{proof}[Proof of Lemma \ref{lem:residual-integral}]

We begin by defining the integral of interest:
\begin{align*}
  I := \int_{0}^{\infty} \re^{-\theta \sqrt{N_n \sigma^2(\theta)} t} \rd \bar{H}_n(t).
\end{align*}
Applying the Edgeworth expansion from Lemma \ref{lem:Edgeworth-main}, we have
\[
\frac{\rd\bar{H}_n(t)}{\rd t}
 = \varphi(t) + \frac{\lambda_{3,N_n}}{6\sqrt{N_n}}(t^3-3t)\varphi(t) + R_{N_n}(t),
\]
where $\varphi(x)$ is the PDF of the standard normal distribution.

Substituting this expansion into the expression for $I$, we obtain
\begin{align}
  I 
&=\int_{0}^{\infty}\re^{-\theta \sqrt{N_n \sigma^2(\theta)} t} \varphi(t) \rd t + \frac{\lambda_{3,N_n}}{6\sqrt{N_n}} \int_{0}^{\infty} \re^{-\theta \sqrt{N_n \sigma^2(\theta)} t} (t^3-3t)\varphi(t) \rd t 
\\ &+\int_{0}^{\infty}\re^{-\theta \sqrt{N_n \sigma^2(\theta)} t}R_{N_n}(t) \rd t.
\end{align}
 
We now analyze each term on the right-hand side:
\begin{itemize}
\item \textbf{First term:} We compute the integral as follows:
\begin{align}
\int_{0}^{\infty} \re^{-\theta \sqrt{N_n \sigma^2(\theta)} t} \varphi(t) \rd t 
&= \int_{0}^{\infty} \frac{1}{\sqrt{2\pi}} e^{-\frac{1}{2}\left(t^2 + 2\theta \sqrt{N_n \sigma^2(\theta)} t\right)} \rd t\\
&= \int_{0}^{\infty} \frac{1}{\sqrt{2\pi}} \re ^{-\frac{1}{2}\left(t + \theta \sqrt{N_n \sigma^2(\theta)}\right)^2 + \frac{\theta^2 N_n \sigma^2(\theta)}{2}} \rd t\\
&= \re^{\frac{\theta^2 N_n \sigma^2(\theta)}{2}} \int_{\theta \sqrt{N_n \sigma^2(\theta)}}^{\infty} \frac{1}{\sqrt{2\pi}}  \re^{-\frac{u^2}{2}} \rd u \\ 
&= \frac{1}{\sqrt{2\pi}} \cdot \frac{1-\Phi(\theta \sqrt{N_n \sigma^2(\theta)})}{\varphi(\theta \sqrt{N_n \sigma^2(\theta)})}.
\end{align}

\begin{lem}[Mill's ratio \citep{feller1968introduction}]
\label{lem:mills}
Let $\Phi(x)$ and $\varphi(x)$ denote the CDF and PDF of the standard normal distribution $\NM(0, 1)$, respectively. Then for all $x > 0$, it holds that
\[
\frac{x}{1 + x^2} < \frac{1 - \Phi(x)}{\varphi(x)} < \frac{1}{x}.
\]
\end{lem}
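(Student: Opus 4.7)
The plan is to split the double inequality into two independent bounds and treat each using standard techniques for Gaussian tail integrals, exploiting the identity $\varphi'(t) = -t\varphi(t)$ throughout.

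For the upper bound $\frac{1-\Phi(x)}{\varphi(x)} < \frac{1}{x}$, I would write $1 - \Phi(x) = \int_x^\infty \varphi(t)\,\rd t$ and majorize the integrand by $(t/x)\varphi(t)$; since $t/x \ge 1$ throughout the integration range with strict inequality on a set of positive measure, this bound is legitimate and strict. The resulting integral $\frac{1}{x}\int_x^\infty t\varphi(t)\,\rd t$ evaluates cleanly to $\varphi(x)/x$ because $t\varphi(t) = -\varphi'(t)$ is an exact antiderivative pair.

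For the lower bound $\frac{x}{1+x^2} < \frac{1-\Phi(x)}{\varphi(x)}$, I would convert the ratio inequality into an additive one by setting
\[
g(x) := (1 - \Phi(x)) - \frac{x\varphi(x)}{1+x^2},
\]
and establish $g(x) > 0$ for $x > 0$ via a monotonicity-plus-limit argument. Differentiating $g$ and substituting $\varphi'(x) = -x\varphi(x)$, I expect the cross terms to collapse and yield a manifestly negative derivative of the form $g'(x) = -\tfrac{2\varphi(x)}{(1+x^2)^2}$. Combined with the boundary behavior $g(x) \to 0$ as $x \to \infty$ (both summands vanish), strict monotonicity forces $g(x) > 0$ for every $x > 0$, which is exactly the desired inequality.

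The main obstacle is really just selecting the right auxiliary function in the lower bound: the naive approach of iterating integration by parts to obtain the two-term asymptotic $1 - \Phi(x) \ge \varphi(x)\bigl(\tfrac{1}{x} - \tfrac{1}{x^3}\bigr)$ produces a strictly weaker estimate, since $\tfrac{x^2-1}{x^3} < \tfrac{x}{1+x^2}$ by direct cross-multiplication. Once $g$ is chosen as above, the algebraic simplification is routine and exploits the special structure of the normal density; since the result is classical and recorded in \citet{feller1968introduction}, no further surprises are anticipated.
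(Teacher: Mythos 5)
The paper does not include a proof of this lemma—it simply cites \citep{feller1968introduction}—so there is no in-text argument to compare against, only the classical literature. Your proof is correct and complete. The upper bound is the textbook integration-by-comparison: on $(x,\infty)$ one has $\varphi(t)\le (t/x)\varphi(t)$ with equality only at the endpoint, and $t\varphi(t)=-\varphi'(t)$ integrates exactly, giving a strict inequality. For the lower bound, the auxiliary function $g(x)=(1-\Phi(x))-\tfrac{x\varphi(x)}{1+x^2}$ does indeed have derivative $g'(x)=-\tfrac{2\varphi(x)}{(1+x^2)^2}<0$ after the $\varphi'=-x\varphi$ substitution (the algebra collapses because $(1-x^2)(1+x^2)-2x^2=2-(1+x^2)^2$), and since $g\to 0$ as $x\to\infty$, strict monotone decrease forces $g>0$ on $(0,\infty)$. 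Your remark that the two-term asymptotic $\varphi(x)\bigl(\tfrac1x-\tfrac1{x^3}\bigr)$ is strictly weaker is also correct ($x^4-1<x^4$), and it is a useful sanity check: Feller's own presentation actually establishes that weaker $\tfrac1x-\tfrac1{x^3}$ lower bound, so the $\tfrac{x}{1+x^2}$ bound stated here is a genuine tightening (attributable to Birnbaum/Komatu rather than Feller verbatim); your auxiliary-function argument proves it cleanly where the naive integration-by-parts would not suffice.
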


Using the classical Mill’s ratio bound in Lemma \ref{lem:mills}, we obtain
\[
\int_{0}^{\infty} \re^{-\theta \sqrt{N_n \sigma^2(\theta)} t} \varphi(t) \rd t  
= \Theta\left( \frac{1}{\theta \sqrt{N_n \sigma^2(\theta)}}\right)
=  \Theta\left( \frac{1}{\sqrt{N_n}}\right).
\]

\item \textbf{Second term:} Since $\lambda_{3,N_n} \leq C$ by assumption, we can bound this term as
\begin{align}
\left|\frac{\lambda_{3,N_n}}{6\sqrt{2\pi N_n}} \int_{0}^{\infty} \re^{-\theta \sqrt{N_n \sigma^2(\theta)} t} (t^3-3t) \re^{-\frac{t^2}{2}} \rd t\right|
 &\leq \frac{C}{\sqrt{N_n}} \int_{0}^{\infty} \re^{-\theta \sqrt{N_n \sigma^2(\theta)} t} (t^3+3t) \re^{-\frac{t^2}{2}} \rd t\\
 &\le \frac{C}{\sqrt{N_n}} \left[O\left(\frac{1}{N_n^2}\right) +  O\left(\frac{1}{N_n}\right) \right]  \\
  &= O\left(\frac{1}{N_n^{3/2}}\right).
\end{align}

\item \textbf{Third term:} Using the bound $\sup_{x \in \RB} |R_{N_n}(x)| = o(1/{\sqrt{N_n}})$ from Lemma \ref{lem:Edgeworth-main}, we have
\begin{align}
  \int_{0}^{\infty}\re^{-\theta \sqrt{N_n \sigma^2(\theta)} t}R_{N_n}(t) \rd t= o\left(\frac{1}{{N_n}}\right).
\end{align}
\end{itemize}

Putting all terms together, we conclude that
\begin{align}
  I &= \Theta\left(\frac{1}{\sqrt{N_n}}\right) + O\left(\frac{1}{N_n^{3/2}}\right) + o\left(\frac{1}{{N_n}}\right) =\Theta\left(\frac{1}{\sqrt{N_n}}\right).
\end{align}

\end{proof}

\subsection{Proof of Lemma \ref{lem:root_stability}}
\label{proof:root_stability}

\begin{proof}[Proof of Lemma~\ref{lem:root_stability}]
Recall that the empirical mean and variance functions are defined by
\[
m(\theta) :=  \frac{1}{N_n} \sum_{\VM \in \Pi} m_{\VM}(\theta), \qquad
\sigma^2(\theta) :=  \frac{1}{N_n} \sum_{\VM \in \Pi} \sigma^2_{\VM}(\theta).
\]
We claim that there exists a unique non-negative solution to the equation $m(\theta) = - \frac{\gamma_{n, \alpha}}{N_n}$. This follows from the following facts:
\begin{itemize}
\item The function $m$ is strictly increasing for sufficiently large $N_n$, since $m' = \sigma^2 > 0$.
\item We have $m(0) = -\frac{1}{N_n} \sum_{\VM \in \Pi}  \cdot \EB_{1, \bQ_{i, \VM}}[h_{\VM}] < - \frac{1}{N_n} \sum_{\VM \in \Pi} \EB_{0}[h_{\VM}]$ due to informativeness.
\item Lemma \ref{lem:mean_limit} implies that
\[
\lim_{\theta \to \infty} m(\theta) = \frac{1}{N_n} \sum_{\VM \in \Pi}\mathrm{esssup}(-h_{\VM})
= - \frac{1}{N_n} \sum_{\VM \in \Pi}\mathrm{essinf}(h_{\VM})
> -\frac{1}{N_n} \sum_{\VM \in \Pi} \EB_{0}[h_{\VM}].
\]
\begin{lem}[Asymptotic behavior of the tilted mean]
\label{lem:mean_limit}
Let $X$ be a real-valued random variable with CDF $F$, and define its MGF by $\phi(\theta) := \EB[\re^{\theta X}]$. Assume that $\phi_{\tau}(\theta)$ is finite for all $\theta \in [0, \infty)$.  
Let $m(\theta) := \frac{\rd}{\rd\theta} \log \phi(\theta)$ denote the mean of the exponentially tilted distribution. Then,
\begin{gather*}
\lim_{\theta \to \infty} m(\theta) 
= \mathrm{esssup}(X) := \inf\{x \in \RB : \PB(X \le x) = 1\}.
\end{gather*}
\end{lem}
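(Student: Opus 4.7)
The plan is to rely on the monotonicity of $m$ together with a comparison between the asymptotic growth rate of $\log \phi(\theta)$ and the essential supremum $M := \mathrm{esssup}(X)$. The key observation is that $m(\theta) = (\log \phi)'(\theta)$, so $m'(\theta) = \Var_\theta(X) \ge 0$, where $\Var_\theta$ denotes the variance under the exponentially tilted law with density $\re^{\theta x}/\phi(\theta)$ relative to $F$. Hence $m$ is non-decreasing on $[0,\infty)$, and the limit $L := \lim_{\theta \to \infty} m(\theta)$ exists in $(-\infty,\infty]$. It therefore suffices to show that $L = M$.

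For the upper bound $L \le M$, when $M = \infty$ there is nothing to prove. When $M < \infty$, the exponential tilt preserves the support of $F$, so the tilted law is also supported in $(-\infty, M]$, giving $m(\theta) = \EB_\theta[X] \le M$ for every $\theta \ge 0$; passing to the limit yields $L \le M$.

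For the lower bound $L \ge M$, the plan is to connect $L$ to the exponential growth rate of $\phi$: since $\log \phi(0) = 0$ and $(\log \phi)'(\theta) = m(\theta) \to L$, integrating and dividing by $\theta$ (equivalently, applying L'H\^opital) gives $\lim_{\theta \to \infty} \log \phi(\theta)/\theta = L$, with $+\infty$ interpreted as such. On the other hand, the definition of essential supremum gives $\PB(X > N) > 0$ for every $N < M$, so
\[
\phi(\theta) \;\ge\; \int_{\{X > N\}} \re^{\theta x}\, \rd F(x) \;\ge\; \re^{\theta N}\, \PB(X > N),
\]
and consequently $\log \phi(\theta)/\theta \ge N + \theta^{-1}\log \PB(X > N) \to N$. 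Matching the two asymptotics yields $L \ge N$ for every $N < M$, whence $L \ge M$.

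The main subtlety, and the only real obstacle, is carrying out the L'H\^opital step uniformly in the two cases $M < \infty$ and $M = \infty$. Monotonicity of $m$ ensures $L$ always exists in $(-\infty,\infty]$, and the Ces\`aro-style identity $\log \phi(\theta)/\theta = \theta^{-1}\int_0^\theta m(s)\, \rd s$ makes the convergence to $L$ immediate in both regimes without any further integrability assumption beyond $\phi(\theta) < \infty$ on $[0,\infty)$. When $M = \infty$, the lower-bound argument applied with arbitrarily large $N$ forces $L = \infty$, so the case $M = \infty$ needs no separate treatment.
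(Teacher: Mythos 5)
Your proof is correct, and the lower bound is argued by a genuinely different route from the paper. The paper bounds $m(\theta)$ directly: it splits the tilted-mean integral at $A = M - \delta$, factors out $\re^{\theta A}$, kills the contribution from $\{x < A\}$ by dominated convergence, and observes that the surviving ratio is at least $A$. You instead exploit the convexity of $\log\phi$ to get monotonicity of $m$, so the limit $L$ exists a priori in $(-\infty,\infty]$; you then pass through the Cesàro identity $\log\phi(\theta)/\theta = \theta^{-1}\int_0^\theta m(s)\,\rd s \to L$ and compare against the elementary lower bound $\phi(\theta) \ge \re^{\theta N}\,\PB(X>N)$ for $N < M$. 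Your approach converts the problem into a statement about the exponential growth rate of the MGF (a classic large-deviations idea) and sidesteps the dominated-convergence step, at the price of needing the monotonicity observation and the Cesàro limit. The upper bound in both proofs is essentially the same: the tilted law inherits the support of $F$, so its mean cannot exceed $M$. One small hygiene point: to justify the Cesàro identity cleanly you should note that $m$ is finite on $(0,\infty)$ (the tilted mean is well-defined there since $|x|\re^{\theta x}$ is bounded on $x<0$ and controlled by $\phi(\theta+1)$ on $x\ge 0$) and that $\int_0^\theta m$ equals $\log\phi(\theta)$ by the fundamental theorem of calculus and continuity of $\log\phi$ at $0$; with monotonicity this makes the $L = \infty$ case automatic, as you note.
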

We defer the proof of this lemma at the end of this section.
\item When $N_n$ is sufficiently large, Lemma \ref{lem:minimaxgamman} implies that $\frac{\gamma_{n, \alpha}}{N_n}$ concentrates to $\frac{1}{N_n}\sum_{\VM \in \Pi}  \EB_{0}[h_{\VM}]$, so that $- \frac{\gamma_{n, \alpha}}{N_n} \in [m(0), \lim\limits_{\theta \to \infty} m(\theta))$.
\end{itemize}

We denote by $\theta^{\star}$ the unique solution to the equation $m(\theta) = - \frac{\gamma_{n, \alpha}}{N_n}$.  
By Lemma~\ref{lem:bounded-optimal-theta}, we already know that $\theta^{\star} \le \overline{M}$.  
It therefore suffices to establish a lower bound $\underline{M}$ such that $\theta^{\star} \ge \underline{M}$.

Fix the interval $K := [0, \overline{M}]$.  
By the CGF regularity in Assumption~\ref{asmp:main-lower-bound}, for any $\theta \in K$ we have
\[
0 < \sigma^2_{\min}(K) \;\le\; m'(\theta) \;\le\; C_2(K).
\]
Consequently,
\[
m(0) + \sigma^2_{\min}(K) \cdot \theta^{\star} 
\;\le\; m(\theta^{\star}) 
= - \frac{\gamma_{n, \alpha}}{N_n} 
\;\le\; m(0) + C_2(K) \cdot \theta^{\star}.
\]
By Lemma~\ref{lem:minimaxgamman}, when $N_n$ is sufficiently large, $\frac{\gamma_{n, \alpha}}{N_n}$ concentrates around $\frac{1}{N_n}\sum_{\VM \in \Pi}\EB_{1, \QM_{1, \VM}}[h_{\VM}]$. 
Hence, for large $N_n$, it follows that
\[
 \frac{\frac{1}{N_n}\sum_{\VM \in \Pi} \bigl(\EB_{1, \QM_{1, \VM}}[h_{\VM}] - \EB_0 [h_{\VM}]\bigr)}{C_2(K)} 
 \;\le\; \theta^\star 
 \;\le\; \frac{\frac{1}{N_n}\sum_{\VM \in \Pi} \bigl(\EB_{1, \QM_{1, \VM}}[h_{\VM}] - \EB_0 [h_{\VM}]\bigr)}{\sigma^2_{\min}(K)}.
\]
We complete the proof by setting
\[
\underline{M} \;=\; \frac{1}{C_2([0, \overline{M}])} 
\inf_{\VM \in \Pi} \;\inf_{\bP_{\VM} \subseteq \PM_{\VM}^{\star}}
\Bigl[\EB_{1, \bP_{\VM}}[h_{\VM}] - \EB_0[h_{\VM}] \Bigr],
\]
which is positive by the informativeness condition in Assumption~\ref{asmp:main-lower-bound}.
\end{proof}

At the end, we provide the proof of Lemma \ref{lem:mean_limit} below.

\begin{proof}[Proof of Lemma \ref{lem:mean_limit}]
The function $m(\theta)$ is the expected value of $X$ under the exponentially tilted probability measure:
\[
m(\theta) = \frac{\int_{-\infty}^{\infty} x \re^{\theta x} \, \rd F(x)}{\int_{-\infty}^{\infty} \re^{\theta x} \, \rd F(x)}.
\]
Let $x^{\star} := \mathrm{esssup}(X) = \inf\{x \in \mathbb{R} : \PB(X \le x) = 1\}$. We aim to show that $\lim_{\theta \to \infty}(\theta) = x^{\star}.$

\paragraph{Step 1: Upper bound.} For any $\varepsilon > 0$, define $B := x^{\star} + \varepsilon$. Since $\PB(X > B) = 0$, the tilted mean satisfies
\[
m(\theta) \le \frac{\int_{-\infty}^{B} x \re^{\theta x} \, \rd F(x)}{\int_{-\infty}^{B} \re^{\theta x} \, \rd F(x)} \le B.
\]
Therefore, for all $\theta$, $m(\theta) \le x^{\star} + \varepsilon$. Taking $\limsup$ and then letting $\varepsilon \to 0$ gives
\[
\limsup_{\theta \to \infty} m(\theta) \le x^{\star}.
\]

\paragraph{Step 2: Lower bound.} Fix $\delta > 0$ and let $A := x^{\star} - \delta$. By the definition of $x^{\star}$, we have $\PB(X \ge A) > 0$. Decompose $m(\theta)$ as
\[
m(\theta) 
= \frac{\int_{x < A} x \re^{\theta x} \, \rd F(x) + \int_{x \ge A} x \re^{\theta x} \, \rd F(x)}{\int_{x < A} \re^{\theta x} \, \rd F(x) + \int_{x \ge A} \re^{\theta x} \, \rd F(x)}.
\]
Rewrite both numerator and denominator by factoring out $\re^{\theta A}$:
\[
m(\theta) 
= \frac{\int_{x < A} x \re^{\theta(x - A)} \, \rd F(x) + \int_{x \ge A} x \re^{\theta(x - A)} \, \rd F(x)}{\int_{x < A} \re^{\theta(x - A)} \, \rd F(x) + \int_{x \ge A} \re^{\theta(x - A)} \, \rd F(x)}.
\]
As $\theta \to \infty$, the integrals over $x < A$ vanish by the dominated convergence theorem, since $x - A < 0$ in this range and the MGF is finite. Thus,
\[
\lim_{\theta \to \infty} m(\theta) 
= \lim_{\theta \to \infty} \frac{\int_{x \ge A} x \re^{\theta(x - A)} \, \rd F(x)}{\int_{x \ge A} \re^{\theta(x - A)} \, \rd F(x)}.
\]
Since $x \ge A$ on the support of both integrals, we have the pointwise bound
\[
\frac{\int_{x \ge A} x \re^{\theta(x - A)} \, \rd F(x)}{\int_{x \ge A} \re^{\theta(x - A)} \, \rd F(x)} \ge A = x^{\star} - \delta.
\]
Therefore, $\liminf_{\theta \to \infty} m(\theta) \ge x^{\star} - \delta$. Since $\delta > 0$ is arbitrary, we conclude
\[
\liminf_{\theta \to \infty} m(\theta) \ge x^{\star}.
\]
Combining both steps, we have $\liminf_{\theta \to \infty} m(\theta) \ge x^{\star}$ and complete the proof.
\end{proof}

\subsection{Verification of Regularity Conditions for Considered Watermarks}
\label{sec:justassump}
In this section, we show that the required conditions are satisfied for the established optimal detection rules of the two watermarking schemes under study. The independence structure in Assumption~\ref{asmp:main} is already justified by the sensitivity of hash functions and therefore does not require further verification. We thus focus on the remaining conditions.

\subsubsection{Inverse Transform Watermark}
We begin with the inverse transform watermark, as its verification is more straightforward.
First, Assumption~\ref{asmp:heavy-tokensa} cannot be verified in practice since it is introduced as a simplifying assumption for theoretical analysis. Thus, it suffices to check the remaining two conditions in Assumption~\ref{asmp:main}.

On the one hand, the bounded variance condition in Assumption~\ref{asmp:main}\ref{subasmp:bounded-variance} holds immediately because $[h_{\VM}^{\mathrm{inv}}]{[-M, M]}$ is uniformly bounded by $M$.
On the other hand, the well-posedness condition in Assumption~\ref{asmp:main}~\ref{subasmp:well-posedness} is automatically satisfied because in deriving \eqref{eq:Inverse-R-lower-bound} we essentially set the minimizer to $\theta=1$, which is uniformly bounded.
To make this intuition more rigorous, we can instead argue more directly: for the score functions $\bh=\{h_{\VM}\}_{\VM \in \Pi}$, we have
\begin{align}
\bar{R}_{n, \Pow}(\bh)&
\overset{(a)}{\ge} \liminf_{|\Voca|\to \infty} D_{n, \Pow}(\bh) \nonumber  \\
&\overset{(b)}{\ge} -  \limsup_{|\Voca| \to \infty}  \left(\frac{\gamma_{n,\alpha}}{N_n}+\frac{1}{N_n} \sum_{\VM \in \Pi} 
 \sup_{\bP_{\VM} \subseteq \overline{\PM}_{\Delta_{\VM}}} \log \phi_{\bP_{\VM}, h_{\VM}}(1)
\right)  \nonumber  \\
&\overset{(c)}{\ge} - \limsup_{|\Voca| \to \infty} \frac{1}{N_n} \sum_{\VM \in \Pi} \left(\EB_0[h_{\VM}(Y_{\VM})] + \sup_{\bP_{\VM} \subseteq \overline{\PM}_{\Delta_{\VM}}} \log \phi_{\bP_{\VM}, h_{\VM}}(1)
\right)- \omega_{N_n} \nonumber \\
&\overset{}{=} -  \frac{1}{N_n} \sum_{\VM \in \Pi} \limsup_{|\Voca| \to \infty} \left(\EB_0[h_{\VM}(Y_{\VM})] + \sup_{\bP_{\VM} \subseteq \overline{\PM}_{\Delta_{\VM}}} \log \phi_{\bP_{\VM}, h_{\VM}}(1)
\right)- \omega_{N_n}. \tag*{\eqref{eq:Inverse-R-lower-bound}}
\end{align}
Here, $(a)$ follows from \eqref{eq:help1}, $(b)$ from setting $\theta=1$ in the definition of $D_{n, \Pow}(\bh)$ in \eqref{eq:Bndefinition}, and $(c)$ from Lemma~\ref{lem:minimaxgamman}.
As a result, we still arrive at \eqref{eq:Inverse-R-lower-bound}.
 
\subsubsection{Gumbel-max Watermark}
The main effort is devoted to the Gumbel-max watermark, as we need to verify both conditions in Assumption~\ref{asmp:main} as well as the additional Assumption~\ref{asmp:main-lower-bound}, which is required for establishing the asymptotic tightness in Remark~\ref{rem:tight-lower-bound}.
For clarity, we fix a minimal unit $\VM$ and denote the two proposed score functions as
\begin{equation}
\label{eq:two-hs}
h_{\bS_{\Delta}^\star}(y) := 
\frac{(|\VM|\wedge|\Voca|)\,\Delta}{(|\VM|\wedge|\Voca|-1)(1-\Delta)}\,
\log y
\quad \text{and} \quad
h_{\bP_{\Delta}^\star}(y) := \log \left(y^{\frac{\Delta}{1-\Delta}}+y^{\frac{1-\Delta}{\Delta}}\right).
\end{equation}

\paragraph{Verification of Assumption \ref{asmp:main}.}
Note that both optimal scores $h_{\bS_{\Delta}^\star}$ and $h_{\bP_{\Delta}^\star}$ are log-likelihood ratio functions corresponding to the least-favorable distribution vectors $\bS_{\Delta}^\star$ and $\bP_{\Delta}^\star$, respectively. By direct computation, their moment generating functions are finite and their variances are uniformly bounded. Under these optimal scores, the minimization in $\theta$ is achieved at $\theta=1$, which is uniformly bounded. Hence, the well-posedness condition is satisfied.

\paragraph{Verification of Assumption \ref{asmp:main-lower-bound}.}
There are four conditions in Assumption~\ref{asmp:main-lower-bound}, and we verify them one by one.
\begin{enumerate}[(i)]
    \item \textbf{\textit{(Finite maximizers)}}
    This condition follows from Lemma~\ref{lem:maximum-CDF-polyhedron}, together with the fact that both $h_{\bS_{\Delta}^\star}$ and $h_{\bP_{\Delta}^\star}$ are increasing. Hence, only $\bS_{\Delta}^\star$ and $\bP_{\Delta}^\star$ can serve as least-favorable distribution vectors.
    
    \item \textbf{\textit{(Informative scores)}}
    By Lemma~\ref{lem:gumbel-alternative}, the null and alternative CDFs are given by $F_0(y) = y$ and $F_{\bS}(y) = (\sum_{w}S_{w})^{-1} \sum_{w} S_{w} y^{1/S_{w}}$ with $\bS = (S_1,\ldots,S_{|\Voca|})$, respectively. From Lemma~\ref{lem:domain}, we know that $S_w \in [0,1-\Delta)$ for any $w$, so $S_w < 1$ and thus $y^{1/S_w} < y$ for any $y \in (0,1)$. Consequently,
    \begin{align*}
        F_{\bS}(y) = \frac{\sum_{w}S_{w}y^{1/S_{w}}}{\sum_{w}S_{w}} < \frac{\sum_{w}S_{w}y}{\sum_{w}S_{w}} = y = F_0(y).
    \end{align*}
    Thus, the alternative distribution of $Y$ is stochastically dominated by the null distribution. Since both $h_{\bS_{\Delta}^\star}$ and $h_{\bP_{\Delta}^\star}$ are strictly increasing, integration by parts shows that $\EB_{1,\bP_{\mathcal{V}}}[h(Y)] > \EB_{0}[h(Y)]$ for $h \in \{h_{\bS_{\Delta}^\star}, h_{\bP_{\Delta}^\star}\}$.

    \item \textbf{\textit{(CGF regularity)}}
    Recall that the CGF is defined as the logarithm of the MGF. Formally, for a score function $h$, the MGF is $\phi_{\bS}(\theta) = \EB_{1,\bS}[\exp(-\theta h(Y_{\mathcal{V}}))]$ and the CGF is given by $K(\theta) = \log \phi_{\bS}(\theta)$. For simplicity, we denote the  alternative density by $f_{\bS}(y) = F'_{\bS}(y) = (\sum{w'} S_{w'})^{-1} \sum_w y^{1/S_w - 1}$.
    \begin{itemize}
        \item For $h_{\bS_{\Delta}^\star}$, the MGF takes the explicit form
    \[
    \phi_{\bS}(\theta) 
    = \int_0^1 \re^{-\theta c \log y} f_{\bS}(y)\rd y 
    = \int_0^1 y^{-\theta c} \frac{\sum_w y^{1/S_w - 1}}{\sum_{w'}S_{w'}} \rd y
    = \frac{1}{\sum_{w'}S_{w'}} \sum_w \frac{S_w}{1 - \theta c S_w}.
    \]
    \item For $h_{\bP_{\Delta}^\star}$, the MGF is
    \[
    \phi_{\bS}(\theta) = \int_0^1 (y^{\frac{1-\Delta}{\Delta}} + y^{\frac{\Delta}{1-\Delta}})^{-\theta} f_{\bS}(y) dy.
    \]
    \end{itemize}
    On any compact set $[0,\overline{M}]$, the derivatives of $\log \phi_{\bS}(\theta)$ are smooth and bounded, which yields uniform upper bounds. For the lower bound, note that $K''(\theta)$ equals the variance of $-h(Y)$ under a tilted measure, which is strictly positive as long as $h(Y)$ is not constant. By continuity, $K''(\theta)$ is uniformly bounded below on $[0,\overline{M}]$ by some constant $\sigma_{\min}^2(K) > 0$. These constants can be chosen independently of any specific $\bS \in \mathcal{D}_\Delta$, thanks to compactness of $[0,\overline{M}]$ and smoothness of the CGF.
    
    \item \textbf{\textit{(Score density regularity)}}
    The last condition is verified directly by Lemma~\ref{lem:bounded-TV}.    
\end{enumerate}

\begin{lem}[Bounded total variation]
\label{lem:bounded-TV}
Let $\mathrm{TV}(\rho) := \int_{-\infty}^{\infty} |\rho'(x)| \rd x$ denote the total variation of a PDF $\rho$.
When $|\VM|=1$ and $\Delta \in (0, 1/2)$, with $h_{\bS_{\Delta}^\star}$ and $h_{\bP_{\Delta}^\star}$ defined in \eqref{eq:two-hs}, we have a universal constant $C_R >0$ such that
\[
\mathrm{TV}(h_{\bS_{\Delta}^\star}) \le |\Voca|
\qquad \text{and} \qquad
\mathrm{TV}(h_{\bP_{\Delta}^\star}) \le |\Voca| + C_R.
\]
\end{lem}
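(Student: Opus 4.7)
The plan is to exploit the fact that both candidate scores are strictly monotone, so that the density $\rho_h$ of $h(Y)$ on the line can be obtained via the usual change-of-variables formula, and then to bound $\mathrm{TV}(\rho_h) = \int |\rho_h'(x)|\,dx$ by decomposing the real line into the monotone pieces of $\rho_h$. On each such piece the contribution to the TV is just the absolute difference of the endpoint values, and summing these gives the claimed bounds. Throughout I will fix a least-favorable $\bP^\star$ so that, by Lemma~\ref{lem:gumbel-alternative}, the alternative density of $Y$ has the explicit form $\rho_Y(y)=\frac{1}{\sum_{w'} S_{w'}}\sum_{w} y^{1/S_w-1}$; I will keep the sum over $w$ throughout and push it outside $\mathrm{TV}(\cdot)$ only at the very end, so the bound depends on $|\Voca|$ through the number of terms in this sum.

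\textbf{Bound for $h_{\bS_\Delta^\star}$.} Here $h_{\bS_\Delta^\star}(y)=c\log y$ for a strictly positive constant $c$ that depends on $\Delta$ and $|\VM|\wedge|\Voca|$. The map $y\mapsto c\log y$ is a monotone bijection from $(0,1]$ onto $(-\infty,0]$, so setting $Z=c\log Y$ and using $dy=(1/c)\,e^{z/c}\,dz$ gives
\begin{equation*}
\rho_Z(z)=\frac{1}{c\sum_{w'}S_{w'}}\sum_{w} e^{z/(cS_w)}, \qquad z\in(-\infty,0].
\end{equation*}
Since each exponent $1/(cS_w)$ is positive, every summand is strictly increasing in $z$ on $(-\infty,0]$; hence $\rho_Z$ itself is monotone increasing on its support, which makes the TV trivial to read off:
\begin{equation*}
\mathrm{TV}(\rho_Z)=\rho_Z(0)-\lim_{z\to-\infty}\rho_Z(z)=\frac{1}{c\sum_{w'}S_{w'}}\sum_{w}1.
\end{equation*}
Plugging in the saddle-point value of $\bS_\Delta^\star$ and the expression for $c$ from \eqref{eq:weighted-log} and Lemma~\ref{lem:extreme-points}, the prefactor $1/(c\sum_{w'}S_{w'})$ is bounded above by $1$ for every $\Delta\in(0,1/2)$, yielding $\mathrm{TV}(\rho_Z)\le|\Voca|$. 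The only routine work is verifying this prefactor inequality algebraically.

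\textbf{Bound for $h_{\bP_\Delta^\star}$.} The score $h_{\bP_\Delta^\star}(y)=\log\bigl(y^{\Delta/(1-\Delta)}+y^{(1-\Delta)/\Delta}\bigr)$ is still strictly increasing on $(0,1)$ because it is the log of a sum of two positive monotone increasing powers (both exponents are positive when $\Delta\in(0,1/2)$), so the change-of-variables formula again produces a density $\rho_Z(z)=\rho_Y(g(z))/|h_{\bP_\Delta^\star}'(g(z))|$, where $g=h_{\bP_\Delta^\star}^{-1}$. The approach is then: substitute the explicit $\rho_Y$, split $\rho_Z$ as a sum of $|\Voca|$ terms, and control the TV of each term separately. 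Unlike the first case, each individual summand is \emph{not} monotone on the whole support, because the two exponents in $h_{\bP_\Delta^\star}'$ compete and the derivative switches regime near a single inflection point $y^\star\in(0,1)$ where $y^{\Delta/(1-\Delta)}=y^{(1-\Delta)/\Delta}$ (namely $y^\star=1$ up to boundary behavior, so the effective competition happens near the endpoints). The plan is to show that each summand is piecewise monotone with at most two monotone intervals, giving $\mathrm{TV}(\rho_Z)\le\sum_w [\text{boundary contributions from one term}]+C_R$ where $C_R$ aggregates the interior-max contributions from the two exponents close to $\Delta=1/2$. Summing over $w$ then yields $|\Voca|+C_R$.

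\textbf{Main obstacle.} The genuinely non-routine step is the second bound: verifying that, after the nonlinear change of variables by $h_{\bP_\Delta^\star}$, each power summand $y^{1/S_w-1}/|h_{\bP_\Delta^\star}'(y)|$ has a uniformly bounded number of monotone pieces and a uniformly bounded local-maximum value, with both bounds independent of $\Delta\in(0,1/2)$, $\bS$, and $|\Voca|$. The delicate points are the behavior of $h_{\bP_\Delta^\star}'(y)$ as $y\downarrow0$ and as $y\uparrow1$ (where one of the two exponents dominates and $h_{\bP_\Delta^\star}'$ vanishes or blows up), and the degeneration of $\Delta/(1-\Delta)$ and $(1-\Delta)/\Delta$ as $\Delta\to 0$ or $\Delta\to 1/2$. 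Once this uniformity is secured, collecting the boundary contributions into $C_R$ and summing over $w\in\Voca$ immediately delivers the bound $\mathrm{TV}(\rho_Z)\le|\Voca|+C_R$.
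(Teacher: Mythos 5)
Your Case~1 follows the paper's proof almost verbatim: change variables to $z=h(y)$, observe that the transformed density is a sum of increasing exponentials, and read off the total variation as the endpoint value. Both you and the paper then invoke the same (slightly suspicious, given that the weighted-log coefficient is undefined at $|\VM|=1$ and the paper quietly replaces it by $\Delta/(1-\Delta)$) inequality on the normalizing prefactor, so this part is in line with the paper.

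Case~2 is where the proposals diverge, and here your plan has a real gap. You want to split $\rho_Z = \sum_w [\text{term}_w]$ into $|\Voca|$ summands, show each is piecewise monotone, and collect ``interior-max contributions'' into a $\Delta$-independent constant $C_R$. The difficulty is not the piecewise monotonicity itself; it is the aggregation over $w$. If each summand $y^{1/S_w-1}/h'(y)$ contributes its own additive error of order $C_R$ (as any per-term argument via the product rule or boundary values will), then summing over $w$ gives $|\Voca|+|\Voca|\,C_R$, not $|\Voca|+C_R$. The paper avoids this by never splitting the sum: after changing the TV integral back to the $y$-variable it bounds
\[
\mathrm{TV}(\rho_{\bP}) \le \int_0^1 \Bigl|\tfrac{f_{\bP}'(y)}{h'(y)}\Bigr|\,\rd y
\;+\; \int_0^1 f_{\bP}(y)\,\Bigl|\tfrac{h''(y)}{(h'(y))^2}\Bigr|\,\rd y,
\]
bounds the $\Delta$-dependent weight $R(y)=|h''(y)/(h'(y))^2|$ by a uniform constant $C_R$, and then uses the crucial normalization $\int_0^1 f_{\bP}(y)\,\rd y = 1$ to absorb all $|\Voca|$ summands of $f_{\bP}$ into a single $C_R$. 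The first integral is handled by integration by parts, which reduces it to the boundary value $|\Voca|/h'(1) \le |\Voca|$ (since $h'(1)=(C+1/C)/2 \ge 1$ by AM--GM) plus another $C_R$. So the essential device you are missing is to keep the aggregate density $f_{\bP}$ intact so that its unit mass, rather than the term count $|\Voca|$, controls the $C_R$-type contributions; the per-summand bookkeeping you propose will not produce the claimed $|\Voca|+C_R$. You also explicitly leave the uniformity of the per-summand bounds as an unresolved obstacle, so even on its own terms the proposal is incomplete.
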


\begin{proof}[Proof of Lemma \ref{lem:bounded-TV}]
Let $Z = h(Y)$ denote the score for a minimal unit $\mathcal{V}$.
When $|\VM|=1$, there is no repetition and each $\bS$ reduces to $\bP$.
By Lemma \ref{lem:gumbel-alternative}, the alternative PDF of $Y$ is 
\[
f_{\bP}(y) = F'_{\bP}(y) = \sum_{w} y^{1/P_w - 1}.
\]
By a change of variables, the PDF of $Z$, denoted by $\rho_{\bP}$, is
$\rho_{\bP}(z) = f_{\bP}(g(z))|g'(z)|$, where $y = g(z)$ is the inverse of $z = h(y)$.

\paragraph{Case 1: $h_{\bS_{\Delta}^\star}(y) = C \log y$.}  
Here $C = \frac{\Delta}{1-\Delta}$, and the inverse function is $y = g(z) = e^{z/C}$ for $z \in (-\infty,0)$ with derivative $g'(z) = \tfrac{1}{C}e^{z/C}$. The density of $Z$ is
\[
\rho_{\bP}(z) = f_{\bP}(e^{z/C}) \cdot \frac{1}{C}e^{z/C} 
= \frac{1}{C} \sum_{w} e^{z/(C P_w)}.
\]
Since $\rho_{\bP}'(z) > 0$ for $z \in (-\infty,0)$, it follows that
\[
\mathrm{TV}(\rho_{\bP}) 
= \int_{-\infty}^0 \rho_{\bP}'(z)\,\rd z 
= \rho_{\bP}(0) - \lim_{z \to -\infty}\rho_{\bP}(z)
= \frac{|\Voca|}{C}
\le |\Voca|,
\]
where the last inequality holds because $C>1$ when $\Delta \in (0,1/2)$.

\paragraph{Case 2: $h_{\bP_{\Delta}^\star}(y) = \log(y^C + y^{1/C})$.}  
Here $C = \frac{\Delta}{1-\Delta} \in (0,1)$. By definition,
\[
\mathrm{TV}(\rho_{\bP}) 
= \int |\rho_{\bP}'(z)| \, \rd z 
= \int_0^1 \left| \frac{d}{dy}\left(\frac{f_{\bP}(y)}{h'(y)}\right) \right|\rd y
\le \underbrace{\int_0^1 \left|\frac{f_{\bP}'(y)}{h'(y)}\right|\rd y}_{(I)}
+ \underbrace{\int_0^1 f_{\bP}(y)\left|\frac{h''(y)}{(h'(y))^2}\right|\rd y}_{(II)}.
\]

We first analyze the term (II). For simplicity, we define $R(y) := \big|h''(y)/(h'(y))^2\big|$, which is independent of $\bP$.  
As $y \to 1$, $h'(1) = (C+1/C)/2 \neq 0$, so $R(1)<\infty$.  
As $y \to 0$, $h'(y) \sim C/y$ and $h''(y) \sim -C/y^2$, hence $\lim_{y \to 0} R(y) = 1/C$.  
Since $R(y)$ is continuous on $(0,1]$ with finite boundary limits, it is uniformly bounded by some constant $C_R < \infty$. Thus,
\[
(II) \le C_R \int_0^1 f_{\bP}(y)\rd y = C_R.
\]

Next, we analyze the term (I). Since $P_w < 1$ for any $\bP \in \mathcal{P}_\Delta$, we have $f_{\bP}'(y) = \sum_w (1/P_w - 1) y^{1/P_w - 2} > 0$, so the absolute value can be removed. Integration by parts gives
\[
(I) = \int_0^1 \frac{f_{\bP}'(y)}{h'(y)}\rd y
= \left[\frac{f_{\bP}(y)}{h'(y)}\right]_0^1 + \int_0^1 f_{\bP}(y)\frac{h''(y)}{(h'(y))^2}dy.
\]
At $y=1$, $f_{\bP}(1)=|\Voca|$.  
As $y\to 0$, using $h'(y)\sim C/y$,
\[
\lim_{y\to 0} \frac{f_{\bP}(y)}{h'(y)} 
= \frac{1}{C}\lim_{y\to 0}\sum_w y^{1/P_w} = 0.
\]
The integral term is bounded in magnitude by (II). Hence,
\[
(I) \le \frac{|\Voca|}{h'(1)} + C_R = \frac{2|\Voca|}{C+1/C} + C_R.
\]
Combining the bounds for (I) and (II) yields
\[
\mathrm{TV}(\rho_{\bP}) \le \frac{2|\Voca|}{C+1/C} + 2C_R \le |\Voca| + 2C_R
\]
which is finite and uniform over $\bP \in \mathcal{P}_\Delta$.

In both cases, $\mathrm{TV}(\rho_{\bP})$ is finite and bounded by a multiple of $|\Voca|$, completing the proof.
\end{proof}

\section{Proof for Gumbel-max Watermarks in Section \ref{sec:gumbel}}
 
\label{append:gumbel}

\subsection{Proof of Lemma \ref{lem:gumbel-alternative}}
\begin{proof}[Proof of Lemma \ref{lem:gumbel-alternative}]
We assert that $Y_{t_1} = \cdots = Y_{t_k}$ if and only if $w_{t_1} =\cdots = w_{t_k}$. This follows from the fact that if $w_{t_1} \neq w_{t_2}$, then $Y_{t_1}$ and $Y_{t_2}$ are independent by Assumption \ref{asmp:subblocks-independence}. Since each $Y_t$ has a smooth CDF, the probability $\PB_1(Y_{t_1} = Y_{t_2} \mid w_{t_1} \neq w_{t_2}) = 0$, making such an event almost surely impossible.

Recall that the NTP distribution for $w_{t_i}$ is given by $\bP_{t_i}$. Since the same pseudorandom variable is used to generate all $w_{t_i}$ for $t_i \in \VM$, we denote it by $\zeta = (U_w)_{w \in \Voca}$. Consequently, each token satisfies $w_{t_i} = \SM(\bP_{t_i}, \zeta)$ for all $t_i \in \VM$.
Under the event $Y_{t_1} = \cdots = Y_{t_k}$, we define $w_{t_1} = \cdots = w_{t_k} = w$. This implies that $w = \SM(\bP_{t}, \zeta)$ for all $t \in \VM$.
Therefore, it follows that
\begin{align*}
\PB_1(Y_{t_1} \le y ~|~ Y_{t_1}  = \cdots = Y_{t_k}, \bP_{\VM})
&=\PB_1(Y_{t_1} \le y,~w_{t_1} = \cdots = w_{t_k}  |\, Y_{t_1} = \cdots = Y_{t_k}, \bP_{\VM})\\
&=\sum_{w \in \Voca}\PB_1(Y_{t_1} \le y,~w_{t_1} = \cdots = w_{t_k} =w |\, Y_{t_1} = \cdots = Y_{t_k}, \bP_{\VM})\\
&=\sum_{w \in \Voca} \PB_1(Y(w,\zeta) \le y ~|~ w = \SM(\bP_{t}, \zeta),~\forall t \in \VM)\\
&=\sum_{w \in \Voca}\PB_1\left( U_{w}\leq y~\Big|~U_{w'} \leq U_{\token}^{\max_{t \in \VM}\left(\frac{P_{t,w'}}{P_{t,w}}\right)}, \forall w' \ne \token\right).
\end{align*}
Given that $\{U_w\}_{w \in \Voca}$ are i.i.d. $\UM(0, 1)$, direct calculation yields that
\begin{align*}
\PB_1\left( U_{w}\leq y, U_{w'} \leq U_{\token}^{\max_{t \in \VM}\left(\frac{P_{t,w'}}{P_{t,w}}\right)}, \forall w' \ne \token\right)
= S_w y^{1/S_w}.
\end{align*}
As a result, it follows from Bayes' theorem that
\[
\PB_1\left( U_{w}\leq y~\Big|~ U_{w'} \leq U_{\token}^{\max_{t \in \VM}\left(\frac{P_{t,w'}}{P_{t,w}}\right)}, \forall w' \ne \token\right)
= \frac{S_w y^{1/S_w}}{\sum_{w' \in \Voca} S_{w'}}.
\]
Summing the last probability over all $w \in \Voca$ completes the proof.
\end{proof}

\subsection{Proof of Lemma \ref{lem:saddle}}
\begin{proof}[Proof of Lemma \ref{lem:saddle}]

\textbf{Proof of the ``if'' direction.}
If there exists a vector $\bS^\star \in \DM_{\Delta}$ such that
\begin{equation} 
\max_{\bS \in \DM_{\Delta}} L(h_{\bS^{\star}}, \bS) =   L(h_{\bS^{\star}}, \bS^\star),
\tag*{\eqref{eq:sup-over-S}}
\end{equation}
on the one hand, it follows from the Donsker–Varadhan representation that
\[
\min_h \max_{\bS \in \DM_{\Delta}} L(h, \bS)
\ge \min_h L(h, \bS^{\star}) = -\mathrm{KL}(F_0 \| F_{\bS^\star}).
\]
On the other hand, by the condition \eqref{eq:sup-over-S}, it follows that
\begin{align*}
\min_h \max_{\bS \in \DM_{\Delta}} L(h, \bS)
\le \max_{\bS \in \DM_{\Delta}} L(h_{\bS^\star}, \bS)
=L(h_{\bS^{\star}}, \bS^\star)= -\mathrm{KL}(F_0 \| F_{\bS^\star}).
\end{align*}
Combining the above two directions, we know that $(h_{\bS^{\star}}, \bS^{\star})$ is a solution pair of the minimax problem \eqref{eq:target-minimax}.

\textbf{Proof of the ``only if'' direction.} 
Suppose the pair $(h^{\star}, \bS^{\star})$ solves the minimax problem \eqref{eq:target-minimax}. By definition, we have
\begin{align*} L(h^{\star}, \bS^{\star}) = \min_{h} \max_{\bS \in \DM_{\Delta}} L(h, \bS) = \max_{\bS \in \DM_{\Delta}} L(h^{\star}, \bS) = \min_{h} L(h, \bS^{\star}). \end{align*} The last equality holds if and only if $h^{\star} = h_{\bS^{\star}}$ (up to a constant shift), by the Donsker–Varadhan representation. The second equality corresponds exactly to the optimality condition \eqref{eq:sup-over-S}.

\end{proof}

\subsection{Proof of Lemma \ref{lem:domain}}
 
\begin{proof}[Proof of Lemma \ref{lem:domain}]
Recall that for any $\bS \in \DM_{\Delta}$ there exists  $\bP_{\VM} \subseteq \FPM$ such that for each $w \in \Voca$,
\begin{equation}
\label{eq:def-Sw}
S_w = \left(\sum_{w'\in \Voca}  \max_{t \in \VM}\frac{P_{t,w'}}{P_{t, w}}  \right)^{-1}.
\end{equation}
According to this definition, the permutation invariance of $ \DM_{\Delta} $ follows directly by permuting the order of entries in each NTP distribution in $ \bP_{\VM} $.
We now turn to prove the remaining part.
Using the fact that $ \frac{P_{t,w'}}{P_{t, w}} \le \max_{t \in \VM}\frac{P_{t,w'}}{P_{t, w}} \le \sum_{t \in \VM}\frac{P_{t, w'}}{P_{t, w}}$ for any $t \in \VM$, we have that
\begin{equation}
\label{eq:relation-S}
\left( \sum_{t \in \VM} \frac{1}{P_{t, w}}-(|\VM|-1)  \right)^{-1}\le S_w \le \min_{t \in \VM} P_{t, w}.
\end{equation}

With this result, we are now ready to prove the three bullet points. \begin{enumerate}
 \item[(i)] By the definition in \eqref{eq:def-Sw}, it is clear that $0 \le S_w$. Using \eqref{eq:relation-S}, it follows that $S_w \le \min_{t \in \VM} P_{t, w} \le 1-\Delta$ due to $\bP_{\VM} \subseteq \FPM$.
 \item[(ii)] By \eqref{eq:relation-S}, we have that $\sum_{w} S_w \le \sum_{w} \min_{t \in \VM} P_{t, w} \le \sum_w P_{t, w} =1$.
 \item[(iii)] By some algebraic manipulation, the target inequality $\frac{\max_{w} S_w}{1-\Delta} \le 1 - \frac{1-\sum_{w} S_w }{|\VM|\wedge |\Voca|}$ is equivalent to 
 \begin{equation}
    \label{eq:third-condition}
\left(1 + \frac{\Delta}{|\VM|\wedge |\Voca|-1} \right)S_w \le (1-\Delta)\cdot \left( \frac{\sum_{w'\neq w} S_{w'}}{|\VM|\wedge |\Voca|-1} +1\right),~\forall~w \in \Voca.
 \end{equation}
 We then turn to prove \eqref{eq:third-condition}. 
 Fix any $w \in \Voca$.
 If $S_w = 0$, then \eqref{eq:third-condition} holds trivially. Otherwise, if $S_w > 0$, then by the relation \eqref{eq:relation-S}, we have $0 < S_w \le \min_{t \in \VM} P_{t, w}$. This implies that $P_{t, w}$ is strictly positive for all indices $t \in \VM$.

In this case, on the one hand, it follows that
\begin{align}
\label{eq:Lp}
S_w 
=  \left(1 + \sum_{w' \neq w}  \max_{t \in \VM}\frac{P_{t,w'}}{P_{t, w}}  \right)^{-1}
\le
\left(1 +   \frac{\sum_{w' \neq w}\max_{t \in \VM} P_{t,w'}}{1-\Delta}\right)^{-1}
\end{align}
where the inequality holds because $P_{t, w} \le 1-\Delta$ for all $t$ and $w$.

On the other hand, we have that
\begin{align}
\label{eq:Rp}
\sum_{w'\neq w} S_{w'}
&= \sum_{w'\neq w} \left(\sum_{j\in \Voca}  \max_{t \in \VM}\frac{P_{t,j}}{P_{t, w'}}  \right)^{-1} 
\nonumber \\
&\ge  \sum_{w'\neq w} \left(1+\sum_{j \neq w'}  \frac{\max_{t \in \VM}P_{t,j}}{\min_{t \in \VM}P_{t, w'}}  \right)^{-1} \\
&=  \sum_{w'\neq w} \min_{t \in \VM}P_{t, w'} \cdot  \left(\min_{t \in \VM}P_{t, w'} +\sum_{j \neq w'}  \max_{t \in \VM}P_{t,j}  \right)^{-1}\nonumber \\
&\ge \frac{\sum_{w'\neq w} \min_{t \in \VM}P_{t, w'} }{1-\Delta +\sum_{w' \neq w}  \max_{t \in \VM}P_{t,w'}}\nonumber,
\end{align}
where the last inequality follows from the fact that, for any $w' \neq w$,
\begin{align*}
\min_{t \in \VM}P_{t, w'} +\sum_{j \neq w'}  \max_{t \in \VM}P_{t,j} 
&\le \min_{t \in \VM}P_{t, w'} +  \max_{t \in \VM}P_{t,w}  + \sum_{j \notin \{w, w'\}}  \max_{t \in \VM}P_{t,j} \\
&\le\max_{t \in \VM}P_{t, w'} + (1-\Delta)  + \sum_{j \notin \{w, w'\}}  \max_{t \in \VM}P_{t,j} \\
&\le 1-\Delta + \sum_{w' \neq w}  \max_{t \in \VM}P_{t,w'}.
\end{align*}

We observe that \eqref{eq:Lp} provides an upper bound for the left-hand side of \eqref{eq:third-condition} in terms of $ \sum_{w' \neq w} \max_{t \in \VM} P_{t,w'} $, while \eqref{eq:Rp} provides a lower bound for the right-hand side of \eqref{eq:third-condition} involving both $ \sum_{w' \neq w} \max_{t \in \VM} P_{t,w'} $ and $ \sum_{w' \neq w} \min_{t \in \VM} P_{t,w'} $. 
To connect these bounds, we use the following fact that bridges both $ \sum_{w' \neq w} \max_{t \in \VM} P_{t,w'} $ and $ \sum_{w' \neq w} \min_{t \in \VM} P_{t,w'} $:  
\begin{equation}
\label{eq:Lp<Rp}
\sum_{w'\neq w} \min_{t \in \VM} P_{t, w'} + (|\VM|\wedge |\Voca|-1) \cdot \sum_{w' \neq w} \max_{t \in \VM}  P_{t, w'}
  \ge( |\VM|\wedge |\Voca|)\cdot \Delta,
\end{equation}
which follows because $ \min_{t \in \VM} P_{t, w'}  + \sum_{j \notin \{w, w'\}}\max_{t \in \VM}  P_{t, j} \ge 1-\max_{t \in \VM} P_{t, w} \ge \Delta$.

Combining the inequalities \eqref{eq:Lp}, \eqref{eq:Rp}, and \eqref{eq:Lp<Rp}, we complete the proof of \eqref{eq:third-condition} for a fixed $ w $. Since the same argument holds for all $ w $, this establishes \eqref{eq:third-condition}.

\item[(iv)] Finally, we show that $\bS_{\Delta}^{\star} := \left(\frac{1-\Delta}{1+\frac{\Delta}{|\VM|\wedge |\Voca|-1}}, 0, 0, \ldots, 0\right) \in \DM_{\Delta}$ by explicit construction. We consider two cases based on the relative sizes of $|\Voca|$ and $|\VM|$:

\begin{itemize}
\item 
If $|\Voca| \ge |\VM| + 1$, we construct the first NTP distribution $\bP_t$ as follows:
\begin{equation}
\label{eq:constructionv}
\bP_{t} = \begin{pmatrix}
1 - \Delta, & \frac{\Delta}{|\VM| - 1}, & \frac{\Delta}{|\VM| - 1}, & \cdots, & \frac{\Delta}{|\VM| - 1}, & 0, & \cdots, & 0
\end{pmatrix}.
\end{equation}
For $i = 2, \ldots, |\VM| + 1$, we define the $i$-th NTP distribution by setting the first entry to $1 - \Delta$, the $i$-th entry to zero, and all other entries among the first $|\VM| + 1$ positions to $\frac{\Delta}{|\VM| - 1}$. A direct computation shows that all such distributions yield this very $\bS$-vector:
\[
\bS_{\Delta}^{\star} = \begin{pmatrix}
\frac{1 - \Delta}{1 + \frac{\Delta}{|\VM| - 1}}, & 0, & \cdots, & 0
\end{pmatrix}.
\]

\item 
If $|\Voca| \le |\VM|$, we instead construct the first NTP distribution as
\[
\bP_{t} = \begin{pmatrix}
1 - \Delta, & \frac{\Delta}{|\Voca| - 1}, & \frac{\Delta}{|\Voca| - 1}, & \cdots, & \frac{\Delta}{|\Voca| - 1}, & 0, & \cdots, & 0
\end{pmatrix},
\]
and define the remaining NTP distributions by cyclically shifting the zero entry among the first $|\Voca| + 1$ positions while keeping the first entry fixed at $1 - \Delta$. The argument mirrors the previous case and is omitted for brevity.
\end{itemize}

\end{enumerate}
 
\end{proof}

\subsection{Proof of Lemma \ref{lem:schur-convex}}
We first introduce an ancillary lemma that establishes the Schur-convexity of CDF, that is the mapping $ \bS \mapsto F_\bS(y) $, in Lemma \ref{lem:schur-convex-CDF}.

\begin{lem}[Schur-convexity]
\label{lem:schur-convex-CDF} 
For any $y \in [0, 1]$, the map $\bS \mapsto F_\bS(\rd y)$ is Schur-convex in $\bS$.
\end{lem}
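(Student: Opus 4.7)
The plan is to reduce Schur-convexity of the pointwise CDF map $\bS \mapsto F_\bS(y)$ to convexity of a single one-variable function. Starting from the closed form in Lemma~\ref{lem:gumbel-alternative}, I would write
\[
F_\bS(y) = \frac{\sum_{w \in \Voca} g(S_w;\, y)}{\sum_{w \in \Voca} S_w}, \qquad g(s;\, y) := s\, y^{1/s},
\]
with the natural convention $g(0; y) = 0$. Since the majorization relation $\bS \le_{m} \bS'$ forces $\sum_w S_w = \sum_w S'_w$, the denominator is invariant along any comparison of interest, so Schur-convexity of $F_\bS(y)$ is equivalent to Schur-convexity of the separable numerator $\bS \mapsto \sum_w g(S_w; y)$ for each fixed $y \in [0,1]$.

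Next I would invoke the classical criterion (a direct consequence of the Schur--Ostrowski condition, or equivalently of decomposing a majorization step into $T$-transforms) that a symmetric separable sum $\sum_w g(S_w; y)$ is Schur-convex in $\bS$ whenever $s \mapsto g(s; y)$ is convex on its domain. Thus the proof collapses to checking convexity of $g(\,\cdot\,; y)$ on $[0, \infty)$ for each fixed $y \in [0,1]$. A short logarithmic-differentiation computation gives
\[
g'(s; y) = y^{1/s}\!\left(1 - \tfrac{\log y}{s}\right), \qquad g''(s; y) = \frac{(\log y)^2}{s^3}\, y^{1/s} \ge 0 \quad \text{for } s > 0,
\]
and the boundary cases $y \in \{0, 1\}$ are trivial ($g \equiv 0$ and $g(s) = s$, respectively).

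The only real bookkeeping obstacle is handling the boundary $s = 0^+$: since vectors $\bS \in \DM_\Delta$ may have vanishing coordinates (see Lemma~\ref{lem:domain}), one needs $g(\,\cdot\,; y)$ to extend continuously and convexly across $s = 0$. This is where I would be careful, but it is genuinely mild: because $y^{1/s}$ decays faster than any polynomial as $s \to 0^+$ whenever $y \in [0,1)$, one has $g(s; y) \to 0 = g(0; y)$ and the right-derivative $g'(0^+; y) = 0$, so convexity automatically extends to the closed half-line. Combining invariance of the denominator under majorization with the convexity of $g(\,\cdot\,; y)$ yields the claimed pointwise Schur-convexity of $\bS \mapsto F_\bS(y)$, which in turn feeds directly into the proof of Lemma~\ref{lem:schur-convex} via the stochastic-dominance interpretation of CDF monotonicity.
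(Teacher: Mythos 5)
Your proposal is correct and slightly different in packaging from the paper's argument, though both hinge on the same computation. The paper applies the Schur--Ostrowski criterion directly to the full ratio $G(\bS)=F_\bS(y)$: it computes $(S_w-S_{w'})\bigl(\partial G/\partial S_w - \partial G/\partial S_{w'}\bigr)$, notes that the quotient-rule contribution from the denominator cancels in the pairwise difference, and then uses the mean value theorem together with $\tfrac{\partial^2}{\partial s^2}\bigl(s\,y^{1/s}\bigr) = y^{1/s}\,\tfrac{\ln^2(1/y)}{s^3}\ge 0$ to conclude nonnegativity. You instead observe that the majorization preorder fixes $\sum_w S_w$, so the denominator is a constant along any majorization comparison; this reduces the problem to Schur-convexity of the separable numerator $\sum_w g(S_w;y)$, which is a stock consequence of convexity of $g(\,\cdot\,;y)$. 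You verify exactly the same second-derivative computation, so the analytic content is identical, but your reduction replaces the Schur--Ostrowski/MVT combination with a cited textbook fact (separable symmetric sums of convex functions are Schur-convex). Your route is a bit more modular and avoids the mean-value-theorem step; the paper's is self-contained and doesn't require pointing out the invariance of the $\ell^1$-sum. Your care about the boundary $s=0^+$ is reasonable and is something the paper glosses over, though it is harmless there as well since the mean-value point $\tilde S_w$ in the paper's argument is interior whenever the two coordinates differ.
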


\begin{proof}[Proof of Lemma \ref{lem:schur-convex-CDF}]
For simplicity, we define $ G(\bS) = F_\bS(y) $ for any fixed $ y \in [0,1] $. It is straightforward to verify that (i) $ G $ is invariant under permutations of its coordinates, meaning that $ G(\bS) = G(\pi(\bS)) $ for any permutation $ \pi \in \mathrm{Perm}(\Voca) $, and (ii) all first partial derivatives of $ G $ exist. By the Schur–Ostrowski criterion, $ G $ is Schur-convex in $ \bS $ if and only if, for any $ \bS \in \RB^d $ and any $ w, w' \in \Voca $, the following condition holds:
\[
(S_{w}-S_{w'}) \left( \frac{\partial G}{\partial S_w} - \frac{\partial G}{\partial S_{w'}} \right) \ge 0.
\]
Direct calculation shows that 
\[
\frac{\partial (S_wy^{1/S_w})}{\partial S_w} = y^{1/S_w}\left(1+\frac{\ln \frac{1}{y}}{S_w}\right)
\quad \text{and} \quad
\frac{\partial^2 (S_wy^{1/S_w})}{\partial^2 S_w} = y^{1/S_w}\frac{\ln^2 \frac{1}{y}}{S_w^3}.
\]
As a result,
\begin{align*}
(S_{w}-S_{w'}) \left( \frac{\partial G}{\partial S_w} - \frac{\partial G}{\partial S_{w'}} \right)
&=\frac{(S_{w}-S_{w'})}{\sum_{w} S_w} \left[y^{1/S_w}\left(1+\frac{\ln \frac{1}{y}}{S_w}\right)-y^{1/S_{w'}}\left(1+\frac{\ln \frac{1}{y}}{S_{w'}}\right) \right]\\
&=\frac{(S_{w}-S_{w'})^2}{\sum_{w} S_w} \cdot y^{1/\widetilde{S}_w}\frac{\ln^2 \frac{1}{y}}{\widetilde{S}_w^3} \ge 0
\end{align*}
where the last equation uses the mean value theorem and $\widetilde{S}_w$ lies between $S_w$ and $S_{w'}$.
\end{proof}

Now, using Lemma \ref{lem:schur-convex-CDF}, we can proceed to prove Lemma \ref{lem:schur-convex}.

\begin{proof}[Proof of Lemma \ref{lem:schur-convex}]
For any non-decreasing score function $h$, by integration by parts, we have that
\begin{equation}
\label{eq:integral-CDF}
\EB_{F_\bS} [\re^{-h(Y_{\VM})}] = \int \re^{-h(y)} F_\bS(\rd y) = \re^{-h(1)} + \int_0^1 F_\bS(y) \re^{-h(y)} h(\rd y).
\end{equation}
This implies that $ \EB_{F_\bS} [\re^{-h(Y_{\VM})}] $ is a non-negative weighted sum of $ F_\bS(y) $ evaluated over all possible values of $ y $.  
By Lemma \ref{lem:schur-convex-CDF}, we know that the mapping $ \bS \mapsto F_\bS(y) $ is Schur-convex in $ \bS $ for any fixed $ y \in [0,1] $. Using this result and applying Definition \ref{def:schur}, we conclude that the function $ \bS \mapsto \int e^{-h(y)} F_\bS(\rd y) $ is isotonic, order-preserving, and therefore Schur-convex in $ \bS $.

\end{proof}

\subsection{Proof of Lemma \ref{lem:maximum}}
\begin{proof}[Proof of Lemma \ref{lem:maximum}] 
The integration by parts implies that
\begin{equation}
\tag{\ref{eq:integral-CDF}}
 \int \re^{-h(y)} F_\bS(\rd y) = \re^{-h(1)} + \int F_\bS(y) \re^{-h(y)} h(\rd y).
\end{equation}
It suffices to prove that 
\[
\max_{\bS \in \DM_{\Delta}} \int F_\bS(y) \re^{-h(y)} h(\rd y)
\le \max_{\bS \in \DM_{\Delta} \cap \HM_{\Delta}}  \int F_\bS(y) \re^{-h(y)} h(\rd y).
\]
To achieve this, it suffices to prove that
\begin{equation}
\label{eq:target1}
\max_{\bS \in \DM_{\Delta} \setminus \HM_{\Delta}}  \int F_\bS(y) \re^{-h(y)} h(\rd y) \le \int F_{\bS_{\Delta}^{\star}}(y) \re^{-h(y)} h(\rd y).
\end{equation}
where $\bS_{\Delta}^{\star} := \left(\frac{1-\Delta}{1+\frac{\Delta}{|\VM|\wedge |\Voca|-1}}, 0, 0, \ldots, 0\right)$.

From Lemma \ref{lem:domain}, it follows that $\bS_{\Delta}^{\star} \in \DM_{\Delta}$. 
By the definition of $\HM_{\Delta}$, it is clear that $\bS_{\Delta}^{\star} \in \HM_{\Delta}$.
Consequently, $\bS_{\Delta}^{\star} \in \DM_{\Delta} \cap \HM_{\Delta}$.
With this result, once \eqref{eq:target1} is established, it follows that
\[
\max_{\bS \in \DM_{\Delta} \setminus \HM_{\Delta}}  \int F_\bS(y) \re^{-h(y)} h(\rd y)
\le \int F_{\bS_{\Delta}^{\star}}(y) \re^{-h(y)} h(\rd y)
\le \max_{\bS \in \DM_{\Delta} \cap \HM_{\Delta}}  \int F_\bS(y) \re^{-h(y)} h(\rd y),
\]
which completes the proof.

In the following, we will prove \eqref{eq:target1}.
For any $\bS \in \DM_{\Delta} \setminus \HM_{\Delta}$, by Eqn. \eqref{eq:integral-CDF}, Lemma \ref{lem:schur-convex}, and the definition of Schur-convexity, it follows that
\[
\int F_\bS(y) \re^{-h(y)} h(\rd y)
\le \int F_{\bS_1}(y) \re^{-h(y)} h(\rd y),
\]
 where $\bS_1 := \left(\sum_{w} S_w, 0, \ldots, 0\right)$ majorizes the given $\bS$.
Next, note that $\bS \in \DM_{\Delta} \setminus \HM_{\Delta}$ implies $\sum_{w} S_w \le \frac{1-\Delta}{1+\frac{\Delta}{|\VM|\wedge |\Voca|-1}}$. Since $F_{\bP}(y) = y^{1/S_1}$ is increasing in $S_1$ when $\bS$ has only one non-zero entry (that is, $\bS = (S_1, 0, \ldots, 0)$), we deduce that $F_{\bS_1}(y) \le F_{\bS_{\Delta}^{\star}}(y)$ for any $y \in [0, 1]$.
As a result, the last inequality holds, which completes the proof. 
\end{proof}

\subsection{Proof of Lemma \ref{lem:extreme-points}}
\begin{proof}[Proof of Lemma \ref{lem:extreme-points}]
\begin{enumerate}
    \item 
    Since $\KM_{\Delta}$ is the intersection of several half-spaces, it forms a convex polyhedron. We now prove that the extreme points of $\KM_{\Delta}$ are precisely the elements of $\EM_{\Delta}$. 
    
    First, observe that both $\bP_{\Delta}^{\star}$ and $\bS_{\Delta}^{\star}$ belong to $\KM_{\Delta}$, and by the permutation invariance of $\KM_{\Delta}$, we have $\EM_{\Delta} \subseteq \KM_{\Delta}$. This implies $\mathrm{conv}(\EM_{\Delta}) \subseteq \KM_{\Delta}$. 
    
    To prove the reverse inclusion, it suffices to show that any point in $\KM_{\Delta}$ can be expressed as a convex combination of points in $\EM_{\Delta}$. Consider an arbitrary $\bS \in \KM_{\Delta}$, and define $C = \sum_w S_w$ as the sum of its coordinates. By the definition of $\KM_{\Delta}$, the largest coordinate of $\bS$ satisfies  
    \[
    \max_{w}S_w\leq (1-\Delta) \left( 1 - \frac{1-C}{|\VM|\wedge |\Voca|} \right).
    \]
    We assert that $\bS$ is majorized by the following vector 
    \[
    \sS_{\mathrm{new}} = \left( (1-\Delta) (1 - \frac{1-C}{|\VM|\wedge |\Voca|}), C - (1-\Delta) (1 - \frac{1-C}{|\VM|\wedge |\Voca|}), 0, \dots \right),
    \]
    which, by definition, belongs to $\KM_{\Delta}$.
    By Lemma \ref{lem:convex-combination}, $\bS$ can thus be expressed as a convex combination of permutations of $\sS_{\mathrm{new}}$. Since $\sS_{\mathrm{new}}$ itself is a convex combination of $\bP_{\Delta}^{\star}$ and $\bS_{\Delta}^{\star}$, it follows that $\bS$ is a convex combination of points in $\EM_{\Delta}$. This completes the proof. 
    
    \begin{lem}[\citep{marshall1979inequalities}]
    \label{lem:convex-combination}
    Given two vectors $\vx, \vy \in \RB^d$, if $\vx$ majorizes $\vy$, then $\vy$ is a convex combination of $\vx$ and its permutations.
    \end{lem}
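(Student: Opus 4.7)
The plan is to prove the lemma via two classical results: the Hardy--Littlewood--P\'olya characterization of majorization and the Birkhoff--von Neumann theorem on doubly stochastic matrices. Together these give the conclusion almost immediately, and the proof reduces to linking two standard structural theorems.

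First, I would invoke (or derive) the Hardy--Littlewood--P\'olya theorem: $\vy \le_m \vx$ if and only if there exists a doubly stochastic matrix $D \in \RB^{d \times d}$ (a nonnegative matrix whose row sums and column sums all equal one) such that $\vy = D\vx$. The nontrivial direction is to construct such a $D$ from the partial-sum inequalities defining majorization in Definition~\ref{def:schur}. A standard argument proceeds by induction on the number of coordinates in which $\vx$ and $\vy$ differ: one applies a sequence of \emph{T-transforms} (elementary $2 \times 2$ averaging operations, each of which is itself doubly stochastic and preserves the majorization relation) to move a pair of coordinates of $\vx$ strictly closer to the corresponding coordinates of $\vy$, and the composition of these T-transforms yields the desired matrix $D$.

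Second, I would apply the Birkhoff--von Neumann theorem, which asserts that the set of $d \times d$ doubly stochastic matrices is precisely the convex hull of the set of $d \times d$ permutation matrices. Therefore one can write $D = \sum_{k=1}^{K} \lambda_k P_k$ for some weights $\lambda_k \ge 0$ with $\sum_{k=1}^K \lambda_k = 1$ and permutation matrices $P_1, \ldots, P_K$. Substituting into $\vy = D\vx$ gives $\vy = \sum_{k=1}^{K} \lambda_k \pi_k(\vx)$, where $\pi_k$ denotes the permutation associated with $P_k$. This is exactly the claimed representation of $\vy$ as a convex combination of $\vx$ and its permutations.

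The main obstacle lies in the forward direction of Hardy--Littlewood--P\'olya: one must construct $D$ explicitly from the partial-sum inequalities, and the T-transform induction requires careful bookkeeping to guarantee that each elementary transfer both preserves the partial-sum inequalities and strictly reduces the number of disagreeing coordinates so that the process terminates. An alternative and perhaps more transparent route is a direct induction on the dimension $d$: after sorting both vectors in decreasing order, one peels off the leading coordinate of $\vy$ as an explicit convex combination of permuted coordinates of $\vx$ at the boundary of the majorization constraints, and then applies the induction hypothesis to the residual vectors in dimension $d-1$. In either approach, Birkhoff--von Neumann reduces the problem to producing a single doubly stochastic matrix, after which the convex-combination form of the conclusion follows at no extra cost.
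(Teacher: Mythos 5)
Your proposal is correct and is exactly the standard argument given in the cited reference (Marshall and Olkin, 1979); the paper itself provides no proof, only the citation. The Hardy--Littlewood--P\'olya characterization ($\vy \le_m \vx$ iff $\vy = D\vx$ for some doubly stochastic $D$) combined with Birkhoff--von Neumann is the canonical route, and your sketch of the T-transform induction for the forward direction is the right filling-in; one small remark is that the T-transform induction already yields the convex-combination conclusion directly, since a finite composition of maps of the form $\lambda I + (1-\lambda)Q$ with $Q$ a transposition expands into a convex combination of permutation matrices, so Birkhoff--von Neumann is not strictly needed if you take that route.
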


    \item   By Lemma \ref{lem:domain}, we know that $ \bS_{\Delta}^{\star} \in \DM_{\Delta} $. Additionally, we have $ \bP_{\Delta}^{\star} \in \DM_{\Delta} $ because setting $ \bP_{t_1} = \cdots = \bP_{t_k} = \bP_{\Delta}^{\star} $ results in a corresponding $ \bS $-vector equal to $ \bP_{\Delta}^{\star} $, which, by definition, belongs to $ \DM_{\Delta} $. Combining these observations with the permutation invariance of $ \DM_{\Delta} $, we conclude that $ \EM_{\Delta} \subseteq \DM_{\Delta} $.  
    On the other hand, by definition, we also have $ \EM_{\Delta} \subseteq \KM_{\Delta} $. The conclusion then follows.
    
    \item 
    It suffices to prove that $\DM_{\Delta} \cap \HM_{\Delta} \subseteq \KM_{\Delta} \subseteq \mathrm{conv}(\DM_{\Delta} \cap \HM_{\Delta})$ since $\KM_{\Delta}$ is convex.
    By Lemma \ref{lem:domain}, we know that $ \DM_{\Delta} \cap \HM_{\Delta} \subseteq \KM_{\Delta} $. We now turn to the opposite direction.  
    By the first point, we have $ \KM_{\Delta} = \mathrm{conv}(\EM_{\Delta}) $. We have that $ \EM_{\Delta} \subseteq \DM_{\Delta} \cap \HM_{\Delta} $ from the second point. Consequently, it follows that $\KM_{\Delta} \subseteq \mathrm{conv}(\DM_{\Delta} \cap \HM_{\Delta})$, which completes the proof.    
    \end{enumerate}
\end{proof}

\subsection{Proof of Lemma \ref{lem:maximum-CDF-polyhedron}}
\begin{proof}[Proof of Lemma \ref{lem:maximum-CDF-polyhedron}]

We note that
\[
\sup_{\bS \in \KM_{\Delta}}  \int F_\bS(y) \re^{-h(y)} h(\rd y)
= \sup_{C \in [ \frac{1-\Delta}{1+\frac{\Delta}{|\VM|\wedge |\Voca|-1}}, 1 ]} \sup_{\bS \in \KM_{\Delta}: \sum_{w} S_w = C} \int F_\bS(y) \re^{-h(y)} h(\rd y).
\]
On the intersection of the plane $\sum_{w} S_w = C$ and $\KM_{\Delta}$, we assert that $\lambda \bS_{\Delta}^{\star} + (1-\lambda) \bP_{\Delta}^{\star}$ majorizes any other points because it has the largest possible first entry.
The calculation shows that here
\[
\lambda = \frac{(1-C)\cdot (|\VM|\wedge |\Voca|-1+\Delta)}{\Delta \cdot |\VM|\wedge |\Voca|} \in [0, 1].
\]
By the definition of Schur-convexity, we have
\[
\sup_{\bS \in \KM_{\Delta}: \sum_{w} S_w = C}  \int F_\bS(y) \re^{-h(y)} h(\rd y) 
=   \int F_{\lambda \bS_{\Delta}^{\star} + (1-\lambda) \bP_{\Delta}^{\star}}(y) \re^{-h(y)} h(\rd y)
=: G(C).
\]
We denote the largest and the second largest entries in $\lambda \bS_{\Delta}^{\star} + (1-\lambda) \bP_{\Delta}^{\star}$ by $S_1$ and $S_2$. 
One can see that
\begin{equation}
S_1 = \lambda \frac{1-\Delta}{1+\frac{\Delta}{|\VM|\wedge |\Voca|-1}} + (1-\lambda) (1-\Delta)
\quad \text{and} \quad
S_2 = (1-\lambda) \Delta.
\end{equation}
It then follows that
\begin{align*}
 F_{\lambda \bS_{\Delta}^{\star} + (1-\lambda) \bP_{\Delta}^{\star}}(y) 
 &=\frac{S_1 y^{1/S_1} + S_2 y^{1/S_2}}{S_1+S_2}\\
&\overset{(a)}{\le}\frac{\lambda \frac{1-\Delta}{1+\frac{\Delta}{|\VM|\wedge |\Voca|-1}} y^{\frac{1+\frac{\Delta}{|\VM|\wedge |\Voca|-1}}{1-\Delta}} + (1-\lambda) (1-\Delta)y^{\frac{1}{1-\Delta}}  + S_2 y^{1/S_2}}{\lambda \frac{1-\Delta}{1+\frac{\Delta}{|\VM|\wedge |\Voca|-1}} + (1-\lambda) (1-\Delta)+S_2}\\
&\overset{(b)}{\le}\frac{\lambda \frac{1-\Delta}{1+\frac{\Delta}{|\VM|\wedge |\Voca|-1}} y^{\frac{1+\frac{\Delta}{|\VM|\wedge |\Voca|-1}}{1-\Delta}} + (1-\lambda) (1-\Delta)y^{\frac{1}{1-\Delta}}  + (1-\lambda) \Delta y^{\frac{1}{\Delta}}}{\lambda \frac{1-\Delta}{1+\frac{\Delta}{|\VM|\wedge |\Voca|-1}} + (1-\lambda) (1-\Delta)+(1-\lambda) \Delta}\\
&\overset{(c)}{=}
\frac{\lambda \frac{1-\Delta}{1+\frac{\Delta}{|\VM|\wedge |\Voca|-1}} F_{\bS_{\Delta}^{\star}}(y)+ (1-\lambda) F_{\bP_{\Delta}^{\star}}(y)}{\lambda \frac{1-\Delta}{1+\frac{\Delta}{|\VM|\wedge |\Voca|-1}} + (1-\lambda) },
\end{align*}
where $(a)$ uses the fact that the map $S \mapsto S y^{1/S}$ is convex in $S$, $(b)$ uses the fact that $y^{1/S_2} \le y^{\frac{1}{\Delta}}$ due to $y \in [0, 1]$ and $\lambda \in [0, 1]$ and $(c)$ follows from arrangement.

Therefore, for any $C \in [ \frac{1-\Delta}{1+\frac{\Delta}{|\VM|\wedge |\Voca|-1}}, 1 ]$, it follows that
\begin{align*}
G(C)
&= \int F_{\lambda \bS_{\Delta}^{\star} + (1-\lambda) \bP_{\Delta}^{\star}}(y)  \re^{-h(y)} h(\rd y)\\
&\le \int  \frac{\lambda \frac{1-\Delta}{1+\frac{\Delta}{|\VM|\wedge |\Voca|-1}} F_{\bS_{\Delta}^{\star}}(y)+ (1-\lambda) F_{\bP_{\Delta}^{\star}}(y)}{\lambda \frac{1-\Delta}{1+\frac{\Delta}{|\VM|\wedge |\Voca|-1}} + (1-\lambda) }  \re^{-h(y)} h(\rd y)\\
&\le \max\left\{ \int F_{\bS_{\Delta}^{\star}} \re^{-h(y)} h(\rd y),  \int F_{\bP_{\Delta}^{\star}} \re^{-h(y)} h(\rd y) \right\},
\end{align*}
where the last inequality uses the fact that the maximum value of a linear function on a line segment is attained at the endpoints.

\end{proof}

\subsection{Optimal Score in the Intermediate Regime}
\label{sec:intermediate-regime-analysis}

In this subsection, we detail the discussion in Remark \ref{rem:beyong-saddle-point}.
Specifically, if we do not require the optimal score function to be part of a saddle point solution, that is, the optimal score function solves the following minimization problem
\begin{equation}
\label{eq:J-loss}
\min_{h} J(h)
~~\text{where}~~
J(h) := \max_{\bS \in \DM_{\Delta}} L(h, \bS)
~~\text{and}~~
L(h, \bS) := \EB_0[h(Y)] + \log \EB_{F_\bS}[\re^{-h(Y)}],
\end{equation}
then the optimal score function always exists in the intermediate regime.
However, it doesn't have a closed form. In the following, we formally state this result.

\begin{lem}
\label{lem:optimal-increasing}
Any score function that minimizes $J$ in \eqref{eq:J-loss} is non-decreasing.
\end{lem}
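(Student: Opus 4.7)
The plan is to use a Hardy--Littlewood rearrangement argument. Given any candidate score $h:[0,1]\to\RB$, I would let $h^{\uparrow}$ denote its non-decreasing rearrangement with respect to Lebesgue measure on $[0,1]$ and show that $J(h^{\uparrow}) \le J(h)$. This already gives existence of a non-decreasing minimizer, and a short strictness argument then upgrades the conclusion to the statement of the lemma.

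The first step is to notice that the null term in $L(h,\bS)$ is invariant under rearrangement. Because $F_0 = \UM(0,1)$, the equimeasurability of $h$ and $h^{\uparrow}$ against Lebesgue measure yields $\EB_0[h^{\uparrow}(Y)] = \int_0^1 h^{\uparrow}(y)\,\rd y = \int_0^1 h(y)\,\rd y = \EB_0[h(Y)]$. The second step is the key monotonicity fact about the alternative density. By Lemma~\ref{lem:gumbel-alternative}, for any $\bS \in \DM_{\Delta}$, $f_{\bS}(y) = (\sum_{w'} S_{w'})^{-1} \sum_{w} y^{1/S_w - 1}$; by Lemma~\ref{lem:domain}(i), each coordinate satisfies $S_w \le 1-\Delta < 1$, hence the exponent $1/S_w - 1$ is non-negative for every $w$, each summand is non-decreasing on $(0,1)$, and therefore so is $f_{\bS}$.

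Combining these two facts, I apply the Hardy--Littlewood rearrangement inequality to the alternative expectation: since $\re^{-h^{\uparrow}}$ is the non-increasing rearrangement of $\re^{-h}$, and $f_{\bS}$ is non-decreasing, pairing them minimizes the integral among all equimeasurable rearrangements of $\re^{-h}$, so
\[
\EB_{F_{\bS}}[\re^{-h^{\uparrow}(Y)}]
=\int_0^1 \re^{-h^{\uparrow}(y)} f_{\bS}(y)\,\rd y
\;\le\; \int_0^1 \re^{-h(y)} f_{\bS}(y)\,\rd y
= \EB_{F_{\bS}}[\re^{-h(Y)}].
\]
Taking logarithms and adding the invariant null term gives $L(h^{\uparrow},\bS) \le L(h,\bS)$ for every $\bS \in \DM_{\Delta}$; taking the supremum over $\bS$ yields $J(h^{\uparrow}) \le J(h)$.

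The main obstacle will be the strictness argument needed to conclude that \emph{every} minimizer is non-decreasing, rather than merely that some minimizer is. The plan is as follows: if $h^{\star}$ minimizes $J$, then by compactness of $\DM_{\Delta}$ (an easy consequence of Lemma~\ref{lem:domain}) and continuity, the inner supremum is attained at some $\bS^{\star}$, giving $L(h^{\star},\bS^{\star}) = J(h^{\star}) = J((h^{\star})^{\uparrow}) \ge L((h^{\star})^{\uparrow},\bS^{\star})$; combined with the reverse inequality above, this forces equality in Hardy--Littlewood at $\bS^{\star}$. Because every $\bS \in \DM_{\Delta}$ has at least one coordinate $S_w \in (0, 1-\Delta]$, the corresponding summand $y^{1/S_w - 1}$ is \emph{strictly} increasing, so $f_{\bS^{\star}}$ is strictly increasing on a set of positive measure; equality in Hardy--Littlewood against such a strictly monotone density forces $\re^{-h^{\star}}$ to be non-increasing almost everywhere, i.e., $h^{\star}$ is non-decreasing almost everywhere, as claimed.
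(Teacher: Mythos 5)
Your core argument is identical to the paper's: construct the non-decreasing rearrangement $h^\uparrow$, note that equimeasurability under $F_0 = \UM(0,1)$ preserves the null term, and invoke Hardy--Littlewood against the non-decreasing density $f_{\bS}$ to show $L(h^\uparrow, \bS) \le L(h, \bS)$ for every $\bS$. What you add, and the paper omits, is the strictness step aimed at the literal ``\emph{any} minimizer is non-decreasing'' phrasing; the paper's argument alone only produces a non-decreasing minimizer. That addition is the right instinct, but as written it has a gap.

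The problem is in the choice of $\bS^\star$. You pick $\bS^\star$ to attain the supremum for $h^\star$, so that $L(h^\star, \bS^\star) = J(h^\star)$. Your chain then gives
\[
L(h^\star, \bS^\star) = J(h^\star) = J((h^\star)^\uparrow) \;\ge\; L((h^\star)^\uparrow, \bS^\star),
\]
but this is the \emph{same} direction as Hardy--Littlewood ($L((h^\star)^\uparrow, \bS^\star) \le L(h^\star, \bS^\star)$); there is no ``reverse inequality above'' to squeeze against, so equality is not forced. The fix is to instead pick $\bS^\star$ attaining the supremum for the \emph{rearranged} function, i.e.\ $L((h^\star)^\uparrow, \bS^\star) = J((h^\star)^\uparrow)$. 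Then
\[
L((h^\star)^\uparrow, \bS^\star) = J((h^\star)^\uparrow) = J(h^\star) \;\ge\; L(h^\star, \bS^\star) \;\ge\; L((h^\star)^\uparrow, \bS^\star),
\]
where the first $\ge$ uses that $J(h^\star)$ dominates $L(h^\star, \cdot)$ pointwise and the second is Hardy--Littlewood; this \emph{does} force equality throughout, and your equality-case-of-Hardy--Littlewood observation (that $f_{\bS^\star}$ is strictly increasing on a set of positive measure because $\DM_\Delta$ contains no all-zero vector) then applies exactly as you describe.
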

\begin{proof}[Proof of Lemma \ref{lem:optimal-increasing}]
For any score function $h$, we can construct a non-decreasing transformation $h^\uparrow$ such that $L(h^\uparrow, \bS) \le L(h, \bS)$ for all $\bS \in \DM_\Delta$.
Specifically, let $G_h(z) = \PB_0(h(Y) \le z)$ with $Y \sim \mathrm{Unif}(0,1)$ under $H_0$, and define $h^\uparrow$ as the generalized inverse of $G_h$:
\[
h^\uparrow(y) = G_h^{-1}(y) := \inf\{z \in \RB : G_h(z) \ge y\}.
\]
By construction, $h^{\uparrow}$ is non-decreasing. Moreover, for any $z \in \RB$,
\[
\PB_0(h^\uparrow(Y) \le z) = \PB_0(G_h^{-1}(Y)\le z) 
= \PB_0(Y \le G_h(z)) 
= G_h(z) 
= \PB_0(h(Y)\le z),
\]
so $h^\uparrow(Y)$ and $h(Y)$ have the same distribution under $H_0$.

We now examine the two terms in $L(h,\bS)$.  
For the first term, $\EB_0[h(Y)] = \EB_0[h^\uparrow(Y)]$ since the distributions coincide.  
Let $f_\bS$ denote the alternative PDF, which is non-decreasing in $y$.
Consider the second term $\int_0^1 \re^{-h(y)} f_\bS(y) \rd y$.
The Hardy--Littlewood inequality \cite[Chapter~2]{bennett1988interpolation} implies that the integral of the product of two functions is minimized when the functions are ordered in opposite monotonicity.
Since $f_\bS(y)$ is non-decreasing, this integral is minimized when $\re^{-h(y)}$ is non-increasing, which is equivalent to $h(y)$ being non-decreasing.
Hence, $L(h^\uparrow, \bS) \le L(h, \bS)$ for all $\bS \in \DM_\Delta$.
\end{proof}

By Lemma \ref{lem:optimal-increasing} and Lemmas \ref{lem:maximum}, \ref{lem:extreme-points}, and \ref{lem:maximum-CDF-polyhedron}, for any non-decreasing function $h$, 
\begin{equation}
\label{eq:J-simplified-loss}
J(h) = \max \{ L(h, \bP_{\Delta}^{\star}), \, L(h, \bS_{\Delta}^{\star}) \},
\end{equation}
where $\bP_{\Delta}^{\star}$ and $\bS_{\Delta}^{\star}$ are the two distribution vectors defined in Lemma \ref{lem:extreme-points}.
Since $L(h, \bS)$ is strictly convex in $h$ for any fixed $\bS$, the above objective, being the pointwise maximum of two strictly convex functions, is also strictly convex. 
This ensures the existence and uniqueness of the minimizer of $J$, which is characterized in the following lemma.

\begin{lem}[Optimal score function]
\label{lem:optimal-score-under-intermediate-regime}
When $\Delta \in (\Delta_1^\star, \Delta_2^\star)$, that is, we have $L(h_{\bP_{\Delta}^\star}, \bP_{\Delta}^\star) < L(h_{\bP_{\Delta}^\star},\bS_{\Delta}^\star)$ and $L(h_{\bS_{\Delta}^\star}, \bS_{\Delta}^\star) < L(h_{\bS_{\Delta}^\star},\bP_{\Delta}^\star)$, the optimal score that minimizes $J$ defined in \eqref{eq:J-loss} is
\[
h_{\lambda^\star}^{\mathrm{gum}}(y) = \log(\lambda^\star \cdot y^{\frac{(|\VM|\wedge|\Voca|)\,\Delta}{(|\VM|\wedge|\Voca|-1)(1-\Delta)}} + (1-\lambda^\star) \cdot (y^{\frac{\Delta}{1-\Delta}} + y^{\frac{1-\Delta}{\Delta}}))
\]
where $\lambda^\star$ is the solution to this equation $L(h_{\lambda}^{\mathrm{gum}}, \bP_{\Delta}^{\star}) = L(h_{\lambda}^{\mathrm{gum}}, \bS_{\Delta}^{\star})$.
\end{lem}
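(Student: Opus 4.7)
The plan is to characterize the unique minimizer of the convex problem $\min_h J(h)$ via a Lagrangian/KKT analysis on the simplified objective from \eqref{eq:J-simplified-loss}, and then identify the stationary point explicitly as a log-mixture of the two alternative densities. By Lemma \ref{lem:optimal-increasing} we may restrict to non-decreasing $h$, on which \eqref{eq:J-simplified-loss} gives $J(h) = \max\{L(h,\bP_{\Delta}^{\star}), L(h,\bS_{\Delta}^{\star})\}$. Since each $L(\cdot,\bS)$ is strictly convex in $h$ modulo additive constants, $J$ is strictly convex and its minimizer $h^\star$ is unique (up to an additive constant that does not affect $L$).

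Next, I would recast the problem as the epigraph program $\min_{h,t}\, t$ subject to $L(h,\bP_{\Delta}^{\star}) \le t$ and $L(h,\bS_{\Delta}^{\star}) \le t$, and write down the KKT conditions with multipliers $\lambda_1,\lambda_2 \ge 0$ satisfying $\lambda_1 + \lambda_2 = 1$. I would then rule out the boundary cases: if $\lambda_2 = 0$, stationarity forces $h^\star$ to be the unconstrained minimizer of $L(\cdot,\bP_{\Delta}^{\star})$, which by Lemma \ref{lem:saddle} is $h_{\bP_{\Delta}^{\star}}$, but the intermediate-regime hypothesis $L(h_{\bP_{\Delta}^{\star}},\bP_{\Delta}^{\star}) < L(h_{\bP_{\Delta}^{\star}},\bS_{\Delta}^{\star})$ then violates the second constraint at $t^\star = L(h_{\bP_{\Delta}^{\star}},\bP_{\Delta}^{\star})$. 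A symmetric argument excludes $\lambda_1 = 0$. Hence $\lambda_1,\lambda_2 > 0$ and complementary slackness yields the equalization $L(h^\star,\bP_{\Delta}^{\star}) = L(h^\star,\bS_{\Delta}^{\star})$.

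To obtain the closed form, I would compute the first variation of $\lambda_1 L(\cdot,\bP_{\Delta}^{\star}) + \lambda_2 L(\cdot,\bS_{\Delta}^{\star})$ pointwise in $y$ and set it to zero, which yields
\[
\re^{h^\star(y)} \;\propto\; \frac{\lambda_1}{C_1} f_{\bP_{\Delta}^{\star}}(y) + \frac{\lambda_2}{C_2} f_{\bS_{\Delta}^{\star}}(y), \qquad C_i := \EB_{F_{\bullet}}[\re^{-h^\star(Y)}].
\]
Absorbing the normalizers $C_i$ and the overall multiplicative constant into a single mixing weight (which is legitimate since $L$ is invariant under adding constants to $h$), the minimizer becomes $h^\star(y) = \log\bigl(\lambda\, f_{\bP_{\Delta}^{\star}}(y) + (1-\lambda) f_{\bS_{\Delta}^{\star}}(y)\bigr)$ for some $\lambda \in (0,1)$. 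Substituting the explicit densities from Lemma \ref{lem:gumbel-alternative}, namely $f_{\bP_{\Delta}^{\star}}(y) = y^{\Delta/(1-\Delta)} + y^{(1-\Delta)/\Delta}$ and a scalar multiple of $y^{(|\VM|\wedge|\Voca|)\Delta/[(|\VM|\wedge|\Voca|-1)(1-\Delta)]}$ for $f_{\bS_{\Delta}^{\star}}$, and reparameterizing the mixing weight, matches $h_{\lambda^\star}^{\mathrm{gum}}$ exactly.

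Finally, $\lambda^\star$ is pinned down by the equalization condition $L(h_{\lambda}^{\mathrm{gum}},\bP_{\Delta}^{\star}) = L(h_{\lambda}^{\mathrm{gum}},\bS_{\Delta}^{\star})$. Define $\Phi(\lambda) := L(h_{\lambda}^{\mathrm{gum}},\bP_{\Delta}^{\star}) - L(h_{\lambda}^{\mathrm{gum}},\bS_{\Delta}^{\star})$; at $\lambda = 1$ the score reduces (up to a constant) to $h_{\bS_{\Delta}^{\star}}$, so the assumption $L(h_{\bS_{\Delta}^{\star}},\bS_{\Delta}^{\star}) < L(h_{\bS_{\Delta}^{\star}},\bP_{\Delta}^{\star})$ gives $\Phi(1) > 0$, and symmetrically $\Phi(0) < 0$. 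Continuity of $\Phi$ in $\lambda$ then produces a root $\lambda^\star \in (0,1)$ by the intermediate value theorem. The main obstacle I expect is justifying uniqueness of $\lambda^\star$ and rigorously exchanging the variational derivative with the integral. For uniqueness, the cleanest route is to invoke strict convexity of $J$: any root of $\Phi$ combined with stationarity produces a minimizer of $J$, and since $J$ admits a unique minimizer (modulo additive constants), $\lambda^\star$ must be unique. For the variational step, one works on a dense subspace of bounded non-decreasing functions and passes to the limit using the uniform integrability implied by Assumption \ref{asmp:main}\ref{subasmp:bounded-variance}.
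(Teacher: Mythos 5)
Your proposal is correct and follows essentially the same route as the paper: both establish equalization of the two active pieces via the intermediate-regime hypotheses (the paper by contradiction through strict convexity, you via complementary slackness after ruling out zero multipliers), both then read off the log-mixture form from first-order optimality, and both pin down $\lambda^\star$ by the equalization equation. Your epigraph/KKT recast is a cosmetic repackaging of the paper's subdifferential-of-a-max argument, and your intermediate-value-theorem step gives a nice constructive alternative for the existence of a root of $\Phi$, though it is logically redundant given that the minimizer's existence already follows from coercivity and strict convexity.
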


As shown in Lemma \ref{lem:optimal-score-under-intermediate-regime}, the optimal score $h_{\lambda^\star}^{\mathrm{gum}}$ takes the form of a log-likelihood ratio score associated with a mixture alternative distribution, where the mixing parameter $\lambda^\star$ has no closed-form expression. 
For this reason, we do not pursue it further in the main text.

\begin{proof}[Proof of Lemma \ref{lem:optimal-score-under-intermediate-regime}]
For simplicity, let $h^\star$ denote the optimal score function. 
We first claim that the unique minimizer $h^{\star}$ must satisfy the equalization condition:
\begin{equation}
\label{eq:equalization}
L(h^{\star}, \bP_{\Delta}^{\star}) = L(h^{\star}, \bS_{\Delta}^{\star}). 
\end{equation}
Suppose, for contradiction, that $L(h^{\star}, \bP_{\Delta}^{\star}) > L(h^{\star}, \bS_{\Delta}^{\star})$. 
Then we have $J(h^{\star}) = L(h^{\star}, \bP_{\Delta}^{\star})$ from \eqref{eq:J-simplified-loss}, so $h^{\star}$ is also a local minimizer of $L(h, \bP_{\Delta}^{\star})$. 
By strict convexity, this forces $h^{\star}$ to equal the unique global minimizer $h_{\bP_{\Delta}^{\star}}$. 
But substituting back yields $L(h_{\bP_{\Delta}^\star}, \bP_{\Delta}^\star) > L(h_{\bP_{\Delta}^\star}, \bS_{\Delta}^\star)$, which contradicts the condition that $L(h_{\bP_{\Delta}^\star}, \bP_{\Delta}^\star) < L(h_{\bP_{\Delta}^\star}, \bS_{\Delta}^\star).$
A similar argument rules out the case $L(h^{\star}, \bS_{\Delta}^{\star}) > L(h^{\star}, \bP_{\Delta}^{\star})$. 
Thus, the equalization condition \eqref{eq:equalization} must hold. This means that at the optimal point $h^{\star}$, both component functions are active and attain the same value.

From the first-order stationary condition, the zero function belongs to the subdifferential set at $h^{\star}$, that is, $0 \in \partial J(h^{\star})$. By standard convex analysis, the subdifferential of the maximum of functions is the convex hull of the gradients of the active functions, namely $\partial J(h^{\star}) = \mathrm{conv} \{ \nabla_h L(h^{\star}, \bP_{\Delta}^{\star}), \nabla_h L(h^{\star}, \bS_{\Delta}^{\star}) \}.$
This implies the existence of a mixing parameter $\lambda^{\star} \in (0,1)$ such that
\begin{equation}
\label{eq:subgradient-optimality}
\lambda^{\star} \nabla_h L(h^{\star}, \bP_{\Delta}^{\star}) + (1-\lambda^{\star}) \nabla_h L(h^{\star}, \bS_{\Delta}^{\star}) = 0. 
\end{equation}
We remark that we must have $\lambda^{\star} \in (0,1)$; otherwise $h^{\star}$ would equal $h_{\bP_{\Delta}^\star}$ or $h_{\bS_{\Delta}^\star}$, which contradicts $\Delta \in (\Delta_1^\star, \Delta_2^\star)$.

We now derive the explicit form of $h^{\star}$.
Let $f_{\bP_{\Delta}^{\star}}$ and $f_{\bS_{\Delta}^{\star}}$ denote the alternative PDFs associated with $F_{\bP_{\Delta}^{\star}}$ and $F_{\bS_{\Delta}^{\star}}$, respectively, and let $f_0$ be the null PDF. The functional gradient of $L(h,\bS)$ with respect to $h$ at a point $y$ is
$$
\nabla_h L(h, \bS)(y) = f_0(y) - \frac{\re^{-h(y)} f_{\bS_{\Delta}^{\star}}(y)}{\EB_{F_{\bS}}[\re^{-h}]}.
$$
By the equalization condition \eqref{eq:equalization}, the denominators are equal: $\EB_{F_{\bP_{\Delta}^{\star}}}[\re^{-h^{\star}}] = \EB_{F_{\bS_{\Delta}^\star}}[\re^{-h^{\star}}]$. Let this common value be $C^\star$. Substituting the gradients into the optimality condition \eqref{eq:subgradient-optimality} gives
$$
f_0(y) - \frac{\re^{-h^{\star}(y)}}{C^*} \left( \lambda^{\star} f_{\bP_{\Delta}^{\star}}(y) + (1-\lambda^{\star}) f_{\bS_{\Delta}^{\star}}(y) \right) = 0.
$$
Solving for $h^{\star}(y)$ and noting that the additive constant $-\log C^\star$ does not affect detection performance, we obtain the explicit form of the optimal score function:
\begin{equation*}
h^{\star}(y) = \log \left( \frac{\lambda^{\star} f_{\bP_{\Delta}^{\star}}(y) + (1-\lambda^{\star}) f_{\bS_{\Delta}^{\star}}(y)}{f_0(y)} \right).
\end{equation*}
Thus, the optimal score function is precisely the log-likelihood ratio between the null distribution $f_0$ and a mixture of the two extremal alternative distributions.
  
\end{proof}

\section{Proof for Inverse Transform Watermarks in Section~\ref{sec:inverse}}
\label{append:inverse}

We begin by introducing the notation and terminology used throughout this section, as the analysis of the inverse transform watermark involves several technical components.

\paragraph{General notation.}
Throughout the proof, we use $(\cdot)_+$ to denote the positive part function, that is, $(x)_+ = \max\{x, 0\}$. For a function $f: A \to \mathbb{R}$ and a constant $M > 0$, we define the \emph{clipped extension} $[f]_{[-M, M]}: \mathbb{R} \to \mathbb{R}$ as a continuous function satisfying:
\begin{equation}
\label{eq:clip-def}
[f]_{[-M, M]}(x) = 
\begin{cases}
f(x), & \text{if } x \in A \text{ and } f(x) \in [-M, M], \\
M, & \text{if } x \in A \text{ and } f(x) > M, \\
-M, & \text{if } x \in A \text{ and } f(x) < -M, \\
\text{a continuous value in } [-M, M], & \text{if } x \notin A.
\end{cases}
\end{equation}
We denote the permutation group over $\Voca$ by $\Perm(\Voca)$, and use $\pi \in \Perm(\Voca)$ to represent a permutation of the vocabulary. The permutation $\pi$ acts on token indices, so that $\pi(w)$ denotes the token to which $w$ is mapped. For brevity, we denote the set $\{1, 2, \ldots, m\}$ by $[m]$.

\paragraph{Belief classes.}

We formally reformulate the conditions from Assumption~\ref{asmp:heavy-tokensa} and collect all NTP distributions within a minimal unit $\mathcal{V}$ of type $\tau$ that satisfy Assumption~\ref{asmp:heavy-tokensa} into the class $\QM_{\tau,\bm{\Delta}}$.
As defined, $\QM_{\tau,\bm{\Delta}}$ depends only on the type $\tau$ and the regularity levels $\bm{\Delta} = (\Delta_t)_{t \in \mathcal{V}}$, as this information is sufficient to determine all valid NTP distributions in the asymptotic regime we consider.

\begin{defn}[Fixed-parameter belief class]
\label{def:q_class}
For a minimal unit $\mathcal{V} = \IM^\zeta$ of type $\tau$ and a sequence of regularity levels $\bm{\Delta} = (\Delta_t)_{t \in \mathcal{V}}$ with each $\Delta_t \in [\Delta, 1-\delta]$ as in Assumption~\ref{asmp:heavy-tokensa}, we define the class $\QM_{\tau, \bm{\Delta}}$ as the set of all joint NTP distributions $\bP_{\mathcal{V}}$ over the tokens in $\mathcal{V}$ that satisfy Assumption~\ref{asmp:heavy-tokensa}:
\begin{equation}
\label{eq:PM-inv-eta-new}
\QM_{\tau,\bm{\Delta}} 
= \left\{ \bP_{\VM}: \ \forall t \in \IM^\zeta, \ P_{t,w_t} = P_{t,(1)} = 1 - \Delta_t \ \text{and} \ \log|\Voca| \cdot P_{t,(2)} \leq \eps_{|\Voca|} \right\},
\end{equation}
where $\bP_{\VM} := (\bP_t)_{t \in \VM}$ is the collection of marginal distributions of tokens in $\VM$, and $P_{t,(1)}, P_{t,(2)}$ denote the largest and second-largest probabilities in the NTP distribution $\bP_t$.
\end{defn}

\subsection{Proof of Lemma \ref{lem:asymptotic-joint-distributions-inverse}}

\begin{proof}[Proof of Lemma \ref{lem:asymptotic-joint-distributions-inverse}]

To establish the asymptotic distribution of the pseudorandom numbers and tokens, we first characterize their exact joint distribution in Lemma~\ref{lem:inverse-perfect-general-technicaljoint}. Since our analysis focuses on a fixed minimal unit $\IM_k^\zeta$, we omit the subscript $k$ for simplicity and denote it by $\VM = \IM_k^\zeta$, which contains $m$ sub-blocks. We adopt this notational convention throughout the proof of Theorem~\ref{thm:inverse-asymptotic-distribution} as well.

\begin{lem}[Exact joint distribution]\label{lem:inverse-perfect-general-technicaljoint}
Fix a minimal unit (or block) $\IM^{\zeta}$ consisting of $m$ sub-blocks, denoted by $\IM_{\ell}^Y$ for $\ell \in [m]$, such that $\bigcup_{\ell=1}^m \IM_{\ell}^Y = \IM^{\zeta}$.
Let Assumption \ref{asmp:blocks-independence} hold.
Assume the shared pseudorandom variables for this block are $(U, \pi)$, where $U \in [0,1]$ is uniform and $\pi \in \Perm(\Voca)$ is a permutation of the vocabulary.
Denote the token associated with each sub-block $\IM_{\ell}^Y$ by $w_\ell$ for $\ell \in [m]$.

Then the joint distribution of $(U, \pi(w_1), \ldots, \pi(w_m))$ conditioned on the fixed block $\IM^\zeta$ is given by
\begin{align}
& \PB_1\left(U \leq r,\; \pi(w_i') = w_i \text{ for } i = 1,\dots,m \;\middle|\; \IM^\zeta \right) \\
=\, & \frac{
    \frac{1}{|\Voca|!} \sum\limits_{\substack{\pi \in \Perm(\Voca) \\ \pi(w_\ell') = w_\ell,\; \ell \in [m]}} 
    \PB\left(U \in \bigcap_{\ell=1}^m \bigcap_{t \in \IM^Y_\ell} \left(a_{\pi, w_\ell'-1}^{(t)},\; a_{\pi, w_\ell'}^{(t)}\right) \cap [0, r] \right)
}{
    \frac{1}{|\Voca|!} \sum\limits_{\substack{w_1', \ldots, w_m' \\ \mathrm{distinct}}} \sum\limits_{\substack{\pi \in \Perm(\Voca) \\ \pi(w_\ell') = w_\ell,\; \ell \in [m]}} 
    \PB\left(U \in \bigcap_{\ell=1}^m \bigcap_{t \in \IM^Y_\ell} \left(a_{\pi, w_\ell'-1}^{(t)},\; a_{\pi, w_\ell'}^{(t)}\right) \right)
}
\end{align}
where the endpoint $a_{\pi, w_\ell}^{(t)}$ is defined by
\[
a_{\pi,w_l}^{(t)} = \sum_{j=1}^{w_l} P_{t,\pi(j)}, \quad \forall t \in \IM^Y_{\ell}, \quad \forall \ell \in [m].
\]
\end{lem}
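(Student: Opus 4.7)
The plan is to compute the joint distribution by direct application of the definition of conditional probability, combined with the explicit deterministic map from the pseudorandom pair $(U,\pi)$ to the generated tokens. First, I will translate the inverse-transform decoder into events on $(U,\pi)$-space. The rule $\token_t = \SMinv(\bP_t,\zeta_t) = \pi^{-1}(F^{-1}(U;\pi_t))$ implies that, under the rank-to-token convention adopted in the definition of $a_{\pi,w_l}^{(t)}$, the token generated at position $t$ equals a prescribed value $w_\ell$ if and only if $U$ lies in the interval $(a_{\pi,w'_\ell-1}^{(t)},\,a_{\pi,w'_\ell}^{(t)})$, where $w'_\ell$ is the rank of $w_\ell$ under $\pi$, i.e., the unique index satisfying $\pi(w'_\ell)=w_\ell$. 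This equivalence is immediate from the definition of $F^{-1}(\cdot;\pi)$.

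Second, I will unpack the conditioning event $\IM^\zeta$. By the two-level partition structure (Definitions~\ref{def:zeta_partition} and~\ref{def:y_partition}), the event $\IM^\zeta$ asserts that every position $t \in \IM^Y_\ell$ produces the common token $w_\ell$, with distinct tokens across distinct sub-blocks. Using the decoder translation above, $\IM^\zeta$ is equivalent to the existence of a (necessarily unique) tuple of distinct ranks $(w'_1,\dots,w'_m)$ with $\pi(w'_\ell)=w_\ell$ for each $\ell$ and $U \in \bigcap_{\ell=1}^m \bigcap_{t\in\IM^Y_\ell}(a_{\pi,w'_\ell-1}^{(t)},\,a_{\pi,w'_\ell}^{(t)})$. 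Appealing to Assumption~\ref{asmp:blocks-independence}, within the block $\IM^\zeta$ the shared pseudorandom pair $(U,\pi)$ is independent of everything outside, with $U$ uniform on $[0,1]$ and $\pi$ uniform on $\Perm(\Voca)$. Marginalizing over the rank tuple $(w'_1,\dots,w'_m)$ and over permutations compatible with it then yields the denominator in the stated formula, with the $1/|\Voca|!$ factor accounting for the uniform distribution of $\pi$.

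Third, the numerator is obtained by essentially the same decomposition applied to the event $\{U\le r\}\cap\{\pi(w'_i)=w_i,\, i=1,\dots,m\}\cap \IM^\zeta$. Here the rank tuple $(w'_1,\dots,w'_m)$ is fixed rather than summed over, and the interval for $U$ is additionally intersected with $[0,r]$. Summing over all permutations $\pi$ compatible with the fixed rank assignment and weighting by $1/|\Voca|!$ produces the numerator. Dividing the two expressions gives the claimed conditional joint distribution.

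The main step requiring care is the bookkeeping between ranks and tokens. The quantity $w'_\ell$ plays a dual role: as the upper limit in the cumulative sum defining $a_{\pi,\cdot}^{(t)}$, and as the $\pi$-preimage of $w_\ell$. One must verify rigorously that these two roles are consistent so that the interval $(a_{\pi, w'_\ell -1}^{(t)}, a_{\pi, w'_\ell}^{(t)})$ truly parameterizes the event ``the decoder outputs $w_\ell$ at step $t$''. A secondary subtlety is that the ranks $w'_1,\dots,w'_m$ must be distinct, because the tokens $w_1,\dots,w_m$ are distinct across sub-blocks by the definition of the $Y$-level partition; this is exactly why the outer sum in the denominator is restricted to distinct tuples. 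Once these identifications are in place, the remainder of the proof reduces to a routine application of Bayes' rule and independence of $U$ and $\pi$.
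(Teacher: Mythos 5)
Your proposal is correct and follows essentially the same route as the paper's proof: both translate the inverse-transform decoder into intervals for $U$ determined by $\pi$ and the NTP distributions, decompose the conditioning event $\IM^\zeta$ into rank assignments and $U$-membership, and apply Bayes' rule with the uniform law of $(U,\pi)$ to obtain the stated ratio. The $\pi$-vs-$\pi^{-1}$ bookkeeping you flag as the main subtlety is handled in the paper by an explicit remark that summing over all permutations makes the two conventions interchangeable, which matches your rank/preimage identification.
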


The proof of Lemma~\ref{lem:inverse-perfect-general-technicaljoint} is provided in Section~\ref{sec:inverse-perfect-general-technicaljoint-proof}. Following the convention in \citet{li2024statistical}, we analyze the expectation of an arbitrary test function $J$ of $(U, \pi(w_1), \ldots, \pi(w_m))$, as it characterizes the joint distribution as well, is equivalent to studying the CDF, and facilitates the analysis of the asymptotic behavior.

\begin{cor}
\label{cor:inverse-perfect-general-technicaljoint}
Under the same notation and assumption as in Lemma~\ref{lem:inverse-perfect-general-technicaljoint}, for any measurable test function $J \colon [0,1]^{m+1} \to [0, \infty)$, we have:
\begin{align}
  &\EB_{1, \bP_{\IM^\zeta}}\left[J(U, \eta(\pi(w_1)),\ldots, \eta(\pi(w_m)))\right] \nonumber \\
  &=  \frac{
    \sum_{\substack{w'_1,\ldots,w'_m \\ \mathrm{distinct}}}
    \sum_{\substack{\pi \in \Perm(\Voca) \\ \pi(w_\ell') = w_\ell,\forall \ell \in [m]}}
    \left(
      \int_{\max\limits_{\ell, t} a_{\pi, w'_\ell - 1}^{(t)}}^{\min\limits_{\ell, t} a_{\pi, w'_\ell}^{(t)}}
      J(u, \eta(w'_1), \ldots, \eta(w'_m)) \, \rd u
    \right)
    \1_{\min\limits_{\ell, t} a_{\pi, w'_\ell}^{(t)} \ge \max\limits_{\ell, t} a_{\pi, w'_\ell - 1}^{(t)}}
  }{
    \sum_{\substack{w'_1,\ldots,w'_m \\ \mathrm{distinct}}}
    \sum_{\substack{\pi \in \Perm(\Voca) \\ \pi(w_\ell') = w_\ell, \forall \ell \in [m]}}
    \PB\left(
      U \in \bigcap_{\ell=1}^m \bigcap_{t \in \IM^Y_\ell}
      \left(a_{\pi, w'_\ell - 1}^{(t)}, a_{\pi, w'_\ell}^{(t)}\right)
    \right)
  }.
\end{align}
where the $\min\limits_{\ell, t}$ or $\max\limits_{\ell, t}$ are taken over all sub-blocks $\ell \in [m]$ and all token indices $t \in \IM^Y_\ell$. 
\end{cor}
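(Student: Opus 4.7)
The plan is to derive the expectation formula by disintegrating the joint CDF provided by Lemma \ref{lem:inverse-perfect-general-technicaljoint} into a joint ``density'' that is continuous in $U$ and discrete in the image tokens $(\pi(w_1),\ldots,\pi(w_m))$, then integrating the test function against it. Because the corollary is a direct corollary of the lemma, the argument is essentially a bookkeeping computation rather than a new probabilistic insight.

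First, I would fix a tuple of distinct indices $(w'_1,\ldots,w'_m)$ and observe that the event $\{\pi(w'_\ell)=w_\ell, \ell\in[m]\}$ must be interpreted with distinct $w'_\ell$'s, since $\pi$ is a bijection and the tokens $w_\ell$ across sub-blocks are distinct by the definition of the $Y$-level partition. Differentiating the CDF in Lemma \ref{lem:inverse-perfect-general-technicaljoint} in the variable $r$ yields the joint law
\[
\frac{\rd}{\rd r}\PB_1\bigl(U \le r, \; \pi(w'_\ell)=w_\ell,\;\forall \ell\bigr)
= \frac{\tfrac{1}{|\Voca|!}\sum\limits_{\substack{\pi\in\Perm(\Voca)\\ \pi(w'_\ell)=w_\ell}}
\1\!\left\{r\in \bigcap_{\ell,t}\bigl(a^{(t)}_{\pi,w'_\ell-1},\,a^{(t)}_{\pi,w'_\ell}\bigr)\right\}}{Z(\bP_{\IM^\zeta})},
\]
where $Z(\bP_{\IM^\zeta})$ denotes the denominator in Lemma \ref{lem:inverse-perfect-general-technicaljoint}, which depends only on $\bP_{\IM^\zeta}$ and not on $r$. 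This gives a legitimate density--mass object because the numerator is an indicator, and the denominator normalizes the total mass across all admissible $(w'_1,\ldots,w'_m)$ and $r\in[0,1]$.

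Next, I would compute the expectation of $J(U,\eta(\pi(w_1)),\ldots,\eta(\pi(w_m)))$ by summing over distinct tuples $(w'_1,\ldots,w'_m)$ and integrating over $r\in[0,1]$. Since $\eta(\pi(w_\ell))=\eta(w'_\ell)$ on the event $\{\pi(w'_\ell)=w_\ell\}$, the test function can be pulled outside the indicator, giving
\[
\EB_{1,\bP_{\IM^\zeta}}\!\left[J(U,\eta(\pi(w_1)),\ldots,\eta(\pi(w_m)))\right]
= \frac{1}{Z(\bP_{\IM^\zeta})}\sum_{\substack{w'_1,\ldots,w'_m\\ \mathrm{distinct}}}\sum_{\substack{\pi\in\Perm(\Voca)\\ \pi(w'_\ell)=w_\ell}} \tfrac{1}{|\Voca|!}\int_0^1 J(r,\eta(w'_1),\ldots,\eta(w'_m))\,\1\{\cdots\}\,\rd r.
\]
Then the intersection of open intervals $\bigcap_{\ell\in[m]}\bigcap_{t\in\IM^Y_\ell}(a^{(t)}_{\pi,w'_\ell-1},a^{(t)}_{\pi,w'_\ell})$ collapses to the single interval $\bigl(\max_{\ell,t} a^{(t)}_{\pi,w'_\ell-1},\;\min_{\ell,t} a^{(t)}_{\pi,w'_\ell}\bigr)$ whenever the latter is non-empty, and is empty otherwise. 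Converting the indicator integral into the explicit range $[\max,\min]$ together with the non-emptiness indicator $\1\{\min\ge\max\}$ produces exactly the numerator in the corollary.

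Finally, the common factor $\tfrac{1}{|\Voca|!}$ in the numerator and in $Z(\bP_{\IM^\zeta})$ cancels, producing the expression stated in Corollary \ref{cor:inverse-perfect-general-technicaljoint}. I expect the only mildly delicate point to be justifying the differentiation under the double sum and the reduction to an intersection-of-intervals integral; both are routine since the sums are finite and the indicator is a simple function of $r$, but one should verify that boundary values (where intervals touch) contribute zero Lebesgue measure, which indeed holds because $U$ is continuous. A minor further check is that the ``distinct $(w'_1,\ldots,w'_m)$'' restriction is automatically enforced by the existence of a permutation $\pi$ with $\pi(w'_\ell)=w_\ell$ for distinct $w_\ell$, so no additional combinatorial accounting is needed.
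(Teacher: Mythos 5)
Your proof is correct and is exactly the derivation the paper implicitly assumes — the paper gives no separate proof for this corollary, treating it as an immediate consequence of Lemma~\ref{lem:inverse-perfect-general-technicaljoint}. You fill in the bookkeeping: differentiate the CDF from the lemma in $r$ (legitimate because $U\sim\mathrm{Unif}(0,1)$ makes each summand $\PB(U\in A_\pi\cap[0,r])=|A_\pi\cap[0,r]|$ piecewise linear in $r$, so its derivative is the indicator $\1_{A_\pi}(r)$), integrate the test function against the resulting density-mass object, identify the finite intersection of intervals with the single interval $(\max_{\ell,t}a^{(t)}_{\pi,w'_\ell-1},\min_{\ell,t}a^{(t)}_{\pi,w'_\ell})$, and cancel the common $1/|\Voca|!$. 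Your remarks about null boundary sets and the automatic distinctness of $(w'_1,\ldots,w'_m)$ given a bijection $\pi$ and distinct $w_\ell$ are both accurate.
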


With Lemma~\ref{lem:inverse-perfect-general-technicaljoint} in place, we then derive the asymptotic joint distribution of $(U, \pi(w_1), \ldots, \pi(w_m))$ when $|\Voca| \to \infty$.
To do so, we examine the limiting expectation $\EB\left[J(U, \eta(\pi(w_1)), \ldots, \eta(\pi(w_m)))\right]$ for any arbitrary test function $J$.

\begin{theorem}[Asymptotic distribution under $H_1$]
\label{thm:inverse-perfect-general-technical-main}
Let $\IM^\zeta$ be a minimal unit consisting of $m$ sub-blocks $\{\IM^Y_{\ell}\}_{\ell=1}^{m}$, and let $\{w_{\ell}\}_{\ell=1}^{m} \subseteq \Voca$ denote the distinct tokens representing these sub-blocks. 
Let Assumptions \ref{asmp:blocks-independence} and \ref{asmp:heavy-tokensa} hold.
Let $(\Delta_t)_{t \in \IM^\zeta}$ be the per-time regularity levels, where each $\Delta_t \in [\Delta, 1 - \delta]$, and define the sub-block regularity vector $ (\bDelta_1, \ldots, \bDelta_m)$ by $\bDelta_{\ell} := \max_{t \in \IM^Y_{\ell}} \Delta_t.$

Then for any measurable function $J\colon [0,1]^{m+1} \to [0, \infty)$, the expectation in Corollary~\ref{cor:inverse-perfect-general-technicaljoint} converges as $|\Voca| \to \infty$ to
\begin{align}
&\lim_{|\Voca| \to \infty} \EB_{1, \bP_{\IM^\zeta}}\bigl[J(U, \eta(\pi(w_1)), \dots, \eta(\pi(w_m)))\bigr] \nonumber \\
&\quad = \frac{1}{I_m(\bar{\bm{\Delta}})} 
\int_{[0,1]^m}  \int_{\max\limits_{\ell \in [m]} \bDelta_{\ell} x_\ell}^{\min\limits_{\ell \in [m]} (1 - \bDelta_{\ell} + \bDelta_{\ell} x_\ell)} J(u, x_1, \dots, x_m) 
\1_{\left\{\min\limits_{\ell \in [m]} (1 - \bDelta_{\ell} + \bDelta_{\ell} x_\ell) \ge \max\limits_{\ell \in [m]} \bDelta_{\ell} x_\ell\right\}} \, \rd u \rd x_1 \cdots \rd x_m,
\end{align}
where the normalization constant $I_m(\bar{\bm{\Delta}})$ is the volume of the integration region:
\[
I_m(\bar{\bm{\Delta}}) := \int_{[0,1]^m} \left( \min\limits_{\ell \in [m]} (1 - \bDelta_{\ell} + \bDelta_{\ell} x_\ell) - \max\limits_{\ell \in [m]} \bDelta_{\ell} x_\ell \right)_+ \,\rd x_1 \cdots \rd x_m.
\]
Moreover, the convergence holds uniformly over any 1-Lipschitz test functions $J$, any NTP distributions $\bP_{\IM^\zeta}$ within the class $\QM_{\tau, \bm{\Delta}}$, and any regularity vectors $\bar{{\Delta}}$.
\end{theorem}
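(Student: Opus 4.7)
The plan is to take the exact expectation formula in Corollary~\ref{cor:inverse-perfect-general-technicaljoint} as the starting point and carry out a two-level limit argument: a combinatorial Riemann-sum approximation in the outer sum over distinct token tuples $(w_1',\ldots,w_m')$, combined with a concentration argument in the inner average over permutations $\pi$ satisfying $\pi(w_\ell')=w_\ell$. Both numerator and denominator share the same $(|\Voca|-m)!/|\Voca|!$ prefactor and sum over $\sim |\Voca|^m$ distinct tuples, so after dividing numerator and denominator by $|\Voca|^m(|\Voca|-m)!$ the outer sum becomes a Riemann sum that, in the limit, converges to an integral over $(x_1,\ldots,x_m)\in[0,1]^m$ where $x_\ell$ corresponds to $\eta(w_\ell')$.

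The key technical step is identifying the limit of the inner integral $\int_{\max a_{\pi,w_\ell'-1}^{(t)}}^{\min a_{\pi,w_\ell'}^{(t)}} J(u,\eta(w_1'),\ldots,\eta(w_m'))\,\rd u$, averaged over permutations $\pi$ with $\pi(w_\ell')=w_\ell$ for all $\ell$. For each $t\in\IM^Y_\ell$, Assumption~\ref{asmp:heavy-tokensa} forces $P_{t,w_\ell}=1-\Delta_t$, so the interval length equals $1-\Delta_t$ exactly. The left endpoint $a_{\pi,w_\ell'-1}^{(t)}=\sum_{j<w_\ell'}P_{t,\pi(j)}$ is a sum of non-heavy probabilities, each bounded by $\eps_{|\Voca|}$, drawn in the random order dictated by $\pi$. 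By exchangeability of the positions not fixed by $(w_1',\ldots,w_m')$, its mean equals $\frac{w_\ell'-1}{|\Voca|-1}\cdot\Delta_t + O(\eps_{|\Voca|})\to \Delta_t x_\ell$, where the $O(\eps_{|\Voca|})$ term accounts for the contribution of the other heavy tokens $w_{\ell'}$ with $\ell'\neq\ell$ whose mass is each at most $\eps_{|\Voca|}$. A concentration inequality for sampling without replacement (Hoeffding--Serfling) then shows that the endpoint is concentrated around its mean.

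Once concentration of endpoints is established, the intersection $\bigcap_{\ell\in[m]}\bigcap_{t\in\IM^Y_\ell}(a_{\pi,w_\ell'-1}^{(t)},a_{\pi,w_\ell'}^{(t)})$ shrinks (within each $\ell$, we keep the tightest constraint, which by monotonicity is attained at $\bDelta_\ell=\max_{t\in\IM^Y_\ell}\Delta_t$) toward the deterministic interval $(\max_\ell \bDelta_\ell x_\ell,\ \min_\ell(1-\bDelta_\ell+\bDelta_\ell x_\ell))$. For any $1$-Lipschitz $J$, the inner integral is continuous in the endpoints and the non-emptiness indicator converges almost everywhere in $(x_1,\ldots,x_m)$, so dominated convergence (using the trivial bound $|J|\le 1$ on $[0,1]^{m+1}$) combines with the Riemann-sum convergence to yield the claimed integral representation for the numerator. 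The same argument applied to $J\equiv 1$ yields $I_m(\bar{\bm{\Delta}})$ as the denominator. Uniformity over $\bP_{\IM^\zeta}\in\QM_{\tau,\bm{\Delta}}$ and over $\bar{\bm{\Delta}}$ follows because the concentration bounds depend on $\bP_{\IM^\zeta}$ only through the universal bounds $\delta\le P_{t,(1)}\le 1-\Delta$ and $\log|\Voca|\cdot P_{t,(2)}\le\eps_{|\Voca|}$, while uniformity over $1$-Lipschitz $J$ is automatic since all such $J$ are uniformly bounded on $[0,1]^{m+1}$.

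The main obstacle I expect is the concentration in the second paragraph, specifically making it uniform in the $|\Voca|^m$ possible tuples $(w_1',\ldots,w_m')$, the $|\IM^\zeta|$ times $t$, and the NTP distributions within $\QM_{\tau,\bm{\Delta}}$. A naive Chebyshev bound gives variance $O(\eps_{|\Voca|}/|\Voca|)$, which is too weak for a union bound over $|\Voca|^m$ tuples unless one uses Hoeffding--Serfling to get a sub-Gaussian tail of the form $\exp(-c\delta^2/(k\eps_{|\Voca|}^2))$, exploiting $\log|\Voca|\cdot\eps_{|\Voca|}\to 0$. An alternative route that may be cleaner is to avoid pointwise uniformity altogether: average directly over $(w_1',\ldots,w_m',\pi)$ simultaneously, absorb the ``bad'' tuples (those where endpoint concentration fails) into an $o(1)$ term by bounding their combined contribution, and recognize the remaining average as a Riemann sum of the target integrand. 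This circumvents the delicate union bound at the cost of a slightly more intricate decomposition.
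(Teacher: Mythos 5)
Your proposal follows essentially the same route as the paper's: start from the exact expectation in Corollary~\ref{cor:inverse-perfect-general-technicaljoint}, show that the random integration endpoints $a^{(t)}_{\pi,w'_\ell-1}$ concentrate around $\Delta_t\,\eta(w'_\ell)$ using a permutation/without-replacement concentration inequality (the paper cites \citet{albertConcentrationInequalitiesRandomly2019} in place of Hoeffding--Serfling, but the two are interchangeable here), replace the random limits by their deterministic limits via Lipschitz control of the integral, and then recognize the outer sum over distinct tuples as a Riemann sum converging to the stated integral, with $J\equiv 1$ giving the denominator. The worry you flag about uniformity over the $|\Voca|^m$ tuples is handled in the paper exactly as you anticipate, by using a sub-Gaussian tail and a union bound with $\lambda=1/(m|\Voca|^{m+1})$, exploiting $\log|\Voca|\cdot\eps_{|\Voca|}\to0$.
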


The proof of Theorem \ref{thm:inverse-perfect-general-technical-main} can be found in Section \ref{sec:inverse-perfect-general-technical-main-proof}.
The arbitrariness of the test function $J$ in Theorem~\ref{thm:inverse-perfect-general-technical-main} directly implies the following weak convergence.

\begin{cor}[Asymptotic distribution under $H_1$]
\label{cor:inverse-perfect-general-technical-main}
Under the same notation and assumptions as in Theorem~\ref{thm:inverse-perfect-general-technical-main}, the joint vector
\[
(U,\eta(\pi(w_1)),\ldots,\eta(\pi(w_m)))
\]
converges in distribution to a random vector $(U, X_1, \ldots, X_m)$, where $X_1, \ldots, X_m$ are i.i.d.\ $\mathrm{Unif}(0,1)$ random variables, and $U$ is independently drawn from the interval
\[
\left[\max\limits_{\ell \in [m]} \{\bDelta_{\ell} X_\ell\},\ \min\limits_{\ell \in [m]} \{1 - \bDelta_{\ell} + \bDelta_{\ell} X_\ell\}\right]
\]
conditioned on the event that this interval is non-empty, that is, $\max\limits_{\ell \in [m]} \{\bDelta_{\ell} X_\ell\} \le \min\limits_{\ell \in [m]}\{1 - \bDelta_{\ell} + \bDelta_{\ell} X_\ell\}$.
\end{cor}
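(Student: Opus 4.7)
The plan is to deduce this weak convergence directly from Theorem~\ref{thm:inverse-perfect-general-technical-main} via the Portmanteau theorem. Since the state space $[0,1]^{m+1}$ is compact, weak convergence of the random vectors is equivalent to convergence of $\EB[J(\cdot)]$ for every bounded continuous test function $J$. Theorem~\ref{thm:inverse-perfect-general-technical-main} already supplies this for every non-negative measurable $J$; writing a bounded continuous $J$ as $J_+ - J_-$ and invoking linearity of expectation extends the conclusion to signed $J$, which is all that the Portmanteau characterization requires.

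The only real content is to identify the limiting integral on the right-hand side of Theorem~\ref{thm:inverse-perfect-general-technical-main} with $\EB[J(U, X_1, \ldots, X_m)]$ taken under the target law stated in the corollary. Reading off the integrand, the limiting measure on $[0,1]^{m+1}$ has density
$$f(u, x_1, \ldots, x_m) \;=\; \frac{\mathbf{1}\{u \in [L(x), R(x)]\}}{I_m(\bar{\bm{\Delta}})}, \qquad L(x) := \max_{\ell \in [m]} \bar{\Delta}_\ell x_\ell, \qquad R(x) := \min_{\ell \in [m]}(1 - \bar{\Delta}_\ell + \bar{\Delta}_\ell x_\ell),$$
i.e., the uniform distribution on the set $S := \{(x, u) \in [0,1]^{m+1} : u \in [L(x), R(x)]\}$, whose Lebesgue volume equals exactly the normalization $I_m(\bar{\bm{\Delta}})$. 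A short Bayes-rule computation then confirms that drawing $X_1, \ldots, X_m \sim \mathrm{Unif}(0,1)$ i.i.d.\ together with an independent $U' \sim \mathrm{Unif}(0,1)$ and conditioning the joint pair on the event $\{U' \in [L(X), R(X)]\}$ (equivalently, on the interval being non-empty and containing the sampled $U'$) reproduces exactly this uniform-on-$S$ law, which is the intended meaning of the sampling description in the corollary statement.

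There is no genuinely hard step here; the corollary is essentially a restatement of Theorem~\ref{thm:inverse-perfect-general-technical-main} in weak-convergence language. The only mild care required is (i) matching the explicit density above with the informal conditional-sampling description in the corollary, and (ii) extending the theorem from non-negative measurable $J$ to signed bounded continuous $J$ via the positive/negative decomposition. Both steps are routine, so the corollary follows immediately from Theorem~\ref{thm:inverse-perfect-general-technical-main}.
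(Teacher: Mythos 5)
Your proof is correct and follows essentially the same route as the paper: the paper's own proof of Corollary~\ref{cor:inverse-perfect-general-technical-main} simply cites Theorem~\ref{thm:inverse-perfect-general-technical-main}, the Portmanteau theorem, and Lemma~\ref{lem:joint-law-UkX}, which is the identification step you carry out by hand. One thing your explicit density computation brings out, which the terse proof in the paper glosses over, is that the limit law supplied by Theorem~\ref{thm:inverse-perfect-general-technical-main} has density $\mathbf{1}\{L(x) \le u \le R(x)\}/I_m(\bar{\bm{\Delta}})$, i.e.\ it is uniform on the set $S = \{(u,x) : L(x) \le u \le R(x)\}$; under this law the marginal of $X$ has density proportional to $(R(x)-L(x))_+$, \emph{not} the uniform density. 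Read literally, the corollary's (and Lemma~\ref{lem:joint-law-UkX}'s) verbal description---$X$ i.i.d.\ $\mathrm{Unif}(0,1)$, then $U\mid X$ uniform on $[L(X),R(X)]$, conditioned on the interval being non-empty---would instead give density proportional to $\mathbf{1}\{L(x)\le u\le R(x)\}/(R(x)-L(x))$, which does \emph{not} match the theorem unless $R(x)-L(x)$ is constant. Your rejection-sampling reformulation (draw $(X,U')$ uniform on $[0,1]^{m+1}$ and condition on $U'\in[L(X),R(X)]$) is the correct description of the limit law, and this is indeed the law that Lemma~\ref{lem:joint-law-UkX}'s formula actually encodes, so your charitable re-reading is the right one; it is worth keeping the distinction in mind since the two constructions differ by a factor of the interval length $R(x)-L(x)$ in the $X$-marginal.
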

\begin{proof}[Proof of Lemma \ref{cor:inverse-perfect-general-technical-main}]
This follows directly from Theorem~\ref{thm:inverse-perfect-general-technical-main}, together with the Portmanteau theorem (Theorem 13.16 in~\cite{klenke2020probability}) and Lemma~\ref{lem:joint-law-UkX}, which together allow us to translate convergence in expectation into weak convergence.
\end{proof}

By an argument similar to that of Theorem~\ref{thm:inverse-perfect-general-technical-main}, we can show that
\begin{lem}[Asymptotic distribution under $H_0$]
\label{lem:asymp-H0}
Under the null hypothesis $H_0$, the joint distribution of $(U, \eta(\pi(w_1)), \ldots, \eta(\pi(w_m)))$ converges weakly to that of $(U, X_1, \ldots, X_m)$, where $U, X_1, \ldots, X_m$ are i.i.d. $\mathrm{Unif}(0,1)$.
\end{lem}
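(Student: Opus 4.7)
The plan is to mimic the structure of the proof of Theorem~\ref{thm:inverse-perfect-general-technical-main}, but to exploit the decisive simplification that under $H_0$ the tokens $w_1,\ldots,w_m$ are independent of the pseudorandom pair $(U,\pi)$. Concretely, since $w_1,\ldots,w_m$ are distinct (one per sub-block) and carry no dependence on $(U,\pi)$, one can first condition on them as fixed symbols and then analyze only the randomness in $(U,\pi)$.

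The first step is to reduce to a test-function formulation, analogous to Corollary~\ref{cor:inverse-perfect-general-technicaljoint}. For any bounded $1$-Lipschitz $J\colon[0,1]^{m+1}\to\RB$, conditioning on the tokens in the block gives
\[
\EB_{0}\bigl[J(U,\eta(\pi(w_1)),\ldots,\eta(\pi(w_m)))\bigr]
= \EB_{U}\EB_{\pi}\bigl[J(U,\eta(\pi(w_1)),\ldots,\eta(\pi(w_m)))\bigr],
\]
because the NTP distributions no longer bias the joint distribution of $(U,\pi)$. Since $\pi$ is uniform on $\mathrm{Perm}(\Voca)$ and $w_1,\ldots,w_m$ are distinct, the random tuple $(\pi(w_1),\ldots,\pi(w_m))$ is uniformly distributed over ordered $m$-tuples of distinct indices in $\{1,\ldots,|\Voca|\}$. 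Hence the conditional expectation reduces to the average
\[
\frac{1}{|\Voca|(|\Voca|-1)\cdots(|\Voca|-m+1)}\sum_{\substack{w'_1,\ldots,w'_m\\ \mathrm{distinct}}}\int_0^1 J\bigl(u,\eta(w'_1),\ldots,\eta(w'_m)\bigr)\,\rd u,
\]
which is precisely the quantity whose limit we need to compute.

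The second step is to pass to the limit $|\Voca|\to\infty$. Ignoring the distinctness constraint, the sum over all tuples $(w'_1,\ldots,w'_m)\in\Voca^m$ is a Riemann sum for $\int_{[0,1]^m}\int_0^1 J(u,x_1,\ldots,x_m)\,\rd u\rd x_1\cdots\rd x_m$ with mesh size $1/(|\Voca|-1)$, and the Lipschitz property of $J$ bounds the Riemann-approximation error by $O(m/|\Voca|)$. The contribution of tuples with at least one repeated coordinate is at most $\binom{m}{2}/|\Voca|\cdot\|J\|_\infty$, which is negligible. Combining these bounds yields
\[
\lim_{|\Voca|\to\infty}\EB_{0}\bigl[J(U,\eta(\pi(w_1)),\ldots,\eta(\pi(w_m)))\bigr]
= \int_{[0,1]^{m+1}} J(u,x_1,\ldots,x_m)\,\rd u\rd x_1\cdots\rd x_m,
\]
uniformly over $1$-Lipschitz $J$ and over the distinct token labels $w_1,\ldots,w_m$. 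By the Portmanteau theorem, this gives the claimed weak convergence to $(U,X_1,\ldots,X_m)$ with i.i.d.\ $\mathrm{Unif}(0,1)$ coordinates.

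There is no real obstacle here, since under $H_0$ the combinatorial structure collapses: no interval $[\max_\ell\bDelta_\ell x_\ell,\min_\ell(1-\bDelta_\ell+\bDelta_\ell x_\ell)]$ appears, and the inclusion–exclusion arguments needed for Theorem~\ref{thm:inverse-perfect-general-technical-main} are unnecessary. The only mild care is in quantifying the two approximation errors (Riemann sum and distinctness restriction) so that the convergence is uniform in the class of $1$-Lipschitz test functions, mirroring the uniformity claim in Theorem~\ref{thm:inverse-perfect-general-technical-main} that will be used downstream in Lemma~\ref{lem:inverse-optimal-rule-lower-bound}.
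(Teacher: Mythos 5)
Your proof is correct and matches the approach the paper intends — the paper itself only states that the $H_0$ case follows ``by an argument similar to Theorem~\ref{thm:inverse-perfect-general-technical-main},'' and you have correctly spelled out what that argument is: under $H_0$ the joint law of $(U,\pi)$ is unconditioned by the NTP distributions, so $(\pi(w_1),\ldots,\pi(w_m))$ is simply uniform over ordered distinct $m$-tuples, and the expectation of any bounded Lipschitz test function becomes a Riemann sum whose error (mesh plus distinctness correction) is $O(m/|\Voca|)$, uniformly in $J$ and the token labels. The simplification you note — no conditional interval $[\max_\ell\bDelta_\ell x_\ell,\min_\ell(1-\bDelta_\ell+\bDelta_\ell x_\ell)]$, no concentration step via Lemma~\ref{lem:concentration-max-cite}, no normalization by $I_m(\bar{\bm\Delta})$ — is exactly why the paper relegates this to a one-line remark, and your quantification of the two error terms and appeal to Portmanteau closes the argument cleanly.
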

\end{proof}

\subsection{Proof of Theorem \ref{thm:inverse-asymptotic-distribution}}
\label{sec:optimal-detection-rule}

\begin{proof}[Proof of Theorem \ref{thm:inverse-asymptotic-distribution}]

This theorem establishes the asymptotic joint distribution of the pivotal statistics within a minimal unit. To achieve this, we will make use of the results in Lemma \ref{lem:asymptotic-joint-distributions-inverse}.

\paragraph{Null joint distribution of pivotal statistics.}

We first derive the joint distribution under $H_0$ for the pivotal statistics $(Y_t)_{t \in \IM^\zeta}$ within a minimal unit.  
By Lemma \ref{lem:asymptotic-joint-distributions-inverse}, as $|\Voca| \to \infty$, each $Y_\ell = |U - \eta(\pi(w_\ell))|$ converges weakly to $|U - X_\ell|$ under $H_0$, where $U, X_1, \ldots, X_m$ are i.i.d. random variables uniformly distributed on $[0,1]$.  
With a slight abuse of notation, we relabel $Y_\ell := |U - X_\ell|$ for $\ell \in [m]$ to simplify notation.

We begin by analyzing the conditional CDF of $Y_\ell$ given $U=u$.  
Fix $u \in [0,1]$ and take any $y \in [0,1]$. The conditional CDF of $Y_\ell$ is:
\begin{align}
  \PB(Y_\ell \leq y \mid U = u) &= \PB(|U - X_\ell| \leq y \mid U = u) \\
  &= \PB(u - y \leq X_\ell \leq u + y \mid U = u) \\
  &= [(u + y) \wedge 1] - [(u - y) \vee 0].
\end{align}
The corresponding conditional PDF is then
\begin{align}
  f_{Y_\ell \mid U}(y \mid u) = 
  \begin{cases}
    2, & \text{if } 0 < y < \min(u, 1 - u), \\
    1, & \text{if } \min(u, 1 - u) < y < \max(u, 1 - u), \\
    0, & \text{otherwise}.
  \end{cases}
\end{align}
Consequently, under $H_0$, the joint PDF of $\bm{Y}=(Y_1, \ldots, Y_m)$ given $U=u$ is:
\begin{align}
f_{\bm{Y}}(y_1,\ldots,y_m) 
  = \int_0^1 f_{Y_1,\ldots,Y_m \mid U}(y_1,\ldots,y_m \mid u) \, \rd u
  = \int_0^1 \prod_{\ell=1}^m f_{Y_\ell \mid U}(y_\ell \mid u) \, \rd u.
\end{align}
To simplify this expression, define two index sets depending on $u$:
\begin{align}
  I_1(u) &= \{\ell : 0 < y_\ell < \min(u, 1 - u)\}, \\
  I_2(u) &= \{\ell : y_\ell \geq \max(u, 1 - u)\}.
\end{align}
Then the joint density becomes:
\begin{align} \label{eq:joint-density-representationh01}
    f_0(y_1,\ldots,y_m):= f_{\bm{Y}}(y_1,\ldots,y_m) = \int_{0}^1 2^{|I_1(u)|} \1_{I_2(u) = \emptyset} \, \rd u.
\end{align}
We will later provide an alternative representation of this density that is more convenient for theoretical analysis but more complex in form. For numerical computations, however, the integral expression in \eqref{eq:joint-density-representationh01} is preferable.

\paragraph{Alternative joint distribution of pivotal statistics.}

We now derive the joint distribution under $H_1$ for the pivotal statistics $(Y_t)_{t \in \IM^\zeta}$ within a minimal unit. According to Lemma \ref{lem:asymptotic-joint-distributions-inverse}, under $H_1$, the tuple $(U, \eta(\pi(w_1)), \ldots, \eta(\pi(w_m)))$ converges in distribution to $(U, X_1, \ldots, X_m)$, where $X_1, \ldots, X_m$ are i.i.d.\ $\mathrm{Unif}(0,1)$ random variables, and $U$ is drawn independently and uniformly from the interval
\[
\left[\max\limits_{\ell \in [m]} \{\bDelta_{\ell} X_\ell\},\ \min\limits_{\ell \in [m]} \{1 - \bDelta_{\ell} + \bDelta_{\ell} X_\ell\} \right],
\]
conditioned on the interval being non-empty. For convenience, we denote $Y_\ell := |U - X_\ell|$ for $\ell \in [m]$.

To obtain the joint density of $(Y_1, \ldots, Y_m)$, we consider the transformation
\[
\Phi: (U, X_1, \ldots, X_m) \mapsto (U, Y_1, \ldots, Y_m) = (U, |U - X_1|, \ldots, |U - X_m|).
\]
Since $\Phi$ is continuous and the joint law of $(U, X_1, \ldots, X_m)$ is absolutely continuous, we may ignore boundary events (e.g., $Y_\ell = 0$ for some $\ell$) which have zero measure.

However, $\Phi$ is not injective due to the absolute values. To apply the change-of-variable formula, we partition the domain into disjoint regions where $\Phi$ becomes bijective. For each sign vector $\bm{\sigma} = (\sigma_1, \ldots, \sigma_m) \in \{-1,1\}^m$, define the region
\begin{align}
\mathcal{R}_{\bm{\sigma}} := \left\{ (u, x_1, \ldots, x_m) \in [0,1]^{m+1} : \sign(x_\ell - u) = \sigma_\ell \text{ for all } \ell \right\}.
\end{align}
Within $\mathcal{R}_{\bm{\sigma}}$, we have $x_\ell = u + \sigma_\ell y_\ell$ and $\Phi$ is bijective with Jacobian determinant of absolute value 1. Thus, the joint density of $(U, Y_1, \ldots, Y_m)$ on this region is directly given by the density of $(U, X_1, \ldots, X_m)$ evaluated at $(u, x_1, \ldots, x_m) = (u, u + \sigma_1 y_1, \ldots, u + \sigma_m y_m)$.

To integrate out the nuisance parameter $U$, we first characterize the feasible values of $u$ given $\bm{y} = (y_1, \ldots, y_m)$ and a fixed sign vector $\bm{\sigma} = (\sigma_1, \ldots, \sigma_m)$. The first requirement is that each reconstructed $x_\ell = u + \sigma_\ell y_\ell$ must lie within the unit interval $[0,1]$, which leads to the constraint:
\begin{align*}
L_{\bm{\sigma}}(\bm{y}) := \max_{\ell} (-\sigma_\ell y_\ell) \le u \le \min_{\ell} (1 - \sigma_\ell y_\ell) =: U_{\bm{\sigma}}(\bm{y}).
\end{align*}
Second, we enforce the conditional event to hold from Corollary~\ref{thm:inverse-perfect-general-technical-main}, which requires
\[
\bDelta_{\ell} x_\ell \le u \le 1 - \bDelta_{\ell} + \bDelta_{\ell} x_\ell \quad \text{for all } \ell.
\]
Substituting $x_\ell = u + \sigma_\ell y_\ell$ and solving for $u$ leads to
\[
\frac{\bDelta_{\ell} \sigma_\ell y_\ell}{1 - \bDelta_{\ell}} \le u \le 1 + \frac{\bDelta_{\ell} \sigma_\ell y_\ell}{1 - \bDelta_{\ell}}.
\]
Taking the maximum lower bound and minimum upper bound across $\ell$, we define
\begin{align*}
Y_{\bm{\sigma}}^+(\bm{y}) &:= \max\left( \left\{ \frac{\bDelta_{\ell} y_\ell}{1 - \bDelta_{\ell}} : \sigma_\ell = +1 \right\} \cup \{0\} \right), \\
Y_{\bm{\sigma}}^-(\bm{y}) &:= \max\left( \left\{ \frac{\bDelta_{\ell} y_\ell}{1 - \bDelta_{\ell}} : \sigma_\ell = -1 \right\} \cup \{0\} \right),
\end{align*}
which yield the additional constraint:
\[
Y_{\bm{\sigma}}^+(\bm{y}) \le u \le 1 - Y_{\bm{\sigma}}^-(\bm{y}).
\]

Combining both sets of constraints, the overall feasible range for $u$ is the interval
\[
\left[A_{\bm{\sigma}}^{\bar{\bm{\Delta}}}(\bm{y}), B_{\bm{\sigma}}^{\bar{\bm{\Delta}}}(\bm{y})\right], \quad \text{where} \quad 
\begin{cases}
A_{\bm{\sigma}}^{\bar{\bm{\Delta}}}(\bm{y}) := \max\{L_{\bm{\sigma}}(\bm{y}), Y_{\bm{\sigma}}^+(\bm{y})\}, \\
B_{\bm{\sigma}}^{\bar{\bm{\Delta}}}(\bm{y}) := \min\{U_{\bm{\sigma}}(\bm{y}), 1 - Y_{\bm{\sigma}}^-(\bm{y})\}.
\end{cases}
\]
Since the density of $(U, X_1, \ldots, X_m)$ is constant and equals $1/I_m(\bar{\bm{\Delta}})$ over its support, the contribution from each region is proportional to the length of this feasible interval:
\[
\ell_{\bm{\sigma}}^{\bar{\bm{\Delta}}}(\bm{y}) := \left( B_{\bm{\sigma}}^{\bar{\bm{\Delta}}}(\bm{y}) - A_{\bm{\sigma}}^{\bar{\bm{\Delta}}}(\bm{y}) \right)_+.
\]
Summing over all $2^m$ sign vectors, the joint density of $\bm{Y} = (Y_1, \ldots, Y_m)$ is
\[
f_{\bar{\bm{\Delta}}}(\bm{y}) :=
f_{\bm{Y}}^{\bar{\bm{\Delta}}}(\bm{y}) = \frac{1}{I_m(\bar{\bm{\Delta}})} \sum_{\bm{\sigma} \in \{-1,1\}^m} \ell_{\bm{\sigma}}^{\bar{\bm{\Delta}}}(\bm{y}).
\]
\begin{rem}
  As a sanity check, when $\bar{\bm{\Delta}} = \0$, we recover the null case. In that case, $Y_{\bm{\sigma}}^+ = Y_{\bm{\sigma}}^- = 0$, and the constraint reduces to $L_{\bm{\sigma}}(\bm{y}) \le u \le U_{\bm{\sigma}}(\bm{y})$, matching the joint density under $H_0$.  
\end{rem}
\end{proof}

\subsection{Proof of Corollary \ref{cor:inverse-m=1}}

\begin{proof}[Proof of Corollary~\ref{cor:inverse-m=1}]

This corollary follows by simplifying the expressions in Theorem~\ref{thm:inverse-asymptotic-distribution}. 

We begin by analyzing the alternative distribution.
When $m = 1$, the general density formula simplifies to:
\[
f_{Y_1}^{\Delta_1}(y_1) = \frac{1}{1 - \Delta_1} \sum_{\sigma \in \{-1,1\}} \left(B_\sigma^{\Delta_1}(y_1) - A_\sigma^{\Delta_1}(y_1)\right) \vee 0.
\]

\paragraph{Case $\sigma = +1$.}
Using the definitions from the theorem, we compute:
\begin{align*}
A_{+1}^{\Delta_1}(y_1) &= \max\left\{ -y_1, \frac{\Delta_1 y_1}{1 - \Delta_1} \right\} = \frac{\Delta_1 y_1}{1 - \Delta_1}, \\
B_{+1}^{\Delta_1}(y_1) &= \min\left\{ 1 - y_1, 1 \right\} = 1 - y_1.
\end{align*}
The corresponding contribution is:
\[
\left(1 - y_1 - \frac{\Delta_1 y_1}{1 - \Delta_1}\right) \vee 0 = \left(1 - \frac{y_1}{1 - \Delta_1}\right) \vee 0.
\]

\paragraph{Case $\sigma = -1$.}
We have:
\begin{align*}
A_{-1}^{\Delta_1}(y_1) &= \max\left\{ y_1, 0 \right\} = y_1, \\
B_{-1}^{\Delta_1}(y_1) &= \min\left\{ 1 + y_1, 1 - \frac{\Delta_1 y_1}{1 - \Delta_1} \right\} = 1 - \frac{\Delta_1 y_1}{1 - \Delta_1}.
\end{align*}
The resulting contribution is:
\[
\left(1 - \frac{\Delta_1 y_1}{1 - \Delta_1} - y_1\right) \vee 0 = \left(1 - \frac{y_1}{1 - \Delta_1}\right) \vee 0.
\]

Since both sign cases yield the same value, we obtain the final density by summing and applying the normalization:
\[
f_{Y_1}^{\Delta_1}(y_1) = \frac{2}{1 - \Delta_1} \left(1 - \frac{y_1}{1 - \Delta_1}\right),
\]
which expands to the triangular form in~\eqref{eq:joint-density-representationh1goodaand}.

Next, we consider the null distribution. When $m = 1$, the formula from Theorem~\ref{thm:inverse-asymptotic-distribution} becomes:
\[
f_{Y_1}(y_1) = \int_{u : y_1 < \max(u, 1 - u)} 2^{\1(y_1 < \min(u, 1 - u))} \, \rd u.
\]

\paragraph{Case $0 < y_1 \le 1/2$.}
In this case, $y_1 < \min(u, 1 - u)$ if and only if $u \in (y_1, 1 - y_1)$, and $y_1 < \max(u, 1 - u)$ for all $u \in (0,1)$. Therefore, the density is:
\[
f_{Y_1}(y_1) = \int_{y_1}^{1 - y_1} 2 \, \rd u + \int_{0}^{y_1} \rd u + \int_{1 - y_1}^1 \rd u = 2(1 - 2y_1) + y_1 + y_1 = 2(1 - y_1).
\]

\paragraph{Case $1/2 < y_1 < 1$.}
Here, $y_1 \ge \min(u, 1 - u)$, so the integrand is always 1. The condition $y_1 < \max(u, 1 - u)$ is satisfied when $u \in (0, 1 - y_1) \cup (y_1, 1)$, yielding:
\[
f_{Y_1}(y_1) = \int_{0}^{1 - y_1} \rd u + \int_{y_1}^1 \rd u = (1 - y_1) + (1 - y_1) = 2(1 - y_1).
\]

In both cases, we conclude that $f_{Y_1}(y_1) = 2(1 - y_1)$ for all $y_1 \in (0,1)$, completing the proof.
 
\end{proof}

\subsection{Proof of Lemma~\ref{lem:inverse-perfect-general-technicaljoint}}
\label{sec:inverse-perfect-general-technicaljoint-proof}

\begin{proof}[Proof of Lemma~\ref{lem:inverse-perfect-general-technicaljoint}]
The randomness in this setting arises from the pseudorandom variables $U$ and $\pi$. Given the fixed minimal unit $\IM^\zeta$, we aim to compute
\begin{equation}
  \PB_1\bigl(U \leq r,\, \pi(w_\ell) = w_\ell' \text{ for } \ell \in [m]~|~\IM^\zeta\bigr).
\end{equation}

For each permutation $\pi$ of the vocabulary, we can evaluate the probability that $U \in [0, r]$ under the constraint that $\pi(w_\ell) = w_\ell'$ for all $\ell \in [m]$. Recall the definition of the inverse transform decoder: for any token $w$,
\[
\mathcal{S}^{\text{inv}}(\bm{P}, \zeta) = w \quad \text{if and only if} \quad 
\sum_{w' \in \Voca} P_{w'} \cdot \mathbf{1}_{\{\pi(w') < \pi(w)\}} 
\le U \le 
\sum_{w' \in \Voca} P_{w'} \cdot \mathbf{1}_{\{\pi(w') \le \pi(w)\}}.
\]

In our setting, knowing that $\pi(w_\ell) = w_\ell'$ for all $\ell \in [m]$ and using the definition that $a_{\pi,\, w_\ell}^{(t)} = \sum_{j = 1}^{w_\ell} P_{t, \pi(j)}$, the above condition becomes, for each $t \in \IM^Y_\ell$ (where $w_\ell$ is the token associated with sub-block $\IM^Y_\ell$),
\begin{align}
a_{\pi^{-1},\, w_\ell'-1}^{(t)} 
&= \sum_{w' \in \Voca} P_{t,w'} \cdot \mathbf{1}_{\{ \pi(w') < w_\ell' \}} 
\le U \le 
\sum_{w' \in \Voca} P_{t,w'} \cdot \mathbf{1}_{\{ \pi(w') \le w_\ell' \}} 
= a_{\pi^{-1},\, w_\ell'}^{(t)}.
\end{align}

The corresponding feasible region for $U$ is thus the intersection
\begin{align}
  \bigcap_{\ell=1}^m \bigcap_{t \in \IM^Y_\ell} \left(a_{\pi^{-1}, w_\ell' - 1}^{(t)},\, a_{\pi^{-1}, w_\ell'}^{(t)}\right).
\end{align}
Summing over all permutations $\pi$ and all tuples of mutually distinct tokens $w_1', \ldots, w_m'$ (that is, $\mathrm{distinct}$), we obtain:
\begin{align}
  &\PB_1\bigl(U \leq r,\, \pi(w_\ell) = w_\ell' \text{ for } \ell \in [m]~|~\IM^\zeta\bigr) \\
  =\; & \frac{1}{|\Voca|!} \sum_{\substack{\pi \in \Perm(\Voca) \\ \pi(w_\ell') = w_\ell,\; \ell \in [m]}} 
  \PB\left(U \in \bigcap_{\ell=1}^m \bigcap_{t \in \IM^Y_\ell} \left(a_{\pi, w_\ell' - 1}^{(t)},\, a_{\pi, w_\ell'}^{(t)}\right) \cap [0, r] \right).
\end{align}
Note that since we sum over all permutations $\pi$, the roles of $\pi$ and $\pi^{-1}$ are interchangeable in the expression above. Hence, we replace $\pi$ with $\pi^{-1}$ in the last equation for notational simplicity.

To obtain the normalization constant (that is, the denominator of the conditional probability), we set $r = 1$ and sum over all distinct $w_1',\ldots,w_m'$:
\begin{align}
  &\sum_{\substack{w_1',\ldots,w_m' \\ \mathrm{distinct}}} 
  \PB\bigl(U \leq 1,\, \pi(w_\ell) = w_\ell' \text{ for } \ell \in [m]~|~\IM^\zeta\bigr) \\
  =\; & \frac{1}{|\Voca|!} 
  \sum_{\substack{w_1',\ldots,w_m' \\ \mathrm{distinct}}} 
  \sum_{\substack{\pi \in \Perm(\Voca) \\ \pi(w_\ell') = w_\ell,\; \ell \in [m]}} 
  \PB\left(U \in \bigcap_{\ell=1}^m \bigcap_{t \in \IM^Y_\ell} \left(a_{\pi, w_\ell' - 1}^{(t)},\, a_{\pi, w_\ell'}^{(t)}\right) \cap [0, 1] \right).
\end{align}

Thus, the conditional probability can be expressed as
\begin{align}
  &\PB\bigl(U \leq r,\, \pi(w_\ell) = w_\ell' \text{ for } \ell \in [m] \,\big|\, \IM^\zeta\bigr) \\
  =\; & \frac{
    \frac{1}{|\Voca|!} 
    \sum_{\substack{\pi \in \Perm(\Voca) \\ \pi(w_\ell') = w_\ell,\; \ell \in [m]}} 
    \PB\left(U \in \bigcap_{\ell=1}^m \bigcap_{t \in \IM^Y_\ell} \left(a_{\pi, w_\ell' - 1}^{(t)},\, a_{\pi, w_\ell'}^{(t)}\right) \cap [0, r] \right)
  }{
    \frac{1}{|\Voca|!} 
    \sum_{\substack{w_1',\ldots,w_m' \\ \mathrm{distinct}}} 
    \sum_{\substack{\pi \in \Perm(\Voca) \\ \pi(w_\ell') = w_\ell,\; \ell \in [m]}} 
    \PB\left(U \in \bigcap_{\ell=1}^m \bigcap_{t \in \IM^Y_\ell} \left(a_{\pi, w_\ell' - 1}^{(t)},\, a_{\pi, w_\ell'}^{(t)}\right) \right)
  }.
\end{align}
\end{proof}

\subsection{Proof of Theorem~\ref{thm:inverse-perfect-general-technical-main}}
\label{sec:inverse-perfect-general-technical-main-proof}

\begin{proof}[Proof of Theorem~\ref{thm:inverse-perfect-general-technical-main}]

In this proof, we aim to show that the absolute error
\begin{align}
\label{eq:expectation-differencewith-zero}
\left|\EB_{1, \bP_{\IM^\zeta}}\left[J(U,\eta(\pi(w_1)),\ldots,\eta(\pi(w_m)))\right] - \frac{1}{I_m(\bar{\bm{\Delta}})}\int_{[0,1]^m} \left(\int_{\IM(\bm{x},\bar{\bm{\Delta}})} J(u, \bm{x})\,\rd u\right)\1_{\IM(\bm{x},\bar{\bm{\Delta}}) \ne \emptyset} \rd \bm{x} \right|
\end{align}
converges to zero as the vocabulary size $|\Voca|$ tends to infinity, provided that the underlying NTP distributions $\bP_{\IM^\zeta}$ satisfy Assumption~\ref{asmp:heavy-tokensa}.
In the expression~\eqref{eq:expectation-differencewith-zero}, we simplify the original target integral by letting $\bm{x} = (x_1, \ldots, x_m)$ and $\rd \bm{x} = \rd x_1 \cdots \rd x_m$, and by rewriting the normalization constant via
\begin{equation}
\label{eq:Im-delta}
\IM(\bm{x},\bar{\bm{\Delta}}) = \left[\max\limits_{\ell \in [m]}\{\bDelta_{\ell} x_{\ell}\},\; \min\limits_{\ell \in [m]}\{1 - \bDelta_{\ell} + \bDelta_{\ell} x_{\ell}\}\right],
\end{equation}
where we define $\bDelta_{\ell} := \max_{t \in \IM^Y_{\ell}} \Delta_t$.
As specified in Definition~\ref{def:q_class}, the NTP distributions $\bP_{\IM^\zeta}$ are assumed to belong to the class $\QM_{\type(\bP_{\IM^\zeta}), \bm{\Delta}}$ from Assumption \ref{asmp:heavy-tokensa}.

Let $J\colon[0,1]^{m+1} \to [0,\infty)$ be a 1-Lipschitz function. 
Without loss of generality, we assume $J(0,0,\ldots,0) = 0$. This is justified because replacing $J$ with $J - C$ for any constant $C$ does not affect the absolute error term in~\eqref{eq:expectation-differencewith-zero} by using the fact that $I_m(\bar{\bm{\Delta}}) = \int_{[0, 1]^m} |\IM(\bm{x}, \bar{\bm{\Delta}})| \cdot \1_{\IM(\bm{x}, \bar{\bm{\Delta}}) \ne \emptyset} \, \rd \bm{x}.$

We are now ready to analyze the asymptotic behavior of $\EB_{1, \bP_{\IM^\zeta}}\left[J(U,\eta(\pi(w_1)),\ldots,\eta(\pi(w_m)))\right].$
An exact formulation is provided by Corollary~\ref{cor:inverse-perfect-general-technicaljoint}:
\begin{align}
  &\EB_{1, \bP_{\IM^\zeta}}\left[J(U, \eta(\pi(w_1)),\ldots, \eta(\pi(w_m)))\right] \nonumber \\
  &=  \frac{
    \sum_{\substack{w'_1,\ldots,w'_m \\ \mathrm{distinct}}}
    \sum_{\substack{\pi \in \Perm(\Voca) \\ \pi(w_\ell') = w_\ell,\forall \ell \in [m]}}
      \int_{\max\limits_{\ell, t} a_{\pi, w'_\ell - 1}^{(t)}}^{\min\limits_{\ell, t} a_{\pi, w'_\ell}^{(t)}}
      J(u, \eta(w'_1), \ldots, \eta(w'_m)) \, \rd u
    \1_{\min\limits_{\ell, t} a_{\pi, w'_\ell}^{(t)} \ge \max\limits_{\ell, t} a_{\pi, w'_\ell - 1}^{(t)}}
  }{
    \sum_{\substack{w'_1,\ldots,w'_m \\ \mathrm{distinct}}}
    \sum_{\substack{\pi \in \Perm(\Voca) \\ \pi(w_\ell') = w_\ell, \forall \ell \in [m]}}
    \PB\left(
      U \in \bigcap_{\ell=1}^m \bigcap_{t \in \IM^Y_\ell}
      \left(a_{\pi, w'_\ell - 1}^{(t)}, a_{\pi, w'_\ell}^{(t)}\right)
    \right)
  },\label{eq:target-expectation-0}
\end{align}
where $\min\limits_{\ell, t}$ denotes $\min_{\ell} \min_{t \in \IM^Y_{\ell}}$ and similarly $\max\limits_{\ell, t}$ denotes $\max_{\ell} \max_{t \in \IM^Y_{\ell}}$ for simplicity.

\paragraph{Numerator.}
We begin by analyzing the numerator of \eqref{eq:target-expectation-0}:
\begin{align}\label{eq:expectation-numerator-0}
   \frac{1}{|\Voca|!} \sum_{\substack{w'_1,\ldots,w'_m \\ \mathrm{distinct}}}
    \sum_{\substack{\pi \in \Perm(\Voca) \\ \pi(w_\ell') = w_\ell,\forall \ell \in [m]}}
    \left(
      \int_{\max\limits_{\ell, t} a_{\pi, w'_\ell - 1}^{(t)}}^{\min\limits_{\ell, t} a_{\pi, w'_\ell}^{(t)}}
      J(u, \eta(w'_1), \ldots, \eta(w'_m)) \, \rd u
    \right)
    \1_{\min\limits_{\ell, t} a_{\pi, w'_\ell}^{(t)} \ge \max\limits_{\ell, t} a_{\pi, w'_\ell - 1}^{(t)}}
\end{align}

Our first step is to rewrite this expression by introducing a random permutation $\pi$. The sum over permutations can then be expressed as an expectation:
\begin{align*}
\EB_{\pi} \left[ \sum_{\substack{w'_1,\ldots,w'_m \\ \text{distinct}}} \1_{\pi(w'_\ell) = w_\ell, \forall \ell \in [m]} \int_{\max\limits_{\ell, t} a_{\pi, w'_\ell - 1}^{(t)}}^{\min\limits_{\ell, t} a_{\pi, w'_\ell}^{(t)}}
J(u,\eta(w'_1),\ldots,\eta(w'_m)) \, \rd u \cdot \1_{\min\limits_{\ell, t} a_{\pi, w'_\ell}^{(t)} \ge \max\limits_{\ell, t} a_{\pi, w'_\ell - 1}^{(t)}} \right].
\end{align*}
By linearity of expectation, we can exchange the expectation and the outer summation over the source tokens $w'_1, \ldots, w'_m$:
\begin{align}
\label{eq:numerator-swapped}
\sum_{\substack{w'_1,\ldots,w'_m \\ \text{distinct}}} \EB_{\pi} \left[ \1_{\pi(w'_\ell) = w_\ell, \forall \ell \in [m]} \int_{\max\limits_{\ell, t} a_{\pi, w'_\ell - 1}^{(t)}}^{\min\limits_{\ell, t} a_{\pi, w'_\ell}^{(t)}}
J(u,\eta(w'_1),\ldots,\eta(w'_m)) \, \rd u \cdot \1_{\min\limits_{\ell, t} a_{\pi, w'_\ell}^{(t)} \ge \max\limits_{\ell, t} a_{\pi, w'_\ell - 1}^{(t)}} \right].
\end{align}
For any fixed set of distinct source tokens $\{w'_\ell\}_{\ell=1}^m$ and distinct target tokens $\{w_\ell\}_{\ell=1}^m$, let
\[
C = \{\pi(w'_\ell) = w_\ell, \forall \ell \in [m]\}
\]
denote the event that $\pi$ maps $w'_\ell$ to $w_\ell$ for each $\ell \in [m]$. This event corresponds to a specific set of permutations, and its total count is $(|\Voca|-m)!$. By the law of total expectation, we can write
\[
\EB[X \cdot \1_C] = \EB[X \mid C] \cdot \PB(C), \quad \text{where} \quad \PB(C) = \frac{(|\Voca| - m)!}{|\Voca|!}
\]
for any random variable $X$.
Substituting this into \eqref{eq:numerator-swapped}, we obtain the following simplified form:
\begin{equation}
\label{eq:numerator-simplified-1}
\begin{split}
\frac{1}{\prod_{\ell = 0}^{m - 1} (|\Voca| - \ell)}
  &\sum_{\substack{w'_{1}, \ldots, w'_{m} \\ \mathrm{distinct}}}
  \EB_{\pi} \Bigg[
    \left(
      \int_{\max\limits_{\ell, t} a^{(t)}_{\pi,w'_\ell-1} \Delta_t}^{
        \min\limits_{\ell, t}a_{\pi,w'_\ell}^{(t)} }
      J\left(u, \eta(w'_1), \ldots, \eta(w'_m)\right) \, \rd u
    \right) \\
& \qquad \qquad\qquad \cdot \1_{
      \min\limits_{\ell, t} a_{\pi,w'_\ell}^{(t)}
      \ge \max\limits_{\ell, t} a^{(t)}_{\pi,w'_\ell-1}
    }
    \,\Bigg|\, \forall \ell,\ \pi(w'_\ell) = w_\ell
  \Bigg].
\end{split}
\end{equation}

Next, we simplify the integration limits, $\max\limits_{\ell, t} a_{\pi,w'\ell-1}^{(t)}$ and $\min\limits_{\ell, t} a_{\pi,w'\ell}^{(t)}$, by applying concentration inequalities under the conditional distribution $\pi \mid C$.
In particular, applying Lemma~\ref{lem:Lipschitz-max} to the maximum function, we obtain
\begin{align}\label{eq:max-concentrationab1}
  &\left| \max_{\substack{\ell \in [m] \\ t \in \IM^Y_{\ell}}} a_{\pi,w'_\ell-1}^{(t)}- \max_{\substack{\ell \in [m] \\ t \in \IM^Y_{\ell}}} \frac{(w'_\ell-1)\Delta_t}{|\Voca|-1} \right|\leq \max_{\substack{\ell \in [m] \\ t \in \IM^Y_{\ell}}}  \left| a_{\pi,w'_\ell-1}^{(t)} - \frac{(w'_\ell-1)\Delta_t}{|\Voca|-1} \right|.
\end{align}
Using Lemma~\ref{lem:concentration-max-cite} below, we bound the right-hand side of \eqref{eq:max-concentrationab1} by $O\left(\frac{1}{|\Voca|} + \sqrt{\eps_{|\Voca|} \log |\Voca|}\right)$, which vanishes as $|\Voca| \to \infty$. The proof of Lemma \ref{lem:concentration-max-cite} can be found in Section \ref{proof:concentration-max-cite}.

\begin{lem}[Concentration of $\max\limits_{\ell, t}a_{\pi,w'_\ell-1}^{(t)}$]\label{lem:concentration-max-cite}
Under Assumption~\ref{asmp:heavy-tokensa}, let $\pi$ be a uniformly random permutation over $\Voca$. Then, for any distinct source tokens $\{w'_\ell\}_{\ell=1}^m$ and target tokens $\{w_\ell\}_{\ell=1}^m$, we have
\begin{align}
\max_{\substack{w'_1,\ldots,w'_m \\ \mathrm{distinct}}} & \EB_{\pi} \left[\max_{\substack{\ell \in [m] \\ t \in \IM^Y_{\ell}}} \left| a_{\pi,w'_\ell-1}^{(t)} - \frac{(w'_\ell-1)\Delta_t}{|\Voca|-1} \right| \Bigg|  \forall \ell \in [m], \pi(w'_\ell) = w_\ell \right] \nonumber \\   
& \leq  O(m) \cdot \max_{\substack{\ell \in [m] \\ t \in \IM^Y_{\ell}}} \left(\frac{1}{|\Voca|} + \sqrt{P_{t, (2)}\log|\Voca|} + P_{t,(2)} \log|\Voca| \right)
\end{align}
where $\Delta_t$ is the regularity level for $\bP_t$, and $O(\cdot)$ hides universal constants.
\end{lem}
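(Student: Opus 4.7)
\textbf{Proof plan for Lemma~\ref{lem:concentration-max-cite}.}
The plan is to analyze each deviation $|a_{\pi,w'_\ell-1}^{(t)} - (w'_\ell-1)\Delta_t/(|\Voca|-1)|$ pointwise for fixed $(\ell,t)$, then pass to the maximum by a standard union-bound/integration argument. Fix $\ell \in [m]$, $t \in \IM^Y_\ell$, and write $w := w'_\ell - 1$. Under the conditioning event $C = \{\pi(w'_{\ell'}) = w_{\ell'}, \; \forall \ell' \in [m]\}$, the positions $\{w'_1,\ldots,w'_m\}$ are deterministically filled by $\{w_1,\ldots,w_m\}$, and the remaining $|\Voca|-m$ positions carry a uniform random bijection with the free tokens $\Voca\setminus\{w_1,\ldots,w_m\}$. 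Let $F := \{w'_{\ell'} : w'_{\ell'} \le w,\; \ell' \neq \ell\}$; then
\[
a_{\pi,w}^{(t)}
= \underbrace{\sum_{w'_{\ell'} \in F} P_{t,w_{\ell'}}}_{\text{deterministic}}
\;+\; \underbrace{\sum_{j \le w,\; j \notin F} P_{t,\pi(j)}}_{\text{random sum}}.
\]
The crucial point is that, by Assumption~\ref{asmp:heavy-tokensa}(ii), the heavy token $w_\ell$ of $\bP_t$ sits at the excluded position $w'_\ell = w+1$, so every summand above is at most $P_{t,(2)} \le \eps_{|\Voca|}$.

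First I would bound the deterministic part. Since $|F| \le m-1$ and each $P_{t,w_{\ell'}} \le P_{t,(2)}$, the fixed-position sum is at most $m \cdot P_{t,(2)}$. Next I would take expectations of the random part: the $w - |F|$ random positions are a uniform sample without replacement from the free tokens, so
\[
\EB[a_{\pi,w}^{(t)} \mid C]
= \sum_{w'_{\ell'} \in F} P_{t,w_{\ell'}}
+ \frac{w-|F|}{|\Voca|-m}\Bigl(\Delta_t - \sum_{\ell' \neq \ell} P_{t,w_{\ell'}}\Bigr).
\]
A direct comparison with the target value $w\Delta_t/(|\Voca|-1)$ shows that
\[
\Bigl|\EB[a_{\pi,w}^{(t)} \mid C] - \tfrac{w\Delta_t}{|\Voca|-1}\Bigr|
\;=\; O\!\left(\tfrac{m}{|\Voca|} + m\, P_{t,(2)}\right),
\]
using $|F| \le m$ and $\Delta_t \le 1$ to control the ratio discrepancy $\tfrac{w-|F|}{|\Voca|-m} - \tfrac{w}{|\Voca|-1}$.

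The main obstacle, and the most delicate step, is controlling the fluctuation of the random sum around its conditional mean. Here I would invoke a Bernstein-type concentration inequality for sampling without replacement (e.g.\ \citet{bardenet2015concentration}): the random part is a sum of bounded variables drawn without replacement from the pool of free tokens, with each variable bounded by $b := P_{t,(2)}$ and variance proxy at most $b \cdot \Delta_t$. This yields, for every $\tau > 0$,
\[
\PB\!\left(\Bigl|\sum_{j\le w,\, j\notin F} P_{t,\pi(j)} - \EB[\cdot \mid C]\Bigr| \ge \tau \,\Big|\, C\right)
\;\le\; 2\exp\!\left(-\frac{c\,\tau^2}{P_{t,(2)} + P_{t,(2)}\,\tau}\right).
\]
Choosing $\tau = \Theta(\sqrt{P_{t,(2)}\log|\Voca|} + P_{t,(2)}\log|\Voca|)$ gives failure probability $\le |\Voca|^{-3}$.

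Finally, I would assemble the three contributions (fixed positions, mean shift, concentration) and combine them via a union bound over at most $\sum_\ell |\IM^Y_\ell| \le |\Voca|$ index pairs $(\ell,t)$; since $\log|\Voca|$ already appears inside the square root, the union bound costs at most a constant factor. To convert the high-probability bound to an expectation bound on the maximum, I would integrate the tail in the standard way ($\EB[X] = \int_0^\infty \PB(X\ge \tau)\,\rd\tau$), splitting at the threshold above; the contribution beyond the threshold decays polynomially in $|\Voca|$ and is absorbed into the $1/|\Voca|$ term. The resulting bound is exactly
\[
O(m)\cdot \max_{\ell,t}\!\left(\tfrac{1}{|\Voca|} + \sqrt{P_{t,(2)}\log|\Voca|} + P_{t,(2)}\log|\Voca|\right),
\]
as claimed, and the bound is uniform over the choice of distinct $\{w'_\ell\}$ because none of the estimates depended on which specific indices were chosen beyond $|F| \le m$.
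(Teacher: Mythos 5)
Your proof follows essentially the same route as the paper's: isolate the deterministic contribution from the $m$ conditioned positions, compute the conditional mean and bound its offset from $(w'_\ell-1)\Delta_t/(|\Voca|-1)$ by $O(m/|\Voca| + mP_{t,(2)})$, apply a Bernstein-type concentration inequality for the random part under the conditioning, and union-bound over $(\ell,t)$ before integrating the tail. The only surface difference is the choice of concentration lemma — you use Bernstein for sampling without replacement (Bardenet--Maillard) whereas the paper invokes the permutation-sum concentration of \citet{albertConcentrationInequalitiesRandomly2019} on an indicator-weighted array $b^{(t)}_{i,j}$ — but since the conditioned permutation restricted to free positions is exactly a uniform bijection (i.e.\ sampling without replacement), these are two phrasings of the same estimate and give the same rate.
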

Note that
\[
a_{\pi,\, w_\ell'}^{(t)} = \sum_{j=1}^{w_\ell'} P_{t, \pi(j)} = 1 - \Delta_t + \sum_{j=1}^{w_\ell' - 1} P_{t, \pi(j)} = 1 - \Delta_t + a_{\pi,\, w_\ell' - 1}^{(t)}.
\]
Using this identity, we can approximate the lower integration limit $\min\limits_{\ell, t} a_{\pi, w'_\ell}^{(t)}$ as follows:
\begin{align}\label{eq:max-concentrationab12} 
  \left|\min_{\substack{\ell \in [m] \\ t \in \IM^Y_{\ell}}} a_{\pi,w_\ell'}^{(t)} 
  - \min_{\substack{\ell \in [m] \\ t \in \IM^Y_{\ell}}}
  \left[ 1-\frac{(|\Voca|-w_\ell')\Delta_t}{|\Voca|-1}\right]\right|
  &\leq \max_{\substack{\ell \in [m] \\ t \in \IM^Y_{\ell}}} \left|a_{\pi,w_\ell'}^{(t)} - \left(1-\frac{(|\Voca|-w_\ell')\Delta_t}{|\Voca|-1}\right)\right| \nonumber \\
  &= \max_{\substack{\ell \in [m] \\ t \in \IM^Y_{\ell}}}  \left| a_{\pi,w'_\ell-1}^{(t)} - \frac{(w'_\ell-1)\Delta_t}{|\Voca|-1} \right|.
\end{align}

By using this approximation to the upper and lower integral limits, we assert that quantity \eqref{eq:numerator-simplified-1} will be close to the following quantity:
\begin{equation}
\label{eq:numerator-simplified-1.1}
\begin{split}
\frac{1}{\prod_{\ell = 0}^{m - 1} (|\Voca| - \ell)}
  &\sum_{\substack{w'_{1}, \ldots, w'_{m} \\ \mathrm{distinct}}}
  \EB_{\pi} \Bigg[
    \left(
      \int_{\max\limits_{\ell, t} \frac{w'_\ell - 1}{|\Voca| - 1} \Delta_t}^{
        \min\limits_{\ell, t} \left[ 1 - \frac{|\Voca| - w'_\ell}{|\Voca| - 1} \Delta_t \right]}
      J\left(u, \eta(w'_1), \ldots, \eta(w'_m)\right) \, \rd u
    \right) \\
& \qquad \qquad\qquad \cdot \1_{
      \min\limits_{\ell, t} \left[ 1 - \frac{(|\Voca| - w'_\ell) \Delta_t}{|\Voca| - 1} \right]
      \ge \max\limits_{\ell, t} \frac{(w'_\ell - 1) \Delta_t}{|\Voca| - 1}
    }
    \,\Bigg|\, \forall \ell,\ \pi(w'_\ell) = w_\ell
  \Bigg]
\end{split}
\end{equation}
This is because 
\begin{align}
|\eqref{eq:numerator-simplified-1}-\eqref{eq:numerator-simplified-1.1}|
 &\overset{(a)}{\leq}  4\|J\|_{\infty} \cdot \max_{\substack{w_1',\ldots,w_m' \\\mathrm{distinct}}} \EB_{\pi} \left[
 \max_{\substack{\ell \in [m] \\ t \in \IM^Y_{\ell}}} \left|a_{\pi,w_\ell'-1}^{(t)} - \frac{(w_\ell'-1)\Delta_t}{|\Voca|-1}\right|\Bigg|\pi(w_\ell') = w_\ell, \forall \ell \right] \nonumber \\ 
  &\overset{(b)}{\le} O(m) \cdot \|J\|_{\infty} \cdot \max_{\substack{\ell \in [m] \\ t \in \IM^Y_{\ell}}} \left(\frac{1}{|\Voca|} + \sqrt{P_{t, (2)}\log|\Voca|} + P_{t,(2)} \log|\Voca| \right), \label{eq:integral-approximation-error1}
\end{align}
where $(a)$ follows from Lemma~\ref{lem:integral-approximation-error}, with $\|J\|_{\infty} := \sup_{\bm{x} \in [0,1]^{m+1}} |J(\bm{x})|$ denoting the supremum norm of $J$ over $[0,1]^{m+1}$ and $(b)$ follows from Lemma~\ref{lem:concentration-max-cite}, where $O(1)$ denotes a universal constant.

Therefore, it suffices to analyze the expression in~\eqref{eq:numerator-simplified-1.1}. 
Once the upper and lower integration limits are approximated, the entire integrand becomes independent of $\pi$, allowing us to safely remove the expectation over $\pi$.
To study the resulting deterministic quantity, we define the function
\begin{align*}
  \Phi(x_1,\ldots,x_m) 
  &= \left(\int_{\max\limits_{\ell, t}\Delta_t x_\ell}^{\min\limits_{\ell, t}[1-\Delta_t + \Delta_t x_\ell]} J(u, x_1,\ldots,x_m) \rd u \right)\1_{\left\{\min\limits_{\ell, t} \{1-\Delta_t + \Delta_t x_\ell\} \ge \max\limits_{\ell, t}\Delta_t x_\ell \right\} }  \\ 
  &= \left(\int_{\max\limits_{\ell\in [m]}\bDelta_{\ell} x_\ell}^{\min\limits_{\ell\in [m]}[1-\bDelta_{\ell} + \bDelta_{\ell} x_\ell]} J(u, x_1,\ldots,x_m) \rd u \right)\1_{\left\{\min\limits_{\ell\in [m]} [1-\bDelta_{\ell} + \bDelta_{\ell} x_\ell] \ge \max\limits_{\ell\in [m]}\bDelta_{\ell} x_\ell\right\}},
\end{align*}
where $\bDelta_{\ell} := \max_{t \in \IM^Y_{\ell}} \Delta_t$ denotes the maximum regularity level associated with sub-block $\IM^Y_\ell$.
It is straightforward to verify that $\Phi$ is Lipschitz continuous with respect to the $L^\infty$ norm on $[0,1]^m$, owing to the Lipschitz continuity of $J$ and the boundedness of the variables $\{\bDelta_\ell, x_\ell\}_{\ell=1}^m \subseteq [0,1]$.

With this definition, we can rewrite~\eqref{eq:numerator-simplified-1.1} as
\begin{align}
\eqref{eq:numerator-simplified-1.1}&=\frac{1}{\prod_{i=0}^{m-1} (|\Voca|-i)}  \sum_{\substack{w_1',\ldots,w_m' \\ \mathrm{distinct} }}   \Phi(\eta(w_1'),\ldots,\eta(w_m')) \nonumber \\
& \overset{(a)}{=} \frac{1}{|\Voca|^m} \sum_{\substack{w_1',\ldots,w_m' }} \Phi(\eta(w_1'),\ldots,\eta(w_m')) + O\left(\frac{\|J\|_{\infty}}{|\Voca|}\right), \nonumber \\
&\overset{(b)}{=} \int_{[0, 1]^m} \Phi(x_1, \ldots, x_m) \rd x_1 \cdots \rd x_m + O\left(\frac{1}{|\Voca|}\right) +  O\left(\frac{\|J\|_{\infty}}{|\Voca|}\right),
\label{eq:integral-approximation-error2}
\end{align}
where
\begin{itemize}
    \item[(a)] follows from the expansion $\prod_{i=0}^{m-1} (|\Voca|-i) = |\Voca|^m \left[1 + O\left(\frac{m^2}{|\Voca|}\right)\right]$ and the observation that the number of non-fully-distinct $m$-tuples $(\eta(w_1'), \ldots, \eta(w_m'))$ is at most $O(m^2|\Voca|^{m-1})$, with each summand bounded in magnitude by $\|J\|_{\infty}$;
    \item[(b)] follows from approximating the Riemann sum over the uniform grid $\{\eta(w') = (w'-1)/(|\Voca|-1): w'\in\Voca\}$ of mesh size $1/(|\Voca|-1)$, which discretizes $[0,1]$ evenly. Since $\Phi$ is Lipschitz, the resulting Riemann sum converges to the Lebesgue integral with error $O(1/|\Voca|)$ per coordinate, yielding a total approximation error of $O(1/|\Voca|)$.
\end{itemize}

Combining the results above, we conclude that
\begin{align}
&\text{Numerator of}~\eqref{eq:target-expectation-0}
=\eqref{eq:numerator-simplified-1}
\overset{\eqref{eq:integral-approximation-error1}}{=} \eqref{eq:numerator-simplified-1.1} + o(1)
\overset{\eqref{eq:integral-approximation-error2}}{=} 
\int_{[0, 1]^m} \Phi(x_1, \ldots, x_m) \,\rd x_1 \cdots \rd x_m + o(1), \nonumber \\
&=\int_{[0, 1]^m} \int_{\max\limits_{\ell\in [m]}\bDelta_{\ell} x_\ell}^{\min\limits_{\ell\in [m]}[1-\bDelta_{\ell} + \bDelta_{\ell} x_\ell]} J(u, x_1,\ldots,x_m) \1_{\left\{\min\limits_{\ell\in [m]} [1-\bDelta_{\ell} + \bDelta_{\ell} x_\ell] \ge \max\limits_{\ell\in [m]}\bDelta_{\ell} x_\ell\right\}}\rd u \rd x_1 \cdots \rd x_m  + o(1)\nonumber \\
&=\int_{[0,1]^m} \left(\int_{\IM(\bm{x},\bar{\bm{\Delta}})} J(u, \bm{x})\,\rd u\right)\1_{\IM(\bm{x},\bar{\bm{\Delta}}) \ne \emptyset} \rd \bm{x}  + o(1), \label{eq:expectation-numerator-1combine}
\end{align}
where the $o(1)$ term vanishes uniformly as $|\Voca| \to \infty$, over all 1-Lipschitz functions $J$, all $\bm{\bDelta} \in [\Delta, 1-\delta]^{m}$, and all $\bP_{\IM^\zeta} \in \QM_{\tau, \bm{\Delta}}$.

\paragraph{Denominator.}
We now turn to the denominator of \eqref{eq:target-expectation-0}. Since it corresponds to the numerator with the constant function $J \equiv 1$, we set $J(u, x_1, \ldots, x_m) := 1$ and obtain
\begin{align}
\text{Denominator of}~\eqref{eq:target-expectation-0}
  &= \frac{1}{|\Voca|!} \sum_{\substack{w_1', \ldots, w_m' \\ \mathrm{distinct}}} \sum_{\substack{\pi \in \Perm(\Voca) \\ \pi(w_\ell') = w_\ell\; \forall \ell \in [m]}} \PB\left(U \in \bigcap_{\ell \in [m]} \bigcap_{t \in \IM^Y_\ell} (a_{\pi, w_\ell'-1}^{(t)}, a_{\pi, w_\ell'}^{(t)}) \right) \nonumber \\
  &= \int_{[0,1]^m} |\IM(\bm{x}, \bar{\bm{\Delta}})| \cdot \1_{\IM(\bm{x}, \bar{\bm{\Delta}}) \ne \emptyset} \, \rd \bm{x} + o(1) \nonumber \\
  &= I_m(\bar{\bm{\Delta}}) + o(1), \label{eq:expectation-denominator-1combine2}
\end{align}
uniformly over all 1-Lipschitz functions $J$, all parameter vectors $\bm{\bDelta} \in [\Delta, 1-\delta]^{m}$, and all distributions $\bP_{\IM^\zeta}$ in the class $\QM_{\tau, \bm{\Delta}}$ defined in~\eqref{eq:PM-inv-eta-new}.

Finally, combining~\eqref{eq:expectation-numerator-1combine} and~\eqref{eq:expectation-denominator-1combine2} yields the desired result.

\end{proof}

\subsection{Proof of Lemma \ref{lem:concentration-max-cite}}
\label{proof:concentration-max-cite}
\begin{proof}[Proof of Lemma \ref{lem:concentration-max-cite}]

The result follows from the concentration inequality in Lemma~\ref{thm:perm-concentration}, which applies to sums over randomly permuted arrays.
Let
\[
C = \{\pi(w_\ell') = w_\ell, \forall \ell \in [m]\}
\]
denote the event that the random permutation $\pi$ maps each $w_\ell'$ to $w_\ell$.
To apply Lemma~\ref{thm:perm-concentration}, we fix an index $\ell \in [m]$ and define
\[
b^{(t)}_{i,j} = P_{t,j} \cdot \mathbf{1}_{\{i \leq w_\ell'\,,\; j \not \in \{w_1, \ldots, w_m\}\}}, \quad \text{for } i,j \in \Voca.
\]
Recall that $a_{\pi,\, w_\ell'-1}^{(t)} = \sum_{j=1}^{w_\ell'-1} P_{t, \pi(j)}$ and a direct calculation shows that for any $t \in \IM^Y_{\ell}$,
\begin{align}\label{eq:a-pi-wl-1-b}
 \left| \sum_{j\in \Voca} b^{(t)}_{j,\pi(j)}  -   a_{\pi,w_\ell'-1}^{(t)} \right| \leq m P_{t,(2)}.
\end{align}
Note that, conditioned on the event $C$, the permutation $\pi$ is uniformly distributed as a bijection from $\Voca \setminus \{w_1',\ldots,w_m'\}$ to $\Voca \setminus \{w_1,\ldots,w_m\}$. Therefore,
\begin{align*}
\EB_\pi\left[ \sum_{j\in \Voca} b^{(t)}_{j,\pi(j)} \bigg| C \right] 
&= \sum_{j \in \Voca \setminus \{w_1',\ldots,w_m'\}} 
\EB_\pi\left[ P_{t, \pi(j)} \1_{j \le w_\ell'} \,\middle|\, C \right] \\
&= \left|\left\{j \le w_\ell' :  j \notin \{w_1',\ldots,w_m'\}\right\}\right| \cdot \frac{1}{|\Voca|-m} \cdot \sum_{j \in \Voca \setminus \{w_1,\ldots,w_m\}} P_{t, j}.
\end{align*}
Observe that
\[
w_\ell' - m \le \left|\left\{ j \le w_\ell' : j \notin \{w_1',\ldots,w_m'\}\right\}\right| \le w_\ell' - 1.
\]
Using this, we obtain
\begin{align}
\left| \frac{\left|\left\{j \le w_\ell' :  j \notin \{w_1',\ldots,w_m'\}\right\}\right|}{|\Voca|-m}
      - \frac{w_\ell'-1}{|\Voca|-1} \right|
&\le
\frac{m-1}{|\Voca|-m}
+ \frac{(w_\ell'-1)(m-1)}{(|\Voca|-m)(|\Voca|-1)} = O\!\left(\frac{m}{|\Voca|}\right).
\end{align}
On the other hand, since $\sum_{j \in \Voca \setminus \{w_t\}} P_{t, j} = \Delta_t$, we also have
\begin{align}
   \left| \sum_{j \in \Voca \setminus \{w_1,\ldots,w_m\}} P_{t, j} - \sum_{j \in \Voca \setminus \{w_t\}}  P_{t, j} \right| \leq (m-1)P_{t,(2)}.
\end{align}
Putting the bounds together, we conclude that
\begin{align}\label{eq:expectation-b-pbi}
 \left| \EB_\pi\left[\sum_{j\in \Voca} b^{(t)}_{j,\pi(j)} \,\middle|\, C \right] - \frac{(w_\ell'-1)\Delta_t}{|\Voca|-1} \right| \leq 2m \left(P_{t,(2)} + \frac{1}{|\Voca|}\right), \quad \forall t \in \IM^Y_{\ell}.
\end{align}

Thus, Lemma~\ref{thm:perm-concentration} applies to $\sum_{j \in \Voca} b^{(t)}_{j,\pi(j)}$ for each $t \in \IM^Y_{\ell}$ and $\ell \in [m]$.
More specifically, combining \eqref{eq:a-pi-wl-1-b} and \eqref{eq:expectation-b-pbi}, for any $\lambda > 0$, we have that with probability at least $1 - \lambda$, 
\begin{align}
\left| a_{\pi,w_\ell'-1}^{(t)} - \frac{(w_\ell'-1)\Delta_t}{|\Voca|-1} \right| 
\leq O(m) \cdot \left( \sqrt{ \frac{w_\ell'}{|\Voca|} \sum_{j=2}^{|\Voca|} P_{t,(j)}^2 \log \frac{1}{\lambda} } + P_{t,(2)} \log \frac{1}{\lambda} + \frac{1}{|\Voca|} \right),
\end{align}
where a universal constant hidden in $O(1)$.
Finally, we apply a union bound over all choices of distinct tokens $w_1',\ldots,w_m'$ and all $t \in \IM^Y_{\ell}$, $\ell \in [m]$. Setting $\lambda = \frac{1}{m\,|\Voca|^{m+1}}$, we obtain
\begin{align}
  &\sup_{\substack{w_1',\ldots,w_m' \\ \mathrm{distinct}}} \EB_{\pi} \left[
  \max_{\substack{ \ell \in [m] \\ t \in \IM^Y_{\ell}}} \left|a_{\pi,w_\ell'-1}^{(t)} - \frac{(w_\ell'-1)\Delta_t}{|\Voca|-1}\right| \,\Bigg|\,  \pi(w_\ell') = w_\ell, \forall \ell \right] \label{eq:concentration-max-condition} \\
  &\leq O(m) \cdot \max_{\substack{  \ell \in [m] \\ t \in \IM^Y_{\ell}}}
 \left( \frac{1}{|\Voca|} + \sqrt{ \sum_{j=2}^{|\Voca|} P_{t,(j)}^2 \log|\Voca| } + P_{t,(2)} \log|\Voca| \right) \\
 &\le O(m) \cdot \max_{\substack{  \ell \in [m] \\ t \in \IM^Y_{\ell}}}
 \left( \frac{1}{|\Voca|} + \sqrt{ P_{t,(2)} \log|\Voca| } + P_{t,(2)} \log|\Voca| \right).
\end{align}
\end{proof}

\subsection{Proof of Lemma~\ref{lem:inverse-optimal-rule-lower-bound}}
\begin{proof}[Proof of Lemma~\ref{lem:inverse-optimal-rule-lower-bound}]
As we focus on a single block, the sub-block index $k$ is omitted, following the convention in the proof of Theorem~\ref{thm:inverse-asymptotic-distribution}.
For simplicity, we also write $\Delta$ instead of $\Delta_{\VM}$.
For a minimal unit $\VM = \IM^\zeta$ containing $m$ sub-blocks, we represent its associated pivotal statistics $Y_{\VM}$ as the vector $\bm{Y} = (Y_{1}, \ldots, Y_{m})$, where each component corresponds to a distinct sub-block. Since 

Under Assumption~\ref{asmp:heavy-tokensa}, the set of NTP distributions in $\VM$ can be rewritten using the notation $\QM_{\tau,\bm{\Delta}}$ from~\eqref{eq:PM-inv-eta-new} as  
\begin{equation}
\label{eq:relation-P-Q}
\{\bP_{\VM}: \bP_{\VM} \subseteq \SPM\} = \bigcup_{\Delta \le \bm{\Delta} \le 1-\delta} \QM_{\VM,\bm{\Delta}}.
\end{equation}
We aim to show that 
\begin{equation}
\label{eq:target-asymptotic-expression}
\limsup_{|\Voca| \to \infty} \left[ \EB_0[h(\bm{Y})] + \sup_{\bP_{\VM} \subseteq \SPM} \log \EB_{1,\bP_{\VM}}[\exp(-h(\bm{Y}))] \right]  
= \sup_{\Delta \le \bm{\bDelta} } L'(h, \bm{\bDelta}),
\end{equation}
where $\bm{\bDelta} = (\bDelta_{1}, \ldots, \bDelta_{m_k})$, with each $\bDelta_{\ell}$ defined in~\eqref{eq:vector-Delta-bar}, and $L'$ is 
defined by (given in~\eqref{eq:new-sub-optimization})
\begin{equation}  
\tag{\ref{eq:new-sub-optimization}}
L'(h, \bm{\bDelta}) = \EB_{f_0}[h(\bm{Y})] + \log \EB_{f_{\bm{\bDelta}}}[\exp(-h(\bm{Y}))],
\end{equation}
where $f_0$ and $f_{\bm{\bDelta}}$ denote the asymptotic PDFs of $\bm{Y}$ under the null and alternative, given in Theorem \ref{thm:inverse-asymptotic-distribution} respectively.

To prove \eqref{eq:target-asymptotic-expression}, it follows that
\begin{align}
&\limsup_{|\Voca| \to \infty} \left[\EB_0[h(\bm{Y})] + \sup_{\bP_{\VM} \subseteq \SPM} \log \EB_{1,\bP_{\VM}}[\exp(-h(\bm{Y}))] \right]\nonumber \\
  &\overset{(a)}{=}\limsup_{|\Voca| \to \infty} \sup_{\Delta \le  \bm{\Delta} } \sup_{\bP_{\VM} \in \QM_{\tau,\bm{\Delta}}} \left[ \EB_0[h(\bm{Y})] + \log \EB_{1,\bP_{\VM}}[\exp(-h(\bm{Y}))]  \right] \nonumber \\
  &\overset{(b)}{=} \sup_{\Delta \le \bm{\Delta} \le 1-\delta} \sup_{\bP_{\VM} \in \QM_{\tau,\bm{\Delta}}} \limsup_{|\Voca| \to \infty}   \left[ \EB_0[h(\bm{Y})] + \log  \EB_{1,\bP_{\VM}}[\exp(-h(\bm{Y}))]  \right] \nonumber \\ 
  &\overset{(c)}{=}  \sup_{\Delta \le \bm{\Delta} \le 1-\delta} \sup_{\bP_{\VM} \in \QM_{\tau,\bm{\Delta}}} \left[ \EB_{f_0}[h(\bm{Y})] + \log  \EB_{f_{\bm{\bDelta}}}[\exp(-h(\bm{Y}))] \right] 
  \label{eq:expectation-differencewith-zero3} \\
    &\overset{(d)}{=}  \sup_{\Delta \le \bm{\bDelta} \le 1-\delta } \left[ \EB_{f_0}[h(\bm{Y})] + \log  \EB_{f_{\bm{\bDelta}}}[\exp(-h(\bm{Y}))] \right] \nonumber \\
    &= \sup_{\Delta \le \bm{\bDelta} \le 1-\delta }  L'(h, \bm{\bDelta}),
\end{align}
where (a) uses the equivalence in~\eqref{eq:relation-P-Q}, (b) follows by exchanging the order of the $\limsup$ and the suprema, which we will justify later, (c) follows from the weak convergence in Theorem~\ref{thm:inverse-perfect-general-technical-main}, and (d) simplifies the expression by eliminating the dependence on a single $\bP_{\VM}$ and replacing $\bm{\Delta}=(\Delta_t)_{t \in \VM}$ with  $\bm{\bDelta}=(\bDelta_{\ell})_{\ell \in [m_k]}$, where $\bDelta_{\ell} := \max_{t \in \IM^Y_{\ell}} \Delta_t$ for each $\ell$.
At this point, the proof is complete.

In the remainder, we establish the validity of the order exchange in step~(b) above.
To this end, let us introduce a test function $J: [0,1]^{m+1} \to \mathbb{R}$ defined by
\begin{align*}
  J(u, x_1,\ldots,x_m) = \exp\big(-h(|u - x_1|,\ldots,|u - x_m|)\big).
\end{align*}
Since $h$ is Lipschitz continuous and both the exponential and absolute value functions are locally Lipschitz on a bounded domain, their composition $J$ is also Lipschitz continuous.
Theorem~\ref{thm:inverse-perfect-general-technical-main} ensures that the convergence of the expectation of such a function is uniform. Specifically, it guarantees that
\begin{align}\label{eq:expectation-differencewith-zero2}
  \lim_{|\Voca| \to \infty} \sup_{\Delta \le \bm{\Delta} \le 1-\delta} \sup_{\bP_{\VM} \in \QM_{\tau,\bm{\Delta}}} 
  \left| \EB_{1, \bP_{\VM}}\big[J(U, \eta(\pi(w_1)),\ldots, \eta(\pi(w_m)))\big] - \mathcal{L}_{\bar{\bm{\Delta}}}(J) \right| = 0,
\end{align}
where $\mathcal{L}_{\bar{\bm{\Delta}}}(J)$ is the asymptotic integral form
\[
\mathcal{L}_{\bar{\bm{\Delta}}}(J) 
:= 
\int_{[0,1]^{m}} 
\int_{\max_\ell\{\Delta_{\ell} x_\ell\}}^{\min_\ell\{1-\Delta_{\ell} + \Delta_{\ell} x_\ell\}} 
\frac{J(u, x_1,\dots,x_{m})}{I_{m}(\bar{\bm{\Delta}})} \,\rd u \,
\mathbf{1}_{\{\min_i\{1-\Delta_{i} + \Delta_{i} x_i\} \ge \max_i\{\Delta_{i} x_i\}\}} 
\, \rd x_1 \cdots \rd x_{m},
\]
and $\bar{\bm{\Delta}}$ denotes the sub-block-level vector derived from $\bm{\Delta}$.
By the definition of $J$, the uniform convergence in~\eqref{eq:expectation-differencewith-zero2} is equivalent to the uniform convergence of the moment-generating function term:
\begin{align*}
  \lim_{|\Voca| \to \infty} \sup_{\Delta \le \bm{\Delta} \le 1-\delta} \sup_{\bP_{\VM} \in \QM_{\tau,\bm{\Delta}}} 
  \left| \EB_{1, \bP_{\VM}}\big[\exp(-h(\bm{Y}))\big] - \mathcal{L}_{\bar{\bm{\Delta}}}(\exp(-h)) \right| = 0,
\end{align*}
which is precisely~\eqref{eq:expectation-differencewith-zero2}.
Since $\mathcal{L}_{\bar{\bm{\Delta}}}(\exp(-h)) = \EB_{f_{\bm{\bDelta}}}[\exp(-h(\bm{Y}))]$ (by the equivalence in Corollary~\ref{cor:inverse-perfect-general-technical-main}), and the supremum is a nonexpansive operator (see Lemma~\ref{lem:Lipschitz-max}), we prove step (b).
\end{proof}

\subsection{Proof of Lemma~\ref{lem:inverse-optimal-rule-upper-bound-true}}
\label{sec:proof-lem-inverse-optimal-rule-upper-bound-truegood}

\begin{proof}[Proof of Lemma~\ref{lem:inverse-optimal-rule-upper-bound-true}]

Since we focus on a single block $\VM$, we write $\Delta$ instead of $\Delta_{\VM}$ for simplicity. In this lemma, the alternative PDF is evaluated at the homogeneous regularity-level vector $(\Delta, \ldots, \Delta)$ and therefore depends only on the single parameter $\Delta$.
For simplicity, we write
\[
f_{1,\Delta} := f_{(\Delta, \ldots, \Delta)},
\]
to emphasize its dependence on the single parameter $\Delta$.  
For brevity, we define the truncated log-likelihood ratio as
\begin{align}
  h_{\mathrm{opt},M}(\bm{y}) 
  := \Biggl[\log \frac{f_{1,\Delta}(\bm{y})}{f_0(\bm{y})}\Biggr]_{[-M,M]},
\end{align}
where $[\,\cdot\,]_{[-M,M]}$ denotes truncation to the interval $[-M,M]$.

\begin{lem}[Properties of the alternative PDF]
\label{lem:joint-density-function-propertiesa}
Let $f_{\bm{\bDelta}}$ denote the joint alternative PDF of the pivotal statistic 
vector $\bm{Y} = (Y_1, \ldots, Y_m)$ under $H_1$, where the regularity levels are 
$\bm{\bDelta} = (\bDelta_1, \ldots, \bDelta_m)$. 
Its explicit form is given in Theorem~\ref{thm:inverse-asymptotic-distribution}. 
Then, under Assumption~\ref{asmp:heavy-tokensa}, 
\begin{itemize}
\item $f_{\bm{\bDelta}}(\bm{y})$ is strictly positive for $\bm{y} \in \prod_{\ell=1}^m [0, 1-\bDelta_{\ell})$, and equals $0$ on the complement set $[0,1]^m \setminus \prod_{\ell=1}^m [0, 1-\bDelta_{\ell})$.
\item The mapping $(\bm{y}, \bm{\bDelta}) \mapsto f_{\bm{\bDelta}}(\bm{y})$ 
is Lipschitz continuous on its domain 
$[0,1]^m \times [0,1-\delta]^m$. 
That is, there exists a universal constant $C > 0$ such that for any 
$(\bm{y}, \bm{\bDelta}), (\bm{y}', \bm{\bDelta}') \in [0,1]^m \times [0,1-\delta]^m$,
\[
\big| f_{\bm{\bDelta}}(\bm{y}) - f_{\bm{\bDelta}'}(\bm{y}') \big|
\;\leq\; C  \cdot \Big( \|\bm{y} - \bm{y}'\|_\infty + \|\bm{\bDelta} - \bm{\bDelta}'\|_\infty \Big).
\]
\end{itemize}
\end{lem}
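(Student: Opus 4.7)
The plan is to treat the two parts of the lemma separately, relying on the closed-form summation representation of $f_{\bm{\bDelta}}$ established in Theorem~\ref{thm:inverse-asymptotic-distribution}.

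For the support characterization, the vanishing direction is the cleaner half. Suppose $y_{\ell^*} \ge 1 - \bDelta_{\ell^*}$ for some index $\ell^*$. I would show that each summand $(B_{\bm{\sigma}}^{\bm{\bDelta}}(\bm{y}) - A_{\bm{\sigma}}^{\bm{\bDelta}}(\bm{y}))_+$ vanishes by examining the sign $\sigma_{\ell^*}$. When $\sigma_{\ell^*} = +1$, the $\ell^*$-entry of $U_{\bm{\sigma}}$ contributes $1 - y_{\ell^*} \le \bDelta_{\ell^*}$ to the min, while the $\ell^*$-entry of $Y^+_{\bm{\sigma}}$ contributes $\bDelta_{\ell^*} y_{\ell^*} / (1 - \bDelta_{\ell^*}) \ge \bDelta_{\ell^*}$ to the max, forcing $B_{\bm{\sigma}} \le U_{\bm{\sigma}} \le \bDelta_{\ell^*} \le Y^+_{\bm{\sigma}} \le A_{\bm{\sigma}}$. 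The case $\sigma_{\ell^*} = -1$ is symmetric, with $L_{\bm{\sigma}}$ and $1 - Y^-_{\bm{\sigma}}$ playing the corresponding roles. For the positivity direction, the cleanest anchor is $\bm{y} = \bm{0}$: all four quantities collapse to $L = 0$, $U = 1$, $Y^{\pm} = 0$, so every summand equals $1$ and $f_{\bm{\bDelta}}(\bm{0}) = 2^m / I_m(\bm{\bDelta}) > 0$. The extension to the full open product $\prod_{\ell}(0, 1-\bDelta_\ell)$ then requires identifying, for each $\bm{y}$, a sign vector whose admissible $u$-interval is nonempty; the natural candidate is to match $\sigma_\ell = \operatorname{sgn}(u^* - x_\ell)$ for some central witness $u^*$.

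For the Lipschitz claim, the argument decomposes into a Lipschitz numerator and a uniformly positive Lipschitz denominator. On the numerator side, each building block is Lipschitz in $(\bm{y}, \bm{\bDelta})$ on the compact domain $[0,1]^m \times [0, 1-\delta]^m$: $L_{\bm{\sigma}}$ and $U_{\bm{\sigma}}$ are max/min of affine functions in $\bm{y}$ alone, while $Y^{\pm}_{\bm{\sigma}}$ involve the map $(\bDelta, y) \mapsto \bDelta y / (1 - \bDelta)$ whose gradient on $[0, 1-\delta] \times [0,1]$ is bounded by $1/\delta^2$. Since max, min, and positive part are each $1$-Lipschitz, every $(B_{\bm{\sigma}} - A_{\bm{\sigma}})_+$ is Lipschitz with constant depending only on $\delta$ and $m$. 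For the denominator, the integrand of $I_m(\bm{\bDelta})$ is itself Lipschitz and uniformly bounded, so $I_m$ is Lipschitz by dominated convergence; moreover, at $\bm{x} = \bm{0}$ the integrand equals $1 - \max_\ell \bDelta_\ell \ge \delta$, and its continuity in $\bm{x}$ extends this to a neighborhood of $\bm{0}$ of positive Lebesgue measure, yielding a uniform lower bound $I_m(\bm{\bDelta}) \ge c(m, \delta) > 0$. The quotient rule for Lipschitz functions then delivers the claimed joint Lipschitz bound with universal constant depending only on $m$ and $\delta$.

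The main obstacle is the positivity direction of the support claim. A naive choice of sign vector---e.g.\ the all-$+1$ vector---can fail to produce a strictly positive summand when the $\bDelta_\ell$ are highly heterogeneous, since the $u$-interval induced by a sub-block with large $\bDelta_\ell$ may be geometrically incompatible with that from a sub-block with small $\bDelta_\ell$ even when both $y_\ell$ satisfy the marginal constraint $y_\ell < 1 - \bDelta_\ell$. Resolving this will require either an adaptive choice of $\bm{\sigma}$ built from the relative magnitudes of $y_\ell$ and $\bDelta_\ell$, or a probabilistic coupling argument that explicitly constructs admissible $(u, x_1, \ldots, x_m)$ with $|u - x_\ell| = y_\ell$ satisfying both the box constraints $x_\ell \in [0,1]$ and the coupling constraints $u \in [\bDelta_\ell x_\ell, 1 - \bDelta_\ell + \bDelta_\ell x_\ell]$. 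The Lipschitz analysis, by contrast, is routine once the $\delta$-dependent constants are tracked carefully through the layered max/min structure.
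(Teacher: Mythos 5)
Your vanishing direction and Lipschitz arguments coincide with the paper's proof: it likewise handles the vanishing half by case analysis on the sign $\sigma_{\ell^*}$ when $y_{\ell^*}\ge 1-\bDelta_{\ell^*}$, and it obtains the Lipschitz bound by noting each $(B_{\bm{\sigma}}^{\bm{\bDelta}}-A_{\bm{\sigma}}^{\bm{\bDelta}})_+$ is a finite composition of affine maps, $\max$, $\min$, and $(\cdot)_+$, with Lipschitz constants depending only on $(m,\delta)$. Your explicit $1/\delta^2$ gradient bound for $(\bDelta,y)\mapsto\bDelta y/(1-\bDelta)$ and the lower bound on $I_m(\bm{\bDelta})$ via positivity of the integrand near $\bm{x}=\bm{0}$ are the right quantitative details; the paper states these conclusions at a coarser level.

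The obstacle you flagged in the positivity direction is genuine, and the paper's own proof falls into it: it takes $\bm{\sigma}=(1,\ldots,1)$ and asserts that $U_{\bm{\sigma}}(\bm{y})=\min_\ell(1-y_\ell)\ge\bDelta_\ell+\eps$ ``for all $\ell$,'' hence $B_{\bm{\sigma}}^{\bm{\bDelta}}\ge\max_\ell\bDelta_\ell+\eps$, but $y_\ell\le 1-\bDelta_\ell-\eps$ only gives $U_{\bm{\sigma}}\ge\min_\ell(\bDelta_\ell+\eps)$, which can be far smaller when the $\bDelta_\ell$ differ. Positivity on the full product does in fact fail for heterogeneous $\bm{\bDelta}$: take $m=2$, $\bm{\bDelta}=(0.1,0.8)$, $\bm{y}=(0.85,0.15)\in[0,0.9)\times[0,0.2)$; a direct check shows $(B_{\bm{\sigma}}^{\bm{\bDelta}}-A_{\bm{\sigma}}^{\bm{\bDelta}})_+=0$ for all four sign vectors, so $f_{\bm{\bDelta}}(\bm{y})=0$. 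Geometrically, the coupling constraint $u\in[0.8\,x_2,\,0.2+0.8\,x_2]$ confines $u$ near $x_2$, which together with $|u-x_1|=0.85$ and $x_1\in[0,1]$ leaves no $u$-interval of positive measure. The saving grace is that positivity is only ever invoked (in the proof of Lemma~\ref{lem:inverse-optimal-rule-upper-bound-true}) for the homogeneous vector $\bm{\bDelta}=(\Delta,\ldots,\Delta)$, including $\Delta=0$ for $f_0$, where the all-$(+1)$ choice yields $B_{\bm{\sigma}}^{\bm{\bDelta}}-A_{\bm{\sigma}}^{\bm{\bDelta}}=1-\max_\ell y_\ell/(1-\Delta)>0$ on $[0,1-\Delta)^m$. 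So your instinct to distrust the naive sign vector was correct; the cleanest fix is to weaken the positivity clause in the lemma statement to the homogeneous case, which is all the downstream argument uses.
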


Before proceeding with the proof, we first establish some properties of the PDF $f_{1,\Delta}$. The proof of this lemma is provided in Section~\ref{sec:proof-lem-joint-density-function-propertiesal5}.

With $L'$ defined in~\eqref{eq:new-sub-optimization}, we can decompose 
\begin{align}
\label{eq:decomposed-L'}
  L'(h_{\text{opt},M}, \bm{\bDelta}') 
  &= \underset{\text{(I)}}{\underbrace{\EB_{f_0}[h_{\text{opt},M}(\bm{Y})]}} 
  + \underset{\text{(II)}}{\underbrace{\log \EB_{f_{\bm{\bDelta}'}}[\exp(-h_{\text{opt},M}(\bm{Y}))]}}.
\end{align}
We then bound terms (I) and (II) separately.

\paragraph{Analysis of Term (I).}
By Lemma~\ref{lem:joint-density-function-propertiesa}, $f_{1,\Delta}$ is supported on $[0, 1-\Delta)^m$ and is Lipschitz continuous in $(\bm{y}, \Delta)$. Since $f_0$ coincides with $f_{1,\Delta}$ when $\Delta=0$ (see Corollary~\ref{cor:inverse-perfect-general-technical-main} and Lemma~\ref{lem:asymp-H0}), the null density $f_0(\bm{y})$ is continuous and strictly positive on $[0,1)^m$. This ensures the existence of a finite upper bound for the following likelihood ratio:
\[
    M' := \sup_{\Delta' \in [0,1]} \frac{\sup_{\bm{y} \in [0,1]^m} f_{1,\Delta'}(\bm{y})}{\inf_{\bm{y} \in [0,1-\Delta)^m} f_0(\bm{y})} < \infty.
\]
Importantly, $M'$ is independent of the clipping threshold $M$. When $M \ge |\log M'|$, the log-likelihood ratio $\log \tfrac{f_{1,\Delta}(\bm{Y})}{f_0(\bm{Y})}$ never exceeds $M$, so the score function $h_{\text{opt},M}$ can only be clipped at $-M$, which occurs when $\bm{Y}$ lies outside the support of $f_{\bm{\bDelta}}$, that is, outside $[0,1-\Delta)^m$.
\begin{align}
\EB_{f_0}[h_{\text{opt},M}(\bm{Y})] 
&= \EB_{f_0}\left[\log \frac{f_{1,\Delta}(\bm{Y})}{f_0(\bm{Y})}\right]_{[-M,M]} \nonumber \\
&= \int_{[0,1-\Delta)^m} \left[\log \frac{f_{1,\Delta}(\bm{y})}{f_0(\bm{y})}\right]_{[-M,M]} f_0(\bm{y}) \,\rd\bm{y} 
+ \int_{[0,1]^m \setminus [0,1-\Delta)^m} (-M) f_0(\bm{y}) \,\rd \bm{y} \nonumber \\
&\leq \int_{[0,1-\Delta)^m} (\log M') \cdot f_0(\bm{y}) \,\rd \bm{y} 
- M \cdot \PB_{f_0}(\bm{Y} \notin [0,1-\Delta)^m) \nonumber \\
&\leq (\log M') \cdot \PB_{f_0}(\bm{Y} \in [0,1-\Delta)^m) 
- M \cdot \bigl(1 - \PB_{f_0}(\bm{Y} \in [0,1-\Delta)^m)\bigr). 
\label{eq:bound-term-I-intermediate}
\end{align}
Although the probability $\PB_{f_0}(\bm{Y} \in [0,1-\Delta)^m)$ depends on the shape of $f_0$, it is bounded away from both $0$ and $1$ when $0 < \Delta < 1$. Hence, there exist positive constants $c$ and $C$ such that
\begin{align}
\EB_{f_0}[h_{\text{opt},M}(\bm{Y})] \leq -M c + C,
\label{eq:ell-sigma-delta-positiveargue2com5}
\end{align}
where $c$ and $C$ depend only on the fixed $\Delta$ used to define the score function $h_{\text{opt},M}$, but are independent of $M$ and $\bm{\Delta}'$, the latter introduced in (II).

\paragraph{Analysis of Term (II).}
We now analyze term (II), which takes the form of an integral over $[0,1]^m$
\begin{align*}
  &\EB_{f_{\bar{\bm{\Delta}}'}}[\exp(-h_{\text{opt},M}(\bm{Y}))] = \int_{[0,1]^m} \exp\left(-\left[\log \frac{f_{\bm{\bDelta}}(\bm{y})}{f_0(\bm{y})}\right]_{[-M,M]}\right) f_{\bar{\bm{\Delta}}'}(\bm{y})\,\rd\bm{y},
\end{align*}
where $\bar{\bm{\Delta}} = (\Delta, \ldots, \Delta)$ and $\bar{\bm{\Delta}}' = (\Delta_1', \ldots, \Delta_m') \in \RB^m$. 
By Lemma~\ref{lem:joint-density-function-propertiesa}, the PDF $f_{\bar{\bm{\Delta}}'}$ vanishes outside the set $\prod_{\ell=1}^m [0, 1-\Delta_\ell')$. Hence, the domain of integration can be restricted to this support without altering the integral:
\begin{align*}
  \EB_{f_{\bar{\bm{\Delta}}'}}[\exp(-h_{\text{opt},M}(\bm{Y}))] = \int_{\prod_{\ell=1}^m [0,1-\Delta'_\ell)} \exp\left(-\left[\log \frac{f_{\bm{\bDelta}}(\bm{y})}{f_0(\bm{y})}\right]_{[-M,M]}\right) f_{\bar{\bm{\Delta}}'}(\bm{y})\,\rd\bm{y}.
\end{align*}

From the analysis of term (I), we know that $\log \tfrac{f_{\bm{\bDelta}}(\bm{y})}{f_0(\bm{y})}$ never exceeds $M$ once $M \ge |\log M'|$. Therefore, the clipping interval $[-M, M]$ can safely be replaced by $[-M, \infty)$. This yields
\begin{align}
\EB_{f_{\bar{\bm{\Delta}}'}}[\exp(-h_{\text{opt},M}(\bm{Y}))] 
&= \int_{\prod_{\ell=1}^m [0,1-\Delta'_\ell)}\exp\left(-\left[\log \frac{f_{\bm{\bDelta}}(\bm{y})}{f_0(\bm{y})}\right]_{[-M,\infty)}\right) f_{\bar{\bm{\Delta}}'}(\bm{y}) \,\rd \bm{y} \nonumber \\
&\overset{(a)}{\le} \int_{\prod_{\ell=1}^m [0,1-\Delta'_\ell)} \exp\left(-\log \frac{f_{\bm{\bDelta}}(\bm{y})}{f_0(\bm{y})}\right) f_{\bar{\bm{\Delta}}'}(\bm{y}) \,\rd\bm{y} \nonumber \\
&= \int_{\prod_{\ell=1}^m [0,1-\Delta'_\ell)} \frac{f_{\bar{\bm{\Delta}}'}(\bm{y})}{f_{\bm{\bDelta}}(\bm{y})} f_0(\bm{y})  \,\rd\bm{y}, \nonumber \\
&\overset{(b)}{\le} \int_{\prod_{\ell=1}^m [0,1-\Delta'_\ell)}f_0(\bm{y})\,\rd\bm{y} \cdot R \le R, 
\label{eq:ell-sigma-delta-positiveargue2com4}
\end{align}
where (a) holds because removing the lower clipping at $-M$ can only increase the integral, and (b) follows from the uniform boundedness of $\tfrac{f_{\bar{\bm{\Delta}}'}(\bm{y})}{f_{\bm{\bDelta}}(\bm{y})}$ established in Lemma~\ref{lem:bounded-ratio}.

\begin{lem}[Uniformly bounded density ratio]
\label{lem:bounded-ratio}
There exists a constant $R > 0$, independent of $M$ and $\bm{\bDelta}'$, such that
\[
\sup_{\Delta \le \bm{\bDelta}' \le 1-\delta}
\;\sup_{\bm{y} \in \prod_{\ell=1}^m [0,1-\Delta'_\ell)}
\left| \frac{f_{\bar{\bm{\Delta}}'}(\bm{y})}{f_{\bm{\bDelta}}(\bm{y})} \right|
\;\le\; R.
\]
\end{lem}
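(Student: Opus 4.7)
The plan is to obtain the bound by direct pointwise comparison of the densities, exploiting monotonicity in the parameter vector and then a compactness argument for the normalization constant. Since $\bar{\bm{\Delta}}' \ge \bar{\bm{\Delta}} = (\Delta,\ldots,\Delta)$ componentwise, the support $\prod_{\ell=1}^m [0, 1-\Delta_\ell')$ of $f_{\bar{\bm{\Delta}}'}$ is contained in $\prod_{\ell=1}^m [0, 1-\Delta)$, the support of $f_{\bar{\bm{\Delta}}}$. On this common region, Lemma~\ref{lem:joint-density-function-propertiesa} guarantees that $f_{\bar{\bm{\Delta}}}(\bm{y})>0$, so the denominator is nonzero and the ratio is well-defined.

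First, I would establish the key monotonicity property: for every sign vector $\bm{\sigma} \in \{-1,1\}^m$, the unnormalized length $\ell_{\bm{\sigma}}^{\bm{\bDelta}}(\bm{y}) := (B_{\bm{\sigma}}^{\bm{\bDelta}}(\bm{y}) - A_{\bm{\sigma}}^{\bm{\bDelta}}(\bm{y}))_+$ is \emph{non-increasing} in $\bm{\bDelta}$ coordinatewise. Indeed, the map $t \mapsto t/(1-t)$ is non-decreasing on $[0,1)$, so increasing any $\bDelta_\ell$ can only enlarge $Y_{\bm{\sigma}}^+(\bm{y})$ and $Y_{\bm{\sigma}}^-(\bm{y})$, which in turn makes $A_{\bm{\sigma}}^{\bm{\bDelta}}(\bm{y})$ larger and $B_{\bm{\sigma}}^{\bm{\bDelta}}(\bm{y})$ smaller, shrinking $(B-A)_+$. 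Summing over all $2^m$ sign vectors gives, pointwise in $\bm{y}$,
\[
\sum_{\bm{\sigma} \in \{-1,1\}^m} \ell_{\bm{\sigma}}^{\bar{\bm{\Delta}}'}(\bm{y}) \;\le\; \sum_{\bm{\sigma} \in \{-1,1\}^m} \ell_{\bm{\sigma}}^{\bar{\bm{\Delta}}}(\bm{y}).
\]
Dividing by $I_m(\bar{\bm{\Delta}}')$ and $I_m(\bar{\bm{\Delta}})$ respectively, this yields the clean bound
\[
\frac{f_{\bar{\bm{\Delta}}'}(\bm{y})}{f_{\bar{\bm{\Delta}}}(\bm{y})} \;\le\; \frac{I_m(\bar{\bm{\Delta}})}{I_m(\bar{\bm{\Delta}}')},
\]
so the problem reduces to controlling the ratio of normalization constants, uniformly over $\bar{\bm{\Delta}}' \in [\Delta, 1-\delta]^m$.

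Second, I would show that $I_m(\bar{\bm{\Delta}}')$ is bounded away from zero on this compact parameter box. Evaluating the integrand at $\bm{x} = \bm{0}$ gives $\min_\ell(1 - \bDelta'_\ell + \bDelta'_\ell \cdot 0) - \max_\ell \bDelta'_\ell \cdot 0 \ge \delta > 0$ whenever $\bar{\bm{\Delta}}' \le 1-\delta$, so by continuity there is a neighborhood of the origin on which the integrand exceeds $\delta/2$, giving $I_m(\bar{\bm{\Delta}}') > 0$. Continuity of $I_m$ as a function of $\bar{\bm{\Delta}}'$ follows from the Lipschitz property in Lemma~\ref{lem:joint-density-function-propertiesa} (or directly from the dominated convergence theorem applied to the bounded integrand). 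Combining continuity with compactness of $[\Delta, 1-\delta]^m$ yields $\inf_{\bar{\bm{\Delta}}' \in [\Delta, 1-\delta]^m} I_m(\bar{\bm{\Delta}}') > 0$, and the lemma follows with $R := I_m(\bar{\bm{\Delta}}) / \inf_{\bar{\bm{\Delta}}'} I_m(\bar{\bm{\Delta}}')$.

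The main obstacle is the monotonicity step: one must verify carefully that the coordinatewise ordering $\bar{\bm{\Delta}}' \ge \bar{\bm{\Delta}}$ actually produces the claimed ordering of $(B_{\bm{\sigma}}^{\bm{\bDelta}} - A_{\bm{\sigma}}^{\bm{\bDelta}})_+$ for every $\bm{\sigma}$. This is essentially a term-by-term inspection, but it is the crucial observation that turns a potentially delicate boundary-behavior argument (both numerator and denominator vanish as $y_\ell \to 1-\Delta_\ell'$) into a trivial pointwise inequality. Once that monotonicity is in hand, the remaining analysis is just continuity plus compactness, which is routine.
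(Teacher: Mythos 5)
Your proof is correct, and it takes a genuinely different route from the paper's. The paper argues locally near the boundary: it uses the Lipschitz continuity of $f_{\bar{\bm{\Delta}}'}$ in $(\bm{y}, \bm{\bDelta})$ from Lemma~\ref{lem:joint-density-function-propertiesa} to bound the numerator above by $C\,(1-\Delta-\max_\ell y_\ell)$ (since $f_{\bar{\bm{\Delta}}'}$ vanishes at the face $y_{\ell^\star}=1-\Delta_{\ell^\star}'$ of its support), and retains only the single sign vector $\bm{\sigma}'=(-1,\ldots,-1)$ and the crude estimate $I_m(\bm{\bDelta})\le 1$ to bound the denominator below by $1-\frac{\max_\ell y_\ell}{1-\Delta}=\frac{1-\Delta-\max_\ell y_\ell}{1-\Delta}$; the two linear factors cancel and give $R=C(1-\Delta)\le C$. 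Your argument is global and structural: the term-by-term monotonicity $\ell_{\bm{\sigma}}^{\bar{\bm{\Delta}}'}(\bm{y})\le\ell_{\bm{\sigma}}^{\bar{\bm{\Delta}}}(\bm{y})$ (which you verify correctly — $L_{\bm{\sigma}}, U_{\bm{\sigma}}$ are $\bm{\bDelta}$-free and $Y^{\pm}_{\bm{\sigma}}$ are coordinatewise non-decreasing in $\bm{\bDelta}$ because $t\mapsto t/(1-t)$ is increasing) collapses the pointwise density ratio to at most $I_m(\bar{\bm{\Delta}})/I_m(\bar{\bm{\Delta}}')$, which you then control by continuity plus compactness of the parameter box $[\Delta,1-\delta]^m$. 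This is tidier and completely sidesteps the delicate ``both numerator and denominator tend to zero'' boundary behavior that the paper has to reconcile explicitly, at the modest price of having to establish that $I_m(\cdot)$ is bounded away from zero on the box (which your evaluation at $\bm{x}=\bm{0}$ handles cleanly). The constant you obtain ($I_m(\bar{\bm{\Delta}})/\inf I_m$) is different from the paper's Lipschitz constant $C$ but equally legitimate. Both approaches rely on Lemma~\ref{lem:joint-density-function-propertiesa}, though you only use the support characterization and positivity, not the Lipschitz estimate itself.
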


Combining \eqref{eq:decomposed-L'}, \eqref{eq:ell-sigma-delta-positiveargue2com5}, and \eqref{eq:ell-sigma-delta-positiveargue2com4}, we conclude that there exist positive constants $c, C, R > 0$, independent of $\bm{\bDelta}'$ and $M$, such that
\[
L'(h_{\text{opt},M}, \bm{\bDelta}')  \le - M c + (C+R).
\]
Taking the supremum over $\bm{\bDelta}'$ and then letting $M \to \infty$ gives
\[
 \liminf_{M \to \infty} \;\sup_{\Delta \leq \bm{\bDelta}' \le 1-\delta} 
 L'(h_{\text{opt},M}, \bm{\bDelta}') = -\infty.
\]

\end{proof}

Finally, we provide the proof of Lemma~\ref{lem:bounded-ratio} below.
\begin{proof}[Proof of Lemma \ref{lem:bounded-ratio}]
Fix any $\bm{y} = (y_1, \ldots, y_m) \in \prod_{\ell=1}^m [0, 1- \Delta_{\ell}')$.
Without loss of generality, assume $y_1$ is the largest coordinate of $\bm{y}$.  
By definition, we have $y_1 < 1-\Delta_1' \le 1-\Delta$.  
We then construct the auxiliary vector $\bm{y}' = (1-\Delta_1', y_2, \ldots, y_m)$, which differs from $\bm{y}$ only in its largest entry.  
By the Lipschitz continuity in Lemma~\ref{lem:joint-density-function-propertiesa}, we know $f_{\bar{\bm{\Delta}}'}(\bm{y}') = 0$, and 
\begin{equation}
\label{eq:ratio-numeriator}
|f_{\bar{\bm{\Delta}}'}(\bm{y})|
\le |f_{\bar{\bm{\Delta}}'}(\bm{y}) -  f_{\bar{\bm{\Delta}}'}(\bm{y}')|
\le C \cdot \|\bm{y}-\bm{y}'\| 
\le C \cdot \left(1-\Delta - \max_{\ell \in [m]} y_\ell\right).
\end{equation}
where the constant $C$, given in Lemma~\ref{lem:joint-density-function-propertiesa}, is independent of $M$ and of $\bm{\bDelta}'$.  

On the other hand, Theorem~\ref{thm:inverse-asymptotic-distribution} gives
\begin{align}
f_{\bm{\bDelta}}(\bm{y}) 
&= I_m(\bm{\bDelta})^{-1} \sum_{\bm{\sigma} \in \{-1,1\}^m} \left(B_{\bm{\sigma}}^{\bm{\bDelta}}(\bm{y})-A_{\bm{\sigma}}^{\bm{\bDelta}}(\bm{y})\right)_+\nonumber \\
&\overset{(a)}{\ge} \left(B_{\bm{\sigma}'}^{\bm{\bDelta}'}(\bm{y})-A_{\bm{\sigma}'}^{\bm{\bDelta}}(\bm{y})\right)_+
\overset{(b)}{=} 1 - \frac{\max_{\ell \in [m]} y_\ell}{1-\Delta},
\label{eq:ratio-denumeriator}
\end{align}
where (a) uses the fact that $I_m(\bm{\bDelta}) \le 1$ (since it is a probability) and keeps only the term with $\bm{\sigma}' = (-1, \ldots, -1)$, while (b) follows directly from the definitions of $B_{\bm{\sigma}'}^{\bm{\bDelta}}(\bm{y})$ and $A_{\bm{\sigma}'}^{\bm{\bDelta}}(\bm{y})$.  

Combining \eqref{eq:ratio-numeriator} and \eqref{eq:ratio-denumeriator} completes the proof with $R = C$.  
\end{proof}

\subsection{Proof of Lemma~\ref{lem:joint-density-function-propertiesa}}
\label{sec:proof-lem-joint-density-function-propertiesal5}

\begin{proof}[Proof of Lemma~\ref{lem:joint-density-function-propertiesa}]
From Theorem \ref{thm:inverse-asymptotic-distribution}, we know that
\begin{align}
  f_{\bm{\bDelta}}
  (\bm{y}) 
  = \frac{1}{I_{m}(\bm{\bDelta})} \sum_{\bm{\sigma} \in \{-1,1\}^{m}} \left( B_{\bm{\sigma}}^{\bm{\bDelta}}(\bm{y}) - A_{\bm{\sigma}}^{\bm{\bDelta}}(\bm{y}) \right)_+,
\end{align}
where for each sign vector $\bm{\sigma} = (\sigma_1, \ldots, \sigma_{m_k}) \in \{-1,1\}^{m_k}$ and input $\bm{y} = (y_1, \ldots, y_{m})$,
\begin{align*}
  L_{\bm{\sigma}}(\bm{y}) &:= \max_{\ell \in [m]} (-\sigma_\ell y_\ell), & 
  U_{\bm{\sigma}}(\bm{y}) &:= \min_{\ell \in [m]} (1 - \sigma_\ell y_\ell), \\
  Y^+_{\bm{\sigma}}(\bm{y}) &:= \left( \max_{\ell: \sigma_{\ell}=1}\frac{\bDelta_{\ell} }{1 - \bDelta_{\ell}} \cdot y_\ell \right)_+, & 
  Y^-_{\bm{\sigma}}(\bm{y}) &:= \left( \max_{\ell: \sigma_{\ell}=-1}\frac{\bDelta_{\ell} }{1 - \bDelta_{\ell}} \cdot y_\ell \right)_+, \\
  A_{\bm{\sigma}}^{\bm{\bDelta}}(\bm{y}) &:= \max\left\{ L_{\bm{\sigma}}(\bm{y}),\ Y^+_{\bm{\sigma}}(\bm{y}) \right\}, & 
  B_{\bm{\sigma}}^{\bm{\bDelta}}(\bm{y}) &:= \min\left\{ U_{\bm{\sigma}}(\bm{y}),\ 1 - Y^-_{\bm{\sigma}}(\bm{y}) \right\},
\end{align*}
with $(x)_+ := \max(x, 0)$, and the normalization constant $I_{m}(\bm{\bDelta}_k)$ is given by
\[
I_{m}(\bm{\bDelta}) := \int_{[0,1]^{m}} \left( \min_{\ell \in [m]} \{1 - \bDelta_{k,\ell} + \bDelta_{k,\ell} x_\ell\} - \max_{\ell \in [m]} \{\bDelta_{k,\ell} x_\ell\} \right)_+ \rd x_1 \cdots \rd x_{m}.
\]

\paragraph{Part 1: Support of $f_{\bm{\bDelta}}$.}
We first show that $f_{\bm{\bDelta}}(\bm{y}) = 0$ whenever $\bm{y}\notin \prod_{\ell=1}^m [0,1-\bDelta_\ell)$.
Take any $\bm{y}\in [0,1]^m$ with $y_\ell \ge 1-\bDelta_\ell$ for some index $\ell$. Consider two cases depending on $\sigma_\ell$:
\begin{enumerate}
  \item If $\sigma_\ell = 1$, then
  \[
  Y^+_{\bm{\sigma}}(\bm{y}) \;\ge\; \tfrac{\bDelta_\ell}{1-\bDelta_\ell}(1-\bDelta_\ell) = \bDelta_\ell,
  \quad
  U_{\bm{\sigma}}(\bm{y}) \;\le\; 1 - (1-\bDelta_\ell) = \bDelta_\ell.
  \]
  Hence, $A_{\bm{\sigma}}^{\bm{\bDelta}}(\bm{y}) \ge \bDelta_\ell$ and 
  $B_{\bm{\sigma}}^{\bm{\bDelta}}(\bm{y}) \le \bDelta_\ell$, so that 
  $(B_{\bm{\sigma}}^{\bm{\bDelta}}-A_{\bm{\sigma}}^{\bm{\bDelta}})_+ = 0$.
  \item If $\sigma_\ell = -1$, then
  \[
  Y^-_{\bm{\sigma}}(\bm{y}) \;\ge\; \tfrac{\bDelta_\ell}{1-\bDelta_\ell}(1-\bDelta_\ell)=\bDelta_\ell,
  \quad
  L_{\bm{\sigma}}(\bm{y}) \;\ge\; 1-\bDelta_\ell.
  \]
  Consequently, $A_{\bm{\sigma}}^{\bm{\bDelta}}(\bm{y}) \ge 1-\bDelta_\ell$ while 
  $B_{\bm{\sigma}}^{\bm{\bDelta}}(\bm{y}) \le 1-\bDelta_\ell$, so again 
  $(B_{\bm{\sigma}}^{\bm{\bDelta}}-A_{\bm{\sigma}}^{\bm{\bDelta}})_+ = 0$.
\end{enumerate}
Since this holds for every $\bm{\sigma}$, the entire sum vanishes and $f_{\bm{\bDelta}}(\bm{y})=0$.  
Thus the support is contained in $\prod_{\ell=1}^m [0,1-\bDelta_\ell)$.

Conversely, let $\bm{y}\in \prod_{\ell=1}^m [0,1-\bDelta_\ell-\eps]$ for any small $\eps>0$.  
Take $\bm{\sigma} = (1,\ldots,1)$. Then
\[
L_{\bm{\sigma}}(\bm{y}) \le 0, 
\quad 
U_{\bm{\sigma}}(\bm{y}) \ge \bDelta_\ell + \eps \ \text{for all } \ell,
\quad 
Y^-_{\bm{\sigma}}(\bm{y}) = 0,
\]
and
\[
Y^+_{\bm{\sigma}}(\bm{y}) 
\le \max_\ell \tfrac{\bDelta_\ell}{1-\bDelta_\ell}(1-\bDelta_\ell-\eps)
\le \max_\ell\bDelta_\ell - \min_{\ell}\tfrac{\bDelta_\ell}{1-\bDelta_\ell} \eps.
\]
Hence,
\[
A_{\bm{\sigma}}^{\bm{\bDelta}}(\bm{y}) = Y^+_{\bm{\sigma}}(\bm{y}) 
  \le \max_\ell\bDelta_\ell - \min_{\ell}\tfrac{\bDelta_\ell}{1-\bDelta_\ell} \eps,
\quad
B_{\bm{\sigma}}^{\bm{\bDelta}}(\bm{y}) = U_{\bm{\sigma}}(\bm{y}) \ge \max_\ell \bDelta_\ell+\eps.
\]
By Assumption~\ref{asmp:heavy-tokensa}, $0 < \bDelta_\ell<1$, so the gap
\[
B_{\bm{\sigma}}^{\bm{\bDelta}}(\bm{y}) - A_{\bm{\sigma}}^{\bm{\bDelta}}(\bm{y})
  \;\ge\; \frac{\eps}{1-\max_\ell \bDelta_\ell} > 0.
\]
Thus, at least one term in the summation is strictly positive, and 
$f_{\bm{\bDelta}}(\bm{y}) > 0$.  
This proves that $f_{\bm{\bDelta}}$ is strictly positive on 
$\prod_{\ell=1}^m [0,1-\bDelta_\ell)$ and zero elsewhere.

\paragraph{Part 2: Lipschitz continuity.}

The joint density $f_{\bm{\bDelta}}(\bm{y})$ is uniformly continuous in 
$(\bm{y},\bm{\bDelta})$ on $[0,1]^m\times[0,1-\delta]^m$, since it is given by a finite sum of continuous functions on a compact domain. To strengthen this to Lipschitz continuity, recall from Theorem~\ref{thm:inverse-asymptotic-distribution} that each summand in the representation of $f_{\bm{\bDelta}}$ is constructed from a finite combination of linear functions, maxima, minima, and positive-part operators. Each of these building blocks is Lipschitz continuous with constants depending only on $(m, \delta)$, and finite maxima/minima of Lipschitz functions remain Lipschitz with constants given by the maximum of the individual constants. 

Therefore, every summand is Lipschitz continuous with respect to $(\bm{y},\bm{\bDelta})$, uniformly on the domain $[0,1]^m\times[0,1-\delta]^m$. Since $f_{\bm{\bDelta}}$ is a finite sum of such summands, it follows that $f_{\bm{\bDelta}}$ itself is Lipschitz continuous, with a constant depending only on $(m,\delta)$, but independent of $(\bm{y},\bm{\bDelta})$. 
 
   \end{proof}

\subsection{Auxiliary Lemmas}

\begin{lem}\label{lem:I_k_Delta}
Let
\[
I_m(\Delta)
=\int_{[0,1]^m}\bigl(1-\Delta-\Delta\,D(x_1,\ldots,x_m)\bigr)_+\,\rd x_1\cdots \rd x_m,
\quad
D(x_1,\ldots,x_m)=\max_{1\le i\le m}x_i-\min_{1\le i\le m}x_i.
\]
Then for $0\le\Delta < 1$, we have the closed form
\[
I_m(\Delta)
=\begin{cases}
1 - \dfrac{2m}{m+1}\,\Delta, & 0 \le \Delta \le \tfrac{1}{2}, \\[1em]
\displaystyle
\left(\frac{1-\Delta}{\Delta}\right)^m
\left[\,1 - \frac{2m(1-\Delta)}{m+1}\right], & \tfrac{1}{2} < \Delta < 1.
\end{cases}
\]
Moreover, $I_m(\Delta)$ is uniformly continuous on $[0, 1-\delta]$ for any given $\delta \in (0, 1)$.
\end{lem}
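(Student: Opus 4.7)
The plan is to observe that $D(x_1,\ldots,x_m) = \max_i x_i - \min_i x_i$ is the sample range of $m$ i.i.d.\ $\mathrm{Unif}(0,1)$ variables, which has a known density; this collapses the $m$-dimensional integral to a one-dimensional expectation that can be evaluated explicitly. The integrand $(1-\Delta-\Delta\,D)_+$ only depends on $D$, so the rest is bookkeeping.

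First I would record that, for $X_1,\ldots,X_m$ i.i.d.\ $\mathrm{Unif}(0,1)$ with $m\ge 2$, the range $R := X_{(m)} - X_{(1)}$ has density $f_R(r) = m(m-1)\,r^{m-2}(1-r)$ on $[0,1]$ (the case $m=1$ is degenerate since $D\equiv 0$, giving $I_1(\Delta)=1-\Delta$, which agrees with both branches of the stated formula). Hence
\[
I_m(\Delta) = \EB\bigl[(1-\Delta-\Delta R)_+\bigr] = \int_0^1 (1-\Delta-\Delta r)_+ \, m(m-1)\, r^{m-2}(1-r) \, \rd r.
\]
Next I would split on the sign of the integrand: $1-\Delta-\Delta r \ge 0$ on all of $[0,1]$ if and only if $\Delta \le 1/2$, while for $\Delta \in (1/2,1)$ the integrand is positive only on $[0, r^\star]$ with $r^\star := (1-\Delta)/\Delta \in (0,1)$.

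In the first regime the positive part is vacuous, so $I_m(\Delta) = 1-\Delta-\Delta\,\EB[R]$. A direct computation yields $\EB[R] = \int_0^1 m(m-1) r^{m-1}(1-r)\,\rd r = (m-1)/(m+1)$, which gives $I_m(\Delta) = 1 - \tfrac{2m}{m+1}\Delta$, matching the claim. In the second regime, expanding $(1-\Delta-\Delta r)(1-r) = (1-\Delta) - r + \Delta r^2$ and integrating term-by-term on $[0,r^\star]$ yields
\[
I_m(\Delta) = m(1-\Delta)(r^\star)^{m-1} \;-\; (m-1)(r^\star)^m \;+\; \frac{m(m-1)\Delta}{m+1}(r^\star)^{m+1}.
\]
Using the key identity $\Delta\, r^\star = 1-\Delta$ to reduce every power to $(r^\star)^m$, the bracket collapses to $\tfrac{2m\Delta-(m-1)}{m+1}$, which equals $1 - \tfrac{2m(1-\Delta)}{m+1}$, yielding the claimed closed form.

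Finally, for continuity, each branch is manifestly smooth in $\Delta$ on its open interval, and a substitution check at $\Delta = 1/2$ shows both branches equal $1/(m+1)$, so $I_m$ is continuous on all of $[0,1)$. Restricting to the compact set $[0,1-\delta]$ then upgrades continuity to uniform continuity. The only nontrivial step is the algebraic simplification in the $\Delta > 1/2$ case; it is routine but requires the substitution $\Delta\, r^\star = 1-\Delta$ to expose the cancellation that converts the three-term polynomial in $r^\star$ into the compact product form appearing in the statement.
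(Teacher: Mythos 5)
Your proposal is correct and follows essentially the same route as the paper's proof: both reduce the $m$-dimensional integral to a one-dimensional expectation against the known range density $f_D(r)=m(m-1)r^{m-2}(1-r)$, split at $\Delta=1/2$ according to whether $r_0=(1-\Delta)/\Delta$ exceeds $1$, and evaluate polynomial integrals. The only difference is cosmetic (you expand $(1-\Delta-\Delta r)(1-r)$ directly and collapse via $\Delta\,r^\star=1-\Delta$, while the paper organizes the same computation through intermediate integrals $A$ and $B$), and your algebraic simplification and boundary/continuity checks are all correct.
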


\begin{proof}[Proof of Lemma~\ref{lem:I_k_Delta}]
Let $X_1, \ldots, X_m$ be i.i.d. $\mathrm{Unif}(0,1)$. Then the target quantity can be expressed as $I_m(\Delta) = \EB\left[\left(1 - \Delta - \Delta\,D\right)_+\right]$, where $D = \max_i X_i - \min_i X_i \in [0,1]$. The density of $D$ is given by $f_D(r) = m(m-1)\,r^{m-2}(1 - r)$ for $r \in [0,1]$, as stated in formula (2.5.15) of \cite{nason1994first}.
Hence,
\[
I_m(\Delta)
= \int_0^1 \left(1 - \Delta - \Delta r\right)_+ f_D(r)\,\rd r
= \int_0^{r_0} (1 - \Delta - \Delta r)\,m(m-1)\,r^{m-2}(1 - r)\,\rd r,
\]
where $r_0 = (1 - \Delta)/\Delta$, since the integrand becomes zero for $r > r_0$.

\medskip\noindent
\textbf{Case 1: $\Delta \le \tfrac{1}{2}$.}  Then $r_0 \ge 1$, so the $(\cdot)_+$ operator has no effect over $r \in [0,1]$. Therefore,
\[
I_m(\Delta)
= \int_0^1 \left((1 - \Delta) - \Delta r\right)\,m(m-1)\,r^{m-2}(1 - r)\,dr.
\]
This evaluates to
\[
I_m(\Delta) = (1 - \Delta) - \Delta \cdot \frac{m - 1}{m + 1}
= 1 - \frac{2m}{m + 1}\,\Delta.
\]

\medskip\noindent
\textbf{Case 2: $\Delta > \tfrac{1}{2}$.} Then $r_0 < 1$, and the integral becomes
\[
I_m(\Delta)
= m(m - 1) \int_0^{r_0} (1 - \Delta - \Delta r)\,r^{m-2}(1 - r)\,dr.
\]
Let
\[
A = \int_0^{r_0} r^{m-2}(1 - r)\,dr = \frac{r_0^{m - 1}}{m - 1} - \frac{r_0^m}{m}, \quad
B = \int_0^{r_0} r^{m - 1}(1 - r)\,dr = \frac{r_0^m}{m} - \frac{r_0^{m + 1}}{m + 1}.
\]
Then we have
\[
I_m(\Delta)
= m(m - 1)\left[(1 - \Delta)A - \Delta B\right].
\]
Substituting $r_0 = (1 - \Delta)/\Delta$ and simplifying gives
\[
I_m(\Delta)
= \left(\frac{1 - \Delta}{\Delta}\right)^m
\left[1 - \frac{2m(1 - \Delta)}{m + 1}\right].
\]

\smallskip
The uniform continuity of $I_m(\Delta)$ over $[0, 1-\delta]$ follows directly from the smoothness of the integrand and the compactness of the domain. This concludes the proof.
\end{proof}

\begin{lem}[\cite{albertConcentrationInequalitiesRandomly2019}, Theorem 2.1]
  \label{thm:perm-concentration}
Let $\{a_{i,j}\}_{1\le i,j\le |\Voca|}$ be a collection of real numbers, and let $\pi$ be a uniformly random permutation  on $\Voca$.  Define
\[
Z \;=\; \sum_{j=1}^{|\Voca|} a_{j,\pi(j)}.
\]
Then for all $x>0$,
\begin{equation}\label{eq:perm-concentration}
\PB\left(\bigl|Z-\EB[Z]\bigr|
\;\ge\;2\sqrt{\,2\Bigl(\tfrac{1}{|\Voca|}\sum_{i,j=1}^{|\Voca|} a_{i,j}^2\Bigr)\,x
\;+\;2\max_{1\le i,j\le |\Voca|}|a_{i,j}|\;x}\right)
\;\le\;16\,e^{1/16}\,\exp\Bigl(-\tfrac{x}{16}\Bigr).
\tag{15}
\end{equation}
\end{lem}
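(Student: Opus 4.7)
The plan is to deduce this Bernstein-type concentration from a Doob martingale decomposition of $Z$ obtained by progressively revealing the permutation. Setting $\mathcal{F}_k := \sigma(\pi(1),\ldots,\pi(k))$ and $M_k := \EB[Z \mid \mathcal{F}_k]$, one has $Z - \EB[Z] = \sum_{k=1}^{|\Voca|}(M_k - M_{k-1})$ as a martingale sum, and the simultaneous appearance of an average second moment $\tfrac{1}{|\Voca|}\sum_{i,j}a_{i,j}^2$ and the sup-norm $\max_{i,j}|a_{i,j}|$ in the claimed bound points squarely at Freedman's inequality (or a sharper variant tailored to the symmetric group) as the right tool.

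The two inputs to control are a uniform bound on the martingale increments and a bound on the predictable quadratic variation. For the increments, a one-swap coupling suffices: given $\mathcal{F}_{k-1}$, the law of $(\pi(k), \pi(k+1), \ldots)$ is uniform on the set $S_{k-1}$ of unused indices, so conditioning on $\pi(k) = v$ versus $v'$ differs only by transposing two entries of a uniform continuation, which yields $|M_k - M_{k-1}| \le 2\max_{i,j}|a_{i,j}|$. For the quadratic variation, the same representation shows that $M_k - M_{k-1}$ is a centered function of $\pi(k) \sim \mathrm{Unif}(S_{k-1})$, so $\EB[(M_k - M_{k-1})^2 \mid \mathcal{F}_{k-1}]$ reduces to a within-row variance, and summing in $k$ should be controlled by $\tfrac{1}{|\Voca|}\sum_{i,j}a_{i,j}^2$. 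Feeding both bounds into Freedman's inequality yields the claimed tail; choosing the threshold $t := 2\sqrt{2\sigma^2 x + 2 M x}$ with $\sigma^2 = \tfrac{1}{|\Voca|}\sum a_{i,j}^2$ and $M = \max|a_{i,j}|$, and tracking absolute constants, should reproduce the prefactor $16 e^{1/16}$ and the decay rate $\exp(-x/16)$.

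The main obstacle lies in the quadratic-variation estimate. A naive swap argument only produces a conditional variance proportional to the \emph{local} average $\tfrac{1}{|S_{k-1}|}\sum_{j \in S_{k-1}} a_{k,j}^2$, and simply summing in $k$ risks a divergent $\tfrac{1}{|S_{k-1}|}$ factor as $|S_{k-1}| \to 1$. Extracting the clean $\tfrac{1}{|\Voca|}$ scaling requires either a more delicate accounting that exploits the fact that each row $a_{k,\cdot}$ is consumed exactly once along the permutation (so one averages over rows rather than over shrinking column subsets), or---as in Albert--Garivier--Marchina---replacing Freedman with a modified log-Sobolev / transportation-cost inequality on the symmetric group, which redistributes the variance uniformly across positions and delivers the stated bound directly.
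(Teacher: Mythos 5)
The paper does not prove this lemma: it is cited verbatim from Albert (2019), Theorem~2.1, and imported as a black box. There is therefore no ``paper's proof'' to compare your attempt against, so the only question is whether your proposal stands on its own, and as written it does not.

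The obstacle you flag in your final paragraph is genuine, not a bookkeeping annoyance. With the sequential filtration $\mathcal{F}_k = \sigma(\pi(1),\ldots,\pi(k))$, write $m = |\Voca|-k+1$ and let $T_k$ be the set of $m$ unrevealed symbols. A direct computation gives
\[
M_k - M_{k-1} \;=\; \bigl(a_{k,\pi(k)}-\bar a_k\bigr) \;-\; \frac{1}{m-1}\sum_{j>k}\bigl(a_{j,\pi(k)}-\bar a_j\bigr),
\qquad
\bar a_j := \tfrac1m\sum_{v\in T_k}a_{j,v},
\]
so the step-$k$ conditional variance is an average over $v\in T_k$ of a quantity driven by $a_{k,v}^2$ and by the rows $j>k$ restricted to the random shrinking set $T_k$. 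There is no almost-sure bound of the form $\sum_k \mathrm{Var}(M_k-M_{k-1}\mid\mathcal{F}_{k-1}) \le C\cdot\tfrac{1}{|\Voca|}\sum_{i,j}a_{i,j}^2$: take $a_{i,1}=(-1)^i$ and $a_{i,j}=0$ for $j\neq 1$, so that $\tfrac1{|\Voca|}\sum_{i,j} a_{i,j}^2 = 1$; on the (positive-probability) event $\pi^{-1}(1)=|\Voca|$, symbol $1$ remains in $T_k$ for every $k$, the step-$k$ conditional variance is of order $1/(|\Voca|-k+1)$, and the cumulative quadratic variation is of order $\log|\Voca|$. Your observation that the quadratic variation has the correct expectation is accurate but does not save the argument, since Freedman's inequality requires a pathwise bound on the predictable variation, not a bound in mean. (A minor secondary slip: the one-swap coupling gives $|M_k-M_{k-1}|\le 4\max_{i,j}|a_{i,j}|$, not $2\max_{i,j}|a_{i,j}|$, because $M_k$ ranges over an interval of width up to $4\max|a_{i,j}|$ and its deviation from the conditional mean is not halved by any symmetry; this only costs a constant.) Your proposal closes by gesturing at ``a more delicate accounting'' or a ``modified log-Sobolev / transportation-cost inequality,'' which is the right instinct, but neither alternative is carried out, so the argument has a hole precisely where you anticipated one.
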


\begin{lem}
\label{lem:Lipschitz-max}
The maximum function $\max: [0,1]^m \to [0,1]$ is Lipschitz continuous with constant $1$ with respect to the $L^\infty$ norm. That is, for any $x, y \in [0,1]^m$, we have
\begin{equation}
\label{eq:Lipschitz-max}
|\max(x_1,\ldots,x_m) - \max(y_1,\ldots,y_m)| \leq \max_{i=1,\ldots,m} |x_i - y_i|.
\end{equation}
Similarly, the minimum function $\min: [0,1]^m \to [0,1]$ is also Lipschitz continuous with constant $1$ under the $L^\infty$ norm:
\begin{equation}
\label{eq:Lipschitz-min}
|\min(x_1,\ldots,x_m) - \min(y_1,\ldots,y_m)| \leq \max_{i=1,\ldots,m} |x_i - y_i|.
\end{equation}
\end{lem}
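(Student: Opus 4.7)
The plan is to prove both inequalities by a direct argument based on the identity that $\max$ (resp.\ $\min$) over a coordinate block is attained at some specific index, then compare pointwise.

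First I would address \eqref{eq:Lipschitz-max}. Let $a := \max_i x_i$ and $b := \max_i y_i$, and without loss of generality assume $a \ge b$. Pick $i^{\star} \in \arg\max_i x_i$, so $a = x_{i^{\star}}$. By definition of the maximum, $b \ge y_{i^{\star}}$. Then
\[
0 \le a - b \;\le\; x_{i^{\star}} - y_{i^{\star}} \;\le\; |x_{i^{\star}} - y_{i^{\star}}| \;\le\; \max_{1 \le i \le m} |x_i - y_i|,
\]
which gives the desired bound after taking absolute values. This single chain of inequalities is the core of the proof, and no case split beyond the WLOG assumption $a \ge b$ is needed, since the roles of $x$ and $y$ are symmetric.

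For \eqref{eq:Lipschitz-min}, I would simply invoke the identity
\[
\min(z_1, \dots, z_m) \;=\; -\max(-z_1, \dots, -z_m),
\]
apply the already-established bound to the vectors $(-x_i)_i$ and $(-y_i)_i$, and observe that $|{-x_i} - ({-y_i})| = |x_i - y_i|$, so the same constant $1$ and the same $L^\infty$ gap on the right-hand side are recovered. Alternatively, one can repeat the argument of the previous paragraph verbatim, with the roles of $\max$ and $\min$ swapped and the argmin playing the role of the argmax.

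There is no real obstacle here: the statement is a textbook nonexpansiveness property of lattice operations, and the only subtlety is the bookkeeping of which index realizes the extremum. The restriction to $[0,1]^m$ plays no role in the argument; the same proof works on $\mathbb{R}^m$, so the statement holds a fortiori on the unit cube used in the applications (in particular inside the proof of Theorem~\ref{thm:inverse-perfect-general-technical-main}, where the lemma is invoked to control $\max_{\ell,t} a_{\pi,w'_\ell-1}^{(t)}$ and $\min_{\ell,t} a_{\pi,w'_\ell}^{(t)}$ in \eqref{eq:max-concentrationab1} and \eqref{eq:max-concentrationab12}).
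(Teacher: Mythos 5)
Your proof is correct and follows essentially the same route as the paper: both arguments pick an index $i^\star$ attaining the larger maximum (WLOG $\max x \ge \max y$), use that $\max y \ge y_{i^\star}$ to bound the gap by $x_{i^\star} - y_{i^\star}$, and then reduce the $\min$ case to the $\max$ case via $\min(z) = -\max(-z)$.
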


\begin{proof}[Proof of Lemma~\ref{lem:Lipschitz-max}]
Without loss of generality, assume $\max(x_1,\ldots,x_m) = x_{i_0} \ge \max(y_1,\ldots,y_m) = y_{j_0}$ for some indices $i_0, j_0 \in [m]$. Since $y_{j_0} \ge y_{i_0}$ (as $y_{j_0}$ is the maximum of $y$), we have
\begin{align}
|\max(x_1,\ldots,x_m) - \max(y_1,\ldots,y_m)| &= x_{i_0} - y_{j_0} \\
&\le x_{i_0} - y_{i_0} \le \max_{i=1,\ldots,m} |x_i - y_i|.
\end{align}
This proves the Lipschitz continuity of the maximum function. The result for the minimum follows by noting that
\[
\min(x_1,\ldots,x_m) = -\max(-x_1,\ldots,-x_m),
\]
and applying the same argument to $-x$ and $-y$.
\end{proof}

\begin{lem}\label{lem:joint-law-UkX}
Let $X_{1}, \dots, X_{m}$ be independent $\mathrm{Unif}(0,1)$ random variables, and conditionally on $(X_{1}, \dots, X_{m}) = \bm{x}:=(x_1, \ldots, x_m)$, let
\[
U \sim \mathrm{Unif}\bigl[a(\bm{x}),\, b(\bm{x})\bigr],
\]
where $a(\bm{x}) = \max_{1 \le \ell \le m} \{\Delta_\ell x_\ell\}$ and $b(\bm{x}) = \min_{1 \le \ell \le m} \{\Delta_\ell x_\ell + 1 - \Delta_\ell\}$ with the convention that $U$ has no mass if $a(x) \ge b(x)$. 
With $\bar{\bm{\Delta}}=(\bDelta_1, \ldots, \bDelta_m)$, define
\[
I_m(\bar{\bm{\Delta}})
= \int_{[0,1]^m} \bigl(b(x) - a(x)\bigr)_+\, \rd x.
\]
Then for any measurable function $J \colon [0,1]^{m+1} \to [0,\infty)$, we have
\begin{align}
  \label{eq:joint-law-expectatioan}
\EB\bigl[J(U, X_1, \dots, X_m)\bigr]
=
\frac{1}{I_m(\bar{\bm{\Delta}})}
\int_{[0,1]^m}
\int_{a(x)}^{b(x)}
J\bigl(u, x_1, \dots, x_m\bigr) \rd u\,
\cdot \mathbf{1}_{a(\bm{x}) < b(\bm{x})}\,  \rd \bm{x},
\end{align}
and hence the right-hand side defines the joint law of $(U, X_1, \dots, X_m)$.
\end{lem}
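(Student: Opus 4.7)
The plan is to unpack the hierarchical construction of $(U, X_1, \ldots, X_m)$ into a single joint density on $[0,1]^{m+1}$ and then compute $\EB[J(U, X_1, \ldots, X_m)]$ by direct integration. The convention that $U$ has no mass when $a(\bm{x}) \ge b(\bm{x})$ is interpreted (consistently with Corollary~\ref{cor:inverse-perfect-general-technical-main}) as conditioning the pair $(U, \bm{X})$ on the event $\{a(\bm{X}) < b(\bm{X})\}$, so that the $1/I_m(\bar{\bm{\Delta}})$ factor on the right-hand side of \eqref{eq:joint-law-expectatioan} is precisely the reciprocal of the conditioning probability.

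First I would establish that $I_m(\bar{\bm{\Delta}})$ is the correct normalizing constant. Since $\bm{X}$ has density $\mathbf{1}_{[0,1]^m}$, the law of total expectation gives
\[
\PB(a(\bm{X}) < b(\bm{X}))
= \int_{[0,1]^m} \mathbf{1}_{a(\bm{x}) < b(\bm{x})}\, \rd \bm{x}
= \int_{[0,1]^m} \bigl(b(\bm{x}) - a(\bm{x})\bigr)_+^{0}\, \rd \bm{x},
\]
but what actually appears as the normalizer in the claim is the integrated interval length $I_m(\bar{\bm{\Delta}}) = \int_{[0,1]^m} (b(\bm{x})-a(\bm{x}))_+\, \rd \bm{x}$. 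To reconcile this, I would observe that the natural joint density of $(U, \bm{X})$ arising from the hierarchical construction is
\[
p_{U, \bm{X}}(u, \bm{x}) \;=\; \frac{\mathbf{1}_{\bm{x} \in [0,1]^m}\, \mathbf{1}_{u \in [a(\bm{x}), b(\bm{x})]}}{I_m(\bar{\bm{\Delta}})},
\]
which is the uniform density on the region $R := \{(u, \bm{x}) : \bm{x} \in [0,1]^m,\; a(\bm{x}) \le u \le b(\bm{x})\}$. A direct computation confirms this is a valid probability density: integrating out $u$ for fixed $\bm{x}$ yields the marginal $(b(\bm{x})-a(\bm{x}))_+ / I_m(\bar{\bm{\Delta}})$ on $\bm{x}$, and integrating this over $[0,1]^m$ gives $1$ by the very definition of $I_m(\bar{\bm{\Delta}})$. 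Moreover, the conditional density of $U$ given $\bm{X} = \bm{x}$ collapses to $\mathbf{1}_{u \in [a(\bm{x}), b(\bm{x})]} / (b(\bm{x}) - a(\bm{x}))$, recovering the stipulated $\mathrm{Unif}[a(\bm{x}), b(\bm{x})]$ law on the event $\{a(\bm{x}) < b(\bm{x})\}$.

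Next I would compute $\EB[J(U, X_1, \ldots, X_m)]$ by integrating $J$ against $p_{U, \bm{X}}$ and applying Fubini's theorem (justified since $J \ge 0$):
\begin{align}
\EB[J(U, X_1, \ldots, X_m)]
&= \int_{[0,1]^{m+1}} J(u, \bm{x})\, p_{U, \bm{X}}(u, \bm{x}) \, \rd u\, \rd \bm{x} \\
&= \frac{1}{I_m(\bar{\bm{\Delta}})} \int_{[0,1]^m} \left( \int_{a(\bm{x})}^{b(\bm{x})} J(u, \bm{x})\, \rd u \right) \mathbf{1}_{a(\bm{x}) < b(\bm{x})}\, \rd \bm{x},
\end{align}
which is exactly the claimed identity. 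The final sentence of the lemma, that the right-hand side defines the joint law, then follows from a standard measure-theoretic argument: the identity extends by monotone class / approximation to all bounded measurable $J$, and the Borel sets in $[0,1]^{m+1}$ are generated by indicator functions $J = \mathbf{1}_A$, so the formula uniquely determines the joint distribution.

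I do not anticipate any genuine obstacle; the work is essentially bookkeeping. The only subtle point is the correct interpretation of the "no mass" convention and the reconciliation with the normalization $I_m(\bar{\bm{\Delta}})$, which I would address up front by reading the construction as conditioning on a non-empty interval and identifying $I_m(\bar{\bm{\Delta}})$ as the natural normalizer that makes $p_{U, \bm{X}}$ a probability density over the region $R$. Once this is settled, the identity is an immediate application of Fubini.
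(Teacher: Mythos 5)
Your proposal is correct and takes essentially the same approach as the paper, which dismisses the lemma as an immediate application of the law of total expectation. You do usefully spell out a normalization subtlety the paper glosses over: taken literally, the hierarchical construction (marginally uniform $\bm{X}$, then conditionally uniform $U$) would produce a pointwise $1/(b(\bm{x})-a(\bm{x}))$ factor inside the integral rather than the single global $1/I_m(\bar{\bm{\Delta}})$, and the formula only matches once one recognizes the joint law as uniform on the region $R=\{(u,\bm{x}):a(\bm{x})\le u\le b(\bm{x})\}$ (equivalently, $(U,\bm{X})$ independent uniform conditioned on $a(\bm{X})\le U\le b(\bm{X})$, whose marginal on $\bm{X}$ is $(b-a)_+/I_m$ rather than Lebesgue). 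One small imprecision: in your opening paragraph you describe $I_m$ as the probability of $\{a(\bm{X})<b(\bm{X})\}$, which it is not — it is the probability of $\{a(\bm{X})\le U\le b(\bm{X})\}$ under the joint uniform — but you self-correct when you compute the density on $R$, so the argument goes through.
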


\begin{proof}[Proof of Lemma \ref{lem:joint-law-UkX}]
This follows directly from the law of total expectation and the conditional definition of $U$, so we omit the details.
\end{proof}

\begin{lem}[Integral approximation error]\label{lem:integral-approximation-error}
Let $J : [0, 1]^{m+1} \to \mathbb{R}$ be a 1-Lipschitz function.  
Let $a_\pi$ and $b_\pi$ be random variables depending on a random variable $\pi$, and let $\bar{a}, \bar{b} \in [0,1]$ be deterministic values.  
Then, for any fixed fractions $\{x_\ell\}_{\ell=1}^m$, the following bound holds:
\begin{align}
&\left| \EB_{\pi} \left[ \int_{a_\pi}^{b_\pi} J(u, x_1, \ldots, x_m) \1_{\{b_\pi \ge a_\pi\}} \,\rd u \right] 
- \int_{\bar{a}}^{\bar{b}} J(u,x_1, \ldots, x_m) \1_{\{\bar{b} \ge \bar{a}\}} \,\rd u \right| \notag \\
&\le 2\|J\|_\infty \cdot \EB_\pi \left[ |a_\pi - \bar{a}| + |b_\pi - \bar{b}| \right]. \label{eq:abstract-integral-bound}
\end{align}
where $\|J\|_\infty := \max\limits_{(u, x_1, \ldots, x_m) \in [0,1]^{m+1}} |J(u, x_1, \ldots, x_m)|$ denotes the bound on $J$.
\end{lem}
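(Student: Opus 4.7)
The plan is to reduce the randomized inequality to a deterministic pointwise inequality and then take expectation. To this end, fix $\bm{x} = (x_1, \ldots, x_m)$ and define the auxiliary function
\[
F(a,b) := \int_a^b J(u, x_1, \ldots, x_m)\,\rd u \cdot \mathbf{1}_{\{b \ge a\}}, \qquad (a,b) \in [0,1]^2,
\]
so that the quantity whose absolute value we wish to control equals $\EB_\pi[F(a_\pi, b_\pi)] - F(\bar{a},\bar{b})$. By Jensen's inequality,
\[
\bigl|\EB_\pi[F(a_\pi, b_\pi)] - F(\bar{a},\bar{b})\bigr| \le \EB_\pi\bigl[|F(a_\pi,b_\pi) - F(\bar{a},\bar{b})|\bigr],
\]
so the claim reduces to establishing the deterministic bound $|F(a,b) - F(\bar{a},\bar{b})| \le \|J\|_\infty(|a-\bar{a}| + |b-\bar{b}|)$ for every $(a,b),(\bar{a},\bar{b}) \in [0,1]^2$, after which the factor of $2$ in the statement is a harmless slackening.

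The pointwise inequality will be proved by a short case analysis based on the sign of $b-a$ and $\bar{b}-\bar{a}$. If both $b \ge a$ and $\bar{b} \ge \bar{a}$, then using $\int_a^b - \int_{\bar{a}}^{\bar{b}} = \int_{\bar{b}}^{b} - \int_{\bar{a}}^{a}$ (with signed integrals) and bounding $|J|\le \|J\|_\infty$ yields the bound directly. If both $b < a$ and $\bar{b} < \bar{a}$, both indicators vanish and the difference is zero. The only subtle case is when exactly one indicator is active, say $b \ge a$ but $\bar{b} < \bar{a}$; then $|F(a,b) - F(\bar{a},\bar{b})| = \int_a^b J\,\rd u \le \|J\|_\infty (b-a)$, and the key observation is that
\[
b - a = (b-\bar{b}) + (\bar{b}-\bar{a}) + (\bar{a}-a) \le (b-\bar{b}) + (\bar{a}-a) \le |b-\bar{b}| + |a-\bar{a}|,
\]
because $\bar{b}-\bar{a} < 0$. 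The symmetric subcase is identical.

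Combining the three cases gives the pointwise Lipschitz estimate, and taking expectation over $\pi$ concludes the proof (with $\|J\|_\infty$ in place of $2\|J\|_\infty$, so the stated bound holds with room to spare). The main obstacle---and the only nontrivial step---is the mixed case where the two indicators disagree: a naive triangle inequality would split the integral into pieces that individually exceed $\|J\|_\infty(|a-\bar{a}|+|b-\bar{b}|)$, so one must use the sign information $\bar{b}-\bar{a}<0 \le b-a$ to reabsorb the slack, as done above. Note that the Lipschitz hypothesis on $J$ is not actually used; only the sup-norm bound enters, which matches how the lemma is invoked in~\eqref{eq:integral-approximation-error1}.
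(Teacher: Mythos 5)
Your proof is correct and takes a genuinely different route from the paper's. The paper proves the lemma by decomposing $J = J_+ - J_-$ into positive and negative parts, observing that for nonnegative $f$ one has $\bigl(\int_a^b f\bigr)\1_{\{b\ge a\}} = \bigl(\int_a^b f\bigr)_+$, and then invoking the $1$-Lipschitz property of $(\cdot)_+$ before bounding the signed difference of integrals; the split into two nonnegative pieces is precisely what produces the constant $2\|J\|_\infty$. You instead show a pointwise estimate $|F(a,b)-F(\bar a,\bar b)|\le \|J\|_\infty(|a-\bar a|+|b-\bar b|)$ for $F(a,b):=\int_a^b J\,\rd u\cdot\1_{\{b\ge a\}}$ by a direct three-way case analysis on the signs of $b-a$ and $\bar b-\bar a$, with the mixed case handled by the sign argument $b-a\le (b-\bar b)+(\bar a-a)$ since $\bar b-\bar a<0$. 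This is both more elementary and sharper: you obtain the bound with constant $\|J\|_\infty$ rather than $2\|J\|_\infty$, and the Jensen step to pass to the expectation is immediate. Your observation that the Lipschitz hypothesis on $J$ is never used (only $\|J\|_\infty$) is also accurate — both your proof and the paper's use only the sup norm, so the hypothesis is extraneous for this lemma.
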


\begin{proof}[Proof of Lemma \ref{lem:integral-approximation-error}]
Since the values $\{x_\ell\}_{\ell=1}^m$ are fixed, define the simplified function $J(u) := J(u, x_1, \ldots, x_m)$.  
We decompose $J$ into its positive and negative parts:
\[
J = J_+ - J_-, \quad \text{where} \quad J_+(u) := \max\{J(u), 0\}, \quad J_-(u) := \max\{-J(u), 0\}.
\]
Both $J_+$ and $J_-$ are non-negative and 1-Lipschitz, with $\|J_+\|_\infty, \|J_-\|_\infty \le \|J\|_\infty$.

Note that for any non-negative function $f$, we have:
\[
\left( \int_a^b f(u)\, \rd u \right) \1_{\{b \ge a\}} = \left( \int_a^b f(u)\, \rd u \right)_+.
\]
Applying this to $J_+$ and $J_-$, we write:
\begin{align*}
&\int_{a_\pi}^{b_\pi} J(u)\, \rd u \cdot \1_{\{b_\pi \ge a_\pi\}} - \int_{\bar{a}}^{\bar{b}} J(u)\, \rd u \cdot \1_{\{\bar{b} \ge \bar{a}\}} \\
&= \left( \int_{a_\pi}^{b_\pi} J_+(u)\, \rd u \right)_+ - \left( \int_{\bar{a}}^{\bar{b}} J_+(u)\, \rd u \right)_+  - \left[ \left( \int_{a_\pi}^{b_\pi} J_-(u)\, \rd u \right)_+ - \left( \int_{\bar{a}}^{\bar{b}} J_-(u)\, \rd u \right)_+ \right].
\end{align*}

Applying the triangle inequality and the fact that $(\cdot)_+$ is 1-Lipschitz, we obtain:
\begin{align*}
&\left| \int_{a_\pi}^{b_\pi} J(u)\, \rd u \cdot \1_{\{b_\pi \ge a_\pi\}} - \int_{\bar{a}}^{\bar{b}} J(u)\, \rd u \cdot \1_{\{\bar{b} \ge \bar{a}\}} \right| \\
&\le \left| \left( \int_{a_\pi}^{b_\pi} J_+(u)\, \rd u \right) - \left( \int_{\bar{a}}^{\bar{b}} J_+(u)\, \rd u \right) \right| 
+ \left| \left( \int_{a_\pi}^{b_\pi} J_-(u)\, \rd u \right) - \left( \int_{\bar{a}}^{\bar{b}} J_-(u)\, \rd u \right) \right| \\
&\le 2\|J\|_\infty \cdot (|a_\pi - \bar{a}| + |b_\pi - \bar{b}|).
\end{align*}
Taking expectation over $\pi$ completes the proof.
\end{proof}

\section{Details of Simulation Study}
\label{append:simulation}

\paragraph*{Choice of pseudorandom variable.}
We use a context window of size $m=7$, so the pseudorandom variable $\xi_t = \AM(s_{(t-m):(t-1)}, \Key)$ depends on the preceding $m$ tokens. With such a relatively large $m$, nearly all pseudorandom collisions stem from our generation mechanism. In practice, at each step $t$, the hash function $\AM$ takes as input the $m$ most recent tokens concatenated with the key $\Key$, producing a hash value that serves as a random seed. This seed is then passed to a pseudorandom number generator, for which we use the PCG-64 generator \citep{o2014pcg}, the default in Python’s \textsf{NumPy} package \citep{harris2020array}.

\paragraph*{Computation of critical values.}
For the Gumbel-max watermark under score functions $h_{\mathrm{ars}}$ and $h_{\mathrm{log}}$, the sum of score values follows a gamma distribution. In this case, the critical values can be obtained directly from the gamma $(1-\alpha)$ quantile. For other score functions, the exact distribution of the score sum is generally unavailable, so we rely on Monte Carlo simulation.
Concretely, for each $n$, we generate $n$ i.i.d. samples of the corresponding pivotal statistic $Y$ from $\mu_0$ and compute $\sum_{\VM \in \Pi} h(Y_{\VM})$ for the score function $h$. This procedure is repeated 4000 times, and the empirical $(1-\alpha)$ quantile of these values serves as an estimate. 
 
\paragraph{Additional results.}
Figure~\ref{fig:simulation-appendix} reports Type~II errors for other values of $\Delta_{\max}$, showing patterns consistent with those in Section~\ref{sec:simulation}.

\begin{figure}[!t]
\centering
\vspace{-0.1in}
\includegraphics[width=\linewidth]{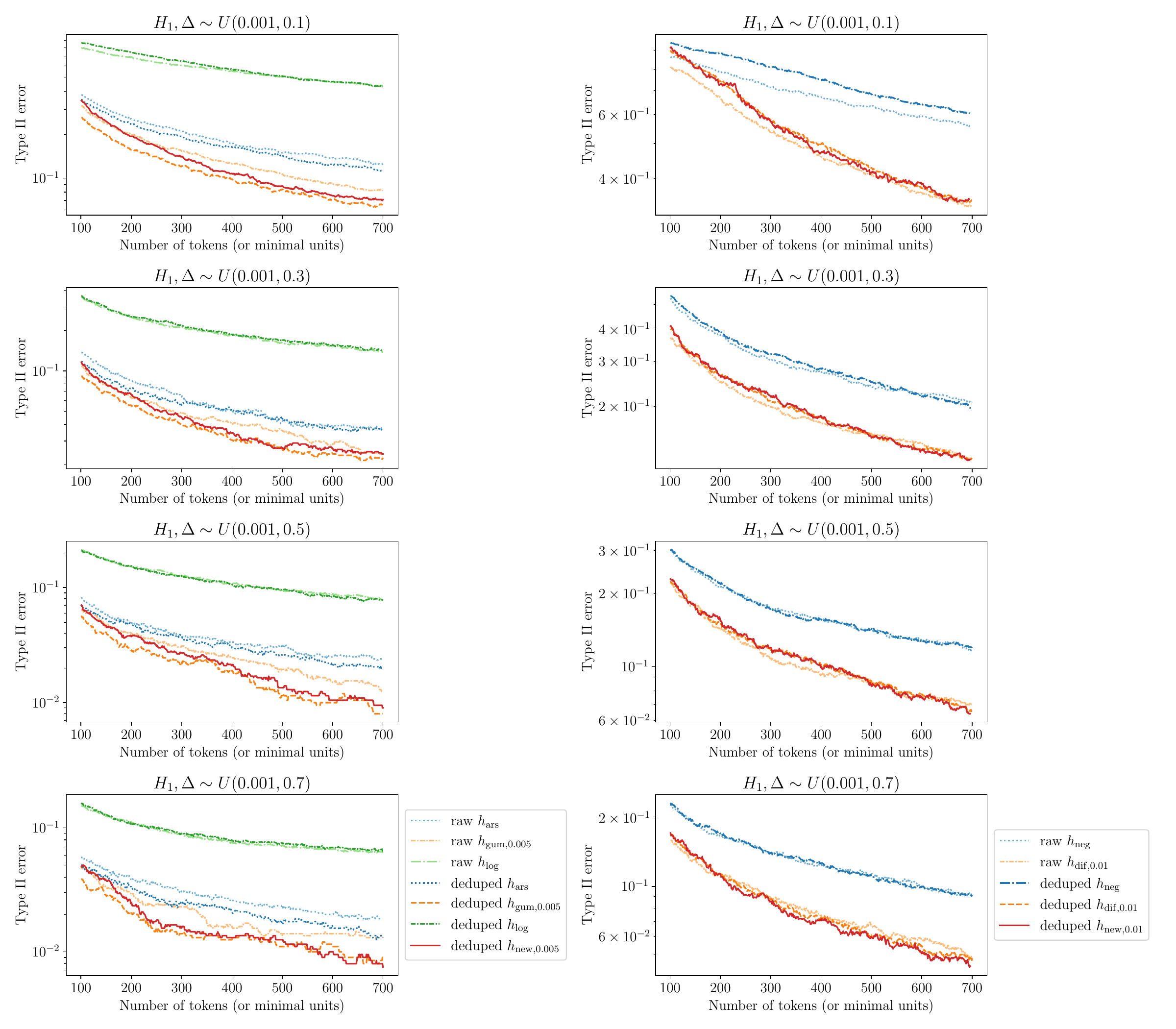}
\caption{Type~II errors on synthetic datasets for the Gumbel-max watermark (left) and the inverse-transform watermark (right), with results shown for $\Delta_{\max} \in \{0.1, 0.3, 0.5, 0.7\}$ from top to bottom.
}
\vspace{-0.1in}
\label{fig:simulation-appendix}
\end{figure}

\section{Details for Real-World Examples}
\label{append:LLM}

\paragraph{Detailed experimental setup.}
In our empirical analysis of the detection performance of different watermark detection methods, we focus on the OPT-1.3B model \citep{zhang2022opt}.
We evaluate Type I errors using 2000 human-written samples from the C4 news-like dataset \citep{raffel2020exploring}.
Specifically, for each human-written document in the dataset, we select it if and only if it contains at least 520 tokens, and we take the last 500 tokens as the initial prompt.
For each selected sample, we apply a hash function $\AM$ to compute the corresponding sequence of pseudorandom variables.
This procedure is repeated until we collect 2000 sequences, each containing 500 pivotal statistics.

To assess Type II errors, we randomly sample prompts from the same dataset.
We enforce a minimum prompt length of 50 tokens in all experiments and skip any document shorter than this threshold.
Each 50-token prompt is then fed into the model, which generates an additional 800 tokens.
Since 800 tokens are sufficiently long, we retain the generated text only if it contains at least 300 unique minimal units; otherwise, the generation is discarded.
Following this procedure, we collect 200 generated sequences, each consisting of at least 300 minimal units.

The temperature parameter controls the randomness of LLM generation.
Let $\bm{L} = (L_1, \ldots, L_{|\Voca|})$ denote the model’s logit vector over the vocabulary $\Voca$.
The temperature $\beta$ rescales this vector to obtain the token distribution $\bP$,
\[
P_{\token} = \frac{\exp(L_{\token} / \beta)}{\sum_{\token' \in \Voca} \exp(L_{\token'} / \beta)}.
\]
A smaller $\beta$ yields a more deterministic distribution.
To ensure a clear comparison across methods, we adopt relatively low temperatures: $\beta = 0.3$ for the Gumbel-max watermark and $\beta = 0.5$ for the inverse-transform watermark.
At higher temperatures (that is, more random generations), all detection methods tend to achieve nearly indistinguishable power within short text lengths.

\paragraph{Details of Figure \ref{fig:repetition}.}
We describe the experimental setup used to produce Figure \ref{fig:repetition}.
The left panel quantifies the proportion of token repetitions in both human-written and watermarked texts.
\begin{itemize}
    \item For the human-written case, we extract sentences from the C4 news-like dataset \citep{raffel2020exploring}. Each sentence is tokenized using the OPT-1.3B decoder, and we retain those with at least 200 tokens, collecting 1000 sequences in total. For a given text window size $m \in \{2, 3, \ldots, 10\}$, we compute the proportion of repeated tokens within each sequence and report the average repetition rate across all sequences.
    \item For the watermarked case, we sample 1000 prompts, each containing at least 50 tokens, and generate the following 200 tokens using the Gumbel-max watermark with temperature 1. When generating each text, we specify a window size $m$, which is used to compute the pseudorandom variables. We then measure the repetition rate for each generation and report the final value as the average across all 1000 samples.
\end{itemize}

The right panel of Figure \ref{fig:repetition} demonstrates that classic detection methods fail to control Type I error. To verify this, we generate 1000 sequences of 1000 tokens each using the OPT-1.3B model with temperature 0.1, without applying any watermarking. We then apply existing detection methods to these sequences and evaluate their empirical Type I error rates.

\end{appendix}

\end{document}